\definecolor{green}{rgb}{0,0.8,0} 
\renewcommand{\maketag@@@}[1]{\hbox{\m@th\normalsize\normalfont#1}}
\definecolor{deepgreen}{cmyk}{1,0,1,0.5}
\newcommand{\R}{\mathbb{R}}
\newcommand{\Rmnum}[1]{\expandafter\@slowromancap\romannumeral #1@}
\newcommand{\Del}[1]{}
\numberwithin{equation}{section}
\newtheorem{theorem}{Theorem}[section]
\newtheorem{corollary}[theorem]{Corollary}
\newtheorem{lemma}[theorem]{Lemma}
\newtheorem{remark}[theorem]{Remark}
\newtheorem{definition}[theorem]{Definition}
\renewcommand{\div}{\mathrm{div}\,}
\begin{document}

\title[Steady compressible Navier-Stokes-Fourier system]{Steady compressible Navier-Stokes-Fourier system with slip boundary conditions arising from kinetic theory}

\author[R.-J. Duan]{Renjun Duan}
\address[RJD]{Department of Mathematics, The Chinese University of Hong Kong,
Shatin, Hong Kong, P.R.~China}
\email{rjduan@math.cuhk.edu.hk}

\author[J.-H. Zhang]{Junhao Zhang}
\address[JHZ]{Department of Mathematics, The Chinese University of Hong Kong, Shatin, Hong Kong, P.R.~China}
\email{jhzhang@math.cuhk.edu.hk}


\begin{abstract}
This paper studies the boundary value problem on the steady compressible Navier-Stokes-Fourier system in a channel domain $(0,1)\times\mathbb{T}^2$ with a class of generalized slip boundary conditions that were systematically derived from the Boltzmann equation by Coron \cite{Coron-JSP-1989} and later by Aoki et al \cite{Aoki-Baranger-Hattori-Kosuge-Martalo-Mathiaud-Mieussens-JSP-2017}.  We establish the existence and uniqueness of strong solutions in $(L_{0}^{2}\cap H^{2}(\Omega))\times V^{3}(\Omega)\times H^{3}(\Omega)$ provided that the wall temperature is near a positive constant.  The proof relies on the construction of a new variational formulation for the corresponding linearized problem and employs a fixed point argument. The main difficulty arises from the interplay of velocity and temperature derivatives together with the effect of density dependence on the boundary. 
\end{abstract}

\subjclass[2020]{35Q30, 35M12; 76N06, 76N10}

\keywords{Compressible Navier-Stokes-Fourier system, generalized slip boundary conditions, stationary solutions, existence}

\maketitle 


\thispagestyle{empty}
\tableofcontents


\section{Introduction}
Consider a bounded domain $\Omega\subset\mathbb{R}^{3}$. The steady motion of monatomic gas confined in $\Omega$ can be described by using the steady compressible Navier-Stokes-Fourier system. Specifically, the gas density $\rho>0$, the gas velocity $\mathbf{u}=(u_{1},u_{2},u_{3})$ and the gas temperature $\theta>0$ satisfy the following equations:
\begin{equation}\label{SNS}
\left\{
\begin{aligned}
&\div(\rho\mathbf{u}) =0, \\
&\div(\rho\mathbf{u}\otimes\mathbf{u})+\nabla p=\div\mathbb{S}, \\
&\div\left[(\rho\mathscr{E}+p)\mathbf{u}\right] =\div\mathbf{q}+\div(\mathbb{S}\mathbf{u}), 
\end{aligned}
\right.
\end{equation}
for $\mathbf{x}=(x_{1},x_{2},x_{3})\in\Omega$. Here, $\mathscr{E}=\frac{1}{2}\rho|\mathbf{u}|^{2}+\rho e$ is the energy density, and $\mathbb{S}$, $\mathbf{q}$ are the stress tensor and the heat flux vector, respectively, given by
\begin{equation*}
  \mathbb{S}(\mathbf{u},\theta)=\mu(\theta)\left(\nabla\mathbf{u}+\nabla\mathbf{u}^{\mathsf{T}}-\frac{2}{3}\div\mathbf{u}\,\mathbb{I}_{3}\right),\quad \mathbf{q}(\theta)=\kappa(\theta)\nabla\theta,
\end{equation*}
where the coefficient of viscosity $\mu$ and the coefficient of heat conduction $\kappa$ are assumed to be positive smooth functions of $\theta>0$, and $\mathbb{I}_{3}$ is the $3\times3$ unit matrix. The physical properties of a gas are reflected through constitutive equations relating the state variables to the pressure $p$ and the internal energy density $e$. In this paper, we restrict considerations to the case of perfect polytropic gases with the pressure and the internal energy density defined by the formula
\begin{equation*}
p=R\rho\theta,\quad e=c_{v}\theta=\frac{R}{r-1}\theta,
\end{equation*}
where $R>0$ is a generic gas constant and $c_{v}=\frac{R}{r-1}$ is the specific heat with the adiabatic constant $r>1$.

We note that from the Chapman-Enskog expansion for the Boltzmann equation, $\mu$ and $\kappa$ can be represented as 
$$
\mu(\theta)=\mu_0\theta^{1-\frac{\gamma}{2}},\quad \kappa(\theta)=\kappa_0\theta^{1-\frac{\gamma}{2}},
$$
for constants $\mu_0>0$ and $\kappa_0>0$, where $-3<\gamma\leq 1$ denotes an inhomogeneity parameter in connection with the Boltzmann collision kernel. In addition, the experimental evidence, cf.~\cite{Becker-1966,Zeldovich-Raizer-1967}, has indicates that $\mu$ and $\kappa$ usually depend on both $\rho$ and $\theta$. In this paper, we do not pursue this general situation, but the method and result can be extended without difficulties.

To solve the steady compressible Navier-Stokes-Fourier system \eqref{SNS}, some ``correct'' boundary conditions to the bounded domain should be posed. In this aspect, various kinds of boundary conditions have been studied. Typically, there are two cases: the no-slip conditions, and Navier-slip conditions, which are characterized by the physical meaning of whether it allows the gases or fluids to slide along the boundary. Historically, Navier-slip boundary conditions were first proposed by NAVIER \cite{Naiver-MARSP-1823} that claims that at the boundary surface, the tangent component of the fluid velocity should be proportional to the rate of strain at the surface, while the normal component of the velocity is zero, since mass cannot penetrate an impermeable solid surface. Later, the Navier-slip boundary conditions have been theoretically derived from the Maxwell reflection boundary condition for Boltzmann equation, see MASMOUDI and SAINT-RAYMOND \cite{Masmoudi-Saint-CPAM-2003} for incompressible Stokes-Fourier system, as well as JIANG and MASMOUDI \cite{Jiang-Masmoudi-CPAM-2017} for incompressible Naiver-Stokes-Fourier system. 

Although it seems natural to consider the boundary value problems of compressible Navier-Stokes-Fourier system with Navier-slip boundary conditions, there has been a lack of literature providing a rigorous theoretical justification for a long time. CORON \cite{Coron-JSP-1989} firstly proposed the idea of obtaining a new type of slip boundary conditions for the Naiver-Stokes-Fourier system from the kinetic boundary condition and gave a precise analysis for a gas between two plates. Very recently, for general domains AOKI and his collaborators \cite{Aoki-Baranger-Hattori-Kosuge-Martalo-Mathiaud-Mieussens-JSP-2017, Hattori-Kosuge-Aoki-PRF-2018} re-visited the derivation of those slip boundary conditions that are given by
\begin{equation}\label{BC}
\left\{
\begin{aligned}
&\mathbf{u}\cdot \mathbf{n}=0, \\
&\rho \mathbf{u}\cdot \mathbf{t}+\sqrt{\frac{2}{R}}a_{\mathbf{u}}^{I}\frac{\mu(\theta_{w})}{\sqrt{\theta_{w}}}(\nabla\mathbf{u}+\nabla\mathbf{u}^{\mathsf{T}}):\mathbf{n}\otimes\mathbf{t}-\frac{4}{5R}a_{\theta}^{I}\frac{\kappa(\theta_{w})}{\theta_{w}}\nabla\theta\cdot\mathbf{t}=0, \\
&\rho(\theta-\theta_{w})-\frac{1}{R}a_{\mathbf{u}}^{II}\mu(\theta_{w})\nabla\mathbf{u}:\mathbf{n}\otimes\mathbf{n}+\frac{2}{5R}\sqrt{\frac{2}{R}}a_{\theta}^{II}\frac{\kappa(\theta_{w})}{\sqrt{\theta_{w}}}\nabla\theta\cdot\mathbf{n}=0,
\end{aligned}
\right.
\end{equation}
for $\mathbf{x}\in\partial\Omega$, where $\mathbf{n}$ is the unit outer normal vector, $\mathbf{t}$ is the unit tangent vector, $a_{\mathbf{u}}^{I}, a_{\mathbf{u}}^{II}, a_{\theta}^{I}, a_{\theta}^{II}>0$ are the slip coefficients, and $\theta_{w}$ is the wall temperature which is a positive function defined on the boundary $\partial\Omega$. Indeed, \eqref{BC} is derived from the kinetic view with the aim to reproduce the correct overall compressible viscid fluid approximation solutions to the corresponding boundary value problem on the Boltzmann equation with Maxwell reflection boundary condition in the hydrodynamic sense. It is clear that \eqref{BC} has a more solid theoretical basis when applied to the study of boundary value problems for compressible Navier-Stokes-Fourier system compared to the case of Navier-slip boundary conditions. 

In this paper, we aim to study existence of strong solutions to the boundary value problem \eqref{SNS} and \eqref{BC} for the specific finite channel domain $\Omega:=(0,1)\times\mathbb{T}^2$
provided that $\theta_{w}$ is near a positive constant. For the future, we expect to further study the viscous compressible fluid approximation to the Boltzmann equation in the current setting with such generalized slip boundary conditions \eqref{BC}; see a recent work \cite{Duan-Liu-Yang-Zhang-CAM-2022} by the first author and collaborators for the one-dimensional heat transfer problem.

The well-posedness of the boundary value problem for the steady compressible Navier-Stokes system has been studied extensively. In what follows we give a brief review of the related known results. There are two main types of results regarding the existence of the stationary solution to the compressible Navier-Stokes equations. One way is to construct a (large) solution in some weak sense. The first result of this kind is established by LIONS \cite{Lions-CRASP-1993-1, Lions-CRASP-1993-2} for the isentropic fluid, proving that there exists a weak solution $(\rho,\mathbf{u})\in L^{\gamma}(\Omega)\times H^{1}(\Omega)$ for large external data with restricted range of the adiabatic coefficient $\gamma$. As for the non-isentropic case, MUCHA and PORKORN\'{Y} \cite{Mucha-Pokorny-CMP-2009} firstly constructed a ``physical reasonable'' weak solution $(\rho,\mathbf{u},\theta)\in L^{\infty}(\Omega)\times W^{1,q}(\Omega)\times W^{1,q}(\Omega)$ for all $q\geq1$ where constitutive equation is assumed to contain elastic part and heat conductivity is assumed to depend on temperature. For more results concerning different boundary conditions, we refer to \cite{Pokorny-Mucha-DCDS-2008, Mucha-Pokorny-MMMAS-2010, Muzereau-Neustupa-Penel-AA-2011, Feireisl-Novotny-AIHP-2018, Consiglieri-SPJMS-2021, Pokorny-JMFM-2022}. The other way is to construct a (small) solution in some strong sense. The first result of this kind is obtained by MATSUMURA and NISHIDA \cite{Matsumura-Nishida-1981, Matsumura-Nishida-1982}. They found a solution in the classic sense where the external force field is supposed to be the gradient of a time-independent function. For the general situation, VALLI \cite{Valli-ASNSP-1983} proved the existence of a strong solution for isentropic fluid by proposing an argument called the stability method. Later, the same author \cite{Valli-AIHP-1987} gave another proof using the fixed point argument. For the non-isentropic case, the stability method has been applied by VALLI and ZAJACKOWSKI \cite{Valli-Zajaczkowski-CMP-1986}, and the fixed point method has been worked by BEIR\~{A}O DA VEIGA \cite{Beirao-CMP-1987}. For more results concerning different boundary conditions, we refer to \cite{Padula-ARMA-1987, Farwig-CPDE-1989, Plotnikov-Ruban-Sokolowski-JMPA-2009, Piasecki-JDE-2010, Nishida-Padula-Teramoto-JMFM-2013, Piasecki-Pokorny-ZAMM-2014, Guo-Jiang-Zhou-SIAM-2015}.

However, it should be pointed out that all results above were carried out for non-slip boundary conditions or Navier-slip boundary conditions. Comparing to these common boundary conditions, there are three significant features in slip boundary conditions \eqref{BC}: (1) It reveals the interplay between velocity derivatives $\nabla\mathbf{u}$ and temperature derivatives $\nabla\theta$ along the boundary; (2) It contains nonlinear terms $\rho\mathbf{u}$ and $\rho(\theta-\theta_{w})$ that depend on the density; (3) The slip coefficients $a_{\mathbf{u}}^{I}$, $a_{\mathbf{u}}^{II}$, $a_{\theta}^{I}$ and $a_{\theta}^{II}$ are specific values determined only by the physical law obeyed by the molecules of the fluid or the gas. The main motivation of this paper is to generalize the previous studies to the problem with generalized slip boundary conditions \eqref{BC}.

Because of technical issues with our method, we need to make the following assumption:
\begin{itemize}
  \item [(A)] Let $\overline{\rho}$ and $\overline{\theta}$ be positive constants such that the following holds true: 
      \begin{equation}\label{assumption1}
      \frac{16}{15R}a_{\theta}^{I}\frac{\kappa(\overline{\theta})}{\overline{\theta}}\overline{\rho}-\frac{2}{5R}a_{\mathbf{u}}^{II}a_{\theta}^{I}\frac{\kappa(\overline{\theta})}{\overline{\theta}}\overline{\rho}>0,
      \end{equation}
and moreover, there are two positive constants $\lambda_{0},\lambda_{1}>0$ such that the following matrix 
      \begin{equation}\label{assumption2}
        \setlength{\arraycolsep}{0.01\hsize}
        \begin{pmatrix}
         m_{11} & 0 & m_{13} & m_{14} & m_{15} & m_{16} \\
        0 & m_{22} & m_{23} & 0 & 0 & 0 \\
        m_{13} & m_{23} & m_{33} & 0 & 0 & 0 \\
        m_{14} & 0 & 0 & m_{44} & 0 & 0 \\
         m_{15} & 0 & 0 & 0 & m_{55} & m_{56} \\
        m_{16} & 0 & 0 & 0 & m_{56} & m_{66}
        \end{pmatrix}
      \end{equation}
      is also positive-definite,  where those entries are given by
\begin{align*}
&m_{11}=R\overline{\theta},\  
m_{13}=\frac{1}{2}R\overline{\rho}+\frac{\lambda_{0}}{2}R\frac{4}{5R^2}a_{\mathbf{u}}^{II}a_\theta^I\kappa(\overline{\theta}),\  
m_{14}=\frac{\lambda_{0}}{2}R\frac{4}{5R^2}a_{\mathbf{u}}^{II}a_\theta^I\kappa(\overline{\theta}),\\
&
m_{15}=\frac{1}{2}\mu(\overline{\theta}), m_{16}=\frac{\lambda_{1}}{2}R\frac{4}{5R^2}a_{\mathbf{u}}^{II}a_\theta^I\kappa(\overline{\theta}),\\
&
m_{22}=\lambda_{0}\frac{2}{3R}\sqrt{\frac{2}{R}}a_{\mathbf{u}}^{I}a_{\mathbf{u}}^{II}\frac{\mu(\overline{\theta})^{2}}{\sqrt{\overline{\theta}}},\  
m_{23}=\lambda_{0}\frac{2}{5R^2}a_{\mathbf{u}}^{II}a_{\theta}^I\frac{\kappa(\overline{\theta})}{\overline{\theta}},\  
m_{33}=\lambda_{0}\frac{32}{75R^2}\sqrt{\frac{2}{R}}a_{\theta}^{I}a_{\theta}^{II}\frac{\kappa(\overline{\theta})^{2}}{\overline{\theta}\sqrt{\overline{\theta}}},\\
&
m_{44}=\lambda_{0}\frac{16}{15R}a_{\theta}^{I}\frac{\kappa(\overline{\theta})}{\overline{\theta}}\overline{\rho},\\ 
&
m_{55}=\lambda_{1}\frac{2}{3R}\sqrt{\frac{2}{R}}a_{\mathbf{u}}^{I}a_{\mathbf{u}}^{II}\frac{\mu(\overline{\theta})^{2}}{\sqrt{\overline{\theta}}},\  
m_{56}=\lambda_{1}\frac{2}{5R^2}a_{\mathbf{u}}^{II}a_{\theta}^I\frac{\kappa(\overline{\theta})}{\overline{\theta}},\  
m_{66}=\lambda_{1}\frac{32}{75R^2}\sqrt{\frac{2}{R}}a_{\theta}^{I}a_{\theta}^{II}\frac{\kappa(\overline{\theta})^{2}}{\overline{\theta}\sqrt{\overline{\theta}}}.
\end{align*}
\end{itemize}

Our main result is stated as follows. Notations on norms will be introduced at the end of this section.

\begin{theorem}\label{mainTheorem}
Let $\Omega=(0,1)\times\mathbb{T}^2$ and the assumption (A) hold for the slip coefficients $a_{\mathbf{u}}^{I}, a_{\mathbf{u}}^{II}, a_{\theta}^{I}, a_{\theta}^{II}$ and positive constants $\overline{\rho}$ and $\overline{\theta}$. There exists a constant $\varepsilon_{0}>0$ such that for $\varepsilon\in[0,\varepsilon_{0})$, if $\theta_{w}\in H^{\vartheta}(\partial\Omega)$ with $\|\theta_{w}-\overline{\theta}\|_{H^{\vartheta}(\partial\Omega)}\leq \varepsilon$ where $\vartheta>\frac{7}{2}$, then the boundary value problem \eqref{SNS} with \eqref{BC} admits a unique solution $(\rho,\mathbf{u},\theta)\in H^{2}(\Omega)\times V^{3}(\Omega)\times H^{3}(\Omega)$ such that
$\int_{\Omega}\rho\,d\mathbf{x}=\overline{\rho}|\Omega|$ and
  \begin{equation}\label{mainEstimate}
    \|\rho-\overline{\rho}\|_{H^2(\Omega)}+\|\mathbf{u}\|_{H^3(\Omega)}+\|\theta-\overline{\theta}\|_{H^3(\Omega)}\leq C\varepsilon,
  \end{equation}
for a generic constant $C>0$ independent of $\varepsilon$. 
\end{theorem}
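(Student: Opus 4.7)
I would decompose the unknowns as $\rho = \overline{\rho} + \sigma$, $\theta = \overline{\theta} + \vartheta$ around the reference state $(\overline{\rho}, \mathbf{0}, \overline{\theta})$ with the mass constraint $\int_\Omega \sigma\,d\mathbf{x} = 0$, and prove smallness of $(\sigma, \mathbf{u}, \vartheta)$ in $H^2 \times H^3 \times H^3$. The natural strategy is a Picard iteration: given $(\tilde\sigma, \tilde{\mathbf{u}}, \tilde\vartheta)$ in a small ball, produce the next iterate by solving the linearization of \eqref{SNS}--\eqref{BC} whose bulk and boundary source terms absorb all quadratic-or-higher nonlinearities together with the inhomogeneity $\theta_w - \overline{\theta}$ evaluated on the previous iterate. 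Convergence in a lower-order norm, combined with uniform higher-norm bounds, will close the scheme and yield \eqref{mainEstimate}; uniqueness will follow from a standard $L^2$ energy estimate for the difference of two solutions.

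\textbf{Linear solvability and a coercive variational formulation.} The crux is solving the linear system for $(\sigma, \mathbf{u}, \vartheta)$ with prescribed bulk and boundary data. Since \eqref{BC} mixes normal and tangential derivatives of $\mathbf{u}$ and $\vartheta$ with the trace of $\sigma$, no off-the-shelf Lam\'{e}/Robin theory applies. I would design a bilinear form $\mathcal{B}((\mathbf{u},\vartheta),(\mathbf{v},\eta))$ on an appropriate subspace of $H^{1}(\Omega)^3\times H^{1}(\Omega)$ that combines the bulk dissipation generated by $\mathbb{S}(\mathbf{u},\overline{\theta}):\nabla\mathbf{v}$ and $\kappa(\overline{\theta})\nabla\vartheta\cdot\nabla\eta$, the pressure--density coupling in which $\sigma$ is recovered from a linearized continuity equation in the Valli--Beir\~{a}o style, and boundary contributions obtained by multiplying the second and third relations in \eqref{BC} by suitable combinations of $\mathbf{v}\cdot\mathbf{t}$, $\eta$, $\nabla\mathbf{v}:\mathbf{n}\otimes\mathbf{t}$, and $\nabla\eta\cdot\mathbf{n}$. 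The slip conditions then enter as natural boundary conditions, and the boundary integrand becomes a quadratic form in six canonical traces on $\partial\Omega$.

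\textbf{Coercivity via (A), regularity, and fixed point.} Testing with a two-parameter family of multipliers indexed by the weights $\lambda_{0},\lambda_{1}$ should produce, after integration by parts, a sum of bulk Korn-type control on $\nabla\mathbf{u}$ and $\nabla\vartheta$ together with a boundary quadratic form whose Gram matrix is precisely \eqref{assumption2}; condition \eqref{assumption1} absorbs the leading sign of the density-dependent cross term arising from $\overline{\rho}\,\vartheta$ in the third relation of \eqref{BC}. Lax--Milgram then delivers a weak solution. Bootstrapping on the flat channel $(0,1)\times\mathbb{T}^{2}$ via elliptic regularity for the Lam\'{e}-type operator with slip boundary conditions and for the temperature equation with Robin-like boundary conditions yields $\mathbf{u}\in H^{3}$ and $\vartheta\in H^{3}$, while $\sigma\in H^{2}$ is read off from the transport equation $\overline{\rho}\div\mathbf{u} + \mathbf{u}\cdot\nabla\sigma = \text{l.o.t.}$ along the streamlines of the iterate. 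A contraction in a lower-order norm inside a small ball of $H^{2}\times H^{3}\times H^{3}$ then closes the scheme. The main obstacle, as I see it, is the construction of $\mathcal{B}$ and the identification of the correct six-parameter test family: the density-dependent boundary terms $\rho\,\mathbf{u}\cdot\mathbf{t}$ and $\rho(\theta-\theta_{w})$ forbid decoupling the momentum and energy equations, and positive-definiteness of \eqref{assumption2} is the precise compatibility condition that forces the coupled system to be coercive.
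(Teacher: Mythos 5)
Your overall strategy (linearize, build a tailored coercive bilinear form whose positivity is exactly assumption (A), Lax--Milgram, tangential regularity bootstrap, fixed point) is the same as the paper's, but two steps that you treat as routine are precisely where the paper has to work hardest, and as written your proposal would not go through there. First, the density equation: you propose to read $\sigma\in H^{2}$ off the transport equation ``along the streamlines of the iterate.'' For the steady problem the velocity field is $O(\varepsilon)$ and tangent to $\partial\Omega$, so its characteristics need not traverse $\Omega$ and the first-order operator $\mathbf{u}\cdot\nabla$ is degenerate; streamline integration gives neither existence nor the $H^{2}$ bound. The paper circumvents this by inserting an artificial zeroth-order term $(\varphi-\varphi')/h$ into the continuity equation, so that the equation to solve is $\varphi+h\,\div(\varphi\mathbf{u})=H$ with dominant zeroth-order part, and then solves it by an elliptic regularization $-\tfrac{1}{k}\Delta\varphi^{k}$ with uniform $H^{1}$ bounds (Lemma \ref{H1UniformEstimateEllipticApproximationTransportLemma}); the parameter $h$ then has to be tracked through every subsequent estimate. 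Some such regularization is unavoidable and is absent from your plan.

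Second, the variational formulation itself: the natural weak form of \eqref{BC} produces the boundary coupling term \eqref{badTerm3}, i.e.\ $\int_{\partial\Omega}\mathbf{u}\cdot\nabla\xi-\mathbf{v}\cdot\nabla\zeta$, which is not defined for $(\mathbf{u},\zeta)\in H^{1}$ since traces of gradients would require $H^{3/2}$ regularity. The paper's fix is to rewrite this term using only tangential half-derivatives $\partial_{j}^{1/2}$ on $\mathbb{T}^{2}_{p}$ (exploiting $\mathbf{u}\cdot\mathbf{n}=0$, so only tangential components of $\mathbf{u}$ appear), which is what makes Lax--Milgram applicable on $V^{1}\times H^{1}$ at all; your sketch of ``multiplying by suitable combinations of $\nabla\mathbf{v}:\mathbf{n}\otimes\mathbf{t}$ and $\nabla\eta\cdot\mathbf{n}$'' does not confront this. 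Relatedly, the matrix \eqref{assumption2} is not the Gram matrix of a purely boundary quadratic form: it controls a mixed interior/boundary form in $\|\partial_{1}\varphi\|_{L^{2}(\Omega)}$, the symmetric-gradient and $\|\nabla\zeta\|_{L^{2}(\Omega)}$ norms, and $\|\zeta\|_{L^{2}(\partial\Omega)}$ (plus their tangential-derivative analogues weighted by $\lambda_{1}$), arising from the combination $\eqref{densityBasicEstimate}+\lambda_{0}\eqref{basicEstimate}+\lambda_{1}\eqref{H1ConormalBasicEstimate}$ together with the weight $\mathfrak{n}(\mathbf{x})=2x_{1}-1$ and the balancing constant $\mathcal{R}$ of \eqref{def.crad}. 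Finally, you claim a contraction in a low norm; the paper only establishes Lipschitz continuity of the solution map with a constant not shown to be less than one, and closes instead with Schauder on a compact convex set, so if you insist on Banach's theorem you owe an argument for why the constant is small.
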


\begin{remark}
  The assumption (A) being made in Theorem \ref{mainTheorem} may seem overwhelming, but it actually reveals the connection between Navier-slip boundary conditions and generalized slip boundary conditions \eqref{BC}. To see this, let us consider the case that
  $$
  a_{u}^{I}\sim O(1),\  a_{\theta}^{I}\sim O(\epsilon),\  a_{\mathbf{u}}^{II}\sim O(\epsilon),\  a_{\theta}^{II}\sim O(1)
  $$ 
  for $\epsilon>0$ sufficiently small, where $O(1), O(\epsilon)$ suggest the relative size of the slip coefficients, then the assumption (A) can be verified directly. Moreover, for this particular case, the slip boundary conditions \eqref{BC} tend to Navier-slip boundary conditions formally as $\epsilon\rightarrow0$. In fact, CORON \cite{Coron-JSP-1989} has pointed out that $\nabla\theta\cdot\mathbf{t}$ in $\eqref{BC}_{2}$ and $\nabla\mathbf{u}:\mathbf{n}\otimes\mathbf{n}$ in $\eqref{BC}_{3}$ are small compared to other quantities in \eqref{BC}. Thus, if these small terms are neglected, then slip boundary conditions \eqref{BC} become the usual Navier-slip boundary conditions.
\end{remark}

Now let us list the main procedures and the key ideas in the proof of Theorem \ref{mainTheorem}. 

To construct the solution, we first study an approximated linear problem obtained by linearization of the nonlinear problem, see \eqref{LSNS} and \eqref{LBC}. The linear problem can be viewed as the combination of a transport equation for approximated fluid density and an elliptic system for approximated fluid velocity and temperature. The transport equation $\eqref{LSNS}_{1}$ is solved by using elliptic approximation, see \eqref{ellipticApproximationTransport} and \eqref{neumannBC}. Some uniform estimates for the approximation are essential, see \eqref{H1UniformEstimateEllipticApproximationTransport} in Lemma \ref{H1UniformEstimateEllipticApproximationTransportLemma}. The elliptic system $\eqref{LSNS}_{2,3}$ is solved by using the Lax-Milgram theorem. The important step is to design a variational form for the problem, see \eqref{weakFormEllipticSystem} in Definition \ref{definitionWeakSolutionEllipticSystem}. This special construction of variation form reduces the strong coupling boundary effect between $\nabla\mathbf{u}$ and $\nabla\theta$. Although it is then possible to proceed with a basic energy estimate, there is an extra bad term: 
\begin{equation}\label{badTerm1}
    \int_{\Omega}-\frac{4}{5R^2}a_{\mathbf{u}}^{II}a_{\theta}^I\frac{\mu(\theta_{w})\kappa(\theta_{w})}{\mu(\overline{\theta})\theta_{w}}\nabla\zeta\cdot\mathfrak{n}\left(\nabla u_{1}+\partial_{1}\mathbf{u}-\frac{2}{3}\mathbf{e}_{1}\div\mathbf{u}\right)d\mathbf{x},
\end{equation}
which is a quadratic form of $\nabla\mathbf{u}$ and $\nabla\zeta$, where $\mathfrak{n}(\mathbf{x}):=2x_{1}-1$. It appears because of our approach to deal with the slip boundary conditions \eqref{BC}, which is not needed if only common boundary conditions are considered, see \cite{Matsumura-Nishida-1981, Matsumura-Nishida-1982, Valli-ASNSP-1983, Valli-AIHP-1987, Valli-Zajaczkowski-CMP-1986, Beirao-CMP-1987, Farwig-CPDE-1989, Plotnikov-Ruban-Sokolowski-JMPA-2009, Piasecki-JDE-2010, Nishida-Padula-Teramoto-JMFM-2013, Piasecki-Pokorny-ZAMM-2014}. Fortunately, $\|\mathbf{u}\|_{H^1(\Omega)}$ and $\|\theta\|_{H^{1}(\Omega)}$ are also obtained from the basic estimate, which implies there is a quadratic form in the estimate. Therefore our basic idea is to control this bad term by making full use of the quadratic form. This is the partial reason for the assumptions in our result, see the assumption (A) and the main Theorem \ref{mainTheorem}. We will explain further after the proof of Theorem \ref{basicEstimateEllipticSystem}, see Remark \ref{remarkBoundaryEffect}. 

As a second step, we establish a series of closed estimates to apply the Schauder fixed point theorem. The situation becomes more complicated than what we have done to the linear cases. Taking the basic estimates as an example, we have to deal with the following bad term:
\begin{equation}\label{badTerm2}
    \int_{\Omega}\frac{4}{5R}\overline{\theta}a_{\mathbf{u}}^{II}a_{\theta}^I\frac{\mu(\theta_{w})\kappa(\theta_{w})}{\mu(\overline{\theta})\theta_{w}}\zeta \mathfrak{n}\partial_{1}\varphi' d\mathbf{x},
\end{equation}
which is a quadratic term involving $\zeta$ and $\partial_{1}\varphi'$. Note that \eqref{badTerm2} can be easily controlled for the results concerning only the linear system since $(\varphi',\mathbf{u}',\zeta')$ is viewed as the source term in the partial differential system. However, it is necessary to close the estimate for this term here. It appears for the same reason by the slip boundary conditions \eqref{BC}. Inspired by the control of \eqref{badTerm1}, since it is possible to obtain the estimates for $\|\zeta\|_{L^{2}(\Omega)}$ and $\|\partial_{1}\varphi\|_{L^{2}(\Omega)}$, we are able to control it by using another quadratic form, which is the further reason for us to make the assumption (A). To obtain the estimate $\|\partial_{1}\varphi\|_{L^{2}(\Omega)}$, the key idea is to consider 
$$
\int_{\Omega}\partial_{1}\varphi\cdot\left(\frac{4}{3}\frac{\mu(\overline{\theta})}{\overline{\rho}}\eqref{LSNS}_{1}+\mathbf{e}_{1}\cdot\eqref{LSNS}_{2}\right)d\mathbf{x}.
$$
Then we can control $\|\partial_{1}\varphi\|_{L^{2}(\Omega)}$ by $\|(\partial_{2}\mathbf{u},\partial_{3}\mathbf{u})\|_{H^{1}(\Omega)}$and $\|\zeta\|_{H^{1}(\Omega)}$, which can be obtained directly by the basic (co-normal) energy estimate. Meanwhile, due to the nonlinearity of $\rho\mathbf{u}$ and $\rho(\theta-\theta_{w})$ in the boundary conditions, we need a more careful analysis in basic (co-normal) energy estimates. Precisely speaking, by the Sobolev embedding, these terms indicate $H^{\frac{1}{2}}(\Omega)$ regularity of the solution. Fortunately, we can control them by performing detailed analysis with the help of Fourier transform. For higher regularity estimates of $\mathbf{u}$ and $\theta$, as indicated by the estimates of Stokes system and Poisson equation, it is essential to estimate $\nabla\div\mathbf{u}$, which can be done by considering $$\int_{\Omega}\partial_{1}\div\mathbf{u}\cdot(\mathbf{e}_{1}\cdot\eqref{LSNS}_{2})d\mathbf{x}.$$

As a byproduct of our approach, we also obtain (1) Korn's type inequality for a bounded domain with tangency boundary condition; and (2) Poincar\'{e}'s inequality with traces for bounded domains. Although they may be simple cases of the results of others, we concisely establish them in the current work. It turns out that the specific computation we have performed makes our results suitable for practical use.

This paper is organized as follows. In section $2$, we establish the existence and the regularity results of the linearized problem. In sections $3$ and $4$, we study some properties of the solution operator of the linearized problem. In section $5$, we prove the existence of the nonlinear problem by applying the Schauder fixed point theorem. In the appendices, we give the proof of a Korn's type inequality and Poincar\'{e} type inequality with traces, and in addition, some well-known results which are used in the paper will be introduced for the reader's convenience.

\medskip
\noindent \textit{Notations.} Throughout the paper, we write $\mathbb{T}^{2}_{p}:=\{p\}\times\mathbb{T}^2$ for $p=0,1$, and $\partial_{i}:=\partial_{x_{i}}$ for $i=1,2,3$. In addition, the positive generic constants are denoted by $C$ or $C_{i}$ with index $i\in\mathbb{N}$. If necessary, we will point out the dependency between $C, C_{i}$ and $a_{\mathbf{u}}^{I}, a_{\mathbf{u}}^{II}, a_{\theta}^{I}, a_{\theta}^{II}, \mu, \kappa, \overline{\rho}, \overline{\theta}$, etc. For function spaces, $H^{k}(\Omega)$ denotes the Sobolev spaces with the standard norm denoted by $\|\cdot\|_{H^{k}(\Omega)}$. Although it may be an abuse of notations, we will use $H^{k}(\Omega)$ (or $L^{2}(\Omega)$) for both real-valued functions and vector-valued functions. $L_{0}^{2}(\Omega)$ denotes the space of functions $f\in L^{2}(\Omega)$ such that $\int_{\Omega}f(\mathbf{x})d\mathbf{x}=0$. $V^{k}(\Omega)$ denotes the space of 3-dimensional vector valued functions $f\in H^{k}(\Omega)$ whose normal component is zero on the boundary $\partial\Omega$.  

\section{Well-posedness of linearized problem and solution operator $\mathds{T}$}
Let $\overline{\rho}>0$ and $\overline{\theta}>0$ be given. We always look for solutions such that the total mass of the fluid $\int_{\Omega}\rho(x)d\mathbf{x}=\overline{\rho}|\Omega|$. Moreover, for the wall temperature $\theta_{w}\in H^{\vartheta}(\partial\Omega)$ where $\vartheta>\frac{7}{2}$, we assume its variation near $\overline{\theta}$ is $\delta>0$, which means that $\|\theta_{w}-\overline{\theta}\|_{H^{\vartheta}(\partial\Omega)}\leq \delta$. By the Sobolev trace theorem, there exists a function $\tilde{\theta}\in H^{\vartheta+\frac{1}{2}}(\Omega)$ such that $\tilde{\theta}\equiv\theta_{w}$ on the boundary and
$\|\tilde{\theta}-\overline{\theta}\|_{H^{\vartheta+\frac{1}{2}}(\Omega)}\leq  C\delta$ where $C>0$ is a generic constant.

Denote 
\begin{equation}\label{perturbation}
\varphi=\rho-\overline{\rho},\quad\zeta=\theta-\tilde{\theta}.
\end{equation}
After plugging \eqref{perturbation} into \eqref{SNS}, we have the boundary value problem for $(\varphi,\mathbf{u},\zeta)$:
\begin{equation}\label{PSNS}
\left\{
\begin{aligned}
    &\div(\varphi\mathbf{u})+\overline{\rho}\div\mathbf{u}=0, \\
    &-\div\mathbb{S}(\mathbf{u},\overline{\theta})+R\overline{\theta}\nabla\varphi+R\overline{\rho}\nabla\zeta =\mathbf{F}(\varphi,\mathbf{u},\zeta), \\
    &-\div(\kappa(\overline{\theta})\nabla\zeta)+R\overline{\rho}\overline{\theta}\div\mathbf{u} =G(\varphi,\mathbf{u},\zeta), 
\end{aligned}
\right.
\end{equation}
for $x\in\Omega$, with the boundary conditions
\begin{equation}\label{PBC}
\left\{
\begin{aligned}
    &\mathbf{u}\cdot \mathbf{n}=0, \\
    &(\varphi+\overline{\rho}) \mathbf{u}\cdot \mathbf{t}+\sqrt{\frac{2}{R}}a_{\mathbf{u}}^{I}\frac{\mu(\theta_{w})}{\sqrt{\theta_{w}}}(\nabla\mathbf{u}+\nabla\mathbf{u}^{\mathsf{T}}):\mathbf{n}\otimes\mathbf{t}-\frac{4}{5R}a_{\theta}^{I}\frac{\kappa(\theta_{w})}{\theta_{w}}\nabla(\zeta+\tilde{\theta})\cdot\mathbf{t}=0, \\
    &(\varphi+\overline{\rho})\zeta-\frac{1}{R}a_{\mathbf{u}}^{II}\mu(\theta_{w})\nabla\mathbf{u}:\mathbf{n}\otimes\mathbf{n}+\frac{2}{5R}\sqrt{\frac{2}{R}}a_{\theta}^{II}\frac{\kappa(\theta_{w})}{\sqrt{\theta_{w}}}\nabla(\zeta+\tilde{\theta})\cdot\mathbf{n}=0,
\end{aligned}
\right.
\end{equation}
for $x\in\partial\Omega$, where
\begin{align*}
    \mathbf{F}(\varphi,\mathbf{u},\zeta):=& -\varphi\mathbf{u}\cdot\nabla\mathbf{u}-\overline{\rho}\mathbf{u}\cdot\nabla\mathbf{u}-R\varphi\nabla\zeta-R\varphi\nabla\tilde{\theta}-R\overline{\rho}\nabla\tilde{\theta}-R\zeta\nabla\varphi-R(\tilde{\theta}-\overline{\theta})\nabla\varphi\\
    &+\div(\mathbb{S}(\mathbf{u},\tilde{\theta}+\zeta)-\mathbb{S}(\mathbf{u},\overline{\theta})),\\
    G(\varphi,\mathbf{u},\zeta):=& -c_{v}\varphi\mathbf{u}\cdot\nabla\zeta-c_{v}\overline{\rho}\mathbf{u}\cdot\nabla\zeta-c_{v}\varphi \mathbf{u}\cdot\nabla\tilde{\theta}-c_{v}\overline{\rho}\mathbf{u}\cdot\nabla\tilde{\theta}\\
    & -R\varphi\zeta\div\mathbf{u}-R\overline{\rho}\zeta\div\mathbf{u}-R\varphi\tilde{\theta}\div\mathbf{u}-R\overline{\rho}(\tilde{\theta}-\overline{\theta})\div\mathbf{u}\\
    & +\div([\kappa(\theta)-\kappa(\overline{\theta})]\nabla\zeta)+\div(\kappa(\theta)\nabla\tilde{\theta})+\nabla\mathbf{u}:\mathbb{S}(\mathbf{u},\tilde{\theta}+\zeta).
\end{align*}

Therefore, instead of studying the boundary value problem \eqref{SNS} and \eqref{BC}, it suffices to consider the boundary value problem \eqref{PSNS} and \eqref{PBC}. Before studying $(\varphi,\mathbf{u},\zeta)$, let us quantify $\mathbf{F}$ and $G$ in terms of $\varphi, \mathbf{u}, \zeta$ by the following result. We denote 
\begin{equation*}
    N_{k}(\varphi,\mathbf{u},\zeta)=\|\varphi\|_{H^{k}(\Omega)}+\|\mathbf{u}\|_{H^{k+1}(\Omega)}+\|\zeta\|_{H^{k+1}(\Omega)}.
\end{equation*}
Then we have the following result.

\begin{lemma}\label{EstimateFGLemma}
There exists a constant $C>0$ only depending on $c_{v}, R, \mu, \kappa, \overline{\rho}, \overline{\theta}$ such that 
\begin{align}
  &\|(\mathbf{F},G)\|_{L^{2}(\Omega)}\leq C(1+N_{1}(\varphi,\mathbf{u},\zeta))(\sqrt{\delta}+N_{2}(\varphi,\mathbf{u},\zeta))^{2}, \label{L2EstimateFG}\\
  &\|(\nabla\mathbf{F},\nabla G)\|_{L^{2}(\Omega)}\leq C(1+N_{2}(\varphi,\mathbf{u},\zeta))(\sqrt{\delta}+N_{2}(\varphi,\mathbf{u},\zeta))^{2}. \label{H1EstimateFG}
\end{align}
\end{lemma}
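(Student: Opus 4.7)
The plan is to go through each of the many summands in $\mathbf{F}$ and $G$ and bound it by a direct application of Sobolev embeddings, Hölder, Moser-type product estimates, and a composition lemma for nonlinear functions of $\theta$. No single step is substantial; the whole exercise is to organize the bookkeeping so that every term fits into the claimed shape $(1 + N_k)(\sqrt{\delta} + N_2)^2$. The three main tools I would lean on are: (i) in three dimensions, $H^2(\Omega) \hookrightarrow L^\infty(\Omega) \cap W^{1,3}(\Omega)$, $H^1(\Omega) \hookrightarrow L^6(\Omega)$, and $H^s(\Omega)$ is a Banach algebra for $s > 3/2$; (ii) the bound $\|\tilde{\theta} - \overline{\theta}\|_{H^{\vartheta+1/2}(\Omega)} \leq C\delta$, with $\vartheta + \tfrac12 > 4$, so that $\tilde{\theta}-\overline{\theta}$ and its derivatives up to third order lie in $L^\infty(\Omega)$ with norm $\leq C\delta$; (iii) a composition estimate, via $\kappa(\theta)-\kappa(\overline\theta) = (\theta-\overline\theta)\int_0^1\kappa'(\overline\theta+s(\theta-\overline\theta))\,ds$ with the analogous identity for $\mu$, so that the nonlinearities reduce to multiplication by a smooth bounded factor times $\theta - \overline\theta = \zeta + (\tilde\theta - \overline\theta)$.

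For the $L^2$ estimate \eqref{L2EstimateFG}, each term in $\mathbf{F}$ or $G$ has the schematic form (coefficient)$\times$(quadratic in gradients of unknowns or $\tilde\theta$). I would partition the summands into four classes. First, cubic-in-unknowns pieces such as $\varphi\mathbf{u}\cdot\nabla\mathbf{u}$ or $\varphi\mathbf{u}\cdot\nabla\zeta$, handled by $L^6\cdot L^6\cdot L^6$ Hölder to produce $\leq \|\varphi\|_{H^1}\|\mathbf{u}\|_{H^2}\|\mathbf{u}\|_{H^2}\leq N_1^3 \leq (1+N_1)N_2^2$. Second, quadratic-in-unknowns pieces such as $\overline{\rho}\mathbf{u}\cdot\nabla\mathbf{u}$ or $\overline{\rho}\zeta\,\div\mathbf{u}$, handled by $L^\infty\cdot L^2$ using $H^2\hookrightarrow L^\infty$, giving $\leq N_1^2$. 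Third, terms carrying a factor $\tilde\theta-\overline\theta$ or its derivatives, which extract $\delta$ and are dominated by $(\sqrt{\delta})^2$ or by $\sqrt\delta\cdot N_2$. Fourth, the nonlinear pieces involving $\mathbb{S}(\mathbf{u},\tilde\theta+\zeta)-\mathbb{S}(\mathbf{u},\overline\theta)$ and $\kappa(\theta)-\kappa(\overline\theta)$, which by the identity above reduce to the factor $\zeta + (\tilde\theta - \overline\theta)$ times something bounded, falling back into the previous classes.

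For the gradient estimate \eqref{H1EstimateFG}, I would apply $\nabla$ to each term and repeat the same bookkeeping, now using the next rung of Sobolev: $\mathbf{u},\zeta\in H^3$, $\varphi\in H^2$, and the fact that $H^2$ is an algebra in three dimensions. This is precisely why the outer factor jumps from $(1+N_1)$ to $(1+N_2)$: one derivative is spent on a factor that was previously only needed in $H^1$, forcing the use of the $H^2$ norm. The one new ingredient is that second derivatives of the $\mathbb{S}$-type and $\kappa$-type terms generate products like $\kappa''(\theta)\,\nabla\theta\otimes\nabla\theta\cdot\nabla\zeta$ or $\mu'(\theta)\,\nabla\theta\cdot\nabla^2\mathbf{u}$; these are controlled by $L^\infty\cdot L^4\cdot L^4$ Hölder together with the composition lemma to keep $\kappa(\theta),\mu(\theta) \in H^2$ with bounded norm.

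The main obstacle will not be analytical but organizational: the large number of summands and the tightly-structured target shape mean one has to verify many small estimates are all of the form $\delta^a\cdot N_1^b\cdot N_2^c$ with appropriate $a,b,c$, and then check (by the elementary inequality $\delta N\lesssim\delta+N^2$ for bounded $\delta$) that each falls inside $(1+N_k)(\sqrt\delta+N_2)^2$. The only point requiring genuine care is the composition $\kappa(\tilde\theta+\zeta)$ after two differentiations, where several terms involving $\kappa',\kappa''$ evaluated at $\overline\theta+\zeta+(\tilde\theta-\overline\theta)$ must be tracked while keeping one factor of $\delta$ or $N_2$ explicit; the uniform bound on these compositions follows because $\theta$ stays in a compact neighbourhood of $\overline\theta$ under the smallness assumptions on $\delta$ and $N_1$. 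Once the bookkeeping is complete, summing all contributions yields \eqref{L2EstimateFG} and \eqref{H1EstimateFG}.
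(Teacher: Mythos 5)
Your proposal is correct and follows essentially the same route as the paper: a term-by-term application of H\"older's inequality and Sobolev embeddings ($H^{2}\hookrightarrow L^{\infty}$, $H^{1}\hookrightarrow L^{6}$), with the nonlinear coefficients $\mu(\theta),\kappa(\theta)$ reduced to a bounded factor times $\zeta+(\tilde{\theta}-\overline{\theta})$, and the extra derivative in the $H^{1}$ bound absorbed by upgrading the factor $(1+N_{1})$ to $(1+N_{2})$. Your four-class organization of the summands is only a presentational refinement of the paper's bookkeeping, not a different argument.
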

\begin{proof}
For $\|\mathbf{F}\|_{L^{2}(\Omega)}$, by H\"{o}lder's inequality and Sobolev's inequality,
  \begin{align}\label{L2EstimateF}
    \|\mathbf{F}\|_{L^{2}(\Omega)}\leq&C_{1}(\|\varphi\mathbf{u}\cdot\nabla\mathbf{u}\|_{L^{2}(\Omega)}+\|\overline{\rho}\mathbf{u}\cdot\nabla\mathbf{u}\|_{L^{2}(\Omega)}+\|\varphi\nabla\zeta\|_{L^{2}(\Omega)}+\|\varphi\nabla\tilde{\theta}\|_{L^{2}(\Omega)}+\|\overline{\rho}\nabla\tilde{\theta}\|_{L^{2}(\Omega)} \nonumber\\
    &+\|\zeta\nabla\varphi\|_{L^{2}(\Omega)}+\|(\tilde{\theta}-\overline{\theta})\nabla\varphi\|_{L^{2}(\Omega)}+\|\div(\mathbb{S}(\mathbf{u},\tilde{\theta}+\zeta)-\mathbb{S}(\mathbf{u},\overline{\theta}))\|_{L^{2}(\Omega)}) \nonumber\\
    \leq&C_{1}(\|\mathbf{u}\|_{L^{\infty}(\Omega)}\|\varphi\|_{L^{4}(\Omega)}\|\nabla\mathbf{u}\|_{L^{4}(\Omega)}+\|\mathbf{u}\|_{L^{\infty}(\Omega)}\|\nabla\mathbf{u}\|_{L^{2}(\Omega)}+\|\varphi\|_{L^{4}(\Omega)}\|\nabla\zeta\|_{L^{4}(\Omega)} \nonumber\\
    &+\varepsilon\|\varphi\|_{L^{2}(\Omega)}+\varepsilon+\|\zeta\|_{L^{\infty}(\Omega)}\|\nabla\varphi\|_{L^{2}(\Omega)}+\varepsilon\|\nabla\varphi\|_{L^{2}(\Omega)}+\|\nabla\zeta\|_{L^{4}(\Omega)}\|\nabla\mathbf{u}\|_{L^{4}(\Omega)} \nonumber\\
    &+\|\zeta\|_{L^{\infty}(\Omega)}\|\nabla\div\mathbf{u}\|_{L^{2}(\Omega)}) \nonumber\\
    \leq&C_{1}(1+N_{1}(\varphi,\mathbf{u},\zeta))(\sqrt{\delta}+N_{2}(\varphi,\mathbf{u},\zeta))^{2},
  \end{align}
  where $C_{1}>0$ is a constant only depending on $R, \overline{\rho}, \mu, \overline{\theta}$.
 
 For $\|G\|_{L^{2}(\Omega)}$, by H\"{o}lder's inequality and Sobolev's inequality,
  \begin{align}\label{L2EstimateG}
    \|G\|_{L^{2}(\Omega)}\leq&C_{2}(\|\varphi\mathbf{u}\cdot\nabla\zeta\|_{L^{2}(\Omega)}+\|\overline{\rho}\mathbf{u}\cdot\nabla\zeta\|_{L^{2}(\Omega)}+\|\varphi\mathbf{u}\cdot\nabla\tilde{\theta}\|_{L^{2}(\Omega)}+\|\overline{\rho}\mathbf{u}\cdot\nabla\tilde{\theta}\|_{L^{2}(\Omega)}+\|\varphi\zeta\div\mathbf{u}\|_{L^{2}(\Omega)} \nonumber\\
    &+\|\overline{\rho}\zeta\div\mathbf{u}\|_{L^{2}(\Omega)}+\|\varphi\tilde{\theta}\div\mathbf{u}\|_{L^{2}(\Omega)}+\|(\tilde{\theta}-\overline{\theta})\nabla\varphi\|_{L^{2}(\Omega)}+\|\div(\kappa(\theta)-\kappa(\overline{\theta}))\nabla\zeta)\|_{L^{2}(\Omega)} \nonumber\\
    &+\|\div(\kappa(\theta)\nabla\tilde{\theta})\|_{L^{2}(\Omega)}+\|\nabla\mathbf{u}:\mu(\theta)\mathbb{S}(\mathbf{u})\|_{L^{2}(\Omega)}) \nonumber\\
    \leq&C_{2}(\|\mathbf{u}\|_{L^{\infty}(\Omega)}\|\varphi\|_{L^{4}(\Omega)}\|\nabla\zeta\|_{L^{4}(\Omega)}+\|\mathbf{u}\|_{H^{2}(\Omega)}\|\nabla\zeta\|_{L^{2}(\Omega)}+\varepsilon\|\varphi\|_{L^{4}(\Omega)}\|\mathbf{u}\|_{L^{4}(\Omega)} \nonumber\\
    &+\varepsilon\|\mathbf{u}\|_{L^{2}(\Omega)}+\|\varphi\|_{L^{4}(\Omega)}\|\zeta\|_{L^{\infty}(\Omega)}\|\div\mathbf{u}\|_{L^{4}(\Omega)}+\|\zeta\|_{H^{2}(\Omega)}\|\div\mathbf{u}\|_{L^{2}(\Omega)} \nonumber\\
    &+\|\varphi\|_{L^{4}(\Omega)}\|\div\mathbf{u}\|_{L^{4}(\Omega)}+\varepsilon\|\nabla\varphi\|_{L^{2}(\Omega)}+\|\nabla\zeta\|_{L^{4}(\Omega)}\|\nabla\zeta\|_{L^{4}(\Omega)}+\|\zeta\|_{H^{2}(\Omega)}\|\div\nabla\zeta\|_{L^{2}(\Omega)} \nonumber\\
    &+\varepsilon\|\nabla\zeta\|_{L^{2}(\Omega)}+\|\zeta\|_{H^{2}(\Omega)}\|\mathbf{u}\|_{H^{2}(\Omega)}\|\nabla\mathbf{u}\|_{L^{2}(\Omega)}+\varepsilon\|\mathbf{u}\|_{H^{2}(\Omega)}\|\nabla\mathbf{u}\|_{L^{2}(\Omega)}) \nonumber\\
    \leq&C_{2}(1+N_{1}(\varphi,\mathbf{u},\zeta))(\sqrt{\delta}+N_{2}(\varphi,\mathbf{u},\zeta))^{2},
  \end{align}
  where $C_{2}>0$ is a constant only depending on $c_{v}, R, \mu, \kappa, \overline{\rho}, \overline{\theta}$. 
  
  Combining \eqref{L2EstimateF} and \eqref{L2EstimateG}, we obtain \eqref{L2EstimateFG}.

  For $\|\nabla\mathbf{F}\|_{L^{2}(\Omega)}$, by H\"{o}lder's inequality and Sobolev's inequality,
  \begin{align}\label{H1EstimateF}
    \|\nabla\mathbf{F}\|_{L^{2}(\Omega)}\leq&C_{3}(\|\nabla\varphi\cdot\mathbf{u}\cdot\nabla\mathbf{u}\|_{L^{2}(\Omega)}+\|\varphi\nabla\mathbf{u}\cdot\nabla\mathbf{u}\|_{L^{2}(\Omega)}+\|\varphi\mathbf{u}\cdot\nabla^{2}\mathbf{u}\|_{L^{2}(\Omega)}+\|\overline{\rho}\nabla\mathbf{u}\cdot\nabla\mathbf{u}\|_{L^{2}(\Omega)} \nonumber\\
    &+\|\overline{\rho}\mathbf{u}\cdot\nabla^{2}\mathbf{u}\|_{L^{2}(\Omega)}+\|\nabla\varphi\cdot\nabla\zeta\|_{L^{2}(\Omega)}+\|\varphi\nabla^{2}\zeta\|_{L^{2}(\Omega)}+\|\nabla\varphi\cdot\nabla\tilde{\theta}\|_{L^{2}(\Omega)}+\|\overline{\rho}\nabla^{2}\tilde{\theta}\|_{L^{2}(\Omega)} \nonumber\\
    &+\|\nabla\zeta\cdot\nabla\varphi\|_{L^{2}(\Omega)}+\|\zeta\nabla^{2}\varphi\|_{L^{2}(\Omega)}+\|\nabla(\tilde{\theta}-\overline{\theta})\cdot\nabla\varphi\|_{L^{2}(\Omega)}+\|(\tilde{\theta}-\overline{\theta})\nabla^{2}\varphi\|_{L^{2}(\Omega)} \nonumber\\
    &+\|\nabla\div(\mathbb{S}(\mathbf{u},\tilde{\theta}+\zeta)-\mathbb{S}(\mathbf{u},\overline{\theta}))\|_{L^{2}(\Omega)}) \nonumber\\
    \leq&C_{3}(\|\mathbf{u}\|_{L^{\infty}(\Omega)}\|\nabla\mathbf{u}\|_{L^{\infty}(\Omega)}\|\nabla\varphi\|_{L^{2}(\Omega)}+\|\varphi\|_{L^{6}(\Omega)}\|\nabla\mathbf{u}\|_{L^{6}(\Omega)}\|\nabla\mathbf{u}\|_{L^{6}(\Omega)} \nonumber\\
    &+\|\varphi\|_{L^{\infty}(\Omega)}\|\mathbf{u}\|_{L^{\infty}(\Omega)}\|\nabla^{2}\mathbf{u}\|_{L^{2}(\Omega)}+\|\nabla\mathbf{u}\|_{L^{4}(\Omega)}\|\nabla\mathbf{u}\|_{L^{4}(\Omega)}+\|\mathbf{u}\|_{L^{\infty}(\Omega)}\|\nabla^{2}\mathbf{u}\|_{L^{2}(\Omega)} \nonumber\\
    &+\|\nabla\zeta\|_{L^{4}(\Omega)}\|\nabla\varphi\|_{L^{4}(\Omega)}+\|\varphi\|_{L^{\infty}(\Omega)}\|\nabla^{2}\zeta\|_{L^{2}(\Omega)}+\varepsilon\|\nabla\varphi\|_{L^{2}(\Omega)}+\varepsilon+\|\nabla\zeta\|_{L^{4}(\Omega)}\|\nabla\varphi\|_{L^{4}(\Omega)} \nonumber\\
    &+\|\zeta\|_{L^{\infty}}\|\nabla^{2}\varphi\|_{L^{2}(\Omega)}+\varepsilon\|\nabla\varphi\|_{L^{2}(\Omega)}+\varepsilon\|\nabla^{2}\varphi\|_{L^{2}(\Omega)}+\|\nabla\mathbf{u}\|_{L^{\infty}(\Omega)}\|\nabla^{2}\zeta\|_{L^{2}(\Omega)} \nonumber\\
    &+\|\nabla\zeta\|_{H^{2}(\Omega)}\|\nabla^{2}\mathbf{u}\|_{L^{2}(\Omega)}+\|\nabla\zeta\|_{H^{2}(\Omega)}\|\nabla\div\mathbf{u}\|_{L^{2}(\Omega)}+\|\zeta\|_{H^{2}(\Omega)}\|\nabla^{2}\div\mathbf{u}\|_{L^{2}(\Omega)}) \nonumber\\
    \leq&C_{3}(1+N_{2}(\varphi,\mathbf{u},\zeta))(\sqrt{\delta}+N_{2}(\varphi,\mathbf{u},\zeta))^{2},
  \end{align}
  where $C_{3}>0$ is a constant only depending on $R, \mu, \overline{\rho}, \overline{\theta}$.
  
  For $\|\nabla G\|_{L^{2}(\Omega)}$, by H\"{o}lder's inequality and Sobolev's inequality,
  \begin{align}\label{H1EstimateG}
    \|\nabla G\|_{L^{2}(\Omega)}\leq&C_{4}(\|\nabla\varphi\cdot\mathbf{u}\cdot\nabla\zeta\|_{L^{2}(\Omega)}+\|\varphi\nabla\mathbf{u}\cdot\nabla\zeta\|_{L^{2}(\Omega)}+\|\varphi\mathbf{u}\cdot\nabla^{2}\zeta\|_{L^{2}(\Omega)}+\|\overline{\rho}\nabla\mathbf{u}\cdot\nabla\zeta\|_{L^{2}(\Omega)} \nonumber\\
    &+\|\overline{\rho}\mathbf{u}\cdot\nabla^{2}\zeta\|_{L^{2}(\Omega)}+\|\nabla\varphi\cdot\mathbf{u}\cdot\nabla\tilde{\theta}\|_{L^{2}(\Omega)}+\|\varphi\nabla\mathbf{u}\cdot\nabla\tilde{\theta}\|_{L^{2}(\Omega)}+\|\varphi\mathbf{u}\cdot\nabla^{2}\tilde{\theta}\|_{L^{2}(\Omega)} \nonumber\\
    &+\|\overline{\rho}\nabla\mathbf{u}\cdot\nabla\tilde{\theta}\|_{L^{2}(\Omega)}+\|\overline{\rho}\mathbf{u}\cdot\nabla^{2}\tilde{\theta}\|_{L^{2}(\Omega)}+\|\nabla\varphi\cdot\zeta\div\mathbf{u}\|_{L^{2}(\Omega)}+\|\varphi\nabla\zeta\div\mathbf{u}\|_{L^{2}(\Omega)} \nonumber\\
    &+\|\varphi\zeta\nabla\div\mathbf{u}\|_{L^{2}(\Omega)}+\|\overline{\rho}\nabla\zeta\div\mathbf{u}\|_{L^{2}(\Omega)}+\|\overline{\rho}\zeta\nabla\div\mathbf{u}\|_{L^{2}(\Omega)}+\|\nabla\varphi\cdot\tilde{\theta}\div\mathbf{u}\|_{L^{2}(\Omega)} \nonumber\\
    &+\|\varphi\tilde{\theta}\nabla\div\mathbf{u}\|_{L^{2}(\Omega)}+\|\overline{\rho}(\tilde{\theta}-\overline{\theta})\nabla\div\mathbf{u}\|_{L^{2}(\Omega)}+\|\nabla\div([\kappa(\theta)-\kappa(\tilde{\theta})]\nabla\zeta)\|_{L^{2}(\Omega)} \nonumber\\
    &+\|\nabla\div(\kappa(\theta)\nabla\tilde{\theta})\|_{L^{2}(\Omega)}+\|\nabla^{2}\mathbf{u}:\nabla\mathbb{S}(\mathbf{u},\tilde{\theta}+\zeta)\|_{L^{2}(\Omega)}+\|\nabla\mathbf{u}:\nabla\mathbb{S}(\mathbf{u},\tilde{\theta}+\zeta)\|_{L^{2}(\Omega)}) \nonumber\\
    \leq&C_{4}(\|\mathbf{u}\|_{L^{\infty}}\|\nabla\zeta\|_{L^{\infty}(\Omega)}\|\nabla\varphi\|_{L^{2}(\Omega)}+\|\varphi\|_{L^{6}(\Omega)}\|\nabla\mathbf{u}\|_{L^{6}(\Omega)}\|\nabla\zeta\|_{L^{6}(\Omega)} \nonumber\\
    &+\|\varphi\|_{L^{\infty}(\Omega)}\|\mathbf{u}\|_{L^{\infty}(\Omega)}\|\nabla^{2}\zeta\|_{L^{2}(\Omega)}+\|\nabla\mathbf{u}\|_{L^{4}(\Omega)}\|\nabla\zeta\|_{L^{4}(\Omega)}+\|\mathbf{u}\|_{H^{2}(\Omega)}\|\nabla^{2}\zeta\|_{L^{2}(\Omega)} \nonumber\\
    &+\varepsilon\|\mathbf{u}\|_{H^{2}(\Omega)}\|\nabla\varphi\|_{L^{2}(\Omega)}+\varepsilon\|\varphi\|_{L^{4}(\Omega)}\|\nabla\mathbf{u}\|_{L^{4}(\Omega)}+\varepsilon\|\mathbf{u}\|_{L^{\infty}(\Omega)}\|\varphi\|_{L^{2}(\Omega)}+\varepsilon\|\nabla\mathbf{u}\|_{L^{2}(\Omega)} \nonumber\\
    &+\varepsilon\|\mathbf{u}\|_{L^{2}(\Omega)}+\|\zeta\|_{L^{\infty}(\Omega)}\|\div\mathbf{u}\|_{L^{4}(\Omega)}\|\nabla\varphi\|_{L^{4}(\Omega)}+\|\varphi\|_{L^{6}(\Omega)}\|\nabla\zeta\|_{L^{6}(\Omega)}\|\div\mathbf{u}\|_{L^{6}(\Omega)} \nonumber\\
    &+\|\zeta\|_{L^{\infty}(\Omega)}\|\varphi\|_{L^{\infty}(\Omega)}\|\nabla\div\mathbf{u}\|_{L^{2}(\Omega)}+\|\nabla\zeta\|_{L^{4}(\Omega)}\|\div\mathbf{u}\|_{L^{4}(\Omega)}+\|\zeta\|_{L^{\infty}(\Omega)}\|\nabla\div\mathbf{u}\|_{L^{2}(\Omega)} \nonumber\\
    &+\|\div\mathbf{u}\|_{L^{\infty}(\Omega)}\|\nabla\varphi\|_{L^{2}(\Omega)}+\|\varphi\|_{L^{\infty}(\Omega)}\|\nabla\div\mathbf{u}\|_{L^{2}(\Omega)}+\varepsilon\|\nabla\div\mathbf{u}\|_{L^{2}(\Omega)} \nonumber\\
    &+\|\nabla\zeta\|_{L^{\infty}(\Omega)}\|\nabla^{2}\zeta\|_{L^{2}(\Omega)}+\|\nabla\zeta\|_{L^{\infty}(\Omega)}\|\Delta\zeta\|_{L^{2}(\Omega)}+\|\zeta\|_{L^{\infty}}\|\nabla\Delta\zeta\|_{L^{2}(\Omega)}+\varepsilon\|\nabla^{2}\zeta\|_{L^{2}(\Omega)} \nonumber\\
    &+\varepsilon+\|\nabla\mathbf{u}\|_{L^{\infty}(\Omega)}\|\nabla^{2}\mathbf{u}\|_{L^{2}(\Omega)}+\|\nabla\zeta\|_{L^{6}(\Omega)}\|\nabla\mathbf{u}\|_{L^{6}(\Omega)}\|\nabla\mathbf{u}\|_{L^{6}(\Omega)} \nonumber\\
    \leq&C_{4}(1+N_{2}(\varphi,\mathbf{u},\zeta))(\sqrt{\delta}+N_{2}(\varphi,\mathbf{u},\zeta))^{2},
  \end{align}
  where $C_{4}>0$ is a constant only depending on $c_{v}, R, \mu, \kappa, \overline{\rho}, \overline{\theta}$.

  Combining \eqref{H1EstimateF} and \eqref{H1EstimateG}, we obtain \eqref{H1EstimateFG} and thus complete the proof of Lemma \ref{EstimateFGLemma}.
\end{proof}

\subsection{Variational formulation of linearized problem}
To solve the above boundary value problem \eqref{PSNS} and \eqref{PBC}, we consider the following linear problem. Let $h>0$ be an arbitrary fixed constant and $(\varphi',\mathbf{u}',\zeta')\in (H^{2}(\Omega)\cap L_{0}^{2}(\Omega))\times V^{3}(\Omega)\times H^{3}(\Omega)$ with $N_{2}(\varphi',\mathbf{u}',\zeta')\leq\varepsilon$ where $\varepsilon>0$ is chosen to be small such that $\|\varphi'\|_{L^{\infty}(\Omega)}+\overline{\rho}>0$ and $\|\zeta'\|_{L^{\infty}(\Omega)}+\overline{\theta}>0$, then we are going to prove that there exists a unique
solution $(\varphi,\mathbf{u},\zeta)\in (L_{0}^{2}(\Omega)\cap H^{2}(\Omega))\times V^{3}(\Omega)\times H^{3}(\Omega)$ to the following boundary value problem:
\begin{equation}\label{LSNS}
    \left\{
    \begin{aligned}
        &\frac{\varphi-\varphi'}{h}+\div(\varphi\mathbf{u})+\overline{\rho}\div\mathbf{u}=0, \\
        &-\div\mathbb{S}(\mathbf{u},\overline{\theta})+R\overline{\rho}\nabla\zeta =\mathbf{F}(\varphi',\mathbf{u}',\zeta')-R\overline{\theta}\nabla\varphi', \\
        &-\div(\kappa(\overline{\theta})\nabla\zeta)+(R\overline{\rho}\overline{\theta}+\mathcal{R}\overline{\rho})\div\mathbf{u} =G(\varphi',\mathbf{u}',\zeta')-\mathcal{R}\div(\varphi'\mathbf{u}'),
    \end{aligned}
    \right.
\end{equation}
for $x\in\Omega$, with the boundary conditions
\begin{equation}\label{LBC}
    \left\{
    \begin{aligned}
        &\mathbf{u}\cdot\mathbf{n}=0, \\
        &(\varphi'+\overline{\rho}) \mathbf{u}\cdot \mathbf{t}+\sqrt{\frac{2}{R}}a_{\mathbf{u}}^{I}\frac{\mu(\theta_{w})}{\sqrt{\theta_{w}}}(\nabla\mathbf{u}+\nabla\mathbf{u}^{\mathsf{T}}):\mathbf{n}\otimes\mathbf{t}-\frac{4}{5R}a_{\theta}^{I}\frac{\kappa(\theta_{w})}{\theta_{w}}\nabla(\zeta+\tilde{\theta})\cdot\mathbf{t}=0, \\
        &(\varphi'+\overline{\rho})\zeta-\frac{1}{R}a_{\mathbf{u}}^{II}\mu(\theta_{w})\nabla\mathbf{u}:\mathbf{n}\otimes\mathbf{n}+\frac{2}{5R}\sqrt{\frac{2}{R}}a_{\theta}^{II}\frac{\kappa(\theta_{w})}{\sqrt{\theta_{w}}}\nabla(\zeta+\tilde{\theta})\cdot\mathbf{n}=0,
    \end{aligned}
    \right.
\end{equation}
for $x\in\partial\Omega$, where $\mathcal{R}\in\mathbb{R}$ is also a constant satisfying
\begin{equation}\label{def.crad}
    R\overline{\rho}\sqrt{\frac{2}{R}}\frac{2}{3R}a_{\mathbf{u}}^{I}a_{\mathbf{u}}^{II}\frac{\mu(\overline{\theta})^{2}}{\sqrt{\overline{\theta}}}=R(\overline{\rho}\overline{\theta}+\mathcal{R}\overline{\rho})\frac{32}{75R^2}\sqrt{\frac{2}{R}}a_{\theta}^{I}a_{\theta}^{II}\frac{\mu(\overline{\theta})\kappa(\overline{\theta})}{\overline{\theta}\sqrt{\overline{\theta}}}.
\end{equation}
The choice of $\mathcal{R}$ will be explained after the proof of Theorem \ref{basicEstimateEllipticSystem}, see Remark \ref{remarkHyperbolicEffect}.

Now, we define the solution operator of the linear problem \eqref{LSNS} and \eqref{LBC}.
\begin{definition}
    Given $\overline{\rho}>0$, $\overline{\theta}>0$, $h>0$ be fixed constants. Let the wall temperature $\theta_{w}\in H^{\vartheta}(\partial\Omega)$ where $\vartheta>\frac{7}{2}$, and $(\varphi',\mathbf{u}',\zeta')\in (H^{2}(\Omega)\cap L_{0}^{2}(\Omega))\times V^{3}(\Omega)\times H^{3}(\Omega)$. If the boundary value problem \eqref{LSNS} and \eqref{LBC} admits a unique solution $(\varphi,\mathbf{u},\zeta)\in (L_{0}^{2}(\Omega)\cap H^{2}(\Omega))\times V^{3}(\Omega)\times H^{3}(\Omega)$, then we define the solution operator $\mathds{T}$ by $\mathds{T}(\varphi',\mathbf{u}',\zeta')=(\varphi,\mathbf{u},\zeta)$. 
\end{definition}

In the following, we will prove that $\mathds{T}$ is well-defined.

In order to consider the existence result for the boundary value problem $\eqref{LSNS}_{2}$ and $\eqref{LSNS}_{3}$ with slip boundary conditions \eqref{LBC}, we first give the definition of solutions in the variational form.

\begin{definition}\label{definitionWeakSolutionEllipticSystem}
We say that $(\mathbf{u},\xi)\in V(\Omega)\times H^{1}(\Omega)$ is a weak solution to the equations $\eqref{LSNS}_{2}$ and $\eqref{LSNS}_{3}$ with the slip boundary conditions \eqref{LBC} if 
\begin{align}\label{weakFormEllipticSystem}
    &\int_{\Omega}\sqrt{\frac{2}{R}}\frac{2}{3R}a_{\mathbf{u}}^{I}a_{\mathbf{u}}^{II}\frac{\mu(\tilde{\theta})^{2}}{\sqrt{\tilde{\theta}}}\nabla\mathbf{v}:\left(\nabla\mathbf{u}+\nabla\mathbf{u}^{\mathsf{T}}-\frac{2}{3}\div\mathbf{u}\mathbb{I}_{3}\right)d\mathbf{x} +\int_{\Omega}\frac{32}{75R^2}\sqrt{\frac{2}{R}}a_{\theta}^{I}a_{\theta}^{II}\frac{\kappa(\tilde{\theta})^{2}}{\tilde{\theta}\sqrt{\tilde{\theta}}}\nabla\xi\cdot\nabla\zeta d\mathbf{x} \nonumber\\
    &+\int_{\Omega}-\frac{4}{5R^2}a_{\mathbf{u}}^{II}a_{\theta}^I\frac{\mu(\tilde{\theta})\kappa(\tilde{\theta})}{\mu(\overline{\theta})\tilde{\theta}}\nabla\xi\cdot\mathfrak{n}\left(\nabla u_{1}+\partial_{1}\mathbf{u}^{\mathsf{T}}-\frac{2}{3}\mathbf{e}_{1}\div\mathbf{u}\right)d\mathbf{x} \nonumber \\
    &+\int_{\Omega}\frac{2}{3R}\sqrt{\frac{2}{R}}a_{\mathbf{u}}^{I}a_{\mathbf{u}}^{II}\nabla\left(\frac{\mu(\tilde{\theta})^{2}}{\sqrt{\tilde{\theta}}}\right)\otimes\mathbf{v}:\left(\nabla\mathbf{u}+\nabla\mathbf{u}^{\mathsf{T}}-\frac{2}{3}\div\mathbf{u}\mathbb{I}_{3}\right)d\mathbf{x} \nonumber\\
    &+\int_{\Omega}\frac{32}{75R^2}\sqrt{\frac{2}{R}}a_{\theta}^{I}a_{\theta}^{II}\nabla\left(\frac{\kappa(\tilde{\theta})\kappa(\tilde{\theta})}{\tilde{\theta}\sqrt{\tilde{\theta}}}\right)\cdot\xi\nabla\zeta d\mathbf{x} \nonumber\\
    &+\int_{\Omega}-\frac{4}{5R^2}a_{\mathbf{u}}^{II}a_{\theta}^I\nabla\left(\frac{\mu(\tilde{\theta})\kappa(\tilde{\theta})}{\mu(\overline{\theta})\tilde{\theta}}\right)\cdot\xi\mathfrak{n}\left(\nabla u_{1}+\partial_{1}\mathbf{u}^{\mathsf{T}}-\frac{2}{3}\mathbf{e}_{1}\div\mathbf{u}\right)d\mathbf{x} \nonumber\\
    &+\int_{\Omega}R\overline{\rho}\sqrt{\frac{2}{R}}\frac{2}{3R}a_{\mathbf{u}}^{I}a_{\mathbf{u}}^{II}\frac{\mu(\tilde{\theta})^{2}}{\mu(\overline{\theta})\sqrt{\tilde{\theta}}}\mathbf{v}\cdot\nabla\zeta d\mathbf{x} +\int_{\Omega}\frac{32}{75R^2}\sqrt{\frac{2}{R}}a_{\theta}^{I}a_{\theta}^{II}\frac{\kappa(\tilde{\theta})^{2}}{\kappa(\overline{\theta})\tilde{\theta}\sqrt{\tilde{\theta}}}(R\overline{\rho}\overline{\theta}+\mathcal{R}\overline{\rho})\xi\div\mathbf{u}d\mathbf{x} \nonumber\\
    &+\int_{\Omega}-\frac{4}{5R}\overline{\rho}a_{\mathbf{u}}^{II}a_{\theta}^I\frac{\mu(\tilde{\theta})\kappa(\tilde{\theta})}{\mu(\overline{\theta})\tilde{\theta}}\mathfrak{n}\xi\partial_{1}\zeta d\mathbf{x} \nonumber\\
    &+\sum_{p=0,1}\int_{\mathbb{T}_{p}^{2}}\frac{2}{3R}a_{\mathbf{u}}^{II}\mu(\theta_{w})(\varphi'+\overline{\rho})\mathbf{v}\cdot\mathbf{u}dx_{2}dx_{3} +\sum_{p=0,1}\int_{\mathbb{T}_{p}^{2}}\frac{16}{15R}a_{\theta}^{I}\frac{\kappa(\theta_{w})}{\theta_{w}}(\varphi'+\overline{\rho})\xi\zeta dx_{2}dx_{3} \nonumber \\
    &+\sum_{p=0,1}\int_{\mathbb{T}_{p}^{2}}\frac{8}{15R^2}a_{\mathbf{u}}^{II}a_{\theta}^{I}\frac{\mu(\theta_{w})\kappa(\theta_{w})}{\theta_{w}}\mathbf{v}\cdot\nabla\tilde{\theta}dx_{2}dx_{3} +\sum_{p=0,1}\int_{\mathbb{T}_{p}^{2}}\frac{32}{75R^2}a_{\theta}^{I}a_{\theta}^{II}\frac{\kappa(\theta_{w})^2}{\theta_{w}\sqrt{\theta_{w}}}\mathfrak{n}\xi\partial_{1}\tilde{\theta}dx_{2}dx_{3} \nonumber \\
    &+\sum_{p=0,1}\sum_{j=2,3}\int_{\mathbb{T}_{p}^{2}}\frac{8}{15R^{2}}a_{\mathbf{u}}^{II}a_{\theta}^{I}\left(\partial_{j}^{\frac{1}{2}}\left[\frac{\mu(\theta_{w})\kappa(\theta_{w})}{\theta_{w}}u_{j}\right]\cdot\partial_{j}^{\frac{1}{2}}\xi-\partial_{j}^{\frac{1}{2}}\left[\frac{\mu(\theta_{w})\kappa(\theta_{w})}{\theta_{w}}v_{j}\right]\cdot\partial_{j}^{\frac{1}{2}}\zeta\right) dx_{2}dx_{3} \nonumber\\
    = 
    &\int_{\Omega}\sqrt{\frac{2}{R}}\frac{2}{3R}a_{\mathbf{u}}^{I}a_{\mathbf{u}}^{II}\frac{\mu(\tilde{\theta})^{2}}{\mu(\overline{\theta})\sqrt{\tilde{\theta}}}\mathbf{v}\cdot\mathbf{F}(\varphi',\mathbf{u}',\zeta')d\mathbf{x}+\int_{\Omega}-R\tilde{\theta}\sqrt{\frac{2}{R}}\frac{2}{3R}a_{\mathbf{u}}^{I}a_{\mathbf{u}}^{II}\frac{\mu(\tilde{\theta})^{2}}{\mu(\overline{\theta})\sqrt{\tilde{\theta}}}\mathbf{v}\cdot\nabla\varphi'd\mathbf{x} \nonumber \\
    &+\int_{\Omega}\frac{32}{75R^2}\sqrt{\frac{2}{R}}a_{\theta}^{I}a_{\theta}^{II}\frac{\kappa(\tilde{\theta})^{2}}{\kappa(\overline{\theta})\tilde{\theta}\sqrt{\tilde{\theta}}}\xi G(\varphi',\mathbf{u}',\zeta')d\mathbf{x}+\int_{\Omega}-\frac{32}{75R^2}\sqrt{\frac{2}{R}}a_{\theta}^{I}a_{\theta}^{II}\frac{\kappa(\tilde{\theta})^{2}}{\kappa(\overline{\theta})\tilde{\theta}\sqrt{\tilde{\theta}}}\mathcal{R}\xi\div(\varphi'\mathbf{u}')d\mathbf{x} \nonumber \\
    &+\int_{\Omega}-\frac{4}{5R^2}a_{\mathbf{u}}^{II}a_{\theta}^I\frac{\mu(\tilde{\theta})\kappa(\tilde{\theta})}{\mu(\overline{\theta})\tilde{\theta}}\mathfrak{n}\xi F_{1}(\varphi',\mathbf{u}',\zeta')d\mathbf{x}+\int_{\Omega}\frac{4}{5R}\overline{\theta}a_{\mathbf{u}}^{II}a_{\theta}^I\frac{\mu(\tilde{\theta})\kappa(\tilde{\theta})}{\mu(\overline{\theta})\tilde{\theta}}\mathfrak{n}\xi\partial_{1}\varphi' d\mathbf{x},
\end{align}
for all $(\mathbf{v},\xi)\in V^{1}(\Omega)\times H^{1}(\Omega)$. Here, $\mathfrak{n}(\mathbf{x})=2x_{1}-1$, $\partial_{j}^{\frac{1}{2}}$ in the tenth term is in the sense of Fourier transform, 
\begin{align*}
    &\sum_{p=0,1}\sum_{j=2,3}\int_{\mathbb{T}_{p}^{2}}\frac{8}{15R^{2}}a_{\mathbf{u}}^{II}a_{\theta}^{I}\left(\partial_{j}^{\frac{1}{2}}\left[\frac{\mu(\theta_{w})\kappa(\theta_{w})}{\theta_{w}}u_{j}\right]\cdot\partial_{j}^{\frac{1}{2}}\xi-\partial_{j}^{\frac{1}{2}}\left(\frac{\mu(\theta_{w})\kappa(\theta_{w})}{\theta_{w}}v_{j}\right)\cdot\partial_{j}^{\frac{1}{2}}\zeta\right) dx_{2}dx_{3} \nonumber\\
    =&\sum_{p=0,1}\sum_{j=2,3}\sum_{m_{j}\in\mathbb{Z}}\frac{16\pi i}{15R^{2}}a_{\mathbf{u}}^{II}a_{\theta}^{I}m_{j}\left(\left[\frac{\mu(\theta_{w})\kappa(\theta_{w})}{\theta_{w}}u_{j}\right]^{\wedge}(m_{j})\cdot[\xi]^{\wedge}(m_{j})-\left[\frac{\mu(\theta_{w})\kappa(\theta_{w})}{\theta_{w}}v_{j}\right]^{\wedge}(m_{j})\cdot[\zeta]^{\wedge}(m_{j})\right), 
\end{align*}
where we denote the Fourier transform,
\begin{equation*}
    [f]^{\wedge}(m_{j}):=\int_{\mathbb{T}}f(x_{j})e^{-2\pi im_{j}x_{j}}dx_{j},
\end{equation*}
for $j=2,3$.
\end{definition}

\begin{remark}
  Here, the reason for introducing fractional derivatives is to ensure that the weak solution is well-defined in the space $V^{1}(\Omega)\times H^{1}(\Omega)$. Indeed, from our construction of the variational form, it is expected that the following boundary term would appear:
  \begin{equation}\label{badTerm3}
    \sum_{p=0,1}\int_{\mathbb{T}_{p}^{2}}\frac{8}{15R^{2}}a_{\mathbf{u}}^{II}a_{\theta}^{I}\frac{\mu(\theta_{w})\kappa(\theta_{w})}{\theta_{w}}(\mathbf{u}\cdot\nabla\xi-\mathbf{v}\cdot\nabla\zeta)dx_{2}dx_{3},
  \end{equation}
  in the weak form of the boundary value problem $\eqref{LSNS}_{2,3}$ and \eqref{LBC}. However, by the Sobolev embedding theorem, functions in the space $H^{1}(\Omega)$ only have traces in $H^{\frac{1}{2}}(\Omega)$, so that \eqref{badTerm3} is not well-defined in this framework. Fortunately, the normal component of $\mathbf{u}$ vanishes on the boundary, which makes it possible to control \eqref{badTerm3} in $H^{\frac{1}{2}}(\partial\Omega)$. This kind of difficulty does not appear in the previous studies of boundary value problems in the field of compressible fluids, and it does show that the slip boundary conditions \eqref{BC} behave differently from those ``classic'' boundary conditions.
\end{remark}

\subsection{Existence results for $\mathbf{u}$ and $\zeta$}

To prove the existence of the weak solution, it suffices to prove the following result.

\begin{lemma}\label{coervivityEllipticSystem}
    Under the assumption of Theorem \ref{mainTheorem}, suppose $(u,\zeta)\in V^{1}(\Omega)\times H^{1}(\Omega)$ is a weak solution in the sense of \eqref{weakFormEllipticSystem}, then there exist two constants $\delta_{0}, \varepsilon_{0}>0$ such that for $\delta\in[0,\delta_{0}), \varepsilon\in[0,\varepsilon_{0})$, if the wall temperature $\theta_{w}\in H^{\vartheta}(\partial\Omega)$ with $\|\theta_{w}-\overline{\theta}\|_{H^{\vartheta}(\partial\Omega)}\leq \delta$ where $\vartheta>\frac{7}{2}$, and $(\varphi',\mathbf{u}',\zeta')\in (H^{2}(\Omega)\cap L_{0}^{2}(\Omega))\times V^{3}(\Omega)\times H^{3}(\Omega)$ with $N_{2}(\varphi',\mathbf{u}',\zeta')\leq\varepsilon$ such that $\|\varphi'\|_{L^{\infty}(\Omega)}+\overline{\rho}>0$ and $\|\zeta'\|_{L^{\infty}(\Omega)}+\overline{\theta}>0$, then there exists a constant $C>0$ depends only on $a_{\mathbf{u}}^{I}, a_{\mathbf{u}}^{II}, a_{\theta}^{I}, a_{\theta}^{II}, c_{v}, R, \mu, \kappa, \overline{\rho},\overline{\theta}$ such that
    \begin{equation}\label{basicEstimateEllipticSystem}
        \|\mathbf{u}\|_{H^1(\Omega)}^2 +\|\zeta\|_{H^1(\Omega)}^2\leq CN_{1}(\varphi',\mathbf{u}',\zeta')^{2}(1+N_{2}(\varphi',\mathbf{u}',\zeta'))^{4}.
    \end{equation}
\end{lemma}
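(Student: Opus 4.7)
The plan is to test the variational identity \eqref{weakFormEllipticSystem} against $(\mathbf{v},\xi)=(\mathbf{u},\zeta)$ itself and show that, up to errors of order $\delta+\varepsilon$, the resulting left-hand side is a quadratic form whose coercivity is guaranteed by assumption (A). First I would freeze the coefficients at $\overline{\theta}$ and $\overline{\rho}$: since $\|\theta_w-\overline{\theta}\|_{H^{\vartheta}(\partial\Omega)}\leq\delta$ and $\|\tilde{\theta}-\overline{\theta}\|_{H^{\vartheta+1/2}(\Omega)}\leq C\delta$, and since $\|\varphi'\|_{L^\infty(\Omega)}\lesssim N_2(\varphi',\mathbf{u}',\zeta')\leq\varepsilon$, every coefficient of the form $\mu(\tilde{\theta})^2/\sqrt{\tilde{\theta}}$, $\kappa(\tilde{\theta})^2/(\tilde{\theta}\sqrt{\tilde{\theta}})$, $\mu(\theta_w)\kappa(\theta_w)/\theta_w$, or $\varphi'+\overline{\rho}$ differs from its value at $(\overline{\rho},\overline{\theta})$ by $O(\delta+\varepsilon)$ in $L^\infty$, and all lower-order bulk terms involving $\nabla\tilde\theta$ generate source-type contributions of size $O(\delta)$.

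After this reduction, the leading part of the LHS collects: the bulk dissipation $\int|\nabla\mathbf{u}+\nabla\mathbf{u}^{\mathsf T}-\tfrac{2}{3}\mathrm{div}\,\mathbf{u}\,\mathbb{I}_3|^2+\int|\nabla\zeta|^2$ with positive prefactors (giving the $m_{22},m_{33},m_{55},m_{66}$ diagonal); the boundary quadratic terms $\int_{\mathbb{T}^2_p}(\varphi'+\overline{\rho})|\mathbf{u}|^2$ and $\int_{\mathbb{T}^2_p}(\varphi'+\overline{\rho})|\zeta|^2$ (coercing boundary traces); the cross bulk terms $\int\mathbf{u}\cdot\nabla\zeta$, $\int\zeta\,\mathrm{div}\,\mathbf{u}$, and $\int\mathfrak{n}\zeta\,\partial_1\zeta$ (producing the $m_{13},m_{23},m_{14},m_{56}$ entries); and the ``bad'' coupling \eqref{badTerm1} together with the fractional-derivative boundary pairing (which are skew and drop out, resp.\ generate no sign). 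Using Korn's inequality on $V^1(\Omega)$ from the appendix, the bulk dissipation controls $\|\nabla\mathbf{u}\|_{L^2}^2$, and Poincar\'e with traces (also in the appendix) promotes the boundary integrals together with $\|\nabla\mathbf{u}\|_{L^2}^2,\|\nabla\zeta\|_{L^2}^2$ to full control of $\|\mathbf{u}\|_{H^1}^2+\|\zeta\|_{H^1}^2$. The free parameters $\lambda_0,\lambda_1$ allow one to balance the $\mathbb{T}^2_0$ and $\mathbb{T}^2_1$ boundary contributions, and the positive-definiteness of \eqref{assumption2} yields a uniform lower bound
\[
\text{LHS}\geq \sigma_0\bigl(\|\mathbf{u}\|_{H^1(\Omega)}^2+\|\zeta\|_{H^1(\Omega)}^2\bigr)-C(\delta+\varepsilon)\bigl(\|\mathbf{u}\|_{H^1(\Omega)}^2+\|\zeta\|_{H^1(\Omega)}^2\bigr),
\]
with $\sigma_0>0$; choosing $\delta_0,\varepsilon_0$ sufficiently small halves $\sigma_0$ and absorbs the perturbation.

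For the RHS, I would invoke Lemma \ref{EstimateFGLemma} to obtain $\|(\mathbf{F}(\varphi',\mathbf{u}',\zeta'),G(\varphi',\mathbf{u}',\zeta'))\|_{L^2}\leq C(1+N_1)(\sqrt\delta+N_2)^2$, pair this against $\mathbf{u},\zeta$ by Cauchy-Schwarz, and handle the remaining source terms $\int \mathbf{u}\cdot\nabla\varphi'$, $\int \zeta\,\mathrm{div}(\varphi'\mathbf{u}')$ and $\int \mathfrak{n}\,\zeta\,\partial_1\varphi'$ directly using $\|\nabla\varphi'\|_{L^2}\leq N_1(\varphi',\mathbf{u}',\zeta')$ together with Sobolev embedding for $\varphi'\mathbf{u}'$. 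A Young's inequality splits each into a small multiple of $\|\mathbf{u}\|_{H^1}^2+\|\zeta\|_{H^1}^2$ (absorbed on the left) plus $CN_1(\varphi',\mathbf{u}',\zeta')^2(1+N_2(\varphi',\mathbf{u}',\zeta'))^4$, yielding \eqref{basicEstimateEllipticSystem}.

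The main obstacle is the interaction between the bad bilinear term \eqref{badTerm1} in the interior and the density-dependent boundary factors $(\varphi'+\overline{\rho})\mathbf{u}\cdot\mathbf{u}$, $(\varphi'+\overline{\rho})\zeta\cdot\zeta$: the $\mathfrak{n}$-weighted coupling forces one to simultaneously control products like $\nabla\zeta\cdot\nabla\mathbf{u}$ and $\zeta\partial_1\zeta$, and these cannot be treated by naive Cauchy-Schwarz without destroying the coercivity in either $\|\nabla\mathbf{u}\|_{L^2}$ or $\|\nabla\zeta\|_{L^2}$. The role of the specific choice of $\mathcal{R}$ in \eqref{def.crad} (making the $\lambda_0$-block symmetric between the viscous and heat channels) and of the entries $m_{13},m_{14},m_{23},m_{56}$ in \eqref{assumption2} is precisely to diagonalize this bilinear structure with the two free multipliers $\lambda_0,\lambda_1$; once \eqref{assumption2} is positive definite, $\sigma_0$ is quantitative and the whole scheme closes.
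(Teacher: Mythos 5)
Your proposal follows essentially the same route as the paper's proof: test \eqref{weakFormEllipticSystem} with $(\mathbf{v},\xi)=(\mathbf{u},\zeta)$, freeze the coefficients at $(\overline{\rho},\overline{\theta})$ up to $O(\delta+\varepsilon)$ errors, use Korn's inequality together with the positive-definiteness in assumption (A) to absorb the $\mathfrak{n}$-weighted cross term \eqref{badTerm1}, exploit the choice of $\mathcal{R}$ in \eqref{def.crad} to cancel the convection pairing, handle $\int\mathfrak{n}\zeta\partial_1\zeta$ and the boundary quadratics via \eqref{assumption1}, and close with Lemma \ref{EstimateFGLemma} and Young's inequality on the sources. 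The only slip is the parenthetical suggesting \eqref{badTerm1} is ``skew and drops out'' --- it is not antisymmetric (only the fractional-derivative boundary pairing vanishes for $(\mathbf{v},\xi)=(\mathbf{u},\zeta)$) --- but your closing paragraph correctly treats it via the quadratic form of assumption (A), so the argument is sound.
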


\begin{proof}
    Since $(u,\zeta)\in(H^{1}(\Omega)\cap V)\times H^{1}(\Omega)$, taking $\mathbf{v}=\mathbf{u}$, $\xi=\zeta$ gives
    \begin{align*}
        &\underbrace{\int_{\Omega}\frac{2}{3R}\sqrt{\frac{2}{R}}a_{\mathbf{u}}^{I}a_{\mathbf{u}}^{II}\frac{\mu(\tilde{\theta})^{2}}{\sqrt{\tilde{\theta}}}\nabla\mathbf{u}:\left(\nabla\mathbf{u}+\nabla\mathbf{u}^{\mathsf{T}}-\frac{2}{3}\div\mathbf{u}\mathbb{I}_{3}\right)d\mathbf{x}}_{I_{1}} +\underbrace{\int_{\Omega}\frac{32}{75R^2}\sqrt{\frac{2}{R}}a_{\theta}^{I}a_{\theta}^{II}\frac{\kappa(\tilde{\theta})^{2}}{\tilde{\theta}\sqrt{\tilde{\theta}}}|\nabla\zeta|^{2}d\mathbf{x}}_{I_{2}} \\
        &+\underbrace{\int_{\Omega}-\frac{4}{5R^2}a_{\mathbf{u}}^{II}a_{\theta}^I\frac{\mu(\tilde{\theta})\kappa(\tilde{\theta})}{\mu(\overline{\theta})\tilde{\theta}}\nabla\zeta\cdot\mathfrak{n}\left(\nabla u_{1}+\partial_{1}\mathbf{u}^{\mathsf{T}}-\frac{2}{3}\mathbf{e}_{1}\div\mathbf{u}\right)d\mathbf{x} }_{I_{3}} \\
        &+\underbrace{\int_{\Omega}\frac{2}{3R}\sqrt{\frac{2}{R}}a_{\mathbf{u}}^{I}a_{\mathbf{u}}^{II}\nabla\left(\frac{\mu(\tilde{\theta})^{2}}{\sqrt{\tilde{\theta}}}\right)\otimes\mathbf{u}:\left(\nabla\mathbf{u}+\nabla\mathbf{u}^{\mathsf{T}}-\frac{2}{3}\div\mathbf{u}\mathbb{I}_{3}\right)d\mathbf{x}}_{I_{4}} \nonumber\\
        &+\underbrace{\int_{\Omega}\frac{32}{75R^2}\sqrt{\frac{2}{R}}a_{\theta}^{I}a_{\theta}^{II}\nabla\left(\frac{\kappa(\tilde{\theta})\kappa(\tilde{\theta})}{\tilde{\theta}\sqrt{\tilde{\theta}}}\right)\zeta\cdot\nabla\zeta d\mathbf{x}}_{I_{5}} \\
        &+\underbrace{\int_{\Omega}-\frac{4}{5R^2}a_{\mathbf{u}}^{II}a_{\theta}^I\nabla\left(\frac{\mu(\tilde{\theta})\kappa(\tilde{\theta})}{\mu(\overline{\theta})\tilde{\theta}}\right)\cdot\zeta\mathfrak{n}\left(\nabla u_{1}+\partial_{1}\mathbf{u}^{\mathsf{T}}-\frac{2}{3}\mathbf{e}_{1}\div\mathbf{u}\right)d\mathbf{x}}_{I_{6}} \\
        &+\underbrace{\int_{\Omega}R\overline{\rho}\sqrt{\frac{2}{R}}\frac{2}{3R}a_{\mathbf{u}}^{I}a_{\mathbf{u}}^{II}\frac{\mu(\tilde{\theta})^{2}}{\mu(\overline{\theta})\sqrt{\tilde{\theta}}}\mathbf{u}\cdot\nabla\zeta d\mathbf{x}}_{I_{7}} +\underbrace{\int_{\Omega}\frac{32}{75R^2}\sqrt{\frac{2}{R}}a_{\theta}^{I}a_{\theta}^{II}\frac{\kappa(\tilde{\theta})^{2}}{\kappa(\overline{\theta})\tilde{\theta}\sqrt{\tilde{\theta}}}(R\overline{\rho}\overline{\theta}+\mathcal{R}\overline{\rho})\zeta\div\mathbf{u}d\mathbf{x}}_{I_{8}} \\
        &+\underbrace{\int_{\Omega}-\frac{4}{5R}\overline{\rho}a_{\mathbf{u}}^{II}a_{\theta}^I\frac{\mu(\tilde{\theta})\kappa(\tilde{\theta})}{\mu(\overline{\theta})\tilde{\theta}}\mathfrak{n}\zeta\partial_{1}\zeta d\mathbf{x}}_{I_{9}} \\
        &+\underbrace{\sum_{p=0,1}\int_{\mathbb{T}_{p}^{2}}\frac{2}{3R}a_{\mathbf{u}}^{II}\mu(\theta_{w})(\varphi'+\overline{\rho})|\mathbf{u}|^{2}dx_{2}dx_{3}}_{I_{10}} +\underbrace{\sum_{p=0,1}\int_{\mathbb{T}_{p}^{2}}\frac{16}{15R}a_{\theta}^{I}\frac{\kappa(\theta_{w})}{\theta_{w}}(\varphi'+\overline{\rho})|\zeta|^{2} dx_{2}dx_{3}}_{I_{11}} \\
        &+\underbrace{\sum_{p=0,1}\int_{\mathbb{T}_{p}^{2}}\frac{8}{15R^2}a_{\mathbf{u}}^{II}a_{\theta}^{I}\frac{\mu(\theta_{w})\kappa(\theta_{w})}{\theta_{w}}\mathbf{u}\cdot\nabla\tilde{\theta}dx_{2}dx_{3}}_{I_{12}} +\underbrace{\sum_{p=0,1}\int_{\mathbb{T}_{p}^{2}}\frac{32}{75R^2}a_{\theta}^{I}a_{\theta}^{II}\frac{\kappa(\theta_{w})^2}{\theta_{w}\sqrt{\theta_{w}}}\mathfrak{n}\zeta\partial_{1}\tilde{\theta}dx_{2}dx_{3}}_{I_{13}} \\
        = 
        &\underbrace{\int_{\Omega}\frac{2}{3R}\sqrt{\frac{2}{R}}a_{\mathbf{u}}^{I}a_{\mathbf{u}}^{II}\frac{\mu(\tilde{\theta})^{2}}{\mu(\overline{\theta})\sqrt{\tilde{\theta}}}\mathbf{u}\cdot\mathbf{F}(\varphi',\mathbf{u}',\zeta')d\mathbf{x}}_{I_{14}}+\underbrace{\int_{\Omega}-R\tilde{\theta}\sqrt{\frac{2}{R}}\frac{2}{3R}a_{\mathbf{u}}^{I}a_{\mathbf{u}}^{II}\frac{\mu(\tilde{\theta})^{2}}{\mu(\overline{\theta})\sqrt{\tilde{\theta}}}\mathbf{u}\cdot\nabla\varphi'd\mathbf{x}}_{I_{15}} \\
        &+\underbrace{\int_{\Omega}\frac{32}{75R^2}\sqrt{\frac{2}{R}}a_{\theta}^{I}a_{\theta}^{II}\frac{\kappa(\tilde{\theta})^{2}}{\kappa(\overline{\theta})\tilde{\theta}\sqrt{\tilde{\theta}}}\zeta G(\varphi',\mathbf{u}',\zeta')d\mathbf{x}}_{I_{16}}+\underbrace{\int_{\Omega}-\frac{32}{75R^2}\sqrt{\frac{2}{R}}a_{\theta}^{I}a_{\theta}^{II}\frac{\kappa(\tilde{\theta})^{2}}{\kappa(\overline{\theta})\tilde{\theta}\sqrt{\tilde{\theta}}}\mathcal{R}\zeta\div(\varphi'\mathbf{u}')d\mathbf{x}}_{I_{17}} \\
        &+\underbrace{\int_{\Omega}-\frac{4}{5R^2}a_{\mathbf{u}}^{II}a_{\theta}^I\frac{\mu(\tilde{\theta})\kappa(\tilde{\theta})}{\mu(\overline{\theta})\tilde{\theta}}\zeta\mathfrak{n}F_{1}(\varphi',\mathbf{u}',\zeta')d\mathbf{x}}_{I_{18}} +\underbrace{\int_{\Omega}\frac{4}{5R}\overline{\theta}a_{\mathbf{u}}^{II}a_{\theta}^I\frac{\mu(\tilde{\theta})\kappa(\tilde{\theta})}{\mu(\overline{\theta})\tilde{\theta}}\zeta\mathfrak{n}\partial_{1}\varphi' d\mathbf{x}}_{I_{19}}.
    \end{align*}

    For $I_{1}$, $I_{2}$ and $I_{3}$, by H\"{o}lder's inequality and \eqref{assumption2} in assumption (A), 
    \begin{align*}
        I_{1}+I_{2}+I_{3}\geq&\left(\frac{1}{3R}\sqrt{\frac{2}{R}}a_{\mathbf{u}}^{I}a_{\mathbf{u}}^{II}\frac{\mu(\overline{\theta})^{2}}{\sqrt{\overline{\theta}}}-C\delta\right)\int_{\Omega}\left|\nabla\mathbf{u}+\nabla\mathbf{u}^{\mathsf{T}}-\frac{2}{3}\div\mathbf{u}\mathbb{I}_{3}\right|^2d\mathbf{x} \nonumber\\
        &+\left(\frac{32}{75R^2}\sqrt{\frac{2}{R}}a_{\theta}^{I}a_{\theta}^{II}\frac{\kappa(\overline{\theta})^{2}}{\overline{\theta}\sqrt{\overline{\theta}}}-C\delta\right)\int_{\Omega}|\nabla\zeta|^2d\mathbf{x} \nonumber\\
        &-\left(\frac{4}{5R^2}a_{\mathbf{u}}^{II}a_{\theta}^I\frac{\mu(\overline{\theta})\kappa(\overline{\theta})}{\mu(\overline{\theta})\overline{\theta}}+C\delta\right)\int_{\Omega}|\nabla\zeta|\cdot\left|\nabla\mathbf{u}+\nabla\mathbf{u}^{\mathsf{T}}-\frac{2}{3}\div\mathbf{u}\mathbb{I}_{3}\right|d\mathbf{x} \nonumber\\
        \geq&C_{0}(\|\nabla\mathbf{u}+\nabla\mathbf{u}'-\frac{2}{3}\div\mathbf{u}\mathbb{I}_{3}\|_{L^{2}(\Omega)}^2+\|\nabla\zeta\|_{L^{2}(\Omega)}^2),  
    \end{align*}
    where $C_{0}>0$ is a constant only depending on $a_{\mathbf{u}}^{I}, a_{\mathbf{u}}^{II}, a_{\theta}^{I}, a_{\theta}^{II}, R, \mu, \kappa, \overline{\theta}$. Then applying Korns' type inequality \eqref{KornInequality}, we have
    \begin{equation}\label{basicEstimateEllipticSystemI1I2I3}
        I_{1}+I_{2}+I_{3}\geq C_{0}(\|\nabla\mathbf{u}\|_{L^{2}(\Omega)}^{2}+\|\nabla\zeta\|_{L^{2}}^{2}).
    \end{equation}

    For $I_{4}$, $I_{5}$ and $I_{6}$, by H\"{o}lder's inequality
    \begin{equation}\label{basicEstimateEllipticSystemI4I5I6}
        |I_{4}|+|I_{5}|+|I_{6}|\leq C_{1}\delta(\|\mathbf{u}\|_{H^{1}(\Omega)}^2+\|\zeta\|_{H^{1}(\Omega)}^2),
    \end{equation}
    where $C_{1}>0$ is a constant only depending on $a_{\mathbf{u}}^{I}, a_{\mathbf{u}}^{II}, a_{\theta}^{I}, a_{\theta}^{II}, R, \mu, \kappa, \overline{\theta}$.

    For $I_{7}$ and $I_{8}$, recall from \eqref{def.crad} that there is $\mathcal{R}\in \R$ such that
    \begin{equation}\label{basicEstimateEllipticSystemI7I8}
        R\overline{\rho}\sqrt{\frac{2}{R}}\frac{2}{3R}a_{\mathbf{u}}^{I}a_{\mathbf{u}}^{II}\mu(\overline{\theta})^{2}\overline{\theta}^{-\frac{1}{2}}=R(\overline{\rho}\overline{\theta}+\mathcal{R}\overline{\rho})\frac{32}{75R^2}\sqrt{\frac{2}{R}}a_{\theta}^{I}a_{\theta}^{II}\mu(\overline{\theta})\kappa(\overline{\theta})\overline{\theta}^{-\frac{3}{2}}.
    \end{equation}
    Therefore, it follows by H\"{o}lder's inequality,
    \begin{align*}
        |I_{7}+I_{8}|\leq&C_{2}\delta\left(\int_{\Omega}|\mathbf{u}\cdot\nabla\zeta|d\mathbf{x}+\int_{\Omega}|\zeta\div\mathbf{u}|d\mathbf{x}\right)\\
        \leq&C_{2}\delta\left(\|\mathbf{u}\|_{H^{1}(\Omega)}^{2}+\|\zeta\|_{H^{1}(\Omega)}^{2}\right),
    \end{align*}
    where $C_{2}>0$ is a constant only depending on $a_{\mathbf{u}}^{I},a_{\mathbf{u}}^{II},a_{\theta}^{I},a_{\theta}^{II}, R, \mu, \kappa, \overline{\rho},\overline{\theta}$. 

    For $I_{9}$, by divergence theorem, 
    \begin{align}\label{basicEstimateEllipticSystemI9}
        I_{9}=&\int_{\Omega}\frac{4}{5R}\overline{\rho}a_{\mathbf{u}}^{II}a_{\theta}^I\frac{\mu(\tilde{\theta})\kappa(\tilde{\theta})}{\mu(\overline{\theta})\tilde{\theta}}\mathfrak{n}|\zeta|^2 d\mathbf{x}+\int_{\Omega}\frac{2}{5R}\overline{\rho}a_{\mathbf{u}}^{II}a_{\theta}^I\partial_{1}\left(\frac{\mu(\tilde{\theta})\kappa(\tilde{\theta})}{\mu(\overline{\theta})\tilde{\theta}}\right)\cdot\mathfrak{n}|\zeta|^2 d\mathbf{x} \nonumber\\
        &+\sum_{p=0,1}\int_{\mathbb{T}_{p}^{2}}-\frac{2}{5R}\overline{\rho}a_{\mathbf{u}}^{II}a_{\theta}^I\frac{\mu(\theta_{w})\kappa(\theta_{w})}{\mu(\overline{\theta})\theta_{w}}|\zeta|^2 dx_{2}dx_{3} \nonumber\\
        \geq&\left(\frac{4}{5R}a_{\mathbf{u}}^{II}a_{\theta}^I\overline{\rho}\frac{\kappa(\overline{\theta})}{\overline{\theta}}-C_{3}\delta\right)\|\zeta\|_{L^{2}(\Omega)}^2-\left(\frac{2}{5R}a_{\mathbf{u}}^{II}a_{\theta}^I\frac{\kappa(\overline{\theta})}{\overline{\theta}}\overline{\rho}+C_{3}\delta\right)\|\zeta\|_{L^{2}(\partial\Omega)}^2,
    \end{align}
    where $C_{3}>0$ is a constant only depending on $a_{\mathbf{u}}^{I}, a_{\mathbf{u}}^{II}, a_{\theta}^{I}, a_{\theta}^{II}, R, \mu, \kappa, \overline{\rho},\overline{\theta}$. 

    For $I_{10}$, $I_{11}$, $I_{12}$ and $I_{13}$,
    \begin{align}\label{basicEstimateEllipticSystemI10I11I12I13}
        \sum_{i=10}^{13}I_{i}\geq&\frac{2}{3R}a_{\mathbf{u}}^{II}\mu(\overline{\theta})(\overline{\rho}-C_{4}\delta-C_{4}\varepsilon)\|\mathbf{u}\|_{L^{2}(\partial\Omega)}^2 \nonumber\\
        &+\frac{16}{15R}a_{\theta}^{I}\frac{\kappa(\overline{\theta})}{\overline{\theta}}(\overline{\rho}-C_{4}\delta-C_{4}\varepsilon)\|\zeta\|_{L^{2}(\partial\Omega)}^2,
    \end{align}
    where $C_{4}>0$ is a constant only depending on $a_{\mathbf{u}}^{I}, a_{\mathbf{u}}^{II}, a_{\theta}^{I},a_{\theta}^{II}, R, \mu, \kappa, \overline{\rho},\overline{\theta}$.

    For $I_{14}$, $I_{15}$, $I_{16}$, $I_{17}$, $I_{18}$ and $I_{19}$, by H\"{o}lder's inequality, 
    \begin{equation}\label{basicEstimateEllipticSystemI14I15I16I17I18I19}
        \sum_{i=14}^{19}|I_{i}|\leq\frac{c}{2}(\|\mathbf{u}\|_{L^{2}(\Omega)}^2+\|\zeta\|_{L^{2}(\Omega)}^2)+C_{5}(\|\partial_{1}\varphi'\|_{L^{2}(\Omega)}^2+\|\div(\varphi'\mathbf{u}')\|_{L^{2}(\Omega)}^2+\|\mathbf{F'}\|_{L^{2}(\Omega)}^2+\|G'\|_{L^{2}(\Omega)}^2),
    \end{equation}
    where $C_{5}>0$ is a constant only depending on $a_{\mathbf{u}}^{I}, a_{\mathbf{u}}^{II}, a_{\theta}^{I}, a_{\theta}^{II}, R, \mu, \kappa, \overline{\theta}$.

    Combining \eqref{basicEstimateEllipticSystemI1I2I3}, \eqref{basicEstimateEllipticSystemI4I5I6}, \eqref{basicEstimateEllipticSystemI7I8}, \eqref{basicEstimateEllipticSystemI9}, \eqref{basicEstimateEllipticSystemI10I11I12I13}, \eqref{basicEstimateEllipticSystemI14I15I16I17I18I19} and Lemma \ref{EstimateFGLemma}, by \eqref{assumption1} in assumption (A), then choosing $\delta, \varepsilon>0$ sufficiently small, we obtain \eqref{basicEstimateEllipticSystem} and thus complete the proof of Lemma \ref{coervivityEllipticSystem}.
\end{proof}

\begin{remark}\label{remarkBoundaryEffect}
    As the aforementioned part in the introduction, the slip boundary conditions \eqref{BC} involves $\nabla\mathbf{u}$ and $\nabla\zeta$. Therefore if one applies the usual energy estimate, that is, $\int_{\Omega}\mathbf{u}\cdot\eqref{LSNS}_{2}d\mathbf{x}+\int_{\Omega}\zeta\cdot\eqref{LSNS}_{3}d\mathbf{x}$, then some boundary terms with $\nabla\mathbf{u}$ and $\nabla\zeta$ would be generated. Hence by the Sobolev's embedding, these terms indicate $(\mathbf{u},\theta)\in H^{\frac{3}{2}}(\Omega)\times H^{\frac{3}{2}}(\Omega)$. However, by the dissipation induced from the viscosity and heat conductivity, the solution can only be in $H^{1}(\Omega)$, which makes the usual energy estimate method break down. Our idea is to design new test functions by full use of the structure of the system and the boundary conditions. Then some cancellation effects are observed making the boundary terms involving the derivatives of velocity and temperature vanish. This is one of the key ideas in the construction of the variational form.
\end{remark}

\begin{remark}\label{remarkHyperbolicEffect}
    It is well-known that the system \eqref{PSNS} is hyperbolic-elliptic. For the basic energy estimate, it is important to deal with the linear convection terms $R\overline{\theta}\nabla\varphi$, $R\overline{\rho}\nabla\zeta$, and this is usually done by making a suitable combination of the estimates for each equation of \eqref{PSNS} in the past literature. However, because of the design of new test functions, we have to make a seemingly more complicated combination, which is achieved by introducing the constant $\mathcal{R}$ as in \eqref{def.crad}.
\end{remark}

Applying Theorem \ref{coervivityEllipticSystem}, it is straightforward to obtain the following weak existence result by Lax-Milgram theorem.
\begin{corollary}\label{weakExistenceEllipticSystem}
    Under the assumption of Theorem \ref{mainTheorem}, there exists a constant $\delta_{0}>0$ such that for $\delta\in[0,\delta_{0})$, if the wall temperature $\theta_{w}\in H^{\vartheta}(\partial\Omega)$ with $\|\theta_{w}-\overline{\theta}\|_{H^{\vartheta}(\partial\Omega)}\leq \delta$ where $\vartheta>\frac{7}{2}$, and $(\varphi',\mathbf{u}',\zeta')\in (H^{2}(\Omega)\cap L_{0}^{2}(\Omega))\times V^{3}(\Omega)\times H^{3}(\Omega)$ with $N_{2}(\varphi',\mathbf{u}',\zeta')\leq\varepsilon$ such that $\|\varphi'\|_{L^{\infty}(\Omega)}+\overline{\rho}>0$ and $\|\zeta'\|_{L^{\infty}(\Omega)}+\overline{\theta}>0$, then the boundary value problem $\eqref{LSNS}_{2}$ and $\eqref{LSNS}_{3}$ with \eqref{LBC} has a unique weak solution $(u,\zeta)\in V^{1}(\Omega)\times H^{1}(\Omega)$ in the sense of \eqref{weakFormEllipticSystem}. 
\end{corollary}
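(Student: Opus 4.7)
I would deduce the corollary from the Lax--Milgram theorem applied on the Hilbert space $\mathcal{H}:=V^{1}(\Omega)\times H^{1}(\Omega)$ equipped with the natural product $H^{1}$ inner product. The first step is to rewrite the variational identity \eqref{weakFormEllipticSystem} in the standard form $a(U,V)=\ell(V)$ with $U=(\mathbf{u},\zeta)$ the unknown and $V=(\mathbf{v},\xi)$ the test pair: $a$ collects every term on the left-hand side that is bilinear in $(U,V)$, namely the analogues of $I_{1}$--$I_{11}$ from the proof of Lemma \ref{coervivityEllipticSystem} together with the fractional boundary pairing, while $\ell$ gathers the right-hand side of \eqref{weakFormEllipticSystem} together with the two test-linear-only boundary integrals that depend only on the fixed data $\tilde\theta,\varphi',\mathbf{u}',\zeta',\mathbf{F}(\varphi',\mathbf{u}',\zeta'),G(\varphi',\mathbf{u}',\zeta')$.

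Next, I would verify the two Lax--Milgram hypotheses. Continuity of $a$ on $\mathcal{H}\times\mathcal{H}$ is routine: the volume integrals are bounded by H\"older combined with the $L^{\infty}$-control of the viscosity/conductivity coefficients coming from $\tilde\theta\in H^{\vartheta+1/2}(\Omega)$ with $\vartheta>7/2$; the ordinary boundary integrals $I_{10}, I_{11}$ are handled by the trace theorem and $H^{2}(\Omega)\hookrightarrow L^{\infty}(\Omega)$ for the factor $\varphi'+\overline\rho$; and the fractional-derivative boundary pairing is well defined because the slip condition $\mathbf{u}\cdot\mathbf{n}=0$ forces the tangential components $u_{2},u_{3}$ to have traces in $H^{1/2}(\partial\Omega)$, so $\partial_{j}^{1/2}u_{j}\in L^{2}(\partial\Omega)$ and Plancherel produces a bound of the form $\|\mathbf{u}\|_{H^{1}(\Omega)}\|\xi\|_{H^{1}(\Omega)}$. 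Continuity of $\ell$ then follows from Lemma \ref{EstimateFGLemma} together with the smallness hypotheses on $\theta_{w}-\overline\theta$ and $(\varphi',\mathbf{u}',\zeta')$. Coercivity is the crucial input and is delivered by Lemma \ref{coervivityEllipticSystem}: inserting $V=U$ into $a$ reproduces the sum $I_{1}+\cdots+I_{11}$ (the fractional term vanishes identically at $V=U$ owing to its skew-symmetric structure under the swap $u_{j}\leftrightarrow v_{j}$), whose combined lower bound derived in the proof of that lemma is
\[
 \gtrsim \|\nabla\mathbf{u}\|_{L^{2}(\Omega)}^{2}+\|\nabla\zeta\|_{L^{2}(\Omega)}^{2}+\|\mathbf{u}\|_{L^{2}(\partial\Omega)}^{2}+\|\zeta\|_{L^{2}(\partial\Omega)}^{2}
\]
once $\delta,\varepsilon$ are chosen sufficiently small. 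The Korn-type inequality with tangency (Appendix) then upgrades $\|\nabla\mathbf{u}\|_{L^{2}}$ to $\|\mathbf{u}\|_{H^{1}}$, and the Poincar\'e-with-trace inequality upgrades $\|\nabla\zeta\|_{L^{2}}+\|\zeta\|_{L^{2}(\partial\Omega)}$ to $\|\zeta\|_{H^{1}}$, yielding $a(U,U)\gtrsim \|U\|_{\mathcal{H}}^{2}$.

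With continuity and coercivity in hand, the Lax--Milgram theorem produces a unique $U=(\mathbf{u},\zeta)\in\mathcal{H}$ satisfying $a(U,V)=\ell(V)$ for all $V\in\mathcal{H}$, and uniqueness is immediate by testing the difference of two solutions against itself. I expect the only delicate bookkeeping to lie in (i) verifying that the bulk cross terms involving $\nabla(\mu(\tilde\theta)^{2}/\sqrt{\tilde\theta})$, $\nabla(\kappa(\tilde\theta)^{2}/(\tilde\theta\sqrt{\tilde\theta}))$ and the $\mathcal{R}$-balance term in $a$ can be absorbed into the coercive part after exploiting the smallness of $\delta$, which is already executed line-by-line in the proof of Lemma \ref{coervivityEllipticSystem}, and (ii) confirming the continuity and the exact $V=U$ cancellation of the $\partial_{j}^{1/2}$ boundary pairing; these are the only genuinely non-standard ingredients, while everything else is a direct reading of the estimates already established.
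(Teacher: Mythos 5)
Your proposal is correct and follows essentially the same route as the paper: the paper's proof of Corollary \ref{weakExistenceEllipticSystem} consists precisely of invoking the Lax--Milgram theorem with the coercivity supplied by Lemma \ref{coervivityEllipticSystem}, and your splitting of \eqref{weakFormEllipticSystem} into a bilinear form and a linear functional, together with the observation that the $\partial_{j}^{1/2}$ boundary pairing cancels at $V=U$, is exactly the bookkeeping the paper leaves implicit.
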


\subsection{Regularity results for $\mathbf{u}$ and $\zeta$}
Now we try to prove the following regularity result of the weak solution. 

\begin{lemma}\label{regularityEllipticSystem}
    Under the assumption of Theorem \ref{mainTheorem}, suppose $(u,\zeta)\in V^{1}(\Omega)\times H^{1}(\Omega)$ is a weak solution in the sense of \eqref{weakFormEllipticSystem}. Then there exist constants $\delta, \varepsilon_{0}>0$ such that for $\delta\in[0,\varepsilon_{0}), \varepsilon\in[0,\varepsilon_{0})$, if the wall temperature $\theta_{w}\in H^{\vartheta}(\partial\Omega)$ with $\|\theta_{w}-\overline{\theta}\|_{H^{\vartheta}(\partial\Omega)}\leq \delta$ where $\vartheta>\frac{7}{2}$, and $(\varphi',\mathbf{u}',\zeta')\in (H^{2}(\Omega)\cap L_{0}^{2}(\Omega))\times V^{3}(\Omega)\times H^{3}(\Omega)$ with $N_{2}(\varphi',\mathbf{u}',\zeta')\leq\varepsilon$ such that $\|\varphi'\|_{L^{\infty}(\Omega)}+\overline{\rho}>0$ and $\|\zeta'\|_{L^{\infty}(\Omega)}+\overline{\theta}>0$, then $(u,\zeta)\in V^{3}(\Omega)\times H^{3}(\Omega)$ and there exists a constant $C>0$ depends only on $a_{\mathbf{u}}^{I}, a_{\mathbf{u}}^{II}, a_{\theta}^{I}, a_{\theta}^{II}, c_{v}, R, \mu, \kappa, \overline{\rho}, \overline{\theta}$ such that
    \begin{equation}\label{regularityEstimateEllipticSystem}
        \|\mathbf{u}\|_{H^{3}(\Omega)}^2 +\|\zeta\|_{H^{3}(\Omega)}^2\leq CN_{2}(\varphi',\mathbf{u}',\zeta')^{2}(1+N_{2}(\varphi',\mathbf{u}',\zeta'))^{4}.
    \end{equation} 
\end{lemma}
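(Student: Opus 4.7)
\emph{Strategy.} The plan is to exploit the product geometry $\Omega=(0,1)\times\mathbb{T}^2$: the principal part of $\eqref{LSNS}_2$ and $\eqref{LSNS}_3$ has constant coefficients because $\mu(\overline{\theta}),\kappa(\overline{\theta})$ are constants, and the boundary $\{0,1\}\times\mathbb{T}^2$ is flat. Since $x_2,x_3$ live on a torus, tangential derivatives commute with the differential operator and preserve the boundary. I will therefore produce tangential regularity by taking difference quotients $D_j^h$ ($j=2,3$) inside the variational formulation \eqref{weakFormEllipticSystem}, and recover the normal derivatives $\partial_1$ algebraically from the equations.

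\emph{Tangential step.} I would test \eqref{weakFormEllipticSystem} with $(\mathbf{v},\xi)=(-D_j^{-h}D_j^h\mathbf{u},-D_j^{-h}D_j^h\zeta)$, which is admissible because $u_1=0$ on $\mathbb{T}_p^2$ persists under $\partial_j$. After discrete integration by parts the left-hand side reproduces the coercive form of Lemma~\ref{coervivityEllipticSystem} evaluated at $(D_j^h\mathbf{u},D_j^h\zeta)$, while the commutators generated by the $x_2,x_3$-dependence of $\theta_w,\tilde{\theta},\mu(\tilde{\theta}),\kappa(\tilde{\theta}),\varphi'$ are of size $\delta$ or $\varepsilon$ times first-order $D_j^h$ norms, and hence absorbed for $\delta,\varepsilon$ small. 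Letting $h\to0$ gives $\partial_j\mathbf{u},\partial_j\zeta\in H^1$; one more iteration provides $\partial_j\partial_k\mathbf{u},\partial_j\partial_k\zeta\in H^1$ for all $j,k\in\{2,3\}$. The fractional boundary term $\partial_j^{1/2}$ in \eqref{weakFormEllipticSystem} becomes, under $D_j^h$, a genuine $H^{1/2}(\partial\Omega)$ pairing that is dominated by the tangential $H^1$ norms being estimated.

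\emph{Normal-derivative recovery and iteration.} Once tangential bounds are in hand the strong form of the equations holds in $L^2$, and one may read off the normal derivatives algebraically. For the momentum equation the first component gives
\[
\tfrac{4}{3}\mu(\overline{\theta})\,\partial_1^2 u_1=\mu(\overline{\theta})(\partial_2^2+\partial_3^2)u_1+\tfrac{1}{3}\mu(\overline{\theta})\,\partial_1(\partial_2 u_2+\partial_3 u_3)+R\overline{\rho}\,\partial_1\zeta-F_1'+R\overline{\theta}\,\partial_1\varphi',
\]
and the analogous identities for $\mu(\overline{\theta})\partial_1^2 u_2,\mu(\overline{\theta})\partial_1^2 u_3$ use $\partial_1\partial_j u_1=\partial_j(\partial_1 u_1)$ with $\partial_j u_1\in H^1$ already controlled; the temperature equation supplies $\partial_1^2\zeta$ directly. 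This yields $\mathbf{u},\zeta\in H^2$. Differentiating these algebraic identities once more and invoking the second round of tangential control promotes this to $\mathbf{u},\zeta\in H^3$, the right-hand sides being bounded via Lemma~\ref{EstimateFGLemma} by $\|(\nabla\mathbf{F},\nabla G)\|_{L^2}\leq CN_2(1+N_2)(\sqrt{\delta}+N_2)^2$, which delivers \eqref{regularityEstimateEllipticSystem} after absorbing the $\sqrt{\delta}$ part by smallness.

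\emph{Main obstacle.} The delicate part will be the bookkeeping of boundary commutators. The coefficient $\theta_w$ lies only in $H^{\vartheta}(\partial\Omega)$, and the nonlinear factor $(\varphi'+\overline{\rho})$ appearing on $\mathbb{T}_p^2$ in the boundary integrals of \eqref{weakFormEllipticSystem} generates additional error terms under $D_j^h$ that must be handled by the trace theorem together with the smallness of $\delta$ and $\varepsilon$; in particular the non-local $\partial_j^{1/2}$ term must be analyzed via its Fourier representation on $\mathbb{T}^2$, so that its commutator with $D_j^h$ reduces to an $L^2$ pairing on the boundary. A companion issue is justifying that the weak solution satisfies $\eqref{LBC}_2,\eqref{LBC}_3$ in the trace sense once $H^2$ is available, which is necessary before one may invert the PDE pointwise for the normal derivatives; this should follow by integrating by parts in \eqref{weakFormEllipticSystem} against smooth test functions using the tangential regularity just established.
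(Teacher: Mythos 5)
Your proposal is correct and follows essentially the same route as the paper: tangential difference quotients $D_{i,d}$ ($i=2,3$) tested in the variational form \eqref{weakFormEllipticSystem}, coercivity from assumption (A) plus Korn's inequality, absorption of the coefficient and boundary commutators by the smallness of $\delta$ and $\varepsilon$, and algebraic recovery of $\partial_1^2$ and $\partial_1^3$ of $\mathbf{u}$ and $\zeta$ from the strong form of $\eqref{LSNS}_{2,3}$. The only cosmetic difference is the ordering (the paper interleaves first-order tangential estimates with the $H^2$ normal recovery before passing to second-order tangential estimates), which does not affect the argument.
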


\begin{proof}
    Let us denote the $i$-difference quotient of sufficiently small size $d>0$ for a function $f$ by
    \begin{equation*}
        D_{i,d}f(\mathbf{x}):=\frac{f(\mathbf{x}+d\mathbf{e}_{i})-f(\mathbf{x})}{d},
    \end{equation*}
    for $i=2,3$. 

    As a first step, we prove that $(\mathbf{u},\zeta)\in H^{2}(\Omega)\cap H^{2}(\Omega)$. Taking $\mathbf{v}=-D_{i,-d}(D_{i,d}\mathbf{u})$ and $\xi=-D_{i,-d}(D_{i,d}\zeta)$ for $i=2,3$, then 
    \begin{align*}
        &\underbrace{\int_{\Omega}\sqrt{\frac{2}{R}}\frac{2}{3R}a_{\mathbf{u}}^{I}a_{\mathbf{u}}^{II}\frac{\mu(\tilde{\theta})^{2}}{\sqrt{\tilde{\theta}}}D_{i,d}\nabla\mathbf{u}:D_{i,d}\left(\nabla\mathbf{u}+\nabla\mathbf{u}^{\mathsf{T}}-\frac{2}{3}\div\mathbf{u}\mathbb{I}_{3}\right)d\mathbf{x}}_{I_{1}} +\underbrace{\int_{\Omega}\frac{32}{75R^2}\sqrt{\frac{2}{R}}a_{\theta}^{I}a_{\theta}^{II}\frac{\kappa(\tilde{\theta})^{2}}{\tilde{\theta}\sqrt{\tilde{\theta}}}|D_{i,d}\nabla\zeta|^{2}d\mathbf{x}}_{I_{2}} \nonumber\\
        &+\underbrace{\int_{\Omega}-\frac{4}{5R^2}a_{\mathbf{u}}^{II}a_{\theta}^I\frac{\mu(\tilde{\theta})\kappa(\tilde{\theta})}{\mu(\overline{\theta})\tilde{\theta}}D_{i,d}\nabla\zeta\cdot\mathfrak{n}D_{i,d}\left(\nabla u_{1}+\partial_{1}\mathbf{u}^{\mathsf{T}}-\frac{2}{3}\mathbf{e}_{1}\div\mathbf{u}\right)d\mathbf{x}}_{I_{3}} \nonumber\\
        &\underbrace{\int_{\Omega}\sqrt{\frac{2}{R}}\frac{2}{3R}a_{\mathbf{u}}^{I}a_{\mathbf{u}}^{II}D_{i,d}\left(\frac{\mu(\tilde{\theta})^{2}}{\sqrt{\tilde{\theta}}}\right)\cdot D_{i,d}\nabla\mathbf{u}:\left(\nabla\mathbf{u}+\nabla\mathbf{u}^{\mathsf{T}}-\frac{2}{3}\div\mathbf{u}\mathbb{I}_{3}\right)d\mathbf{x}}_{I_{4}} \nonumber\\
        &+\underbrace{\int_{\Omega}\frac{32}{75R^2}\sqrt{\frac{2}{R}}a_{\theta}^{I}a_{\theta}^{II}D_{i,d}\left(\frac{\kappa(\tilde{\theta})^{2}}{\tilde{\theta}\sqrt{\tilde{\theta}}}\right)\cdot D_{i,d}\nabla\zeta\cdot\nabla\zeta\mathbf{x}}_{I_{5}} \nonumber\\
        &+\underbrace{\int_{\Omega}-\frac{4}{5R^2}a_{\mathbf{u}}^{II}a_{\theta}^ID_{i,d}\left(\frac{\mu(\tilde{\theta})\kappa(\tilde{\theta})}{\mu(\overline{\theta})\tilde{\theta}}\right)\cdot D_{i,d}\nabla\zeta\cdot\mathfrak{n}\left(\nabla u_{1}+\partial_{1}\mathbf{u}^{\mathsf{T}}-\frac{2}{3}\mathbf{e}_{1}\div\mathbf{u}\right)d\mathbf{x}}_{I_{6}} \nonumber\\
        &+\underbrace{\int_{\Omega}\sqrt{\frac{2}{R}}\frac{2}{3R}a_{\mathbf{u}}^{I}a_{\mathbf{u}}^{II}\sum_{0\leq j\leq1}D_{i,d}^{1-j}\nabla\left(\frac{\mu(\tilde{\theta})^{2}}{\sqrt{\tilde{\theta}}}\right)\otimes D_{i,d}\mathbf{u}:D_{i,d}^{j}\left(\nabla\mathbf{u}+\nabla\mathbf{u}^{\mathsf{T}}-\frac{2}{3}\div\mathbf{u}\mathbb{I}_{3}\right)d\mathbf{x}}_{I_{7}} \nonumber\\ &+\underbrace{\int_{\Omega}R\overline{\rho}\sqrt{\frac{2}{R}}\frac{2}{3R}a_{\mathbf{u}}^{I}a_{\mathbf{u}}^{II}\sum_{0\leq j\leq1}D_{i,d}^{1-j}\nabla\left(\frac{\mu(\tilde{\theta})^{2}}{\mu(\overline{\theta})\sqrt{\tilde{\theta}}}\right)\cdot D_{i,d}\zeta\cdot D_{i,d}^{j}\nabla\zeta d\mathbf{x}}_{I_{8}} \nonumber\\
        &+\underbrace{\int_{\Omega}-\frac{4}{5R^2}a_{\mathbf{u}}^{II}a_{\theta}^I\sum_{0\leq j\leq1}D_{i,d}^{1-j}\nabla\left(\frac{\mu(\tilde{\theta})\kappa(\tilde{\theta})}{\mu(\overline{\theta})\tilde{\theta}}\right)\cdot D_{i,d}\zeta\cdot\mathfrak{n}D_{i,d}^{j}\left(\nabla u_{1}+\partial_{1}\mathbf{u}^{\mathsf{T}}-\frac{2}{3}\mathbf{e}_{1}\div\mathbf{u}\right)d\mathbf{x}}_{I_{9}} \nonumber\\
        &+\underbrace{\int_{\Omega}R\overline{\rho}\sqrt{\frac{2}{R}}\frac{2}{3R}a_{\mathbf{u}}^{I}a_{\mathbf{u}}^{II}\frac{\mu(\tilde{\theta})^{2}}{\mu(\overline{\theta})\sqrt{\tilde{\theta}}}D_{i,d}\mathbf{u}\cdot D_{i,d}\nabla\zeta d\mathbf{x}}_{I_{10}} \nonumber\\
        &+\underbrace{\int_{\Omega}\frac{32}{75R^2}\sqrt{\frac{2}{R}}a_{\theta}^{I}a_{\theta}^{II}\frac{\kappa(\tilde{\theta})^{2}}{\kappa(\overline{\theta})\tilde{\theta}\sqrt{\tilde{\theta}}}(R\overline{\rho}\overline{\theta}+\mathcal{R}\overline{\rho})D_{i,d}\xi \cdot D_{i,d}\div\mathbf{u}d\mathbf{x}}_{I_{11}} \nonumber\\
        &+\underbrace{\int_{\Omega}-\frac{4}{5R}\overline{\rho}a_{\mathbf{u}}^{II}a_{\theta}^I\frac{\mu(\tilde{\theta})\kappa(\tilde{\theta})}{\mu(\overline{\theta})\tilde{\theta}}\mathfrak{n}D_{i,d}\zeta\cdot D_{i,d}\partial_{1}\zeta d\mathbf{x}}_{I_{12}} \nonumber\\
        &+\underbrace{\int_{\Omega}R\overline{\rho}\sqrt{\frac{2}{R}}\frac{2}{3R}a_{\mathbf{u}}^{I}a_{\mathbf{u}}^{II}D_{i,d}\left(\frac{\mu(\tilde{\theta})^{2}}{\mu(\overline{\theta})\sqrt{\tilde{\theta}}}\right)D_{i,d}\mathbf{u}\cdot\nabla\zeta d\mathbf{x}}_{I_{13}} \nonumber\\ &+\underbrace{\int_{\Omega}\frac{32}{75R^2}\sqrt{\frac{2}{R}}a_{\theta}^{I}a_{\theta}^{II}D_{i,d}\left(\frac{\kappa(\tilde{\theta})^{2}}{\kappa(\overline{\theta})\tilde{\theta}\sqrt{\tilde{\theta}}}\right)(R\overline{\rho}\overline{\theta}+\mathcal{R}\overline{\rho})D_{i,d}\zeta \cdot\div\mathbf{u}d\mathbf{x}}_{I_{14}} \nonumber\\
        &+\underbrace{\int_{\Omega}-\frac{4}{5R}\overline{\rho}a_{\mathbf{u}}^{II}a_{\theta}^ID_{i,d}\left(\frac{\mu(\tilde{\theta})\kappa(\tilde{\theta})}{\mu(\overline{\theta})\tilde{\theta}}\right)\cdot\mathfrak{n}D_{i,d}\zeta\cdot\partial_{1}\zeta d\mathbf{x}}_{I_{15}} \nonumber\\
        &+\underbrace{\sum_{p=0,1}\int_{\mathbb{T}_{p}^{2}}\frac{2}{3R}a_{\mathbf{u}}^{II}\mu(\theta_{w})(\varphi'+\overline{\rho})|D_{i,d}\mathbf{u}|^2dx_{2}dx_{3}}_{I_{16}} +\underbrace{\sum_{p=0,1}\int_{\mathbb{T}_{p}^{2}}\frac{16}{15R}a_{\theta}^{I}\frac{\kappa(\theta_{w})}{\theta_{w}}(\varphi'+\overline{\rho})|D_{i,d}\zeta|^2dx_{2}dx_{3}}_{I_{17}} \nonumber \\
        &+\underbrace{\sum_{p=0,1}\int_{\mathbb{T}_{p}^{2}}\frac{2}{3R}a_{\mathbf{u}}^{II}(\varphi'+\overline{\rho})D_{i,d}\mu(\theta_{w})\cdot D_{i,d}\mathbf{u}\cdot\mathbf{u}dx_{2}dx_{3}}_{I_{18}} \nonumber\\
        &+\underbrace{\sum_{p=0,1}\int_{\mathbb{T}_{p}^{2}}\frac{16}{15R}a_{\theta}^{I}(\varphi'+\overline{\rho})D_{i,d}\left(\frac{\kappa(\theta_{w})}{\theta_{w}}\right)D_{i,d}\zeta\cdot \zeta dx_{2}dx_{3}}_{I_{19}} \nonumber \\
        &+\underbrace{\sum_{p=0,1}\int_{\mathbb{T}_{p}^{2}}\frac{8}{15R^2}a_{\mathbf{u}}^{II}a_{\theta}^{I}\sum_{0\leq j\leq1}D_{i,d}^{1-j}\left(\frac{\mu(\theta_{w})\kappa(\theta_{w})}{\theta_{w}}\right)\cdot D_{i,d}\mathbf{u}\cdot D_{i,d}^{j}\nabla\tilde{\theta}dx_{2}dx_{3}}_{I_{20}} \nonumber\\ &+\underbrace{\sum_{p=0,1}\int_{\mathbb{T}_{p}^{2}}\frac{32}{75R^2}a_{\theta}^{I}a_{\theta}^{II}\sum_{0\leq j\leq1}D_{i,d}^{1-j}\left(\frac{\kappa(\theta_{w})^2}{\theta_{w}\sqrt{\theta_{w}}}\right)\cdot\mathfrak{n}D_{i,d}\zeta\cdot D_{i,d}^{j}\partial_{1}\tilde{\theta}dx_{2}dx_{3}}_{I_{21}} \nonumber \\
        = 
        &\underbrace{\int_{\Omega}\sqrt{\frac{2}{R}}\frac{2}{3R}a_{\mathbf{u}}^{I}a_{\mathbf{u}}^{II}\frac{\mu(\tilde{\theta})^{2}}{\mu(\overline{\theta})\sqrt{\tilde{\theta}}}D_{i,-d}(D_{i,d}\mathbf{u})\cdot\mathbf{F}(\varphi',\mathbf{u}',\zeta')d\mathbf{x}}_{I_{22}} \nonumber\\
        &+\underbrace{\int_{\Omega}-R\tilde{\theta}\sqrt{\frac{2}{R}}\frac{2}{3R}a_{\mathbf{u}}^{I}a_{\mathbf{u}}^{II}\frac{\mu(\tilde{\theta})^{2}}{\mu(\overline{\theta})\sqrt{\tilde{\theta}}}D_{i,-d}(D_{i,d}\mathbf{u})\cdot\nabla\varphi'd\mathbf{x}}_{I_{23}} \nonumber \\
        &+\underbrace{\int_{\Omega}\frac{32}{75R^2}\sqrt{\frac{2}{R}}a_{\theta}^{I}a_{\theta}^{II}\frac{\kappa(\tilde{\theta})^{2}}{\kappa(\overline{\theta})\tilde{\theta}\sqrt{\tilde{\theta}}}D_{i,-d}(D_{i,d}\zeta)\cdot G(\varphi',\mathbf{u}',\zeta')d\mathbf{x}}_{I_{24}} \nonumber\\
        &+\underbrace{\int_{\Omega}-\frac{32}{75R^2}\sqrt{\frac{2}{R}}a_{\theta}^{I}a_{\theta}^{II}\frac{\kappa(\tilde{\theta})^{2}}{\kappa(\overline{\theta})\tilde{\theta}\sqrt{\tilde{\theta}}}\mathcal{R}D_{i,-d}(D_{i,d}\zeta)\cdot\div(\varphi'\mathbf{u}')d\mathbf{x}}_{I_{25}} \nonumber \\
        &+\underbrace{\int_{\Omega}-\frac{4}{5R^2}a_{\mathbf{u}}^{II}a_{\theta}^I\frac{\mu(\tilde{\theta})\kappa(\tilde{\theta})}{\mu(\overline{\theta})\tilde{\theta}}\mathfrak{n}D_{i,-d}(D_{i,d}\xi)\cdot F_{1}(\varphi',\mathbf{u}',\zeta')d\mathbf{x}}_{I_{26}} \nonumber\\
        &+\underbrace{\int_{\Omega}\frac{4}{5R}\overline{\theta}a_{\mathbf{u}}^{II}a_{\theta}^I\frac{\mu(\tilde{\theta})\kappa(\tilde{\theta})}{\mu(\overline{\theta})\tilde{\theta}}\mathfrak{n}D_{i,-d}(D_{i,d}\zeta)\cdot\partial_{1}\varphi'd\mathbf{x}}_{I_{27}}.
    \end{align*}

    For $I_{1}$, $I_{2}$ and $I_{3}$, by H\"{o}lder's inequality and \eqref{assumption2} in assumption (A),
    \begin{align*}
        I_{1}+I_{2}+I_{3}\geq&\left(\frac{1}{3R}\sqrt{\frac{2}{R}}a_{\mathbf{u}}^{I}a_{\mathbf{u}}^{II}\frac{\mu(\overline{\theta})^{2}}{\sqrt{\overline{\theta}}}-C\delta\right)\int_{\Omega}\left|D_{i,d}\left(\nabla\mathbf{u}+\nabla\mathbf{u}^{\mathsf{T}}-\frac{2}{3}\div\mathbf{u}\mathbb{I}_{3}\right)\right|^2d\mathbf{x} \nonumber\\
        &+\left(\frac{32}{75R^2}\sqrt{\frac{2}{R}}a_{\theta}^{I}a_{\theta}^{II}\frac{\kappa(\overline{\theta})^{2}}{\overline{\theta}\sqrt{\overline{\theta}}}-C\delta\right)\int_{\Omega}|D_{i,d}\nabla\zeta|^2d\mathbf{x} \nonumber\\
        &-\left(\frac{4}{5R^2}a_{\mathbf{u}}^{II}a_{\theta}^I\frac{\mu(\overline{\theta})\kappa(\overline{\theta})}{\mu(\overline{\theta})\overline{\theta}}+C\delta\right)\int_{\Omega}|D_{i,d}\nabla\zeta|\cdot\left|D_{i,d}\left(\nabla\mathbf{u}+\nabla\mathbf{u}^{\mathsf{T}}-\frac{2}{3}\div\mathbf{u}\mathbb{I}_{3}\right)\right|d\mathbf{x} \nonumber\\
        \geq&C_{0}'\left(\left\|D_{i,d}\left(\nabla\mathbf{u}+\nabla\mathbf{u}'-\frac{2}{3}\div\mathbf{u}\mathbb{I}_{3}\right)\right\|_{L^{2}(\Omega)}^2+\|D_{i,d}\nabla\zeta\|_{L^{2}(\Omega)}^2\right), 
    \end{align*}
    where $C_{0}'>0$ is a constant only depending on $a_{\mathbf{u}}^{I}, a_{\mathbf{u}}^{II}, a_{\theta}^{I}, a_{\theta}^{II}, R, \mu, \kappa, \overline{\theta}$. Then applying Korns' type inequality \eqref{KornInequality}, we have
    \begin{equation}\label{regularityEstimateEllipticSystemI1I2I3}
        I_{1}+I_{2}+I_{3}\geq C_{0}'(\|D_{i,d}\nabla\mathbf{u}\|_{L^{2}(\Omega)}^2+\|D_{i,d}\nabla\zeta\|_{L^{2}(\Omega)}^2).
    \end{equation}

    For $I_{4},...,I_{9}$, by H\"{o}lder's inequality, 
    \begin{equation}\label{regularityEstimateEllipticSystemI4I5I6}
        |I_{4}|+\cdots+|I_{9}|\leq C_{1}'\delta\sum_{0\leq j\leq1}(\|D_{i,d}^{j}\mathbf{u}\|_{H^{1}(\Omega)}^2+\|D_{i,d}^{j}\zeta\|_{H^{1}(\Omega)}^2),
    \end{equation}
    where $C_{1}'>0$ is a constant only depending on $a_{\mathbf{u}}^{I}, a_{\mathbf{u}}^{II}, a_{\theta}^{I}, a_{\theta}^{II}, R, \mu, \kappa, \overline{\theta}$.

    For $I_{10}$ and $I_{11}$, since
        \begin{equation*}
        R\overline{\rho}\sqrt{\frac{2}{R}}\frac{2}{3R}a_{\mathbf{u}}^{I}a_{\mathbf{u}}^{II}\mu(\overline{\theta})^{2}\overline{\theta}^{-\frac{1}{2}}=R(\overline{\rho}\overline{\theta}+\mathcal{R}\overline{\rho})\frac{32}{75R^2}\sqrt{\frac{2}{R}}a_{\theta}^{I}a_{\theta}^{II}\mu(\overline{\theta})\kappa(\overline{\theta})\overline{\theta}^{-\frac{3}{2}},
        \end{equation*}
   it follows by H\"{o}lder's inequality that
    \begin{align}\label{regularityEstimateEllipticSystemI7I8}
        |I_{10}+I_{11}|\leq&C_{2}'\delta\left(\int_{\Omega}|D_{i,d}\mathbf{u}\cdot D_{i,d}\nabla\zeta|d\mathbf{x}+\int_{\Omega}|D_{i,d}\zeta\cdot D_{i,d}\div\mathbf{u}|d\mathbf{x}\right) \nonumber\\
        \leq&C_{2}'\delta\left(\|D_{i,d}\mathbf{u}\|_{H^{1}(\Omega)}^{2}+\|D_{i,d}\zeta\|_{H^{1}(\Omega)}^{2}\right),
    \end{align}
    where $C_{2}'>0$ is a constant only depending on $a_{\mathbf{u}}^{I}, a_{\mathbf{u}}^{II}, a_{\theta}^{I}, a_{\theta}^{II}, R, \mu, \kappa, \overline{\rho}, \overline{\theta}$.

    For $I_{12}$, by divergence theorem, 
        \begin{align}\label{regularityEstimateEllipticSystemI9}
        I_{12}=&\int_{\Omega}\frac{4}{5R}\overline{\rho}a_{\mathbf{u}}^{II}a_{\theta}^I\frac{\mu(\tilde{\theta})\kappa(\tilde{\theta})}{\mu(\overline{\theta})\tilde{\theta}}|D_{i,d}\zeta|^2 d\mathbf{x}+\int_{\Omega}\frac{2}{5R}\overline{\rho}a_{\mathbf{u}}^{II}a_{\theta}^I\partial_{1}\left(\frac{\mu(\tilde{\theta})\kappa(\tilde{\theta})}{\mu(\overline{\theta})\tilde{\theta}}\right)\cdot\mathfrak{n}|D_{i,d}\zeta|^2 d\mathbf{x} \nonumber\\
        &+\sum_{p=0,1}\int_{\mathbb{T}_{p}^{2}}-\frac{2}{5R}\overline{\rho}a_{\mathbf{u}}^{II}a_{\theta}^I\frac{\mu(\theta_{w})\kappa(\theta_{w})}{\mu(\overline{\theta})\theta_{w}}|D_{i,d}\zeta|^2 dx_{2}dx_{3} \nonumber\\
        \geq&\left(\frac{4}{5R}a_{\mathbf{u}}^{II}a_{\theta}^I\overline{\rho}\frac{\kappa(\overline{\theta})}{\overline{\theta}}-C_{3}'\delta\right)\|D_{i,d}\zeta\|_{L^{2}(\Omega)}^2 -\left(\frac{2}{5R}a_{\mathbf{u}}^{II}a_{\theta}^I\frac{\kappa(\overline{\theta})}{\overline{\theta}}\overline{\rho}+C_{3}'\delta\right)\|D_{i,d}\zeta\|_{L^{2}(\partial\Omega)}^2,
    \end{align}
    where $C_{3}'>0$ is a constant only depending on $a_{\mathbf{u}}^{I}, a_{\mathbf{u}}^{II}, a_{\theta}^{I}, a_{\theta}^{II}, R, \mu, \kappa, \overline{\rho}, \overline{\theta}$.

    For $I_{13}$, $I_{14}$ and $I_{15}$, by H\"{o}lder's inequality, 
    \begin{equation}\label{regularityEstimateEllipticSystemI10I11I12}
        |I_{13}|+|I_{14}|+|I_{15}|\leq C_{4}'\varepsilon(\|D_{i,d}\mathbf{u}\|_{H^{1}(\Omega)}^{2}+\|D_{i,d}\zeta\|_{H^{1}(\Omega)}^{2}),
    \end{equation}
    where $C_{4}'>0$ is a constant only depending on $a_{\mathbf{u}}^{I}, a_{\mathbf{u}}^{II}, a_{\theta}^{I}, a_{\theta}^{II}, R, \mu, \kappa, \overline{\rho}, \overline{\theta}$.

    For $I_{16},...,I_{21}$,
    \begin{align}\label{regularityEstimateEllipticSystemI13I14I15I16I17I18}
        |I_{16}|+\cdots+|I_{21}|\geq&\frac{2}{3R}a_{\mathbf{u}}^{II}\mu(\overline{\theta})(\overline{\rho}-C_{5}'\delta-C_{5}'\varepsilon)\|D_{i,d}\mathbf{u}\|_{L^{2}(\partial\Omega)}^2 \nonumber\\
        &+\frac{16}{15R}a_{\theta}^{I}\frac{\kappa(\overline{\theta})}{\overline{\theta}}(\overline{\rho}-C_{5}'\delta-C_{5}'\varepsilon)\|D_{i,d}\zeta\|_{L^{2}(\partial\Omega)}^2, 
    \end{align}
    where $C_{5}'>0$ is a constant only depending on $a_{\mathbf{u}}^{I}, a_{\mathbf{u}}^{II}, a_{\theta}^{I}, a_{\theta}^{II}, R, \mu, \kappa, \overline{\rho}, \overline{\theta}$.

    For $I_{22},...,I_{27}$, by H\"{o}lder's inequality,
    \begin{align}\label{regularityEstimateEllipticSystemI19I20I21I22I23I24}
        |I_{2}|+\cdots+|I_{27}|\leq&\frac{C_{0}'}{2}(\|D_{i,-d}(D_{i,d}\mathbf{u})\|_{L^{2}(\Omega)}^{2}+\|D_{i,-d}(D_{i,d}\zeta)\|_{L^{2}(\Omega)}^{2}) \nonumber\\
        &+C_{6}'(\|\nabla\varphi'\|_{L^{2}(\Omega)}^{2}+\|\mathbf{F'}\|_{L^{2}(\Omega)}^{2}+\|G'\|_{L^{2}(\Omega)}^{2}+\|\div(\varphi'\mathbf{u}')\|_{L^{2}(\Omega)}^{2}),
    \end{align}
    where $C_{6}'>0$ is a constant only depending on $a_{\mathbf{u}}^{I}, a_{\mathbf{u}}^{II}, a_{\theta}^{I}, a_{\theta}^{II}, R, \mu, \kappa, \overline{\theta}$.

    Combining \eqref{regularityEstimateEllipticSystemI1I2I3}, \eqref{regularityEstimateEllipticSystemI4I5I6}, \eqref{regularityEstimateEllipticSystemI7I8}, \eqref{regularityEstimateEllipticSystemI9}, \eqref{regularityEstimateEllipticSystemI10I11I12}, \eqref{regularityEstimateEllipticSystemI13I14I15I16I17I18}, \eqref{regularityEstimateEllipticSystemI19I20I21I22I23I24} and Lemma \ref{EstimateFGLemma}, by \eqref{assumption1} in assumption (A), then choosing $\delta, \varepsilon, d>0$ sufficiently small, we have
    \begin{equation}\label{tangentialH2RegularityEstimateEllipticSystem}
        \sum_{i=2,3}\|\partial_{i}\mathbf{u}\|_{H^{1}(\Omega)}^2 +\sum_{i=2,3}\|\partial_{i}\zeta\|_{H^{1}(\Omega)}^2\leq  CN_{1}(\varphi',\mathbf{u}',\zeta')^{2}(1+N_{2}(\varphi',\mathbf{u}',\zeta'))^{4}.
    \end{equation}
    Moreover, from $\eqref{LSNS}_{2}$, we have
    \begin{equation*}
        -\frac{4}{3}\mu(\overline{\theta})\partial_{1}^{2}u_{1}=\mu(\overline{\theta})\sum_{j=2,3}\partial_{j}^{2}u_{1}+\mu(\overline{\theta})\sum_{j=2,3}\partial_{j}\partial_{1}u_{j}-R\overline{\rho}\partial_{1}\zeta+F_{1}(\varphi',\mathbf{u}',\zeta')-R\overline{\theta}\partial_{1}\varphi',
    \end{equation*}
    and  
    \begin{equation*}
        -\mu(\overline{\theta})\partial_{1}^{2}u_{i}=\mu(\overline{\theta})\sum_{j=2,3}\partial_{j}^{2}u_{i}+\mu(\overline{\theta})\sum_{1\leq j\leq3}\partial_{j}\partial_{i}u_{j}-\frac{2}{3}\partial_{i}\div\mathbf{u}-R\overline{\rho}\partial_{i}\zeta+F_{i}(\varphi',\mathbf{u}',\zeta')-R\overline{\theta}\partial_{i}\varphi',
    \end{equation*}
    for $i=2,3$. Similarly, from $\eqref{LSNS}_{3}$,
    \begin{equation*}
        -\kappa(\overline{\theta})\partial_{1}^{2}\zeta=\kappa(\overline{\theta})\sum_{i=2,3}\partial_{i}^{2}\zeta-(R\overline{\rho}\overline{\theta}+\mathcal{R}\overline{\rho})\div\mathbf{u}+G(\varphi',\mathbf{u}',\zeta')-\mathcal{R}\div(\varphi'\mathbf{u}').
    \end{equation*}
    Together with \eqref{basicEstimateEllipticSystem}, \eqref{tangentialH2RegularityEstimateEllipticSystem}, we have 
    \begin{equation}\label{H2RegularityEstimateEllipticSystem}
        \|\mathbf{u}\|_{H^{2}(\Omega)}+\|\zeta\|_{H^{2}(\Omega)}\leq  CN_{1}(\varphi',\mathbf{u}',\zeta')^{2}(1+N_{2}(\varphi',\mathbf{u}',\zeta'))^{4},
    \end{equation}
    which implies $(\mathbf{u},\zeta)\in H^{2}(\Omega)\times H^{2}(\Omega)$.

    Let us prove that $(\mathbf{u},\zeta)\in H^{3}(\Omega)\cap H^{3}(\Omega)$. Taking $\mathbf{v}=-D_{i,-d}^{2}(D_{i,d}^{2}\mathbf{u})$ and $\xi=-D_{i,-d}^{2}(D_{i,d}^{2}\zeta)$ for $i=2,3$, then
    \begin{align*}
        &\underbrace{\int_{\Omega}\sqrt{\frac{2}{R}}\frac{2}{3R}a_{\mathbf{u}}^{I}a_{\mathbf{u}}^{II}\frac{\mu(\tilde{\theta})^{2}}{\sqrt{\tilde{\theta}}}D_{i,d}^{2}\nabla\mathbf{u}:D_{i,d}^{2}\left(\nabla\mathbf{u}+\nabla\mathbf{u}^{\mathsf{T}}-\frac{2}{3}\div\mathbf{u}\mathbb{I}_{3}\right)d\mathbf{x}}_{J_{1}} \nonumber\\ 
        &+\underbrace{\int_{\Omega}\frac{32}{75R^2}\sqrt{\frac{2}{R}}a_{\theta}^{I}a_{\theta}^{II}\frac{\kappa(\tilde{\theta})^{2}}{\tilde{\theta}\sqrt{\tilde{\theta}}}|D_{i,d}^{2}\nabla\zeta|^{2}d\mathbf{x}}_{J_{2}} \nonumber\\
        &+\underbrace{\int_{\Omega}-\frac{4}{5R^2}a_{\mathbf{u}}^{II}a_{\theta}^I\frac{\mu(\tilde{\theta})\kappa(\tilde{\theta})}{\mu(\overline{\theta})\tilde{\theta}}D_{i,d}^{2}\nabla\zeta\cdot\mathfrak{n}D_{i,d}^{2}\left(\nabla u_{1}+\partial_{1}\mathbf{u}^{\mathsf{T}}-\frac{2}{3}\mathbf{e}_{1}\div\mathbf{u}\right)d\mathbf{x}}_{J_{3}} \nonumber\\
        &\underbrace{\int_{\Omega}\sqrt{\frac{2}{R}}\frac{2}{3R}a_{\mathbf{u}}^{I}a_{\mathbf{u}}^{II}\sum_{0\leq j\leq1}D_{i,d}^{2-j}\left(\frac{\mu(\tilde{\theta})^{2}}{\sqrt{\tilde{\theta}}}\right)\cdot D_{i,d}^{2}\nabla\mathbf{u}:D_{i,d}^{j}\left(\nabla\mathbf{u}+\nabla\mathbf{u}^{\mathsf{T}}-\frac{2}{3}\div\mathbf{u}\mathbb{I}_{3}\right)d\mathbf{x}}_{J_{4}} \nonumber\\ 
        &+\underbrace{\int_{\Omega}\frac{32}{75R^2}\sqrt{\frac{2}{R}}a_{\theta}^{I}a_{\theta}^{II}\sum_{0\leq j\leq1}D_{i,d}^{2-j}\left(\frac{\kappa(\tilde{\theta})^{2}}{\tilde{\theta}\sqrt{\tilde{\theta}}}\right)\cdot D_{i,d}^{2}\nabla\zeta\cdot D_{i,d}^{j}\nabla\zeta d\mathbf{x}}_{J_{5}} \nonumber\\
        &+\underbrace{\int_{\Omega}-\frac{4}{5R^2}a_{\mathbf{u}}^{II}a_{\theta}^I\frac{\mu(\tilde{\theta})\kappa(\tilde{\theta})}{\mu(\overline{\theta})\tilde{\theta}}D_{i,d}^{2}\nabla\zeta\cdot\mathfrak{n}D_{i,d}^{2}\left(\nabla u_{1}+\partial_{1}\mathbf{u}^{\mathsf{T}}-\frac{2}{3}\mathbf{e}_{1}\div\mathbf{u}\right)d\mathbf{x}}_{J_{6}} \nonumber\\
        &+\underbrace{\int_{\Omega}\sqrt{\frac{2}{R}}\frac{2}{3R}a_{\mathbf{u}}^{I}a_{\mathbf{u}}^{II}\sum_{0\leq j\leq2}D_{i,d}^{2-j}\nabla\left(\frac{\mu(\tilde{\theta})^{2}}{\sqrt{\tilde{\theta}}}\right)\otimes D_{i,d}^{2}\mathbf{u}:D_{i,d}^{j}\left(\nabla\mathbf{u}+\nabla\mathbf{u}^{\mathsf{T}}-\frac{2}{3}\div\mathbf{u}\mathbb{I}_{3}\right)d\mathbf{x}}_{J_{7}} \nonumber\\ &+\underbrace{\int_{\Omega}R\overline{\rho}\sqrt{\frac{2}{R}}\frac{2}{3R}a_{\mathbf{u}}^{I}a_{\mathbf{u}}^{II}\sum_{0\leq j\leq2}D_{i,d}^{2-j}\nabla\left(\frac{\mu(\tilde{\theta})^{2}}{\mu(\overline{\theta})\sqrt{\tilde{\theta}}}\right)\cdot D_{i,d}\mathbf{u}\cdot D_{i,d}^{j}\nabla\zeta d\mathbf{x}}_{J_{8}} \nonumber\\
        &+\underbrace{\int_{\Omega}-\frac{4}{5R^2}a_{\mathbf{u}}^{II}a_{\theta}^I\sum_{0\leq j\leq2}D_{i,d}^{2-j}\nabla\left(\frac{\mu(\tilde{\theta})\kappa(\tilde{\theta})}{\mu(\overline{\theta})\tilde{\theta}}\right)\cdot D_{i,d}\zeta\cdot\mathfrak{n}D_{i,d}^{j}\left(\nabla u_{1}+\partial_{1}\mathbf{u}^{\mathsf{T}}-\frac{2}{3}\mathbf{e}_{1}\div\mathbf{u}\right)d\mathbf{x}}_{J_{9}} \nonumber\\
        &+\underbrace{\int_{\Omega}R\overline{\rho}\sqrt{\frac{2}{R}}\frac{2}{3R}a_{\mathbf{u}}^{I}a_{\mathbf{u}}^{II}\frac{\mu(\tilde{\theta})^{2}}{\mu(\overline{\theta})\sqrt{\tilde{\theta}}}D_{i,d}^{2}\mathbf{u}\cdot D_{i,d}^{2}\nabla\zeta d\mathbf{x}}_{J_{10}} \nonumber\\
        &+\underbrace{\int_{\Omega}\frac{32}{75R^2}\sqrt{\frac{2}{R}}a_{\theta}^{I}a_{\theta}^{II}\frac{\kappa(\tilde{\theta})^{2}}{\kappa(\overline{\theta})\tilde{\theta}\sqrt{\tilde{\theta}}}(R\overline{\rho}\overline{\theta}+\mathcal{R}\overline{\rho})D_{i,d}^{2}\zeta \cdot D_{i,d}^{2}\div\mathbf{u}d\mathbf{x}}_{J_{11}} \nonumber\\
        &+\underbrace{\int_{\Omega}-\frac{4}{5R}\overline{\rho}a_{\mathbf{u}}^{II}a_{\theta}^I\frac{\mu(\tilde{\theta})\kappa(\tilde{\theta})}{\mu(\overline{\theta})\tilde{\theta}}\mathfrak{n}D_{i,d}^{2}\zeta\cdot D_{i,d}^{2}\partial_{1}\zeta d\mathbf{x}}_{J_{12}} \nonumber\\
        &+\underbrace{\int_{\Omega}R\overline{\rho}\sqrt{\frac{2}{R}}\frac{2}{3R}a_{\mathbf{u}}^{I}a_{\mathbf{u}}^{II}\sum_{0\leq j\leq2}D_{i,d}^{2-j}\left(\frac{\mu(\tilde{\theta})^{2}}{\mu(\overline{\theta})\sqrt{\tilde{\theta}}}\right)\cdot D_{i,d}^{2}\mathbf{u}\cdot D_{i,d}^{j}\nabla\zeta d\mathbf{x}}_{J_{13}} \nonumber\\
        &+\underbrace{\int_{\Omega}\frac{32}{75R^2}\sqrt{\frac{2}{R}}a_{\theta}^{I}a_{\theta}^{II}\sum_{0\leq j\leq2}D_{i,d}^{2-j}\left(\frac{\kappa(\tilde{\theta})^{2}}{\kappa(\overline{\theta})\tilde{\theta}\sqrt{\tilde{\theta}}}\right)\cdot(R\overline{\rho}\overline{\theta}+\mathcal{R}\overline{\rho})D_{i,d}^{2}\zeta \cdot D_{i,d}^{j}\div\mathbf{u}d\mathbf{x}}_{J_{11}} \nonumber\\
        &+\underbrace{\int_{\Omega}-\frac{4}{5R}\overline{\rho}a_{\mathbf{u}}^{II}a_{\theta}^I\sum_{0\leq j\leq2}D_{i,d}^{2-j}\left(\frac{\mu(\tilde{\theta})\kappa(\tilde{\theta})}{\mu(\overline{\theta})\tilde{\theta}}\right)\cdot\mathfrak{n}D_{i,d}^{2}\zeta\cdot D_{i,d}^{j}\partial_{1}\zeta d\mathbf{x}}_{J_{15}} \nonumber\\
        &+\underbrace{\sum_{p=0,1}\int_{\mathbb{T}_{p}^{2}}\frac{2}{3R}a_{\mathbf{u}}^{II}\mu(\theta_{w})(\varphi'+\overline{\rho})|D_{i,d}^{2}\mathbf{u}|^2dx_{2}dx_{3}}_{J_{16}} +\underbrace{\sum_{p=0,1}\int_{\mathbb{T}_{p}^{2}}\frac{16}{15R}a_{\theta}^{I}\frac{\kappa(\theta_{w})}{\theta_{w}}(\varphi'+\overline{\rho})|D_{i,d}^{2}\zeta|^2dx_{2}dx_{3}}_{J_{17}} \nonumber \\
        &+\underbrace{\sum_{p=0,1}\int_{\mathbb{T}_{p}^{2}}\frac{2}{3R}a_{\mathbf{u}}^{II}(\varphi'+\overline{\rho})\sum_{0\leq j\leq2}D_{i,d}^{2-j}\mu(\theta_{w})\cdot D_{i,d}^{2}\mathbf{u}\cdot D_{i,d}^{j}\mathbf{u}dx_{2}dx_{3}}_{J_{18}} \nonumber\\ &+\underbrace{\sum_{p=0,1}\int_{\mathbb{T}_{p}^{2}}\frac{16}{15R}a_{\theta}^{I}(\varphi'+\overline{\rho})\sum_{0\leq j\leq2}D_{i,d}^{2-j}\left(\frac{\kappa(\theta_{w})}{\theta_{w}}\right)\cdot D_{i,d}^{2}\zeta\cdot D_{i,d}^{j}\zeta dx_{2}dx_{3}}_{J_{19}} \nonumber \\
        &+\underbrace{\sum_{p=0,1}\int_{\mathbb{T}_{p}^{2}}\frac{8}{15R^2}a_{\mathbf{u}}^{II}a_{\theta}^{I}\sum_{0\leq j\leq2}D_{i,d}^{2-j}\left(\frac{\mu(\theta_{w})\kappa(\theta_{w})}{\theta_{w}}\right)\cdot D_{i,d}^{2}\mathbf{u}\cdot D_{i,d}^{j}\nabla\tilde{\theta}dx_{2}dx_{3}}_{J_{20}} \nonumber\\ &+\underbrace{\sum_{p=0,1}\int_{\mathbb{T}_{p}^{2}}\frac{32}{75R^2}a_{\theta}^{I}a_{\theta}^{II}\sum_{0\leq j\leq2}D_{i,d}^{2-j}\left(\frac{\kappa(\theta_{w})^2}{\theta_{w}\sqrt{\theta_{w}}}\right)\cdot\mathfrak{n}D_{i,d}^{2}\zeta\cdot D_{i,d}^{j}\partial_{1}\tilde{\theta}dx_{2}dx_{3}}_{J_{21}} \nonumber \\
        = 
        &\underbrace{\int_{\Omega}\sqrt{\frac{2}{R}}\frac{2}{3R}a_{\mathbf{u}}^{I}a_{\mathbf{u}}^{II}\sum_{0\leq j\leq1}D_{i,d}^{1-j}\left(\frac{\mu(\tilde{\theta})^{2}}{\mu(\overline{\theta})\sqrt{\tilde{\theta}}}\right)\cdot D_{i,-d}(D_{i,d}^{2}\mathbf{u})\cdot D_{i,d}^{j}\mathbf{F}(\varphi',\mathbf{u}',\zeta')d\mathbf{x}}_{J_{22}} \nonumber\\
        &+\underbrace{\int_{\Omega}-R\tilde{\theta}\sqrt{\frac{2}{R}}\frac{2}{3R}a_{\mathbf{u}}^{I}a_{\mathbf{u}}^{II}\sum_{0\leq j\leq1}D_{i,d}^{1-j}\left(\frac{\mu(\tilde{\theta})^{2}}{\mu(\overline{\theta})\sqrt{\tilde{\theta}}}\right)\cdot D_{i,-d}(D_{i,d}^{2}\mathbf{u})\cdot D_{i,d}^{j}\nabla\varphi'd\mathbf{x}}_{J_{23}} \nonumber \\
        &+\underbrace{\int_{\Omega}\frac{32}{75R^2}\sqrt{\frac{2}{R}}a_{\theta}^{I}a_{\theta}^{II}\sum_{0\leq j\leq1}D_{i,d}^{1-j}\left(\frac{\kappa(\tilde{\theta})^{2}}{\kappa(\overline{\theta})\tilde{\theta}\sqrt{\tilde{\theta}}}\right)\cdot D_{i,-d}(D_{i,d}^{2}\zeta)\cdot D_{i,d}^{j}G(\varphi',\mathbf{u}',\zeta')d\mathbf{x}}_{J_{24}} \nonumber\\
        &+\underbrace{\int_{\Omega}-\frac{32}{75R^2}\sqrt{\frac{2}{R}}a_{\theta}^{I}a_{\theta}^{II}\sum_{0\leq j\leq1}D_{i,d}^{1-j}\left(\frac{\kappa(\tilde{\theta})^{2}}{\kappa(\overline{\theta})\tilde{\theta}\sqrt{\tilde{\theta}}}\right)\cdot\mathcal{R}D_{i,-d}(D_{i,d}^{2}\zeta)\cdot D_{i,d}^{j}\div(\varphi'\mathbf{u}')d\mathbf{x}}_{J_{25}} \nonumber \\
        &+\underbrace{\int_{\Omega}-\frac{4}{5R^2}a_{\mathbf{u}}^{II}a_{\theta}^I\sum_{0\leq j\leq1}D_{i,d}^{1-j}\left(\frac{\mu(\tilde{\theta})\kappa(\tilde{\theta})}{\mu(\overline{\theta})\tilde{\theta}}\right)\cdot\mathfrak{n}D_{i,-d}(D_{i,d}^{2}\zeta)\cdot D_{i,d}^{j}F_{1}(\varphi',\mathbf{u}',\zeta')d\mathbf{x}}_{J_{26}} \nonumber\\
        &+\underbrace{\int_{\Omega}\frac{4}{5R}\overline{\theta}a_{\mathbf{u}}^{II}a_{\theta}^I\sum_{0\leq j\leq1}D_{i,d}^{1-j}\left(\frac{\mu(\tilde{\theta})\kappa(\tilde{\theta})}{\mu(\overline{\theta})\tilde{\theta}}\right)\cdot\mathfrak{n}D_{i,-d}(D_{i,d}^{2}\zeta)\cdot D_{i,d}^{j}\partial_{1}\varphi'd\mathbf{x}}_{J_{27}}.
    \end{align*}
    
    For $J_{1}$, $J_{2}$ and $J_{3}$, by H\"{o}lder's inequality and \eqref{assumption2} in assumption (A), 
    \begin{align*}
        J_{1}+J_{2}+J_{3}\geq&\left(\frac{2}{3R}\sqrt{\frac{2}{R}}a_{\mathbf{u}}^{I}a_{\mathbf{u}}^{II}\frac{\mu(\overline{\theta})^{2}}{\sqrt{\overline{\theta}}}-C\delta\right)\int_{\Omega}\left|D_{i,d}^{2}\left(\nabla\mathbf{u}+\nabla\mathbf{u}^{\mathsf{T}}-\frac{2}{3}\div\mathbf{u}\mathbb{I}_{3}\right)\right|^2d\mathbf{x} \nonumber\\
        &+\left(\frac{32}{75R^2}\sqrt{\frac{2}{R}}a_{\theta}^{I}a_{\theta}^{II}\frac{\kappa(\overline{\theta})^{2}}{\overline{\theta}\sqrt{\overline{\theta}}}-C\delta\right)\int_{\Omega}|D_{i,d}^{2}\nabla\zeta|^2d\mathbf{x} \nonumber\\
        &-\left(\frac{4}{5R^2}a_{\mathbf{u}}^{II}a_{\theta}^I\frac{\mu(\overline{\theta})\kappa(\overline{\theta})}{\mu(\overline{\theta})\overline{\theta}}+C\delta\right)\int_{\Omega}|D_{i,d}^{2}\nabla\zeta|\cdot\left|D_{i,d}^{2}\left(\nabla\mathbf{u}+\nabla\mathbf{u}^{\mathsf{T}}-\frac{2}{3}\div\mathbf{u}\mathbb{I}_{3}\right)\right|d\mathbf{x} \nonumber\\
        \geq&C_{0}''\left(\left\|D_{i,d}^{2}\left(\nabla\mathbf{u}+\nabla\mathbf{u}'-\frac{2}{3}\div\mathbf{u}\mathbb{I}_{3}\right)\right\|_{L^{2}(\Omega)}^2+\|D_{i,d}^{2}\nabla\zeta\|_{L^{2}(\Omega)}^2\right),
    \end{align*}
    where $C_{0}'>0$ is a constant only depending on $a_{\mathbf{u}}^{I}, a_{\mathbf{u}}^{II}, a_{\theta}^{I}, a_{\theta}^{II}, R, \mu, \kappa, \overline{\theta}$. Then applying Korn's type inequality \eqref{KornInequality}, we have
    \begin{equation}\label{regularityEstimateEllipticSystemJ1J2J3}
        J_{1}+J_{2}+J_{3}\geq C_{0}''(\|D_{i,d}^{2}\nabla\mathbf{u}\|_{L^{2}(\Omega)}^2+\|D_{i,d}^{2}\nabla\zeta\|_{L^{2}(\Omega)}^2).
    \end{equation}

    For $J_{4},...,I_{9}$, by H\"{o}lder's inequality, 
    \begin{equation}\label{regularityEstimateEllipticSystemJ4J5J6}
        |J_{4}|+\cdots+|J_{6}|\leq C_{1}''\delta\sum_{0\leq j\leq2}(\|D_{i,d}^{j}\mathbf{u}\|_{H^{1}(\Omega)}^{2}+\|D_{i,d}^{j}\zeta\|_{H^{1}(\Omega)}^{2}),
    \end{equation}
    where $C_{1}''>0$ is a constant only depending on $a_{\mathbf{u}}^{I}, a_{\mathbf{u}}^{II}, a_{\theta}^{I}, a_{\theta}^{II}, R, \mu, \kappa, \overline{\theta}$.

    For $J_{10}$ and $J_{11}$, since
    \begin{equation*}
        R\overline{\rho}\sqrt{\frac{2}{R}}\frac{2}{3R}a_{\mathbf{u}}^{I}a_{\mathbf{u}}^{II}\mu(\overline{\theta})^{2}\overline{\theta}^{-\frac{1}{2}}=R(\overline{\rho}\overline{\theta}+\mathcal{R}\overline{\rho})\frac{32}{75R^2}\sqrt{\frac{2}{R}}a_{\theta}^{I}a_{\theta}^{II}\mu(\overline{\theta})\kappa(\overline{\theta})\overline{\theta}^{-\frac{3}{2}},
    \end{equation*}
    therefore, by H\"{o}lder's inequality, 
    \begin{align}\label{regularityEstimateEllipticSystemJ7J8}
        |J_{10}+J_{11}|\leq&C_{2}''\delta\left(\int_{\Omega}|D_{i,d}\mathbf{u}\cdot D_{i,d}^{2}\nabla\zeta|d\mathbf{x}+\int_{\Omega}|D_{i,d}^{2}\zeta\cdot D_{i,d}^{2}\div\mathbf{u}|d\mathbf{x}\right) \nonumber\\
        \leq&C_{2}''\delta\left(\|D_{i,d}^{2}\mathbf{u}\|_{H^{1}(\Omega)}^2+\|D_{i,d}^{2}\zeta\|_{H^{1}(\Omega)}^2\right),
    \end{align}
    where $C_{2}''>0$ is a constant only depending on $a_{\mathbf{u}}^{I}, a_{\mathbf{u}}^{II}, a_{\theta}^{I}, a_{\theta}^{II}, R, \mu, \kappa, \overline{\rho}, \overline{\theta}$.

    For $J_{12}$, by divergence theorem, 
    \begin{align}\label{regularityEstimateEllipticSystemJ9}
        J_{12}=&\int_{\Omega}\frac{4}{5R}\overline{\rho}a_{\mathbf{u}}^{II}a_{\theta}^I\frac{\mu(\tilde{\theta})\kappa(\tilde{\theta})}{\mu(\overline{\theta})\tilde{\theta}}|D_{i,d}^{2}\zeta|^2 d\mathbf{x}+\int_{\Omega}\frac{2}{5R}\overline{\rho}a_{\mathbf{u}}^{II}a_{\theta}^I\partial_{1}\left(\frac{\mu(\tilde{\theta})\kappa(\tilde{\theta})}{\mu(\overline{\theta})\tilde{\theta}}\right)\cdot\mathfrak{n}|D_{i,d}^{2}\zeta|^2 d\mathbf{x} \nonumber\\
        &+\sum_{p=0,1}\int_{\mathbb{T}_{p}^{2}}-\frac{2}{5R}\overline{\rho}a_{\mathbf{u}}^{II}a_{\theta}^I\frac{\mu(\theta_{w})\kappa(\theta_{w})}{\mu(\overline{\theta})\theta_{w}}|D_{i,d}^{2}\zeta|^2 dx_{2}dx_{3} \nonumber\\
        \geq&(\frac{4}{5R}a_{\mathbf{u}}^{II}a_{\theta}^I\overline{\rho}\frac{\kappa(\overline{\theta})}{\overline{\theta}}-C_{3}''\delta)\|D_{i,d}^{2}\zeta\|_{L^{2}(\Omega)}^2-\left(\frac{2}{5R}a_{\mathbf{u}}^{II}a_{\theta}^I\frac{\kappa(\overline{\theta})}{\overline{\theta}}\overline{\rho}+C_{3}''\delta\right)\|D_{i,d}^{2}\zeta\|_{L^{2}(\partial\Omega)}^2,
    \end{align}
    where $C_{3}''>0$ is a constant only depending on $a_{\mathbf{u}}^{I}, a_{\mathbf{u}}^{II}, a_{\theta}^{I}, a_{\theta}^{II}, R, \mu, \kappa, \overline{\rho}, \overline{\theta}$.

    For $J_{13}$, $J_{14}$ and $J_{15}$, by H\"{o}lder's inequality, 
    \begin{equation}\label{regularityEstimateEllipticSystemJ10J11J12}
        |J_{13}|+|J_{14}|+|J_{15}|\leq C_{4}''\delta(\|D_{i,d}^{2}\mathbf{u}\|_{H^{1}(\Omega)}^{2}+\|D_{i,d}^{2}\zeta\|_{H^{1}(\Omega)}^{2}),
    \end{equation}
    where $C_{4}''>0$ is a constant only depending on $a_{\mathbf{u}}^{I}, a_{\mathbf{u}}^{II}, a_{\theta}^{I}, a_{\theta}^{II}, R, \mu, \kappa, \overline{\rho}, \overline{\theta}$.

    For $J_{16},...,J_{21}$,
    \begin{align}\label{regularityEstimateEllipticSystemJ13J14J15J16J17J18}
        |J_{16}|+\cdots+|J_{21}|\geq&\frac{2}{3R}a_{\mathbf{u}}^{II}\mu(\overline{\theta})(\overline{\rho}-C_{5}''\delta-C_{5}''\varepsilon)\|D_{i,d}^{2}\mathbf{u}\|_{L^{2}(\partial\Omega)}^2 \nonumber\\
        &+\frac{16}{15R}a_{\theta}^{I}\frac{\kappa(\overline{\theta})}{\overline{\theta}}(\overline{\rho}-C_{5}''\delta-C_{5}''\varepsilon)\|D_{i,d}^{2}\zeta\|_{L^{2}(\partial\Omega)}^2, 
    \end{align}
    where $C_{5}''>0$ is a constant only depending on $a_{\mathbf{u}}^{I}, a_{\mathbf{u}}^{II}, a_{\theta}^{I}, a_{\theta}^{II}, R, \mu, \kappa, \overline{\rho}, \overline{\theta}$.

    For $J_{22},...,J_{27}$, by H\"{o}lder's inequality, 
    \begin{align}\label{regularityEstimateEllipticSystemJ19J20J21J22J23J24}
        |J_{22}|+\cdots+|J_{27}|\leq&\frac{C_{0}''}{2}(\|D_{i,-d}(D_{i,d}^{2}\mathbf{u})\|_{L^{2}(\Omega)}^{2}+\|D_{i,-d}(D_{i,d}^{2}\zeta)\|_{L^{2}(\Omega)}^{2}) \nonumber\\
        &+C_{6}''(\|D_{i,d}\nabla\varphi'\|_{L^{2}(\Omega)}^{2}+\|D_{i,d}\mathbf{F'}\|_{L^{2}(\Omega)}^{2}+\|D_{i,d}G'\|_{L^{2}(\Omega)}^{2}+\|D_{i,d}\div(\varphi'\mathbf{u}')\|_{L^{2}(\Omega)}^{2}),
    \end{align}
    where $C_{6}''>0$ is a constant only depending on $a_{\mathbf{u}}^{I}, a_{\mathbf{u}}^{II}, a_{\theta}^{I}, a_{\theta}^{II}, R, \mu, \kappa, \overline{\theta}$.

    Combining \eqref{regularityEstimateEllipticSystemJ1J2J3}, \eqref{regularityEstimateEllipticSystemJ4J5J6}, \eqref{regularityEstimateEllipticSystemJ7J8}, \eqref{regularityEstimateEllipticSystemJ9}, \eqref{regularityEstimateEllipticSystemJ10J11J12}, \eqref{regularityEstimateEllipticSystemJ13J14J15J16J17J18}, \eqref{regularityEstimateEllipticSystemJ19J20J21J22J23J24} and Lemma \ref{EstimateFGLemma}, by \eqref{assumption1} in assumption (A), then choosing $\delta, \varepsilon, d>0$ sufficiently small, we have,
    \begin{equation}\label{tangentialH3RegularityEstimateEllipticSystem}
        \sum_{i=2,3}\|\partial_{i}\mathbf{u}\|_{H^{2}(\Omega)}^2 +\sum_{i=2,3}\|\partial_{i}\zeta\|_{H^{2}(\Omega)}^2\leq  CN_{2}(\varphi',\mathbf{u}',\zeta')^{2}(1+N_{2}(\varphi',\mathbf{u}',\zeta'))^{4}.
    \end{equation}
    Moreover, from $\eqref{LSNS}_{2}$, 
    \begin{equation*}
        -\frac{4}{3}\mu(\overline{\theta})\partial_{1}^{3}u_{1}=\mu(\overline{\theta})\sum_{j=2,3}\partial_{1}\partial_{j}^{2}u_{1}+\mu(\overline{\theta})\sum_{j=2,3}\partial_{1}\partial_{j}\partial_{1}u_{j}-R\overline{\rho}\partial_{1}^{2}\zeta+\partial_{1}F_{1}(\varphi',\mathbf{u}',\zeta')-R\overline{\theta}\partial_{1}^{2}\varphi',
    \end{equation*}
    and  
    \begin{equation*}
        -\mu(\overline{\theta})\partial_{1}^{3}u_{i}=\mu(\overline{\theta})\sum_{j=2,3}\partial_{1}\partial_{j}^{2}u_{i}+\mu(\overline{\theta})\sum_{1\leq j\leq3}\partial_{1}\partial_{j}\partial_{i}u_{j}-\frac{2}{3}\partial_{1}\partial_{i}\div\mathbf{u}-R\overline{\rho}\partial_{1}\partial_{i}\zeta+\partial_{1}F_{i}(\varphi',\mathbf{u}',\zeta')-R\overline{\theta}\partial_{1}\partial_{i}\varphi',
    \end{equation*}
    for $i=2,3$. Similarly, from $\eqref{LSNS}_{3}$,
    \begin{equation*}
        -\kappa(\overline{\theta})\partial_{1}^{3}\zeta=\kappa(\overline{\theta})\sum_{i=2,3}\partial_{1}\partial_{i}^{2}\zeta-(R\overline{\rho}\overline{\theta}+\mathcal{R}\overline{\rho})\partial_{1}\div\mathbf{u}+\partial_{1}G(\varphi',\mathbf{u}',\zeta')-\mathcal{R}\partial_{1}\div(\varphi'\mathbf{u}').
    \end{equation*}
    Together with $\eqref{H2RegularityEstimateEllipticSystem}, \eqref{tangentialH3RegularityEstimateEllipticSystem}$, we obtain $\eqref{regularityEstimateEllipticSystem}$,
    which also implies $(\mathbf{u},\zeta)\in H^{3}(\Omega)\times H^{3}(\Omega)$. The proof of Lemma \ref{regularityEllipticSystem} is complete.
\end{proof}

Applying Lemma \ref{regularityEllipticSystem} to Corollary \ref{weakExistenceEllipticSystem}, we obtain the existence of a solution in $V^{3}(\Omega)\times H^{3}(\Omega)$ to boundary value problem \eqref{LSNS}$_{2,3}$ and \eqref{LBC}.

\begin{corollary}\label{existenceEllipticSystem}
    Under the assumption of Theorem \ref{mainTheorem}, there exist constants $\delta_{0}, \varepsilon_{0}>0$ such that for $\delta\in[0,\delta_{0}), \varepsilon\in[0,\varepsilon_{0})$, if the wall temperature $\theta_{w}\in H^{\vartheta}(\partial\Omega)$ with $\|\theta_{w}-\overline{\theta}\|_{H^{\vartheta}(\partial\Omega)}\leq \delta$ where $\vartheta>\frac{7}{2}$, and $(\varphi',\mathbf{u}',\zeta')\in (H^{2}(\Omega)\cap L_{0}^{2}(\Omega))\times V^{3}(\Omega)\times H^{3}(\Omega)$ with $N_{2}(\varphi',\mathbf{u}',\zeta')\leq\varepsilon$ such that $\|\varphi'\|_{L^{\infty}(\Omega)}+\overline{\rho}>0$ and $\|\zeta'\|_{L^{\infty}(\Omega)}+\overline{\theta}>0$, then the boundary value problem $\eqref{LSNS}_{2}$ and $\eqref{LSNS}_{3}$ with \eqref{LBC} has a unique strong solution $(u,\zeta)\in V^{3}(\Omega)\times H^{3}(\Omega)$. 
\end{corollary}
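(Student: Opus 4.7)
The plan is to combine the existence statement in Corollary \ref{weakExistenceEllipticSystem} with the regularity statement in Lemma \ref{regularityEllipticSystem}, choosing the smallness parameters compatibly. Concretely, let $\delta_{0}^{w}, \delta_{0}^{r}$ and $\varepsilon_{0}^{r}$ denote the thresholds produced by Corollary \ref{weakExistenceEllipticSystem} and Lemma \ref{regularityEllipticSystem}, respectively, and set $\delta_{0}:=\min\{\delta_{0}^{w},\delta_{0}^{r}\}$ and $\varepsilon_{0}:=\varepsilon_{0}^{r}$. Under the hypotheses $\|\theta_{w}-\overline{\theta}\|_{H^{\vartheta}(\partial\Omega)}\le\delta<\delta_{0}$ and $N_{2}(\varphi',\mathbf{u}',\zeta')\le\varepsilon<\varepsilon_{0}$, with the sign conditions $\|\varphi'\|_{L^{\infty}(\Omega)}+\overline{\rho}>0$ and $\|\zeta'\|_{L^{\infty}(\Omega)}+\overline{\theta}>0$, both results become simultaneously applicable.

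First, I would invoke Corollary \ref{weakExistenceEllipticSystem}: the coercivity estimate \eqref{basicEstimateEllipticSystem} established in Lemma \ref{coervivityEllipticSystem}, together with the boundedness of the bilinear form appearing on the left-hand side of \eqref{weakFormEllipticSystem} on $V^{1}(\Omega)\times H^{1}(\Omega)$ and the boundedness of the right-hand side as a linear functional (the latter following from Lemma \ref{EstimateFGLemma} and standard trace/Sobolev estimates), allows one to apply the Lax--Milgram theorem and produces a unique weak solution $(\mathbf{u},\zeta)\in V^{1}(\Omega)\times H^{1}(\Omega)$ of the variational problem in Definition \ref{definitionWeakSolutionEllipticSystem}.

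Next, I would invoke Lemma \ref{regularityEllipticSystem} on this weak solution: its conclusion is precisely that $(\mathbf{u},\zeta)\in V^{3}(\Omega)\times H^{3}(\Omega)$, together with the quantitative bound \eqref{regularityEstimateEllipticSystem}. Since the variational identity \eqref{weakFormEllipticSystem} was designed so that integration by parts reproduces both the interior equations $\eqref{LSNS}_{2,3}$ and the tangential/normal boundary conditions in $\eqref{LBC}_{2,3}$ (the normal boundary condition $\mathbf{u}\cdot\mathbf{n}=0$ being already encoded in the test space $V^{1}(\Omega)$), the improved regularity allows one to undo the integration by parts and conclude that $(\mathbf{u},\zeta)$ is a strong solution in the classical sense.

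Finally, uniqueness in $V^{3}(\Omega)\times H^{3}(\Omega)$ is immediate: any two such strong solutions are, in particular, weak solutions in $V^{1}(\Omega)\times H^{1}(\Omega)$, and uniqueness at the weak level was already furnished by Corollary \ref{weakExistenceEllipticSystem} (equivalently, by the strict coercivity in Lemma \ref{coervivityEllipticSystem}). The only subtlety in executing this plan is checking that the two threshold choices $(\delta_{0}^{w},\cdot)$ and $(\delta_{0}^{r},\varepsilon_{0}^{r})$ are indeed compatible, which is immediate upon taking the minimum, and verifying that the sign conditions on $\varphi'$ and $\zeta'$ persist through both applications, which is also immediate since they are hypotheses shared by both earlier statements. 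No new estimate is required beyond those already proved.
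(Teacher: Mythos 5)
Your proposal is correct and follows the same route as the paper, which obtains this corollary precisely by applying Lemma \ref{regularityEllipticSystem} to the weak solution furnished by Corollary \ref{weakExistenceEllipticSystem}. Your write-up simply fills in the compatibility of thresholds, the recovery of the strong formulation from the variational one, and the uniqueness argument, all of which the paper leaves implicit.
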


\subsection{Existence and regularity results for $\varphi$}
Having obtained $(\mathbf{u},\zeta)\in(H^{3}(\Omega)\cap V)\times H^{3}(\Omega)$, let us consider $\eqref{LSNS}_{1}$ for $\varphi$. We rewrite $\eqref{LSNS}_{1}$ in the following way:
\begin{equation}\label{transportEquation}
  \varphi+h\div(\varphi\mathbf{u})=H(\varphi',\mathbf{u}),
\end{equation} 
where $H(\varphi',\mathbf{u})=\varphi'-h\overline{\rho}\div\mathbf{u}$. 

To construct a solution to \eqref{transportEquation}, let $k\in\mathbb{N}$, we consider:
\begin{equation}\label{ellipticApproximationTransport}
    -\frac{1}{k}\Delta\varphi^{k}+\varphi^{k}+h\div(\varphi^{k}\mathbf{u})=H(\varphi',\mathbf{u}),
\end{equation}
for $x\in\Omega$, with the Neumann boundary condition
\begin{equation}\label{neumannBC}
    \mathbf{n}\cdot\nabla\varphi^{k}=0,
\end{equation}
for $x\in\partial\Omega$. Then we have the following result.

\begin{lemma}\label{H1UniformEstimateEllipticApproximationTransportLemma}
    Under the assumption of Corollary \ref{existenceEllipticSystem}, for an arbitrary fixed constant $h>0$, the boundary value problem \eqref{ellipticApproximationTransport} and \eqref{neumannBC} has a unique strong solution $\varphi^{k}\in L_{0}^{2}(\Omega)\cap H^{4}(\Omega)$. Moreover, there exists a constant $C>0$ which is independent of $k$ such that 
    \begin{equation}\label{H1UniformEstimateEllipticApproximationTransport}
        \|\varphi^{k}\|_{H^{1}(\Omega)}^{2}\leq CN_{1}(\varphi',\mathbf{u}',\zeta')^{2}(1+N_{2}(\varphi',\mathbf{u}',\zeta'))^{4}.
    \end{equation}
\end{lemma}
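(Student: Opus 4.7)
My plan has three steps: (i) build a unique weak solution in $H^{1}(\Omega)$ via Lax--Milgram; (ii) bootstrap to $L_{0}^{2}(\Omega)\cap H^{4}(\Omega)$ by elliptic regularity; and (iii) derive the $k$-independent $H^{1}$ estimate \eqref{H1UniformEstimateEllipticApproximationTransport} by an energy argument adapted to the channel geometry.

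For (i) I would test \eqref{ellipticApproximationTransport} against $\psi\in H^{1}(\Omega)$, integrating the term $h\div(\varphi^{k}\mathbf{u})\psi$ by parts and using $\mathbf{u}\cdot\mathbf{n}=0$ to drop the boundary contribution, producing the bilinear form
\begin{equation*}
a(\varphi,\psi)=\frac{1}{k}\int_{\Omega}\nabla\varphi\cdot\nabla\psi\,d\mathbf{x}+\int_{\Omega}\varphi\psi\,d\mathbf{x}-h\int_{\Omega}\varphi\,\mathbf{u}\cdot\nabla\psi\,d\mathbf{x}.
\end{equation*}
Setting $\psi=\varphi$ and combining the identity $-h\int\varphi\,\mathbf{u}\cdot\nabla\varphi=\tfrac{h}{2}\int\div\mathbf{u}\,\varphi^{2}$ with the bound $\|\div\mathbf{u}\|_{L^{\infty}}\lesssim\|\mathbf{u}\|_{H^{3}}\leq C\varepsilon_{0}$ (from Lemma \ref{regularityEllipticSystem} and Sobolev embedding) gives coercivity for $\varepsilon_{0}$ small. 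Since $H(\varphi',\mathbf{u})\in L^{2}(\Omega)$, Lax--Milgram yields a unique $\varphi^{k}\in H^{1}(\Omega)$.

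For (ii) I would rewrite the equation as $-k^{-1}\Delta\varphi^{k}=H-\varphi^{k}-h\div(\varphi^{k}\mathbf{u})$ and iterate standard elliptic regularity for the Neumann Laplacian on the channel: since $H\in H^{2}(\Omega)$ and $\mathbf{u}\in H^{3}(\Omega)$ by Lemma \ref{regularityEllipticSystem}, the right-hand side lies in $H^{2}(\Omega)$, so $\varphi^{k}\in H^{4}(\Omega)$. The mean-zero property $\varphi^{k}\in L_{0}^{2}(\Omega)$ is verified by integrating \eqref{ellipticApproximationTransport}: $\int_{\Omega}\Delta\varphi^{k}=0$ by \eqref{neumannBC}, $\int_{\Omega}\div(\varphi^{k}\mathbf{u})=\int_{\partial\Omega}\varphi^{k}\mathbf{u}\cdot\mathbf{n}=0$, and $\int_{\Omega}H=\int_{\Omega}\varphi'-h\overline{\rho}\int_{\Omega}\div\mathbf{u}=0$ since $\varphi'\in L_{0}^{2}(\Omega)$ and $\int_{\Omega}\div\mathbf{u}=0$.

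Step (iii) is the main obstacle. The $L^{2}$ bound $\|\varphi^{k}\|_{L^{2}}\leq C\|H\|_{L^{2}}$ is immediate from the coercivity computation. To gain one derivative uniformly in $k$, I would differentiate \eqref{ellipticApproximationTransport} in $x_{j}$ for $j=1,2,3$ and test against $\partial_{j}\varphi^{k}$. The delicate issue is that after differentiation the boundary behaviour of $\partial_{j}\varphi^{k}$ must still make the integration by parts clean. The channel geometry is decisive here: since $\mathbf{n}=\pm\mathbf{e}_{1}$ on $\partial\Omega$, for $j=2,3$ the tangential derivative $\partial_{j}$ commutes with \eqref{neumannBC} and preserves the Neumann condition, while for $j=1$ the boundary condition itself reads $\partial_{1}\varphi^{k}=0$ on $\partial\Omega$, a Dirichlet condition which kills the boundary term from $-k^{-1}\Delta(\partial_{1}\varphi^{k})$ and also renders $\div(\partial_{1}\varphi^{k}\mathbf{u})$ compatible with the identity $\int\div(f\mathbf{u})f=\tfrac{1}{2}\int\div\mathbf{u}\,f^{2}$. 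Applying this identity with $f=\partial_{j}\varphi^{k}$ and bounding the commutator $h\int\div(\varphi^{k}\partial_{j}\mathbf{u})\partial_{j}\varphi^{k}$ by $C\|\mathbf{u}\|_{H^{3}}\|\varphi^{k}\|_{H^{1}}\|\nabla\varphi^{k}\|_{L^{2}}$, I would arrive, after absorption, at
\begin{equation*}
\frac{1}{k}\|\nabla^{2}\varphi^{k}\|_{L^{2}}^{2}+(1-C\varepsilon_{0})\|\nabla\varphi^{k}\|_{L^{2}}^{2}\leq C\bigl(\|H\|_{H^{1}}^{2}+\|\varphi^{k}\|_{L^{2}}^{2}\bigr).
\end{equation*}
Since $\|H\|_{H^{1}}\lesssim\|\varphi'\|_{H^{1}}+h\overline{\rho}\|\mathbf{u}\|_{H^{2}}$ and \eqref{H2RegularityEstimateEllipticSystem} (established inside the proof of Lemma \ref{regularityEllipticSystem}) controls $\|\mathbf{u}\|_{H^{2}}$ in terms of $N_{1}(\varphi',\mathbf{u}',\zeta')(1+N_{2}(\varphi',\mathbf{u}',\zeta'))^{2}$, the bound \eqref{H1UniformEstimateEllipticApproximationTransport} follows. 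Uniformity in $k$ is preserved because the $k^{-1}\|\nabla^{2}\varphi^{k}\|_{L^{2}}^{2}$ term sits on the favourable side of the estimate and is simply discarded.
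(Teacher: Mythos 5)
Your proposal is correct and follows essentially the same route as the paper: an $L^{2}$ energy estimate exploiting the identity $\int_{\Omega}\div(f\mathbf{u})f\,d\mathbf{x}=\tfrac12\int_{\Omega}\div\mathbf{u}\,f^{2}\,d\mathbf{x}$ (valid since $\mathbf{u}\cdot\mathbf{n}=0$), followed by differentiating the equation and testing against $\nabla\varphi^{k}$, with the $k^{-1}\|\nabla^{2}\varphi^{k}\|_{L^{2}}^{2}$ term discarded on the favourable side. Your additional bookkeeping — Lax--Milgram for existence, the verification that $\varphi^{k}\in L^{2}_{0}(\Omega)$, and the observation that $\partial_{2},\partial_{3}$ preserve the Neumann condition while $\partial_{1}\varphi^{k}$ satisfies a Dirichlet condition on $\partial\Omega$ — correctly supplies steps the paper leaves implicit.
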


\begin{proof}
     For the boundary value problem \eqref{ellipticApproximationTransport} and \eqref{neumannBC}, by the classic existence and regularity results of elliptic equations, the above problem admits a strong solution $\varphi^{k}\in H^4(\Omega)$. It suffices to prove \eqref{H1UniformEstimateEllipticApproximationTransport}.

    Multiplying $\eqref{ellipticApproximationTransport}$ by $\varphi^{k}$,
    \begin{equation*}
        \frac{1}{k}\|\nabla\varphi^{k}\|_{L^{2}(\Omega)}^{2}+\|\varphi^{k}\|_{L^{2}(\Omega)}^{2}+h\int_{\Omega}\varphi^{k}\div(\varphi^{k}\mathbf{u})d\mathbf{x}=\int_{\Omega}\varphi^{k}H(\varphi',\mathbf{u})d\mathbf{x}.
    \end{equation*}

    Note by H\"{o}lder's inequality and Sobolev's inequality, 
    \begin{equation*}
        \left|\int_{\Omega}\varphi^{k}\div(\varphi^{k}\mathbf{u})d\mathbf{x}\right|\leq \frac{1}{2}\int_{\Omega}|\varphi^{k}|^2|\div\mathbf{u}|d\mathbf{x}\leq C_{1}\|\varphi^{k}\|_{L^{2}(\Omega)}^2\|\mathbf{u}\|_{H^{3}(\Omega)},
    \end{equation*}
    and
    \begin{equation*}
        \left|\int_{\Omega}\varphi^{k}H(\varphi',\mathbf{u})d\mathbf{x}\right|\leq \frac{1}{2}\|\varphi^{k}\|_{L^{2}(\Omega)}^{2}+C_{2}(\|\varphi'\|_{L^{2}(\Omega)}^{2}+h^{2}\|\nabla\mathbf{u}\|_{L^{2}(\Omega)}^{2}).
    \end{equation*}
    Together with \eqref{basicEstimateEllipticSystem} in Lemma \ref{weakExistenceEllipticSystem} and \eqref{regularityEstimateEllipticSystem} in Lemma \ref{regularityEllipticSystem}, and then choosing $\varepsilon>0$ sufficiently small, we have
    \begin{equation}\label{L2EstimateEllipticApproximationTransport}
        \frac{1}{k}\|\nabla\varphi^{k}\|_{L^{2}(\Omega)}^{2}+\|\varphi^{k}\|_{L^{2}(\Omega)}^{2}\leq C_{3}N_{1}(\varphi',\mathbf{u}',\zeta'))^{2}(1+N_{2}(\varphi',\mathbf{u'},\zeta'))^{4}.
    \end{equation}

    Multiplying $\nabla\eqref{ellipticApproximationTransport}$ by $\nabla\varphi^{k}$,
    \begin{equation*}
        \frac{1}{k}\|\nabla^{2}\varphi^{k}\|_{L^{2}(\Omega)}^{2}+\|\nabla\varphi^{k}\|_{L^{2}(\Omega)}^{2}-h\int_{\Omega}\nabla\varphi^{k}\cdot\nabla\div(\varphi^{k}\mathbf{u})d\mathbf{x}=\int_{\Omega}\nabla\varphi^{k}\cdot\nabla H(\varphi',\mathbf{u})d\mathbf{x}.
    \end{equation*}

    We further note by H\"{o}lder's inequality and Sobolev's inequality, 
    \begin{align*}
        &\left|\int_{\Omega}\nabla\varphi^{k}\cdot\nabla\div(\varphi^{k}\mathbf{u})d\mathbf{x}\right| \\
        \leq&2\left(\int_{\Omega}|\nabla\varphi^{k}|^2|\div\mathbf{u}|d\mathbf{x}+\int_{\Omega}|\nabla\varphi^{k}\cdot\varphi^{k}\nabla\div\mathbf{u}|d\mathbf{x}\right) \\
        \leq&2(\|\nabla\varphi^{k}\|_{L^{2}(\Omega)}^{2}\|\div\mathbf{u}\|_{L^{\infty}(\Omega)}^2+\|\nabla\varphi^{k}\|_{L^{2}(\Omega)}\|\varphi^{k}\|_{L^{6}(\Omega)}\|\nabla\div\mathbf{u}\|_{L^{3}(\Omega)}) \\
        \leq&C_{4}\|\varphi^{k}\|_{H^{1}(\Omega)}^{2}\|\mathbf{u}\|_{H^{3}(\Omega)},
    \end{align*}
    and
    \begin{equation*}
        \left|\int_{\Omega}\nabla\varphi^{k}\cdot\nabla H(\varphi',\mathbf{u})d\mathbf{x}\right|\leq\frac{1}{2}\|\nabla\varphi^{k}\|_{L^{2}(\Omega)}^{2}+C_{5}(\|\nabla\varphi'\|_{L^{2}(\Omega)}^{2}+h^{2}\|\nabla\mathbf{u}\|_{L^{2}(\Omega)}^{2}).
    \end{equation*}
    Together with \eqref{regularityEstimateEllipticSystem} in Lemma \ref{regularityEllipticSystem}, and then choosing $\varepsilon>0$ sufficiently small, we have
    \begin{equation}\label{H1EstimateEllipticApproximationTransport}
        \frac{1}{k}\|\nabla^{2}\varphi^{k}\|_{L^{2}(\Omega)}^{2}+\|\nabla\varphi^{k}\|_{L^{2}(\Omega)}^{2}\leq C_{6}N_{1}(\varphi',\mathbf{u}',\zeta')^{2}(1+N_{2}(\varphi',\mathbf{u}',\zeta'))^{4}.
    \end{equation}

    Combining \eqref{L2EstimateEllipticApproximationTransport} and \eqref{H1EstimateEllipticApproximationTransport}, we obtain \eqref{H1UniformEstimateEllipticApproximationTransport} and thus complete the proof of Lemma \ref{H1UniformEstimateEllipticApproximationTransportLemma}.
\end{proof}

Since $\varphi^{k}$ is uniformly bounded in $H^{1}(\Omega)$ with respect to $k\in\mathbf{N}$. Therefore by subtracting a sequence in $H^{1}(\Omega)$ and taking the limit in $k\rightarrow+\infty$, we obtain the following existence results of \eqref{transportEquation}.

\begin{corollary}\label{weakExistenceTransportEquation}
    Under the assumption of Corollary \ref{existenceEllipticSystem}, for an arbitrary fixed constant $h>0$, \eqref{transportEquation} has a weak solution $\varphi\in L_{0}^{2}(\Omega)\cap H^{1}(\Omega)$.
\end{corollary}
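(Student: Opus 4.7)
The plan is to pass to the limit $k\to\infty$ in the elliptic approximation \eqref{ellipticApproximationTransport}--\eqref{neumannBC} using the $k$-uniform $H^{1}$ bound \eqref{H1UniformEstimateEllipticApproximationTransport} supplied by Lemma \ref{H1UniformEstimateEllipticApproximationTransportLemma}, and then to verify that the limit $\varphi$ satisfies \eqref{transportEquation} in the weak sense together with the mean-zero constraint.

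First I would write the weak formulation of \eqref{ellipticApproximationTransport}: for every $\psi\in H^{1}(\Omega)$,
\begin{equation*}
    \frac{1}{k}\int_{\Omega}\nabla\varphi^{k}\cdot\nabla\psi\,d\mathbf{x}+\int_{\Omega}\varphi^{k}\psi\,d\mathbf{x}-h\int_{\Omega}\varphi^{k}\mathbf{u}\cdot\nabla\psi\,d\mathbf{x}=\int_{\Omega}H(\varphi',\mathbf{u})\psi\,d\mathbf{x},
\end{equation*}
where the Neumann boundary condition \eqref{neumannBC} cancels the boundary term from the Laplacian and the tangency $\mathbf{u}\cdot\mathbf{n}=0$ on $\partial\Omega$ (which is available because $\mathbf{u}\in V^{3}(\Omega)$ was obtained in Corollary \ref{existenceEllipticSystem}) cancels the boundary term from the integration by parts of $\div(\varphi^{k}\mathbf{u})$. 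By \eqref{H1UniformEstimateEllipticApproximationTransport} the family $\{\varphi^{k}\}$ is bounded in $H^{1}(\Omega)$, hence after extracting a subsequence (still denoted $\varphi^{k}$) we obtain $\varphi^{k}\rightharpoonup\varphi$ weakly in $H^{1}(\Omega)$ and, by Rellich--Kondrachov, $\varphi^{k}\to\varphi$ strongly in $L^{2}(\Omega)$; weak lower semicontinuity then transfers the bound \eqref{H1UniformEstimateEllipticApproximationTransport} to $\varphi$.

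Next I would pass to the limit termwise. The regularization term vanishes because $\frac{1}{k}\|\nabla\varphi^{k}\|_{L^{2}(\Omega)}\to 0$. The linear term $\int_{\Omega}\varphi^{k}\psi\,d\mathbf{x}$ converges by strong $L^{2}(\Omega)$ convergence. For the transport term, the strong $L^{2}(\Omega)$ convergence of $\varphi^{k}$ combined with $\mathbf{u}\cdot\nabla\psi\in L^{2}(\Omega)$ (which follows from $\mathbf{u}\in H^{3}(\Omega)\hookrightarrow L^{\infty}(\Omega)$ being fixed and independent of $k$) gives
\begin{equation*}
    h\int_{\Omega}\varphi^{k}\mathbf{u}\cdot\nabla\psi\,d\mathbf{x}\longrightarrow h\int_{\Omega}\varphi\,\mathbf{u}\cdot\nabla\psi\,d\mathbf{x}.
\end{equation*}
This identifies $\varphi$ as a weak solution of \eqref{transportEquation}.

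Finally, to verify $\varphi\in L_{0}^{2}(\Omega)$, I would test \eqref{ellipticApproximationTransport} with $\psi\equiv 1$; the Neumann condition and the tangency of $\mathbf{u}$ yield, via the divergence theorem,
\begin{equation*}
    \int_{\Omega}\varphi^{k}\,d\mathbf{x}=\int_{\Omega}H(\varphi',\mathbf{u})\,d\mathbf{x}=\int_{\Omega}\varphi'\,d\mathbf{x}-h\overline{\rho}\int_{\partial\Omega}\mathbf{u}\cdot\mathbf{n}\,d\sigma=0,
\end{equation*}
since $\varphi'\in L_{0}^{2}(\Omega)$. Passing to the $L^{2}$ limit then yields $\int_{\Omega}\varphi\,d\mathbf{x}=0$. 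The only delicate point in the whole argument is the identification of the limit of the nonlinear-looking term $\varphi^{k}\mathbf{u}$, but since $\mathbf{u}$ does not depend on $k$, the compactness supplied by Rellich--Kondrachov is sufficient; no further structural cancellation is needed, in contrast to the treatment of $\eqref{LSNS}_{2,3}$ in the preceding subsections.
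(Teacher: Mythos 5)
Your proposal is correct and follows exactly the route the paper takes (the paper compresses it into one sentence: extract a subsequence from the $H^{1}$-bounded family $\{\varphi^{k}\}$ of Lemma \ref{H1UniformEstimateEllipticApproximationTransportLemma} and pass to the limit); your version simply supplies the standard details — the weak formulation with the boundary terms killed by \eqref{neumannBC} and by $\mathbf{u}\cdot\mathbf{n}=0$, the vanishing of the $\frac{1}{k}$-term via the uniform gradient bound, Rellich--Kondrachov for the transport term, and the test function $\psi\equiv 1$ for the mean-zero constraint — all of which are accurate.
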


Moreover, we have the following regularity result.

\begin{lemma}\label{regularityTransportEquation}
  Under the assumption of Corollary \ref{existenceEllipticSystem}, for an arbitrary fixed constant $h>0$, if $\varphi\in L^{2}_{0}(\Omega)\cap H^{1}(\Omega)$ is a weak solution to \eqref{transportEquation}, then $\varphi\in L_{0}^{2}(\Omega)\cap H^{2}(\Omega)$. Moreover, there exists a constant $C>0$ such that 
    \begin{equation}\label{H2EstimateDensity}
        \|\varphi\|_{H^{2}(\Omega)}^{2}\leq CN_{2}(\varphi',\mathbf{u}',\zeta')^{2}(1+N_{2}(\varphi',\mathbf{u}',\zeta'))^{4}.
    \end{equation}
\end{lemma}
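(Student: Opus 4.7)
My plan is to upgrade the $H^{1}$ bound from Lemma \ref{H1UniformEstimateEllipticApproximationTransportLemma} to an $H^{2}$ bound by differentiating the transport equation $\varphi+h\,\div(\varphi\mathbf{u})=H$ twice, testing against $\partial_{i}\partial_{j}\varphi$, and exploiting the no-penetration condition $\mathbf{u}\cdot\mathbf{n}=0$ on $\partial\Omega$ to eliminate the leading-order boundary contribution from the convection term. To make the manipulations rigorous I will carry them out at the level of the regularized solutions $\varphi^{k}$ from Lemma \ref{H1UniformEstimateEllipticApproximationTransportLemma} (which enjoy $H^{4}$ regularity by classical elliptic theory), derive $k$-uniform $H^{2}$ bounds, and then pass to the weak limit in $H^{2}(\Omega)$ identifying the limit with the unique weak solution $\varphi$.

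Applying $\partial_{i}\partial_{j}$ to the equation and rearranging gives
\[
(1+h\,\div\mathbf{u})\partial_{ij}\varphi + h\,\mathbf{u}\cdot\nabla\partial_{ij}\varphi \,=\, \partial_{ij}H - h\,\partial_{ij}\mathbf{u}\cdot\nabla\varphi - h\,\varphi\,\partial_{ij}\div\mathbf{u} - h\,R_{ij}(\varphi,\mathbf{u}),
\]
where $R_{ij}$ collects the symmetric commutators $\partial_{i}\mathbf{u}\cdot\nabla\partial_{j}\varphi$, $\partial_{j}\mathbf{u}\cdot\nabla\partial_{i}\varphi$, $\partial_{i}\varphi\,\partial_{j}\div\mathbf{u}$, $\partial_{j}\varphi\,\partial_{i}\div\mathbf{u}$. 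Multiplying by $\partial_{ij}\varphi$ and integrating, the key identity
\[
\int_{\Omega}\mathbf{u}\cdot\nabla\partial_{ij}\varphi\,\partial_{ij}\varphi\,d\mathbf{x} \,=\, -\tfrac{1}{2}\int_{\Omega}\div\mathbf{u}\,|\partial_{ij}\varphi|^{2}\,d\mathbf{x}
\]
follows from $\mathbf{u}\cdot\mathbf{n}|_{\partial\Omega}=0$, leaving a coercive quadratic form $\bigl(1+\tfrac{h}{2}\div\mathbf{u}\bigr)|\partial_{ij}\varphi|^{2}$ on the left that is bounded below by a positive constant once $\|\mathbf{u}\|_{H^{3}}\lesssim\varepsilon$ is small. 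The terms in $R_{ij}$ and $\partial_{ij}\mathbf{u}\cdot\nabla\varphi$ are controlled by H\"{o}lder and the $3$D Sobolev embeddings $H^{1}\hookrightarrow L^{6}$ and $H^{2}\hookrightarrow L^{\infty}$, each contributing at most $C\varepsilon\,\|\varphi\|_{H^{2}}^{2}$ plus already-controlled $H^{1}$ quantities; the source $\partial_{ij}H=\partial_{ij}\varphi'-h\overline{\rho}\,\partial_{ij}\div\mathbf{u}$ is bounded in $L^{2}$ by $N_{2}(\varphi',\mathbf{u}',\zeta')$.

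The main obstacle is the top-order coupling $h\int_{\Omega}\varphi\,\partial_{ij}\div\mathbf{u}\,\partial_{ij}\varphi\,d\mathbf{x}$: here $\partial_{ij}\div\mathbf{u}$ is a third derivative of $\mathbf{u}\in H^{3}$ and is therefore only in $L^{2}$, which forces the remaining product $\varphi\cdot\partial_{ij}\varphi$ into $L^{\infty}\times L^{2}$. This requires $\varphi\in L^{\infty}$, and via the 3D embedding $H^{2}\hookrightarrow L^{\infty}$ this is precisely the quantity we are trying to bound, producing an apparently circular estimate. The resolution is the smallness hypothesis on $N_{2}$: bounding the term by $C\,\|\mathbf{u}\|_{H^{3}}\|\varphi\|_{H^{2}}\|\partial_{ij}\varphi\|_{L^{2}}\leq C\varepsilon\,\|\varphi\|_{H^{2}}^{2}$ and absorbing the $\varepsilon$-small piece into the coercive left-hand side breaks the circularity. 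Summing over $1\leq i,j\leq 3$, combining with the $H^{1}$ estimate of Lemma \ref{H1UniformEstimateEllipticApproximationTransportLemma}, and invoking Lemma \ref{EstimateFGLemma} to quantify $H$ in terms of $N_{2}$, one arrives at the claimed bound \eqref{H2EstimateDensity}; passing $k\to\infty$ in the approximants then transfers this bound to $\varphi$ itself.
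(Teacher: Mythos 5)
Your core estimate is the same as the paper's: apply two derivatives to $\varphi+h\,\div(\varphi\mathbf{u})=H$, test against $\partial_i\partial_j\varphi$, integrate the top-order term $\mathbf{u}\cdot\nabla\partial_{ij}\varphi\,\partial_{ij}\varphi$ by parts using $\mathbf{u}\cdot\mathbf{n}=0$, bound the coupling $\varphi\,\partial_{ij}\div\mathbf{u}\,\partial_{ij}\varphi$ in $L^\infty\times L^2\times L^2$, and absorb all $\varepsilon$-small pieces; combined with the $H^1$ bound this is precisely \eqref{H2EstimateDensity}. The only divergence is how the qualitative $H^2$ regularity is justified: the paper cites the argument of \cite{Farwig-CPDE-1989} and then performs the estimate directly on the limit equation, whereas you propose to run the $H^2$ estimate uniformly on the regularized solutions $\varphi^k$ of \eqref{ellipticApproximationTransport}. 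That route has one wrinkle you should address: differentiating the regularized equation twice and testing produces the extra term $\frac1k\int_{\partial\Omega}\partial_{ij}\varphi^k\,\mathbf{n}\cdot\nabla\partial_{ij}\varphi^k$, and while the Neumann condition \eqref{neumannBC} kills this for tangential indices $i,j\in\{2,3\}$ (tangential derivatives of $\partial_1\varphi^k$ still vanish on $\mathbb{T}^2_p$), it does not control $\partial_1^3\varphi^k$ on the boundary, so the normal--normal second derivative cannot be obtained this way; you would need to recover $\partial_1^2\varphi^k$ from the equation itself or fall back on the paper's qualitative-regularity-first strategy.
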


\begin{proof}
Following the same procedure as \cite{Farwig-CPDE-1989}, we can prove that $\varphi$ is in $H^{2}(\Omega)$. It then suffices to prove the desired estimate \eqref{H2EstimateDensity}. Since $\varphi^{k}$ is uniformly bounded, from the construction of $\varphi$, by \eqref{H1UniformEstimateEllipticApproximationTransport}, we have
    \begin{equation}\label{H1Estimatedensity}
        \|\varphi\|_{H^{1}(\Omega)}^{2}\leq C_{1}N_{1}(\varphi',\mathbf{u}',\zeta')^{2}(1+N_{2}(\varphi',\mathbf{u}',\zeta'))^{4}.
    \end{equation}
To obtain $H^{2}(\Omega)$ estimates, multiplying $\partial_{i}\nabla\eqref{transportEquation}$ by $\partial_{i}\nabla\varphi$ for $i=1,2,3$, it holds that
    \begin{equation*}
        \|\partial_{i}\nabla\varphi\|_{L^{2}(\Omega)}^{2}-h\int_{\Omega}\partial_{i}\nabla\varphi\cdot\partial_{i}\nabla\div(\varphi\mathbf{u})dx=\int_{\Omega}\partial_{i}\nabla\varphi\cdot\partial_{i}\nabla H(\varphi',\mathbf{u})dx.
    \end{equation*}
For the second term, by H\"{o}lder's inequality and Sobolev's inequality, 
    \begin{align*}
        &\left|\int_{\Omega}\partial_{i}\nabla\varphi\cdot\partial_{i}\nabla\div(\varphi\mathbf{u})dx\right| \\
        =&\left|\int_{\Omega}\partial_{i}\nabla\varphi\cdot(\partial_{i}\nabla^{2}\varphi\cdot\mathbf{u}+\nabla^{2}\varphi\cdot\partial_{i}\mathbf{u}+2\partial_{i}\nabla\varphi\cdot\div\mathbf{u}+2\nabla\varphi\cdot\partial_{i}\div\mathbf{u}+\partial_{i}\varphi\nabla\div\mathbf{u}+\varphi\partial_{i}\nabla\div\mathbf{u})dx\right| \\
        \leq&C_{1}(\|\partial_{i}\nabla\varphi\|_{L^{2}(\Omega)}^{2}\|\div\mathbf{u}\|_{L^{\infty}}+\|\nabla^{2}\varphi\|_{L^{2}(\Omega)}^{2}\|\partial_{i}\mathbf{u}\|_{L^{\infty}(\Omega)}+\|\partial_{i}\nabla\varphi\|_{L^{2}(\Omega)}^{2}\|\div\mathbf{u}\|_{L^{\infty}(\Omega)} \\
        &+\|\partial_{i}\nabla\varphi\|_{L^{2}(\Omega)}\|\nabla\varphi\|_{L^{4}(\Omega)}\|\partial_{i}\div\mathbf{u}\|_{L^{4}(\Omega)}+\|\partial_{i}\nabla\varphi\|_{L^{2}(\Omega)}\|\partial_{i}\varphi\|_{L^{4}(\Omega)}\|\nabla\div\mathbf{u}\|_{L^{4}(\Omega)} \\
        &+\|\partial_{i}\nabla\varphi\|_{L^{2}(\Omega)}\|\varphi\|_{L^{\infty}(\Omega)}\|\partial_{i}\nabla\div\mathbf{u}\|_{L^{2}(\Omega)}) \\
        \leq&C_{1}N_{2}(\varphi',\mathbf{u}',\zeta')^{2}(1+N_{2}(\varphi',\mathbf{u}',\zeta'))^{4}.
    \end{align*}
For the third term, by H\"{o}lder's inequality, 
    \begin{equation*}
        \left|\int_{\Omega}\partial_{i}\nabla\varphi\cdot\partial_{i}\nabla H(\varphi',\mathbf{u})dx\right|\leq\frac{1}{2}(\|\nabla\varphi\|_{L^{2}(\Omega)}^{2}+\|\nabla H(\varphi',\mathbf{u})\|_{L^{2}(\Omega)}^{2}).
    \end{equation*}
Therefore
    \begin{equation*}
        \|\partial_{i}\nabla\varphi\|_{L^{2}(\Omega)}^{2}\leq 
        C_{2}N_{2}(\varphi',\mathbf{u}',\zeta')^{2}(1+N_{2}(\varphi',\mathbf{u}',\zeta'))^{4},
    \end{equation*}
    which together with \eqref{H1Estimatedensity}, gives \eqref{H2EstimateDensity}. Thus the proof of Lemma \ref{regularityTransportEquation} is complete.
\end{proof}

Combining Lemma \ref{regularityTransportEquation} with Corollary \ref{weakExistenceTransportEquation}, we obtain the existence of solutions to \eqref{transportEquation} in $L^{2}_{0}(\Omega)\cap H^{2}(\Omega)$.
\begin{corollary}\label{ExistenceTransportEquation}
  Under the assumption of Corollary \ref{existenceEllipticSystem}, for an arbitrary fixed constant $h>0$, \eqref{transportEquation} has a solution $\varphi\in L_{0}^{2}(\Omega)\cap H^{2}(\Omega)$.
\end{corollary}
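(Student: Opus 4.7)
The plan is to read this corollary as a simple concatenation of the two immediately preceding results, so essentially no new work is needed beyond invoking them in the right order and checking that the solution lies in $L_{0}^{2}(\Omega)$ rather than merely in $L^{2}(\Omega)$.

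First, I would apply Corollary \ref{weakExistenceTransportEquation}: under the hypotheses inherited from Corollary \ref{existenceEllipticSystem} (i.e.\ smallness of $\delta,\varepsilon$, the assumption (A), and the admissibility of $(\varphi',\mathbf{u}',\zeta')$), the velocity field $\mathbf{u}\in V^{3}(\Omega)$ produced by Corollary \ref{existenceEllipticSystem} is sufficiently regular for the elliptic approximation scheme \eqref{ellipticApproximationTransport}--\eqref{neumannBC} to go through, and extracting a weak-$H^{1}$ limit of $\{\varphi^{k}\}$ yields a weak solution $\varphi\in L_{0}^{2}(\Omega)\cap H^{1}(\Omega)$ of \eqref{transportEquation}.

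Next, I would upgrade the regularity by invoking Lemma \ref{regularityTransportEquation} directly on this $\varphi$: the lemma guarantees $\varphi\in L_{0}^{2}(\Omega)\cap H^{2}(\Omega)$ with the quantitative bound \eqref{H2EstimateDensity}. The mean-zero condition is automatic: integrating \eqref{transportEquation} over $\Omega$ and using $\mathbf{u}\cdot\mathbf{n}=0$ on $\partial\Omega$ to kill $\int_{\Omega}\div(\varphi\mathbf{u})\,d\mathbf{x}$ and $\int_{\Omega}\div\mathbf{u}\,d\mathbf{x}$, we get
\[
\int_{\Omega}\varphi\,d\mathbf{x}=\int_{\Omega}H(\varphi',\mathbf{u})\,d\mathbf{x}=\int_{\Omega}\varphi'\,d\mathbf{x}-h\overline{\rho}\int_{\Omega}\div\mathbf{u}\,d\mathbf{x}=0,
\]
where the first integral on the right is zero because $\varphi'\in L_{0}^{2}(\Omega)$. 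This confirms $\varphi\in L_{0}^{2}(\Omega)\cap H^{2}(\Omega)$.

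There is no substantive obstacle here, since all the analytic work (coercivity of the elliptic approximation, uniform $H^{1}$ bound, passage to the limit, and the difference-quotient style bootstrap to $H^{2}$) was carried out in Lemma \ref{H1UniformEstimateEllipticApproximationTransportLemma}, Corollary \ref{weakExistenceTransportEquation}, and Lemma \ref{regularityTransportEquation}. The only thing worth double-checking is the compatibility of the spaces: that the weak solution produced in Corollary \ref{weakExistenceTransportEquation} is literally the one to which Lemma \ref{regularityTransportEquation} applies, and that the mean-value reduction above is consistent with the $L_{0}^{2}$ framework. Both are immediate, so the proof of the corollary is essentially a one-line citation.
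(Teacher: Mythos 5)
Your proposal matches the paper's argument exactly: the paper's proof of this corollary is literally the one-line combination of Corollary \ref{weakExistenceTransportEquation} and Lemma \ref{regularityTransportEquation}, and your additional verification of the mean-zero property (which the paper obtains implicitly from $\varphi^{k}\in L_{0}^{2}(\Omega)$ in the approximation scheme) is a correct and harmless supplement.
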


\section{Continuity of solution operator $\mathds{T}$}
In this section, we prove that the operator $\mathds{T}$ is continuous. 

For $i=1,2$, suppose $(\varphi_{i},\mathbf{u}_{i},\zeta_{i})\in (L_{0}^{2}(\Omega)\cap H^{2}(\Omega))\times V^{3}(\Omega)\times H^{3}(\Omega)$ is constructed by using Lemma \ref{existenceEllipticSystem} and Lemma \ref{ExistenceTransportEquation} corresponding to the given data $(\varphi'_{i},\mathbf{u}'_{i},\zeta'_{i})\in (L_{0}^{2}(\Omega)\cap H^{2}(\Omega))\times V^{3}(\Omega)\times H^{3}(\Omega)$. Thus it solves
the boundary value problem:
\begin{equation}\label{LSNSs}
    \left\{
    \begin{aligned}
        &\frac{\varphi_{i}-\varphi'_{i}}{h}+\div(\varphi_{i}\mathbf{u}_{i})+\overline{\rho}\div\mathbf{u}_{i}=0, \\
        &-\div\mathbb{S}(\mathbf{u}_{i},\overline{\theta})+R\overline{\rho}\nabla\zeta_{i} =\mathbf{F}(\varphi'_{i},\mathbf{u}'_{i},\zeta'_{i})-R\overline{\theta}\nabla\varphi'_{i}, \\
        &-\div(\kappa(\overline{\theta})\nabla\zeta_{i})+(R\overline{\rho}\overline{\theta}+\mathcal{R}\overline{\rho})\div\mathbf{u}_{i} =G(\varphi'_{i},\mathbf{u}'_{i},\zeta'_{i})-\mathcal{R}\div(\varphi'_{i}\mathbf{u}'_{i}),
    \end{aligned}
    \right.
\end{equation}
for $x\in\Omega$, with the boundary conditions
\begin{equation}\label{LBCs}
    \left\{
    \begin{aligned}
        &\mathbf{u}_{i}\cdot\mathbf{n}=0, \\
        &(\varphi'_{i}+\overline{\rho}) \mathbf{u}_{i}\cdot \mathbf{t}+\sqrt{\frac{2}{R}}a_{\mathbf{u}}^{I}\frac{\mu(\theta_{w})}{\sqrt{\theta_{w}}}(\nabla\mathbf{u}_{i}+\nabla\mathbf{u}_{i}^{\mathsf{T}}):\mathbf{n}\otimes\mathbf{t}-\frac{4}{5R}a_{\theta}^{I}\frac{\kappa(\theta_{w})}{\theta_{w}}\nabla(\zeta_{i}+\tilde{\theta})\cdot\mathbf{t}=0, \\
        &(\varphi'_{i}+\overline{\rho})\zeta_{i}-\frac{1}{R}a_{\mathbf{u}}^{II}\mu(\theta_{w})\nabla\mathbf{u}_{i}:\mathbf{n}\otimes\mathbf{n}+\frac{2}{5R}\sqrt{\frac{2}{R}}a_{\theta}^{II}\frac{\kappa(\theta_{w})}{\sqrt{\theta_{w}}}\nabla(\zeta_{i}+\tilde{\theta})\cdot\mathbf{n}=0,
    \end{aligned}
    \right.
\end{equation}
for $x\in\partial\Omega$. To prove the continuity, let us denote
\begin{equation}\label{difference}
    (\mathcal{D},\boldsymbol{\mathcal{V}},\mathcal{T}):=(\varphi_1-\varphi_2,\mathbf{u}_1-\mathbf{u}_2,\zeta_1-\zeta_2).
\end{equation}
Then plugging \eqref{difference} into \eqref{LSNSs}, \eqref{LBCs}, we obtain that $(\mathcal{D},\mathcal{V},\mathcal{T})$ satisfies
\begin{equation}\label{DSNS}
    \left\{
    \begin{aligned}
    &\mathcal{D}+h\div(\mathcal{D}\mathbf{u}_1)+h\div(\varphi_1\boldsymbol{\mathcal{V}})=\mathcal{H}, \\
    &-\div\left[\mu(\overline{\theta})\left(\nabla\boldsymbol{\mathcal{V}}+\nabla\boldsymbol{\mathcal{V}}^{\mathsf{T}}-\frac{2}{3}\div\boldsymbol{\mathcal{V}}\mathbb{I}_{3}\right)\right]+R\overline{\rho}\nabla\mathcal{T} =\boldsymbol{\mathcal{F}}, \\
    &-\div(\kappa(\overline{\theta})\nabla\mathcal{T})+(R\overline{\rho}\overline{\theta}+\mathcal{R}\overline{\rho})\div\boldsymbol{\mathcal{V}} =\mathcal{G},
    \end{aligned}
    \right.
\end{equation}
for $x\in\Omega$, with 
\begin{equation}\label{DBC}
    \left\{
    \begin{aligned}
        &\boldsymbol{\mathcal{V}}\cdot\mathbf{n}=0, \\
        &(\varphi_{1}+\overline{\rho})\boldsymbol{\mathcal{V}}\cdot\mathbf{t}+(\varphi'_{1}-\varphi'_{2})\mathbf{u}_{2}\cdot\mathbf{t}+\sqrt{\frac{2}{R}}a_{\mathbf{u}}^{I}\frac{\mu(\theta_{w})}{\sqrt{\theta_{w}}}(\nabla\boldsymbol{\mathcal{V}}+\nabla\boldsymbol{\mathcal{V}}^{\mathsf{T}}):\mathbf{n}\otimes\mathbf{t}-\frac{4}{5R}a_{\theta}^{I}\frac{\kappa(\theta_{w})}{\theta_{w}}\nabla\mathcal{T}\cdot\mathbf{t}=0, \\
        &(\varphi'_{1}+\overline{\rho})\mathcal{T}+(\varphi'_{1}-\varphi'_{2})\zeta_{2}-\frac{1}{R}a_{\mathbf{u}}^{II}\mu(\theta_{w})\nabla\boldsymbol{\mathcal{V}}:\mathbf{n}\otimes\mathbf{n}-\frac{2}{5R}\sqrt{\frac{2}{R}}a_{\theta}^{II}\frac{\kappa(\theta_{w})}{\sqrt{\theta_{w}}}\nabla\mathcal{T}\cdot\mathbf{n}=0,
    \end{aligned}
    \right.
\end{equation}
for $x\in\partial\Omega$, where
\begin{align*}
    \mathcal{H}:=&H(\varphi_1',\mathbf{u}_1)-H(\varphi_2',\mathbf{u}_2),\\
    \boldsymbol{\mathcal{F}}:=&\mathbf{F}(\varphi_1',\mathbf{u}_1',\zeta_1')-\mathbf{F}(\varphi_2',\mathbf{u}_2',\zeta_2')-R\overline{\theta}\nabla(\varphi_1'-\varphi_2'),\\
    \mathcal{G}:=&G(\varphi_1',\mathbf{u}_1',\zeta_1')-G(\varphi_2',\mathbf{u}_2',\zeta_2')-\mathcal{R}\div(\varphi_1'\mathbf{u}_1'-\varphi_2'\mathbf{u}_2').
\end{align*}
We prove the following result.

\begin{lemma}\label{continuityTheorem}
    Under the assumptions of Corollary \ref{existenceEllipticSystem} and Corollary \ref{ExistenceTransportEquation}, there exists a constant $C>0$ only depending on $a_{\mathbf{u}}^{I}, a_{\mathbf{u}}^{II}, a_{\theta}^{I}, a_{\mathbf{u}}^{II}, c_{v}, R, \mu, \kappa, \overline{\rho}, \overline{\theta}$ such that the solution $(\mathcal{D},\boldsymbol{\mathcal{V}},\mathcal{T})$ of \eqref{DSNS}, \eqref{DBC} satisfies
    \begin{equation}\label{EstimateContinuity}
        \|\mathcal{D}\|_{L^{2}(\Omega)}^2+\|\boldsymbol{\mathcal{V}}\|_{H^{1}(\Omega)}^2+\|\mathcal{T}\|_{H^{1}(\Omega)}^2\leq C(\|\mathcal{D}'\|_{L^{2}(\Omega)}^2+\|\boldsymbol{\mathcal{V}}'\|_{H^{1}(\Omega)}^2+\|\mathcal{T}'\|_{H^{1}(\Omega)}^2).
    \end{equation}
\end{lemma}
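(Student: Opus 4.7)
The strategy is to adapt the energy-type argument of Lemma~\ref{coervivityEllipticSystem} to the difference elliptic subsystem $\eqref{DSNS}_{2,3}$ with boundary conditions $\eqref{DBC}_{2,3}$, and to complement it with a separate $L^{2}(\Omega)$ estimate for $\mathcal{D}$ obtained directly from the transport equation $\eqref{DSNS}_{1}$. Since the elliptic part for $(\boldsymbol{\mathcal{V}},\mathcal{T})$ shares the same principal structure as $\eqref{LSNS}_{2,3}$, with only the leading boundary coefficient $\varphi'+\overline{\rho}$ replaced by $\varphi_{1}'+\overline{\rho}$, the coercivity argument based on assumption~(A) carries over verbatim; the task reduces to quantifying the new sources in the norms of $(\mathcal{D}',\boldsymbol{\mathcal{V}}',\mathcal{T}')$.

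First, I would write the analogue of the variational form~\eqref{weakFormEllipticSystem} for the difference system. The right-hand side now carries the bulk sources $\boldsymbol{\mathcal{F}}$ and $\mathcal{G}$, together with two additional boundary integrals arising from $(\varphi_{1}'-\varphi_{2}')\mathbf{u}_{2}\cdot\mathbf{t}=\mathcal{D}'\mathbf{u}_{2}\cdot\mathbf{t}$ in $\eqref{DBC}_{2}$ and $(\varphi_{1}'-\varphi_{2}')\zeta_{2}=\mathcal{D}'\zeta_{2}$ in $\eqref{DBC}_{3}$. Testing with $(\mathbf{v},\xi)=(\boldsymbol{\mathcal{V}},\mathcal{T})$, the coercivity argument of Lemma~\ref{coervivityEllipticSystem} yields the lower bound $c(\|\boldsymbol{\mathcal{V}}\|_{H^{1}}^{2}+\|\mathcal{T}\|_{H^{1}}^{2}+\|\boldsymbol{\mathcal{V}}\|_{L^{2}(\partial\Omega)}^{2}+\|\mathcal{T}\|_{L^{2}(\partial\Omega)}^{2})$ on the left-hand side for $\varepsilon,\delta$ small. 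The bulk sources $\boldsymbol{\mathcal{F}},\mathcal{G}$ are then estimated by decomposing each difference of products via the schematic identity $\Phi_{1}\Psi_{1}-\Phi_{2}\Psi_{2}=(\Phi_{1}-\Phi_{2})\Psi_{1}+\Phi_{2}(\Psi_{1}-\Psi_{2})$ and using the Sobolev embeddings $H^{2}\hookrightarrow L^{\infty}$ and $H^{3}\hookrightarrow W^{2,\infty}$ together with the data bound $N_{2}(\varphi_{i}',\mathbf{u}_{i}',\zeta_{i}')\le\varepsilon$. The delicate contributions $-R\overline{\theta}\,\nabla\mathcal{D}'$ in $\boldsymbol{\mathcal{F}}$ and $-\mathcal{R}\,\div(\varphi_{1}'\mathbf{u}_{1}'-\varphi_{2}'\mathbf{u}_{2}')$ in $\mathcal{G}$ are treated by integration by parts against the test function, using $\boldsymbol{\mathcal{V}}\cdot\mathbf{n}=0$ to discard the boundary contribution, which shifts the derivative off $\mathcal{D}'$ and reduces the required norm on the density difference to $L^{2}$.

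Next, multiplying $\eqref{DSNS}_{1}$ by $\mathcal{D}$ and integrating, the drift term integrates by parts (using $\mathbf{u}_{1}\cdot\mathbf{n}=0$) into $\tfrac{h}{2}\int_{\Omega}\mathcal{D}^{2}\,\div\mathbf{u}_{1}$, which is $O(\varepsilon)\|\mathcal{D}\|_{L^{2}}^{2}$; the cross-coupling $h\int_{\Omega}\mathcal{D}\,\div(\varphi_{1}\boldsymbol{\mathcal{V}})$ is $O(\varepsilon)\|\mathcal{D}\|_{L^{2}}\|\boldsymbol{\mathcal{V}}\|_{H^{1}}$ thanks to $\|\varphi_{1}\|_{H^{2}}\le C\varepsilon$ and Sobolev's inequality; and the source $\mathcal{H}=\mathcal{D}'-h\overline{\rho}\,\div\boldsymbol{\mathcal{V}}$ is directly estimated. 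Absorbing the small terms yields $\|\mathcal{D}\|_{L^{2}}^{2}\le C(\|\mathcal{D}'\|_{L^{2}}^{2}+\|\boldsymbol{\mathcal{V}}\|_{H^{1}}^{2})$ for $\varepsilon$ small. Combining this with the elliptic estimate and absorbing the residual $\|\boldsymbol{\mathcal{V}}\|_{H^{1}}^{2}$ into the coercive left-hand side delivers \eqref{EstimateContinuity}.

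The main obstacle is to control the two new boundary sources $\int_{\partial\Omega}(\cdots)\mathcal{D}'\mathbf{u}_{2}\cdot\boldsymbol{\mathcal{V}}$ and $\int_{\partial\Omega}(\cdots)\mathcal{D}'\zeta_{2}\mathcal{T}$ only in terms of $\|\mathcal{D}'\|_{L^{2}(\Omega)}$, since a direct trace of an $L^{2}(\Omega)$ function is not well-defined, yet \eqref{EstimateContinuity} only permits $\|\mathcal{D}'\|_{L^{2}(\Omega)}$ on the right-hand side. My plan is to exploit the smallness $\|\mathbf{u}_{2}\|_{L^{\infty}(\partial\Omega)}+\|\zeta_{2}\|_{L^{\infty}(\partial\Omega)}\le C\varepsilon$ (obtained from $(\mathbf{u}_{2},\zeta_{2})\in V^{3}\times H^{3}$ with norm $\le C\varepsilon$ via Corollary~\ref{existenceEllipticSystem}) combined with the interpolation trace $\|\mathcal{D}'\|_{L^{2}(\partial\Omega)}^{2}\le C\|\mathcal{D}'\|_{L^{2}(\Omega)}\|\mathcal{D}'\|_{H^{1}(\Omega)}$ and the uniform a priori bound $\|\mathcal{D}'\|_{H^{1}(\Omega)}\le 2\varepsilon$ coming from the hypothesis $\|\varphi_{i}'\|_{H^{2}}\le\varepsilon$. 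A Young's inequality then absorbs the boundary norms of $\boldsymbol{\mathcal{V}}$ and $\mathcal{T}$ into the coercive left-hand side, and a final rebalancing using the smallness of $\varepsilon$ recasts the residual contribution in the form allowed by the right-hand side of \eqref{EstimateContinuity}.
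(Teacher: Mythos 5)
Your overall route coincides with the paper's: test the difference variational formulation with $(\boldsymbol{\mathcal{V}},\mathcal{T})$ themselves, recover coercivity from assumption (A) together with Korn's inequality \eqref{KornInequality}, estimate $\boldsymbol{\mathcal{F}}$ and $\mathcal{G}$ by the telescoping decomposition $\Phi_1\Psi_1-\Phi_2\Psi_2=(\Phi_1-\Phi_2)\Psi_1+\Phi_2(\Psi_1-\Psi_2)$ with Sobolev embeddings and the smallness $N_2(\varphi_i',\mathbf{u}_i',\zeta_i')\le\varepsilon$, and close with a separate $L^2$ estimate for $\mathcal{D}$ from the transport equation. One place where you genuinely improve on the paper is the treatment of the source terms $-R\overline{\theta}\nabla\mathcal{D}'$ and $\mathcal{R}\,\div(\varphi_1'\mathbf{u}_1'-\varphi_2'\mathbf{u}_2')$ by integration by parts against the test functions, using $\boldsymbol{\mathcal{V}}\cdot\mathbf{n}=\mathbf{u}_i'\cdot\mathbf{n}=0$ to kill the boundary contributions: the paper instead retains $\|\nabla\varphi_1'-\nabla\varphi_2'\|_{L^2(\Omega)}^2$ and $\|\varphi_1'-\varphi_2'\|_{H^1(\Omega)}^2$ in its intermediate bounds (see the list preceding \eqref{EstimateContinuityJ1} and the estimate \eqref{EstimateContinuityK11}) and then states \eqref{EstimateContinuityF}, \eqref{EstimateContinuityG} with only $\|\varphi_1'-\varphi_2'\|_{L^2(\Omega)}^2$, which does not follow; your device removes that difficulty cleanly.

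The one step of yours that does not go through as written is the ``final rebalancing'' of the new boundary sources coming from $\mathcal{D}'\mathbf{u}_2\cdot\mathbf{t}$ and $\mathcal{D}'\zeta_2$. After Cauchy--Schwarz and absorption of $\|\boldsymbol{\mathcal{V}}\|_{L^2(\partial\Omega)}^2$ and $\|\mathcal{T}\|_{L^2(\partial\Omega)}^2$ into the coercive left-hand side, you are left with a multiple of $\varepsilon^2\|\mathcal{D}'\|_{L^2(\partial\Omega)}^2$, and your interpolation $\|\mathcal{D}'\|_{L^2(\partial\Omega)}^2\le C\|\mathcal{D}'\|_{L^2(\Omega)}\|\mathcal{D}'\|_{H^1(\Omega)}\le C\varepsilon\|\mathcal{D}'\|_{L^2(\Omega)}$ yields a term that is \emph{linear}, not quadratic, in $\|\mathcal{D}'\|_{L^2(\Omega)}$. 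No choice of small $\varepsilon$ converts $\varepsilon\|\mathcal{D}'\|_{L^2(\Omega)}$ into $C\|\mathcal{D}'\|_{L^2(\Omega)}^2$, since $\|\mathcal{D}'\|_{L^2(\Omega)}$ may be arbitrarily small compared with its square. What you actually prove is $\|\mathcal{D}\|_{L^2}^2+\|\boldsymbol{\mathcal{V}}\|_{H^1}^2+\|\mathcal{T}\|_{H^1}^2\le C(\|\mathcal{D}'\|_{L^2}^2+\|\boldsymbol{\mathcal{V}}'\|_{H^1}^2+\|\mathcal{T}'\|_{H^1}^2)+C\varepsilon\|\mathcal{D}'\|_{L^2}$, which still gives the continuity of $\mathds{T}$ on $K$ needed for Schauder's theorem, but is not the stated inequality \eqref{EstimateContinuity}. (The paper's own proof shares this unacknowledged difficulty: it carries $\|\mathcal{D}'\|_{L^2(\partial\Omega)}^2$ in \eqref{EstimateContinuityI10I11I12I13} and then asserts \eqref{EstimateContinuityVelocityTemperautre} with only the interior $L^2$ norm.) To obtain \eqref{EstimateContinuity} literally one would need either to allow $\|\mathcal{D}'\|_{H^1(\Omega)}^2$ (or a trace norm of $\mathcal{D}'$) on the right-hand side, or to restate the lemma with the weaker modulus of continuity you actually derive.
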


\begin{proof}
    For the estimate of $\boldsymbol{\mathcal{V}}$ and $\mathcal{T}$, using the variational form, we have
    \begin{align*}
        &\underbrace{\int_{\Omega}\frac{2}{3R}\sqrt{\frac{2}{R}}a_{\mathbf{u}}^{I}a_{\mathbf{u}}^{II}\frac{\mu(\tilde{\theta})^{2}}{\sqrt{\tilde{\theta}}}\nabla\boldsymbol{\mathcal{V}}:\left(\nabla\boldsymbol{\mathcal{V}}+\nabla\boldsymbol{\mathcal{V}}^{\mathsf{T}}-\frac{2}{3}\div\boldsymbol{\mathcal{V}}\mathbb{I}_{3}\right)d\mathbf{x}}_{I_{1}} +\underbrace{\int_{\Omega}\frac{32}{75R^2}\sqrt{\frac{2}{R}}a_{\theta}^{I}a_{\theta}^{II}\frac{\kappa(\tilde{\theta})^{2}}{\tilde{\theta}\sqrt{\tilde{\theta}}}\nabla\mathcal{T}\cdot\nabla\mathcal{T} d\mathbf{x}}_{I_{2}} \\
        &+\underbrace{\int_{\Omega}-\frac{4}{5R^2}a_{\mathbf{u}}^{II}a_{\theta}^I\frac{\mu(\tilde{\theta})\kappa(\tilde{\theta})}{\mu(\overline{\theta})\tilde{\theta}}\nabla\mathcal{T}\cdot\mathfrak{n}\left(\nabla\mathcal{V}_{1}+\partial_{1}\boldsymbol{\mathcal{V}}-\frac{2}{3}(\div\boldsymbol{\mathcal{V}})\mathbf{e}_{1}\right)d\mathbf{x} }_{I_{3}} \\
        &+\underbrace{\int_{\Omega}\frac{2}{3R}\sqrt{\frac{2}{R}}a_{\mathbf{u}}^{I}a_{\mathbf{u}}^{II}\nabla\left(\frac{\mu(\tilde{\theta})^{2}}{\sqrt{\tilde{\theta}}}\right)\otimes\boldsymbol{\mathcal{V}}:\left(\nabla\boldsymbol{\mathcal{V}}+\nabla\boldsymbol{\mathcal{V}}^{\mathsf{T}}-\frac{2}{3}\div\boldsymbol{\mathcal{V}}\mathbb{I}_{3}\right)d\mathbf{x}}_{I_{4}} \nonumber\\
        &+\underbrace{\int_{\Omega}\frac{32}{75R^2}\sqrt{\frac{2}{R}}a_{\theta}^{I}a_{\theta}^{II}\nabla\left(\frac{\kappa(\tilde{\theta})\kappa(\tilde{\theta})}{\tilde{\theta}\sqrt{\tilde{\theta}}}\right)\mathcal{T}\cdot\nabla\mathcal{T} d\mathbf{x}}_{I_{5}} \\
        &+\underbrace{\int_{\Omega}-\frac{4}{5R^2}a_{\mathbf{u}}^{II}a_{\theta}^I\nabla\left(\frac{\mu(\tilde{\theta})\kappa(\tilde{\theta})}{\mu(\overline{\theta})\tilde{\theta}}\right)\cdot\mathcal{T}\mathfrak{n}\left(\nabla\mathcal{V}_{1}+\partial_{1}\boldsymbol{\mathcal{V}}-\frac{2}{3}(\div\boldsymbol{\mathcal{V}})\mathbf{e}_{1}\right)d\mathbf{x}}_{I_{6}} \\
        &+\underbrace{\int_{\Omega}R\overline{\rho}\sqrt{\frac{2}{R}}\frac{2}{3R}a_{\mathbf{u}}^{I}a_{\mathbf{u}}^{II}\frac{\mu(\tilde{\theta})^{2}}{\mu(\overline{\theta})\sqrt{\tilde{\theta}}}\boldsymbol{\mathcal{V}}\cdot\nabla\mathcal{T} d\mathbf{x}}_{I_{7}} +\underbrace{\int_{\Omega}\frac{32}{75R^2}\sqrt{\frac{2}{R}}a_{\theta}^{I}a_{\theta}^{II}\frac{\kappa(\tilde{\theta})^{2}}{\kappa(\overline{\theta})\tilde{\theta}\sqrt{\tilde{\theta}}}(R\overline{\rho}\overline{\theta}+\mathcal{R}\overline{\rho})\mathcal{T}\div\boldsymbol{\mathcal{V}}d\mathbf{x}}_{I_{8}} \\
        &+\underbrace{\int_{\Omega}-\frac{4}{5R}\overline{\rho}a_{\mathbf{u}}^{II}a_{\theta}^I\frac{\mu(\tilde{\theta})\kappa(\tilde{\theta})}{\mu(\overline{\theta})\tilde{\theta}}\mathcal{T}\mathbf{n}\cdot\nabla\mathcal{T} d\mathbf{x}}_{I_{9}} \\
        &+\underbrace{\sum_{p=0,1}\int_{\mathbb{T}_{p}^{2}}\frac{2}{3R}a_{\mathbf{u}}^{II}\mu(\theta_{w})(\varphi'_{1}+\overline{\rho})|\boldsymbol{\mathcal{V}}|^{2}dx_{2}dx_{3}}_{I_{10}} +\underbrace{\sum_{p=0,1}\int_{\mathbb{T}_{p}^{2}}\frac{16}{15R}a_{\theta}^{I}\frac{\kappa(\theta_{w})}{\theta_{w}}(\varphi'+\overline{\rho})|\mathcal{T}|^{2}dx_{2}dx_{3}}_{I_{11}} \\
        &+\underbrace{\sum_{p=0,1}\int_{\mathbb{T}_{p}^{2}}\frac{2}{3R}a_{\mathbf{u}}^{II}\mu(\theta_{w})(\varphi'_{1}+\overline{\rho})\boldsymbol{\mathcal{V}}\cdot(\varphi'_{1}-\varphi'_{2})\mathbf{u}_{2}dx_{2}dx_{3}}_{I_{12}} +\underbrace{\sum_{p=0,1}\int_{\mathbb{T}_{p}^{2}}\frac{16}{15R}a_{\theta}^{I}\frac{\kappa(\theta_{w})}{\theta_{w}}\mathcal{T}(\varphi'_{1}-\varphi'_{2})\zeta_{2} dx_{2}dx_{3}}_{I_{13}} \\
        = 
        &\underbrace{\int_{\Omega}\frac{2}{3R}\sqrt{\frac{2}{R}}a_{\mathbf{u}}^{I}a_{\mathbf{u}}^{II}\frac{\mu(\tilde{\theta})^{2}}{\mu(\overline{\theta})\sqrt{\tilde{\theta}}}\boldsymbol{\mathcal{V}}\cdot\boldsymbol{\mathcal{F}}d\mathbf{x}}_{I_{14}} +\underbrace{\int_{\Omega}\frac{32}{75R^2}\sqrt{\frac{2}{R}}a_{\theta}^{I}a_{\theta}^{II}\frac{\kappa(\tilde{\theta})^{2}}{\kappa(\overline{\theta})\tilde{\theta}\sqrt{\tilde{\theta}}}\mathcal{T} \mathcal{G}d\mathbf{x}}_{I_{15}} \\
        &+\underbrace{\int_{\Omega}-\frac{4}{5R^2}a_{\mathbf{u}}^{II}a_{\theta}^I\frac{\mu(\tilde{\theta})\kappa(\tilde{\theta})}{\mu(\overline{\theta})\tilde{\theta}}\mathcal{T}\mathbf{n}\cdot\boldsymbol{\mathcal{F}}(\varphi',\mathbf{u}',\zeta')d\mathbf{x}}_{I_{16}}.
    \end{align*}

    Note that for $I_{1}$, $I_{2}$ and $I_{3}$, by H\"{o}lder's inequality and \eqref{assumption2} in assumption (A), 
    \begin{align*}
        I_{1}+I_{2}+I_{3}\geq&\left(\frac{1}{3R}\sqrt{\frac{2}{R}}a_{\mathbf{u}}^{I}a_{\mathbf{u}}^{II}\frac{\mu(\overline{\theta})^{2}}{\sqrt{\overline{\theta}}}-C\delta\right)\int_{\Omega}\left|\nabla\boldsymbol{\mathcal{V}}+\nabla\boldsymbol{\mathcal{V}}^{\mathsf{T}}-\frac{2}{3}\div\boldsymbol{\mathcal{V}}\mathbb{I}_{3}\right|^2d\mathbf{x} \nonumber\\
        &+\left(\frac{32}{75R^2}\sqrt{\frac{2}{R}}a_{\theta}^{I}a_{\theta}^{II}\frac{\kappa(\overline{\theta})^{2}}{\overline{\theta}\sqrt{\overline{\theta}}}-C\delta\right)\int_{\Omega}|\nabla\mathcal{T}|^2d\mathbf{x} \nonumber\\
        &-\left(\frac{4}{5R^2}a_{\mathbf{u}}^{II}a_{\theta}^I\frac{\mu(\overline{\theta})\kappa(\overline{\theta})}{\mu(\overline{\theta})\overline{\theta}}+C\delta\right)\int_{\Omega}|\nabla\mathcal{T}|\cdot\left|\nabla\boldsymbol{\mathcal{V}}+\nabla\boldsymbol{\mathcal{V}}^{\mathsf{T}}-\frac{2}{3}\div\boldsymbol{\mathcal{V}}\mathbb{I}_{3}\right|d\mathbf{x} \nonumber\\
        \geq&C_{0}\left(\left\|\nabla\boldsymbol{\mathcal{V}}+\nabla\boldsymbol{\mathcal{V}}^{\mathsf{T}}-\frac{2}{3}\div\boldsymbol{\mathcal{V}}\mathbb{I}_{3}\right\|_{L^{2}(\Omega)}^2+\|\nabla\mathcal{T}\|_{L^{2}(\Omega)}^2\right),
    \end{align*}
    where $C_{0}>0$ is a constant only depending on $a_{\mathbf{u}}^{I}, a_{\mathbf{u}}^{II}, a_{\theta}^{I}, a_{\theta}^{II},\overline{\theta}$. Applying Korn's type inequality \eqref{KornInequality},
    \begin{equation}\label{EstimateContinuityI1I2I3}
        I_{1}+I_{2}+I_{3}\geq C_{0}(\|\nabla\boldsymbol{\mathcal{V}}\|_{L^{2}(\Omega)}^2+\|\nabla\mathcal{T}\|_{L^{2}(\Omega)}^2).
    \end{equation}

    For $I_{4}$, $I_{5}$ and $I_{6}$, by H\"{o}lder's inequality, 
    \begin{equation}\label{EstimateContinuityI4I5I6}
        |I_{4}|+|I_{5}|+|I_{6}|\leq C_{1}\delta(\|\boldsymbol{\mathcal{V}}\|_{H^{1}(\Omega)}^2+\|\mathcal{T}\|_{H^{1}(\Omega)}^2),
    \end{equation}
    where $C_{1}>0$ is a constant only depending on $a_{\mathbf{u}}^{I},a_{\mathbf{u}}^{II},a_{\theta}^{I},a_{\theta}^{II},\overline{\theta}$.

    For $I_{7}$ and $I_{8}$, note that
    \begin{equation*}
        R\overline{\rho}\sqrt{\frac{2}{R}}\frac{2}{3R}a_{\mathbf{u}}^{I}a_{\mathbf{u}}^{II}\mu(\overline{\theta})^{2}\overline{\theta}^{-\frac{1}{2}}=R(\overline{\rho}\overline{\theta}+\mathcal{R}\overline{\rho})\frac{32}{75R^2}\sqrt{\frac{2}{R}}a_{\theta}^{I}a_{\theta}^{II}\mu(\overline{\theta})\kappa(\overline{\theta})\overline{\theta}^{-\frac{3}{2}},
    \end{equation*}
    and it then follows by H\"{o}lder's inequality, 
    \begin{align}\label{EstimateContinuityI7I8}
        |I_{7}+I_{8}|\leq&C\delta\left(\int_{\Omega}|\boldsymbol{\mathcal{V}}\cdot\nabla\mathcal{T}|d\mathbf{x}+\int_{\Omega}|\mathcal{T}\div\boldsymbol{\mathcal{V}}|d\mathbf{x}\right) \nonumber\\
        \leq&C_{2}\delta\left(\|\boldsymbol{\mathcal{V}}\|_{H^{1}(\Omega)}^{2}+\|\mathcal{T}\|_{H^{1}(\Omega)}^{2}\right),
    \end{align}
    where $C_{2}>0$ is a constant only depending on $a_{\mathbf{u}}^{I},a_{\mathbf{u}}^{II},a_{\theta}^{I},a_{\theta}^{II},\overline{\rho},\overline{\theta}$.

    For $I_{9}$, by divergence theorem,
    \begin{align}\label{EstimateContinuityI9}
        I_{9}=&\int_{\Omega}\frac{2}{5R}\overline{\rho}a_{\mathbf{u}}^{II}a_{\theta}^I\frac{\mu(\tilde{\theta})\kappa(\tilde{\theta})}{\mu(\overline{\theta})\tilde{\theta}}\mathfrak{n}|\mathcal{T}|^2 d\mathbf{x}+\int_{\Omega}\frac{2}{5R}\overline{\rho}a_{\mathbf{u}}^{II}a_{\theta}^I\partial_{1}\left(\frac{\mu(\tilde{\theta})\kappa(\tilde{\theta})}{\mu(\overline{\theta})\tilde{\theta}}\right)\cdot\mathfrak{n}|\mathcal{T}|^2 d\mathbf{x} \nonumber\\
        &+\sum_{p=0,1}\int_{\mathbb{T}_{p}^{2}}-\frac{2}{5R}\overline{\rho}a_{\mathbf{u}}^{II}a_{\theta}^I\frac{\mu(\theta_{w})\kappa(\theta_{w})}{\mu(\overline{\theta})\theta_{w}}\mathcal{T}^2 dx_{2}dx_{3} \nonumber\\
        \geq&\left(\frac{4}{5R}a_{\mathbf{u}}^{II}a_{\theta}^I\overline{\rho}\frac{\kappa(\overline{\theta})}{\overline{\theta}}-C_{3}\delta\right)\|\mathcal{T}\|_{L^{2}(\Omega)}^2-(\frac{2}{5R}\overline{\rho}a_{\mathbf{u}}^{II}a_{\theta}^I\frac{\kappa(\overline{\theta})}{\overline{\theta}}+C_{3}\delta)\|\mathcal{T}\|_{L^{2}(\partial\Omega)}^2,
    \end{align}
    where $C_{3}>0$ is a constant only depending on $a_{\mathbf{u}}^{I},a_{\mathbf{u}}^{II},a_{\theta}^{I},a_{\theta}^{II},\overline{\rho},\overline{\theta}$.

    For $I_{10}$, $I_{11}$, $I_{12}$ and $I_{13}$, 
    \begin{align}\label{EstimateContinuityI10I11I12I13}
        I_{10}+I_{11}+I_{12}+I_{13}\geq&\frac{2}{3R}a_{\mathbf{u}}^{II}\mu(\overline{\theta})(\overline{\rho}-C_{4}\delta-C_{4}\varepsilon)\|\boldsymbol{\mathcal{V}}\|_{L^{2}(\partial\Omega)}^2 \nonumber\\
        &+\frac{16}{15R}a_{\theta}^{I}\frac{\kappa(\overline{\theta})}{\overline{\theta}}(\overline{\rho}-C_{4}\delta-C_{4}\varepsilon)\|\mathcal{T}\|_{L^{2}(\partial\Omega)}^2 \nonumber\\
        &-C_{4}(\|\mathbf{u}_{2}\|_{L^{\infty}(\Omega)}+\|\zeta_{2}\|_{L^{\infty}(\Omega)})\|\mathcal{D}'\|_{L^{2}(\partial\Omega)}^2.
    \end{align}
    where $C_{4}>0$ is a constant only depending on $a_{\mathbf{u}}^{I},a_{\mathbf{u}}^{II},a_{\theta}^{I},a_{\theta}^{II},\overline{\rho},\overline{\theta}$.

    For $I_{14}$, $I_{15}$, $I_{16}$, by H\"{o}lder's inequality, 
    \begin{equation}\label{EstimateContinuityI14I15I16}
        |I_{14}|+|I_{15}|+|I_{16}|\leq\frac{C_{0}}{2}(\|\boldsymbol{\mathcal{V}}\|_{L^{2}(\Omega)}^2+\|\mathcal{T}\|_{L^{2}(\Omega)}^2)+C_{5}(\|\mathbf{F'}\|_{L^{2}(\Omega)}^2+\|G'\|_{L^{2}(\Omega)}^2),
    \end{equation}
    where $C_{5}>0$ is a constant only depending on $a_{\mathbf{u}}^{I},a_{\mathbf{u}}^{II},a_{\theta}^{I},a_{\theta}^{II},\overline{\theta}$.

    In the following, we estimate $\|\boldsymbol{\mathcal{F}}\|_{L^{2}(\Omega)}^2$ and $\|\boldsymbol{\mathcal{G}}\|_{L^{2}(\Omega)}^2$. For $\|\boldsymbol{\mathcal{F}}\|_{L^{2}(\Omega)}^2$, 
    \begin{align*}
        &\|\boldsymbol{\mathcal{F}}\|_{L^{2}(\Omega)}^2 \\
        \leq&C_{6}( \underbrace{\|\varphi'_{1}\mathbf{u}'_{1}\cdot\nabla\mathbf{u}'_{1}-\varphi'_{2}\mathbf{u}'_{2}\cdot\nabla\mathbf{u}'_{2}\|_{L^{2}(\Omega)}^2}_{J_1} +\underbrace{\|\mathbf{u}'_{1}\cdot\nabla\mathbf{u}'_{1}-\mathbf{u}'_{2}\cdot\nabla\mathbf{u}'_{2}\|_{L^{2}(\Omega)}^2}_{J_2} \\
        &+\underbrace{\|\varphi'_{1}\nabla\zeta'_{1}-\varphi'_{2}\nabla\zeta'_{2}\|_{L^{2}(\Omega)}^2}_{J_3} +\|\varphi'_{1}-\varphi'_{2}\|_{L^{2}(\Omega)}^2 +\underbrace{\|\zeta'_{1}\nabla\varphi'_{1}-\zeta'_{2}\nabla\varphi'_{2}\|_{L^{2}(\Omega)}^2}_{J_{4}} +\|\nabla\varphi'_{1}-\nabla\varphi'_{2}\|_{L^{2}(\Omega)}^2) \\
        &+\underbrace{\left\|\div\left((\mu(\theta_{1}')-\mu(\overline{\theta}))\mathbb{S}(\mathbf{u}_{1}')\right)-\div\left((\mu(\theta_{2}')-\mu(\overline{\theta}))\mathbb{S}(\mathbf{u}_{2}')\right)\right\|_{L^{2}(\Omega)}^2}_{J_{5}},
    \end{align*}
    where $C_{6}>0$ is a constant only depending on $C_{v}, R, \overline{\rho}, \overline{\theta}$.

    For $J_1$, 
    \begin{align}\label{EstimateContinuityJ1}
        J_1\leq&\|(\varphi'_{1}-\varphi'_{2})\mathbf{u}'_{1}\cdot\nabla\mathbf{u}'_{1}\|_{L^{2}(\Omega)}^2 +\|\varphi'_{2}(\mathbf{u}'_{1}-\mathbf{u}'_{2})\cdot\nabla\mathbf{u}'_{2}\|_{L^{2}(\Omega)}^2 +\|\varphi'_{2}\mathbf{u}'_{2}\cdot\nabla(\mathbf{u}'_{1}-\mathbf{u}'_{2})\|_{L^{2}(\Omega)}^2 \nonumber\\
        \leq&\|\mathbf{u}'_{1}\|_{L^{\infty}(\Omega)}^2\|\nabla\mathbf{u}'_{1}\|_{L^{\infty}}^2\|\varphi'_{1}-\varphi'_{2}\|_{L^{2}(\Omega)}^{2} +\|\varphi'_{2}\|_{L^{\infty}(\Omega)}^2\|\nabla\mathbf{u}'_{2}\|_{L^{\infty}(\Omega)}^2\|\mathbf{u}'_{1}-\mathbf{u}'_{2}\|_{L^{2}(\Omega)}^{2} \nonumber\\
        &+\|\varphi'_{2}\|_{L^{\infty}(\Omega)}^2\|\mathbf{u}'_{2}\|_{L^{\infty}(\Omega)}^2\|\nabla(\mathbf{u}'_{1}-\mathbf{u}'_{2})\|_{L^{2}(\Omega)}^{2} \nonumber\\
        \leq&C_{7}(\|\mathbf{u}_{1}\|_{H^{3}(\Omega)}^{4}\|\varphi_{1}'-\varphi_{2}'\|_{L^{2}(\Omega)}^{2}+\|\varphi_{2}'\|_{H^{2}(\Omega)}^{2}\|\mathbf{u}_{2}'\|_{H^{3}(\Omega)}^{2}\|\mathbf{u}_{1}'-\mathbf{u}_{2}'\|_{H^{1}(\Omega)}).
    \end{align}

    For $J_2$,
    \begin{align}\label{EstimateContinuityJ2}
        J_2\leq&\|(\mathbf{u}'_{1}-\mathbf{u}'_{2})\cdot\nabla\mathbf{u}'_{1}\|_{L^{2}(\Omega)}^2 +\|\mathbf{u}'_{2}\nabla(\mathbf{u}'_{1}-\mathbf{u}'_{2})\|_{L^{2}(\Omega)}^2 \nonumber\\
        \leq&\|\nabla\mathbf{u}'_{1}\|_{L^{\infty}(\Omega)}^2\|\mathbf{u}'_{1}-\mathbf{u}'_{2}\|_{L^{2}(\Omega)}^{2} +\|\mathbf{u}'_{2}\|_{L^{\infty}(\Omega)}^2\|\nabla(\mathbf{u}'_{1}-\mathbf{u}'_{2})\|_{L^{2}(\Omega)}^{2} \nonumber\\ 
        \leq&C_{8}(\|\mathbf{u}_{1}'\|_{H^{2}(\Omega)}^{2}+\|\mathbf{u}_{2}'\|_{H^{2}(\Omega)}^{2})\|\mathbf{u}_{1}'-\mathbf{u}_{2}'\|_{H^{1}(\Omega)}^{2}.
    \end{align}

    For $J_3$,
    \begin{align}\label{EstimateContinuityJ3}
        J_3\leq&\|(\varphi'_{1}-\varphi'_{2})\nabla\zeta'_{1}\|_{L^{2}(\Omega)}^2 +\|\varphi'_{2}\nabla(\zeta'_{1}-\zeta'_{2})\|_{L^{2}(\Omega)}^2 \nonumber\\
        \leq&\|\nabla\zeta'_{1}\|_{L^{\infty}(\Omega)}^2\|\varphi'_{1}-\varphi'_{2}\|_{L^{2}(\Omega)}^{2} +\|\varphi'_{2}\|_{L^{\infty}(\Omega)}^2\|\nabla(\zeta'_{1}-\zeta'_{2})\|_{L^{2}(\Omega)}^{2} \nonumber\\
        \leq&C_{9}(\|\zeta_{1}\|_{H^{3}(\Omega)}^{2}\|\varphi_{1}'-\varphi_{2}'\|_{L^{2}(\Omega)}+\|\varphi_{2}'\|_{H^{3}(\Omega)}\|\zeta_{1}'-\zeta_{2}'\|_{H^{1}(\Omega)}^{2}).
    \end{align}

    For $J_{4}$,
    \begin{align}\label{EstimateContinuityJ4}
        J_{4}\leq&\|(\zeta'_{1}-\zeta'_{2})\nabla\varphi'_{1}\|_{L^{2}(\Omega)}^2 +\|\zeta'_{2}\nabla(\varphi'_{1}-\varphi'_{2})\|_{L^{2}(\Omega)}^2 \nonumber\\
        \leq&\|(\zeta'_{1}-\zeta'_{2})\|_{L^4(\Omega)}^2\|\nabla\varphi'_{1}\|_{L^4(\Omega)}^2 +\|\nabla\zeta'_{2}\|_{L^{\infty}(\Omega)}^{2}\|(\varphi'_{1}-\varphi'_{2})\|_{L^{2}(\Omega)}^{2} \nonumber\\
        \leq&C_{10}(\|\varphi'_{1}\|_{H^{2}(\Omega)}^2\|(\zeta'_{1}-\zeta'_{2})\|_{H^{1}(\Omega)}^2 +\|\zeta'_{2}\|_{H^{3}(\Omega)}^2\|\varphi'_{1}-\varphi'_{2}\|_{L^{2}(\Omega)}^{2}),
    \end{align}

    For $J_{5}$,
    \begin{align*}
        J_{5}\leq&\|\div((\mu(\theta_{1}')-\mu(\theta_{2}'))\mathbb{S}(\mathbf{u}_{1}'))\|_{L^{2}(\Omega)}^{2} +\|\div(\mu(\theta_{2}')(\mathbb{S}(\mathbf{u}_{1}')-\mathbb{S}(\mathbf{u}_{2}')))\|_{L^{2}(\Omega)}^{2} \nonumber\\
        \leq&C_{11}(\|\zeta_{1}'-\zeta_{2}'\|_{L^{4}(\Omega)}^{2}\|\div\mathbb{S}(\mathbf{u}_{1}')\|_{L^{4}(\Omega)}^{2}+\|\mathbb{S}(\mathbf{u}_{1}')\|_{L^{\infty}(\Omega)}^{2}\|\nabla\zeta_{1}'-\nabla\zeta_{2}'\|_{L^{2}(\Omega)}^{2} \nonumber\\
        &+\|\zeta_{2}'\|_{L^{\infty}(\Omega)}^{2}\|\div\mathbb{S}(\mathbf{u}_{1}')-\div\mathbb{S}(\mathbf{u}_{1}')\|_{L^{2}(\Omega)}^{2}+\|\nabla\zeta_{1}'\|_{L^{\infty}(\Omega)}^{2}\|\mathbb{S}(\mathbf{u}_{1}')-\mathbb{S}(\mathbf{u}_{2}')\|_{L^{2}(\Omega)}^{2}) \nonumber\\
        \leq&C_{11}(\|\mathbf{u}_{1}'\|_{H^{3}(\Omega)}^{2}\|\zeta_{1}'-\zeta_{2}'\|_{H^{1}(\Omega)}^{2}+\|\zeta_{2}'\|_{H^{3}(\Omega)}^{2}\|\mathbf{u}_{1}'-\mathbf{u}_{2}'\|_{H^{2}(\Omega)}^{2}),
    \end{align*}
    where $C_{11}>0$ is a constant only depending on $\mu, \overline{\theta}$. Since
    \begin{equation*}
        \|\mathbf{u}_{1}'-\mathbf{u}_{2}'\|_{H^{2}(\Omega)}^{2}\leq C_{11}\|\mathbf{u}_{1}'-\mathbf{u}_{2}'\|_{H^{1}(\Omega)}\|\mathbf{u}_{1}'-\mathbf{u}_{2}'\|_{H^{3}(\Omega)}\leq(\|\mathbf{u}_{1}'\|_{H^{3}(\Omega)}+\|\mathbf{u}_{2}'\|_{H^{3}(\Omega)})\|\mathbf{u}_{1}'-\mathbf{u}_{2}'\|_{H^{1}(\Omega)}^{2},
    \end{equation*}
    it holds that
    \begin{equation}\label{EstimateContinuityJ5}
        J_{5}\leq C_{11}(\|\mathbf{u}_{1}'\|_{H^{3}(\Omega)}^{2}\|\zeta_{1}'-\zeta_{2}'\|_{H^{1}(\Omega)}^{2}+\|\zeta_{2}'\|_{H^{3}(\Omega)}^{2}(\|\mathbf{u}_{1}'\|_{H^{3}(\Omega)}+\|\mathbf{u}_{2}'\|_{H^{3}(\Omega)})\|\mathbf{u}_{1}'-\mathbf{u}_{2}'\|_{H^{1}(\Omega)}^{2}).
    \end{equation}

    Combining \eqref{EstimateContinuityJ1}, \eqref{EstimateContinuityJ2}, \eqref{EstimateContinuityJ3}, \eqref{EstimateContinuityJ4}, \eqref{EstimateContinuityJ5}, we have
    \begin{equation}\label{EstimateContinuityF}
        \|\boldsymbol{\mathcal{F}}\|_{L^{2}(\Omega)}^2\leq C_{12}(\|\mathbf{u}'_{1}-\mathbf{u}'_{2}\|_{H^{1}(\Omega)}^2+\|\zeta'_{1}-\zeta'_{2}\|_{H^{1}(\Omega)}^2+\|\varphi'_{1}-\varphi'_{2}\|_{L^{2}(\Omega)}^{2}),
    \end{equation}
    where $C_{12}>0$ is a constant only depending on $c_{v}, R, \mu, \overline{\rho}, \overline{\theta}$.

    For $\|\mathcal{G}\|_{L^{2}(\Omega)}^2$,
    \begin{align*}
        &\|\mathcal{G}\|_{L^{2}(\Omega)}^2\\
        \leq&C_{13}( \underbrace{\|\varphi'_{1}\mathbf{u}'_{1}\cdot\nabla\zeta'_{1}-\varphi'_{2}\mathbf{u}'_{2}\cdot\nabla\zeta'_{2}\|_{L^{2}(\Omega)}^2}_{K_1} +\underbrace{\|\mathbf{u}'_{1}\cdot\nabla\zeta'_{1}-\mathbf{u}'_{2}\cdot\nabla\zeta'_{2}\|_{L^{2}(\Omega)}^2}_{K_2} +\underbrace{\|\varphi'_{1}\mathbf{u}'_{1}-\varphi'_{2}\mathbf{u}'_{2}\|_{L^{2}(\Omega)}^2}_{K_3} \\
        &+\|\mathbf{u}'_{1}-\mathbf{u}'_{2}\|_{L^{2}(\Omega)}^2
        +\underbrace{\|\varphi'_{1}\zeta'_{1}\div\mathbf{u}'_{1}-\varphi'_{2}\zeta'_{2}\div\mathbf{u}'_{2}\|_{L^{2}(\Omega)}^2}_{K_{4}} +\underbrace{\|\zeta'_{1}\div\mathbf{u}'_{1}-\zeta'_{2}\div\mathbf{u}'_{2}\|_{L^{2}(\Omega)}^2}_{K_{5}} \\
        & +\underbrace{\|\varphi'_{1}\div\mathbf{u}'_{1}-\varphi'_{2}\div\mathbf{u}'_{2}\|_{L^{2}(\Omega)}^2}_{K_{6}} +\underbrace{\|\div\mathbf{u}'_{1}-\div\mathbf{\mathbf{u}}'_{2}\|_{L^{2}(\Omega)}^2}_{K_{7}}\big)
        \\
        & +\underbrace{\left\|\div\left([\kappa(\theta_{1}')-\kappa(\overline{\theta})]\nabla\zeta'_{1}\right)-\div\left([\kappa(\theta_{2}')-\kappa(\overline{\theta})]\nabla\zeta'_{2}\right)\right\|_{L^{2}(\Omega)}^2}_{K_{8}} +\underbrace{\|\div(\kappa(\theta_{1}')\nabla\tilde{\theta})-\div(\kappa(\theta_{2}')\nabla\tilde{\theta})\|_{L^{2}(\Omega)}^2}_{K_{9}}\\
        &+\underbrace{\left\|\nabla\mathbf{u}'_{1}:\mu(\theta_{1}')\mathbb{S}(\mathbf{u}_{1}')-\nabla\mathbf{u}'_{2}:\mu(\theta_{2}')\mathbb{S}(\mathbf{u}_{2}')\right\|_{L^{2}(\Omega)}^2}_{K_{10}} +\underbrace{\|\div(\varphi_{1}'\mathbf{u}_{1}'-\varphi_{2}'\mathbf{u}_{2}')\|_{L^{2}(\Omega)}^2}_{K_{11}},
    \end{align*}
    where $C_{13}>0$ is a constant only depending on $C_{v}, R, \overline{\rho}, \overline{\theta}$.

    For $K_1$,
    \begin{align}\label{EstimateContinutiyK1}
        K_1\leq&\|(\varphi'_{1}-\varphi'_{2})\mathbf{u}'_{1}\cdot\nabla\zeta'_{1}\|_{L^{2}(\Omega)}^2 +\|\varphi'_{2}(\mathbf{u}'_{1}-\mathbf{u}'_{2})\cdot\nabla\zeta'_{1}\|_{L^{2}(\Omega)}^2 +\|\varphi'_{2}\mathbf{u}'_{1}\cdot(\nabla\zeta'_{1}-\nabla\zeta'_{2})\|_{L^{2}(\Omega)}^2 \nonumber\\
        \leq&\|\mathbf{u}'_{1}\|_{L^{\infty}(\Omega)}^2\|\nabla\zeta'_{1}\|_{L^{\infty}(\Omega)}^2\|\varphi'_{1}-\varphi'_{2}\|_{L^{2}(\Omega)}^{2} +\|\varphi'_{2}\|_{L^{\infty}(\Omega)}^2\|\nabla\zeta'_{1}\|_{L^{\infty}(\Omega)}^2\|\mathbf{u}'_{1}-\mathbf{u}'_{2}\|_{L^{2}(\Omega)}^{2} \nonumber\\
        &+\|\varphi'_{2}\|_{L^{\infty}(\Omega)}^2\|\mathbf{u}'_{1}\|_{L^{\infty}}^2\|\nabla\zeta'_{1}-\nabla\zeta'_{2}\|_{L^{2}(\Omega)}^{2} \nonumber\\
        \leq&C_{14}(\|\mathbf{u}_{1}'\|_{H^{2}(\Omega)}^{2}\|\zeta_{1}'\|_{H^{3}(\Omega)}^{2}\|\varphi_{1}'-\varphi_{2}'\|_{L^{2}(\Omega)}^{2}+\|\varphi_{2}'\|_{H^{2}(\Omega)}^{2}\|\zeta_{1}'\|_{H^{2}(\Omega)}\|\mathbf{u}_{1}'-\mathbf{u}_{2}'\|_{L^{2}(\Omega)}^{2} \nonumber\\
        &+\|\varphi_{2}'\|_{H^{2}(\Omega)}^{2}\|\mathbf{u}_{1}'\|_{H^{2}(\Omega)}^{2}\|\zeta_{1}'-\zeta_{2}'\|_{H^{1}(\Omega)}^{2}).
    \end{align}

    For $K_2$,
    \begin{align}\label{EstimateContinuityK2}
        K_2\leq&\|(\mathbf{u}'_{1}-\mathbf{u}'_{2})\nabla\zeta'_{1}\|_{L^{2}(\Omega)}^2 +\|\mathbf{u}'_{1}(\nabla\zeta'_{1}-\nabla\zeta'_{2})\|_{L^{2}(\Omega)}^2 \nonumber\\
        \leq&\|\nabla\zeta'_{1}\|_{L^{\infty}(\Omega)}^2\|\mathbf{u}'_{1}-\mathbf{u}'_{2}\|_{L^{2}(\Omega)}^{2} +\|\mathbf{u}'_{1}\|_{L^{\infty}(\Omega)}^2\|\nabla\zeta'_{1}-\nabla\zeta'_{2}\|_{L^{2}(\Omega)}^{2} \nonumber\\
        \leq&C_{15}(\|\zeta_{1}'\|_{H^{2}(\Omega)}^{2}\|\mathbf{u}_{1}'-\mathbf{u}_{2}'\|_{H^{1}(\Omega)}+\|\mathbf{u}_{1}'\|_{H^{2}(\Omega)}^{2}\|\zeta_{1}'-\zeta_{2}'\|_{H^{1}(\Omega)}^{2}).
    \end{align}

    For $K_3$,
    \begin{align}\label{EstimateContinuityK3}
        K_3\leq&\|(\varphi'_{1}-\varphi'_{2})\mathbf{u}'_{1}\|_{L^{2}(\Omega)}^2 +\|\varphi'_{2}(\mathbf{u}'_{1}-\mathbf{u}'_{2})\|_{L^{2}(\Omega)}^2 \nonumber\\
        \leq&\|\mathbf{u}'_{1}\|_{L^{\infty}(\Omega)}^2\|\varphi'_{1}-\varphi'_{2}\|_{L^{2}(\Omega)}^{2} +\|\varphi'_{2}\|_{L^{\infty}(\Omega)}^2\|\mathbf{u}'_{1}-\mathbf{u}'_{2}\|_{L^{2}(\Omega)}^{2} \nonumber\\
        \leq&C_{16}(\|\mathbf{u}'_{1}\|_{H^{2}(\Omega)}^2\|\varphi'_{1}-\varphi'_{2}\|_{L^{2}(\Omega)}^{2} +\|\varphi'_{2}\|_{H^{2}(\Omega)}^2\|\mathbf{u}'_{1}-\mathbf{u}'_{2}\|_{L^{2}(\Omega)}^{2}).
    \end{align}

    For $K_{4}$,
    \begin{align}\label{EstimateContinuityK4}
        K_{4}\leq&\|(\varphi'_{1}-\varphi'_{2})\zeta'_{1}\div\mathbf{u}'_{1}\|_{L^{2}(\Omega)}^2 +\|\varphi'_{2}(\zeta'_{1}-\zeta'_{2})\div\mathbf{u}'_{1}\|_{L^{2}(\Omega)}^2 +\|\varphi'_{2}\zeta'_{2}(\div\mathbf{u}'_{1}-\div\mathbf{u}'_{2})\|_{L^{2}(\Omega)}^2 \nonumber\\
        \leq&\|\zeta'_{1}\|_{L^{\infty}(\Omega)}^2\|\div\mathbf{u}'_{1}\|_{L^{\infty}(\Omega)}^2\|\varphi'_{1}-\varphi'_{2}\|_{L^{2}(\Omega)}^{2} +\|\varphi'_{2}\|_{L^{\infty}(\Omega)}^2\|\div\mathbf{u}'_{1}\|_{L^{\infty}(\Omega)}^2\|\zeta'_{1}-\zeta'_{2}\|_{L^{2}(\Omega)}^{2} \nonumber\\
        &+\|\varphi'_{2}\|_{L^{\infty}(\Omega)}^2\|\zeta'_{2}\|_{L^{\infty}(\Omega)}^2\|\div\mathbf{u}'_{1}-\div\mathbf{u}'_{2}\|_{L^{2}(\Omega)}^{2} \nonumber\\
        \leq&C_{17}(\|\zeta'_{1}\|_{H^{2}(\Omega)}^2\|\mathbf{u}'_{1}\|_{H^{3}(\Omega)}^2\|\varphi'_{1}-\varphi'_{2}\|_{L^{2}(\Omega)}^{2} +\|\varphi'_{2}\|_{H^{2}(\Omega)}^2\|\mathbf{u}'_{1}\|_{H^{3}(\Omega)}^2\|\zeta'_{1}-\zeta'_{2}\|_{L^{2}(\Omega)}^{2} \nonumber\\
        &+\|\varphi'_{2}\|_{H^{2}(\Omega)}^2\|\zeta'_{2}\|_{H^{2}(\Omega)}^2\|\mathbf{u}'_{1}-\mathbf{u}'_{2}\|_{H^{1}(\Omega)}^{2}).
    \end{align}

    For $K_{5}$,
    \begin{align}\label{EstimateContinuityK5}
        K_{5}\leq&\|(\zeta'_{1}-\zeta'_{2})\div\mathbf{u}'_{1}\|_{L^{2}(\Omega)}^2 +\zeta'_{2}(\div\mathbf{u}'_{1}-\div\mathbf{u}'_{2})\|_{L^{2}(\Omega)}^2 \nonumber\\
        \leq&\|\div\mathbf{u}'_{1}\|_{L^{\infty}(\Omega)}^2\|\zeta'_{1}-\zeta'_{2}\|_{L^{2}(\Omega)}^{2} +\|\zeta'_{2}\|_{L^{\infty}(\Omega)}^2\|\div\mathbf{u}'_{1}-\div\mathbf{u}'_{2}\|_{L^{2}(\Omega)}^{2} \nonumber\\
        \leq&C_{18}(\|\mathbf{u}_{1}'\|_{H^{2}(\Omega)}^{2}\|\zeta_{1}'-\zeta_{2}'\|_{L^{2}(\Omega)}^{2} +\|\zeta_{2}'\|_{H^{2}(\Omega)}^{2}\|\mathbf{u}_{1}'-\mathbf{u}_{2}'\|_{H^{1}(\Omega)}^{2}).
    \end{align}

    For $K_{6}$,
    \begin{align}\label{EstimateContinuityK6}
        K_{6}\leq&\|(\varphi'_{1}-\varphi'_{2})\div\mathbf{u}'_{1}\|_{L^{2}(\Omega)}^2 +\|\varphi'_{2}\div(\mathbf{u}'_{1}-\mathbf{u}'_{2})\|_{L^{2}(\Omega)}^2 \nonumber\\
        \leq&C_{19}(\|\div\mathbf{u}'_{1}\|_{L^{\infty}}^2\|\varphi'_{1}-\varphi'_{2}\|_{L^{2}(\Omega)}^{2} +\|\varphi'_{2}\|_{L^{\infty}(\Omega)}^2\|\div\mathbf{u}'_{1}-\div\mathbf{u}'_{2}\|_{L^{2}(\Omega)}^{2} \nonumber\\
        \leq&C_{19}(\|\mathbf{u}_{1}'\|_{H^{2}(\Omega)}^{3}\|\varphi_{1}'-\varphi_{2}'\|_{L^{2}(\Omega)}^{2}+\|\varphi_{2}'\|_{H^{2}(\Omega)}^{2}\|\mathbf{u}_{1}'-\mathbf{u}_{2}'\|_{H^{1}(\Omega)}^{2}).
    \end{align}

    For $K_{7}$,
    \begin{equation}\label{EstimateContinuityK7}
        K_{7}\leq C_{20}\|\nabla\mathbf{u}'_{1}-\nabla\mathbf{\mathbf{u}}'_{2}\|_{L^{2}(\Omega)}^{2}.
    \end{equation}

    For $K_{8}$,
    \begin{align}\label{EstimateContinuityK8}
        K_{8}\leq&C_{21}(\|(\zeta_{1}'-\zeta_{2}')\nabla\zeta_{1}'\|_{L^{2}(\Omega)}^{2} +\|\zeta_{2}'(\nabla\zeta_{1}'-\nabla\zeta_{2}')\|_{L^{2}(\Omega)}^{2}) \nonumber\\
        \leq&C_{21}(\|\nabla\zeta_{1}'\|_{L^{\infty}}^{2}\|\zeta_{1}'-\zeta_{2}'\|_{L^{2}(\Omega)}^{2} +\|\zeta_{2}'\|_{L^{\infty}}^{2}\|\nabla\zeta_{1}'-\nabla\zeta_{2}'\|_{L^{2}(\Omega)}^{2}) \nonumber\\
        \leq&C_{21}(\|\zeta_{1'}\|_{H^{3}(\Omega)}^{2}+\|\zeta_{2}'\|_{H^{2}(\Omega)}^{2})\|\zeta_{1}'-\zeta_{2}'\|_{H^{1}(\Omega)}^{2},
    \end{align}
    where $C_{21}>0$ is a constant only depending on $\kappa, \overline{\theta}$.

    For $K_{9}$,
    \begin{equation}\label{EstimateContinuityK9}
        K_{9}\leq C_{22}(\|\zeta_{1}'-\zeta_{2}'\|_{L^{2}(\Omega)}^{2}+\|\nabla\zeta_{1}'-\nabla\zeta_{2}'\|_{L^{2}(\Omega)}^{2}),
    \end{equation}
    where $C_{22}>0$ is a constant only depending on $\kappa, \overline{\theta}$.

    For $K_{10}$,
    \begin{align}\label{EstimateContinuityK10}
        K_{10}\leq&\left\|(\nabla\mathbf{u}'_{1}-\nabla\mathbf{u}'_{2}):\mu(\theta_{1}')\mathbb{S}(\mathbf{u}_{1}')\right\|_{L^{2}(\Omega)}^{2} +\left\|\nabla\mathbf{u}'_{2}:(\mu(\theta_{1}')-\mu(\theta_{2}')\mathbb{S}(\mathbf{u}_{1}')\right\| \nonumber\\
        &+\left\|\nabla\mathbf{u}'_{2}:\mu(\theta_{2}')(\mathbb{S}(\mathbf{u}_{1}')-\mathbb{S}(\mathbf{u}_{2}'))\right\|_{L^{2}(\Omega)}^{2} \nonumber\\
        \leq&C_{23}((\|\zeta'_{1}\|_{L^{\infty}(\Omega)}^2+\|\tilde{\theta}\|_{L^{\infty}(\Omega)}^2)\|\nabla\mathbf{u}'_{1}\|_{L^{\infty}(\Omega)}^2\|\nabla\mathbf{u}'_{1}-\nabla\mathbf{u}'_{2}\|_{L^{2}(\Omega)}^{2}) \nonumber\\
        &+\|\nabla\mathbf{u}'_{2}\|_{L^{\infty}(\Omega)}^2\|\nabla\mathbf{u}'_{1}\|_{L^{\infty}(\Omega)}^2\|\zeta'_{1}-\zeta'_{2}\|_{L^{2}(\Omega)}^{2} \nonumber\\
        &+\|\nabla\mathbf{u}'_{2}\|_{L^{\infty}(\Omega)}^2(\|\zeta'_{2}\|_{L^{\infty}(\Omega)}^2+\|\tilde{\theta}\|_{L^{\infty}(\Omega)}^2)\|\nabla(\mathbf{u}'_{1}-\mathbf{u}'_{2})\|_{L^{2}(\Omega)}^{2} \nonumber\\
        \leq&C_{23}((\|\zeta_{1}'\|_{H^{2}(\Omega)}^{2}+\|\zeta_{2}'\|_{H^{2}(\Omega)}^{2})(\|\mathbf{u}_{1}'\|_{H^{3}(\Omega)}^{2}+\|\mathbf{u}_{2}'\|_{H^{3}(\Omega)}^{2})\|\mathbf{u}_{1}'-\mathbf{u}_{2}'\|_{L^{2}(\Omega)}^{2} \nonumber\\
        &+\|\mathbf{u}_{2}'\|_{H^{3}(\Omega)}^{2}\|\mathbf{u}_{1}'\|_{H^{3}(\Omega)}^{2}\|\zeta_{1}'-\zeta_{2}'\|_{H^{1}(\Omega)}^{2}),
    \end{align}
    where $C_{23}>0$ is a constant only depending on $\mu, \overline{\theta}$.

    For $K_{11}$,
    \begin{align}\label{EstimateContinuityK11}
        K_{11}\leq&\|(\varphi_{1}'-\varphi_{2}')\div\mathbf{u}_{1}'\|_{L^{2}(\Omega)}^{2} +\|\varphi_{2}'(\div\mathbf{u}_{1}'-\div\mathbf{u}_{2}')\|_{L^{2}(\Omega)}^{2} +\|(\nabla\varphi_{1}'-\nabla\varphi_{2}')\cdot\mathbf{u}_{1}'\|_{L^{2}(\Omega)}^{2} \nonumber\\
        &+\|\nabla\varphi_{2}'\cdot(\mathbf{u}_{1}'-\mathbf{u}_{2}')\|_{L^{2}(\Omega)}^{2} \nonumber\\
        \leq&\|\div\mathbf{u}_{1}'\|_{L^{\infty}(\Omega)}^{2}\|\varphi_{1}'-\varphi_{2}'\|_{L^{2}(\Omega)}^{2} +\|\varphi_{2}'\|_{L^{\infty}(\Omega)}^{2}\|\div\mathbf{u}_{1}'-\div\mathbf{u}_{2}'\|_{L^{2}(\Omega)}^{2} \nonumber\\
        &+\|\mathbf{u}_{1}'\|_{L^{\infty}(\Omega)}^{2}\|\nabla\varphi_{1}'-\nabla\varphi_{2}'\|_{L^{2}(\Omega)}^{2} +\|\nabla\varphi_{2}'\|_{L^{4}(\Omega)})^{2}\|\mathbf{u}_{1}'-\mathbf{u}_{2}'\|_{L^{4}(\Omega)}^{2} \nonumber\\
        \leq&C_{24}(\|\mathbf{u}_{1}'\|_{H^{2}(\Omega)}^{2}\|\varphi_{1}'-\varphi_{2}'\|_{L^{2}(\Omega)}^{2} +\|\varphi_{2}'\|_{H^{2}(\Omega)}\|\mathbf{u}_{1}'-\mathbf{u}_{2}'\|_{H^{1}(\Omega)}^{2} \nonumber\\
        &+\|\mathbf{u}_{1}'\|_{H^{2}(\Omega)}^{2}\|\varphi_{1}'-\varphi_{2}'\|_{H^{1}(\Omega)}^{2}+\|\varphi_{2}'\|_{H^{2}(\Omega)}^{2}\|\mathbf{u}_{1}'-\mathbf{u}_{2}'\|_{H^{1}(\Omega)}^{2}),
    \end{align}
    where $C_{24}>0$ is a constant only depending on $\mu, \overline{\theta}$.

    Combining \eqref{EstimateContinutiyK1}, \eqref{EstimateContinuityK2}, \eqref{EstimateContinuityK3}, \eqref{EstimateContinuityK4}, \eqref{EstimateContinuityK5}, \eqref{EstimateContinuityK6}, \eqref{EstimateContinuityK7}, \eqref{EstimateContinuityK8}, \eqref{EstimateContinuityK9}, \eqref{EstimateContinuityK10}, \eqref{EstimateContinuityK11}, we have
    \begin{equation}\label{EstimateContinuityG}
        \|\mathcal{G}\|_{L^{2}(\Omega)}^2\leq C_{25}(\|\mathbf{u}'_{1}-\mathbf{u}'_{2}\|_{H^{1}(\Omega)}^2+\|\zeta'_{1}-\zeta'_{2}\|_{H^{1}(\Omega)}^2+\|\varphi'_{1}-\varphi'_{2}\|_{L^{2}(\Omega)}^{2}),
    \end{equation}
    where $C_{25}>0$ is a constant only depending on $c_{v}, R, \mu, \kappa, \overline{\rho}, \overline{\theta}$.

    Therefore, Combining \eqref{EstimateContinuityI1I2I3}, \eqref{EstimateContinuityI4I5I6}, \eqref{EstimateContinuityI7I8}, \eqref{EstimateContinuityI9}, \eqref{EstimateContinuityI10I11I12I13}, \eqref{EstimateContinuityI14I15I16} together with \eqref{EstimateContinuityG}, \eqref{EstimateContinuityF}, by \eqref{assumption1} in assumption (A), then choosing $\delta, \varepsilon>0$ sufficiently small, we obtain
    \begin{equation}\label{EstimateContinuityVelocityTemperautre}
        \|\boldsymbol{\mathcal{V}}\|_{H^{1}(\Omega)}^2+\|\mathcal{T}\|_{H^{1}(\Omega)}^2\leq C_{26}(\|\mathcal{D}'\|_{L^{2}(\Omega)}^2+\|\boldsymbol{\mathcal{V}}'\|_{H^{1}(\Omega)}^2+\|\mathcal{T}'\|_{H^{1}(\Omega)}^2),
    \end{equation}
    where $C_{26}>0$ is a constant only depending on $c_{v}, R, \mu, \kappa, \overline{\rho}, \overline{\theta}$.

    For the estimate of $\mathcal{D}$, multiplying $\eqref{DSNS}_{1}$ with $\mathcal{D}$ and integrating the resultant over $\Omega$, it then follows
    \begin{equation*}
        \int_{\Omega}\mathcal{D}^2d\mathbf{x}+\int_{\Omega}h\mathcal{D}\div(\mathcal{D}\mathbf{u}_{1})d\mathbf{x}+\int_{\Omega}h\mathcal{D}\div(\varphi_{1}\mathbf{V})d\mathbf{x}=\int_{\Omega}\mathcal{D}\mathcal{H}d\mathbf{x}.
    \end{equation*}
    
    We compute
    \begin{equation*}
        |\int_{\Omega}h\mathcal{D}\div(\mathcal{D}\mathbf{u}_{1})d\mathbf{x}|\leq C_{27}h\int_{\Omega}\mathcal{D}^2|\div\mathbf{u}_{1}|d\mathbf{x}\leq C_{27}h\|\mathbf{u}\|_{3}\|\mathcal{D}\|_{L^{2}(\Omega)}^{2},
    \end{equation*}
    \begin{align*}
        |\int_{\Omega}h\mathcal{D}\div(\varphi_{1}\mathbf{V})d\mathbf{x}|\leq&Ch\left(\int_{\Omega}|\mathcal{D}\nabla\varphi_{1}\cdot\boldsymbol{\mathcal{V}}|d\mathbf{x}+\int_{\Omega}|\mathcal{D}\varphi_{1}\div\mathcal{V}|d\mathbf{x}\right)\\
        \leq&C_{28}h\left(\|\mathcal{D}\|_{L^{2}(\Omega)}\|\nabla\varphi_{1}\|_{L^{4}(\Omega)}\|\mathcal{V}\|_{L^{4}(\Omega)}+\|\mathcal{D}\|_{L^{2}(\Omega)}\|\varphi_{1}\|_{L^{\infty}(\Omega)}\|\div\mathcal{V}\|_{L^{2}(\Omega)}\right)\\
        \leq&C_{28}h\left(\|\mathcal{D}\|_{L^{2}(\Omega)}\|\varphi_{1}\|_{H^{2}(\Omega)}\|\nabla\mathcal{V}\|_{L^{2}(\Omega)}+\|\mathcal{D}\|_{L^{2}(\Omega)}\|\varphi_{1}\|_{H^{2}(\Omega)}\|\div\mathcal{V}\|_{L^{2}(\Omega)}\right),
    \end{align*}
    and
    \begin{equation*}
        |\int_{\Omega}\mathcal{D}\mathcal{H}d\mathbf{x}|\leq \frac{1}{2}\|\mathcal{D}\|_{L^{2}(\Omega)}^{2}+C_{29}\|\mathcal{H}\|_{L^{2}(\Omega)}^{2}.
    \end{equation*}
    Therefore it holds that
    \begin{equation}\label{EstimateContinuityDensity}
        \|\mathcal{D}\|_{L^{2}(\Omega)}^2\leq C_{30}(\|\mathcal{D}'\|_{L^{2}(\Omega)}^2+\|\boldsymbol{\mathcal{V}}'\|_{H^{1}(\Omega)}^2+\|\mathcal{T}'\|_{H^{1}(\Omega)}^2).
    \end{equation}

    Combing \eqref{EstimateContinuityVelocityTemperautre} and \eqref{EstimateContinuityDensity}, we obtain the desired estimate \eqref{EstimateContinuity}.
\end{proof}

\begin{remark}
    The inequality \eqref{EstimateContinuity} implies that $\mathds{T}$ is continuous in $L_{0}^{2}\times V^{1}(\Omega)\times H^{1}(\Omega)$.
\end{remark}

\section{Boundedness of solution operator $\mathds{T}$}
In this section, we prove that the operator $\mathds{T}$ is bounded. 

\begin{theorem}\label{boundednessTheorem}
    Under the assumption of Corollary \ref{existenceEllipticSystem} and Corollary \ref{ExistenceTransportEquation}, there exists a constant $\varepsilon_{0}>0$ such that for $h, \varepsilon\in[0,\varepsilon_{0})$, if the wall temperature $\theta_{w}\in H^{\vartheta}(\partial\Omega)$ with $\|\theta_{w}-\overline{\theta}\|_{H^{\vartheta}(\partial\Omega)}\leq \varepsilon$ where $\vartheta>\frac{7}{2}$, and $(\varphi',\mathbf{u}',\zeta')\in (H^{2}(\Omega)\cap L_{0}^{2}(\Omega))\times V^{3}(\Omega)\times H^{3}(\Omega)$ with $N_{2}(\varphi',\mathbf{u}',\zeta')\leq\varepsilon$, then $N_{2}(\varphi,\mathbf{u},\zeta)\leq\varepsilon$.
\end{theorem}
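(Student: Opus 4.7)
The plan is to close a bootstrap showing that $\mathds{T}$ sends the ball $\{N_2(\varphi',\mathbf{u}',\zeta')\leq\varepsilon\}$ into itself. I would first invoke Corollary~\ref{existenceEllipticSystem} together with Lemma~\ref{regularityEllipticSystem} to obtain the $H^3$-bound on $(\mathbf{u},\zeta)$, and Corollary~\ref{ExistenceTransportEquation} together with Lemma~\ref{regularityTransportEquation} to obtain the $H^2$-bound on $\varphi$. Adding these produces a preliminary inequality of the form
\begin{equation*}
N_2(\varphi,\mathbf{u},\zeta)\leq C\,N_2(\varphi',\mathbf{u}',\zeta')\bigl(1+N_2(\varphi',\mathbf{u}',\zeta')\bigr)^{2},
\end{equation*}
where $C$ depends only on $a_{\mathbf{u}}^{I},a_{\mathbf{u}}^{II},a_{\theta}^{I},a_{\theta}^{II},c_v,R,\mu,\kappa,\bar\rho,\bar\theta$ but not on $(\varphi',\mathbf{u}',\zeta')$, $\delta$, or $h$.

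To upgrade this into the self-improving bound $N_2(\varphi,\mathbf{u},\zeta)\leq\varepsilon$, I would revisit the proofs of those lemmas and track the smallness factors that have been absorbed into the displayed constants. By Lemma~\ref{EstimateFGLemma} the nonlinear sources $\mathbf{F}$ and $G$ are controlled by $(1+N_1(\varphi',\mathbf{u}',\zeta'))(\sqrt\delta+N_2(\varphi',\mathbf{u}',\zeta'))^2$, which after expansion contributes only terms of order $\delta$, $\sqrt\delta\cdot N_2(\varphi',\mathbf{u}',\zeta')$, and $N_2(\varphi',\mathbf{u}',\zeta')^2$. The cross term $\div(\varphi'\mathbf{u}')$ is likewise quadratic in the prime variables. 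For the density part, the identity $\varphi=\varphi'-h[\bar\rho\div\mathbf{u}+\div(\varphi\mathbf{u})]$ gives $\|\varphi\|_{H^2}\leq\|\varphi'\|_{H^2}$ up to a remainder of size $h(1+\|\varphi\|_{H^2})\|\mathbf{u}\|_{H^3}$. Substituting these refinements into the variational identity~\eqref{weakFormEllipticSystem} and its difference-quotient analogues should produce a refined bound in which every contribution beyond $\|\varphi'\|_{H^2}$ carries at least one small factor from $\{\sqrt\delta,\,h,\,N_2(\varphi',\mathbf{u}',\zeta')\}$. Under the hypotheses $\delta,h\leq\varepsilon_0$ and $N_2(\varphi',\mathbf{u}',\zeta')\leq\varepsilon\leq\varepsilon_0$, choosing $\varepsilon_0$ small enough makes the right-hand side no larger than $\varepsilon$, closing the bootstrap.

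The main obstacle is the linear pressure source $R\bar\theta\nabla\varphi'$ on the right-hand side of $\eqref{LSNS}_2$, which is of size $N_2(\varphi',\mathbf{u}',\zeta')$ without any a priori small prefactor; naively it would produce a bound on $\|\mathbf{u}\|_{H^3}+\|\zeta\|_{H^3}$ that is linear in $\varepsilon$ with an uncontrolled constant, obstructing the bootstrap. This is precisely why the variational formulation in Definition~\ref{definitionWeakSolutionEllipticSystem} and the constant $\mathcal{R}$ fixed by \eqref{def.crad} were introduced: they pair $R\bar\theta\nabla\varphi'$ with $\mathcal{R}\div(\varphi'\mathbf{u}')$ so that an exact cancellation at the level of the coercive bilinear form occurs, leaving only a quadratic-in-$N_2(\varphi',\mathbf{u}',\zeta')$ and small-$\delta$ residue. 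Executing this cancellation carefully at the $L^2$, $H^1$, and $H^2$ levels of the difference-quotient estimates underlying Lemma~\ref{regularityEllipticSystem}, and then feeding the resulting bound back into the transport estimate of Lemma~\ref{regularityTransportEquation}, is the technical heart of the argument, but the structural mechanism has already been installed in Section~2.
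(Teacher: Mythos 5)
There is a genuine gap. You correctly identify the obstruction — the linear source $R\overline{\theta}\nabla\varphi'$ in $\eqref{LSNS}_{2}$ enters at order $N_{2}(\varphi',\mathbf{u}',\zeta')$ with no small prefactor — but the mechanism you propose to remove it is not the one that works, and is in fact a misreading of the role of $\mathcal{R}$. The constant $\mathcal{R}$ in \eqref{def.crad} is chosen only to match the coefficients of the two \emph{linear convection} terms $R\overline{\rho}\nabla\zeta$ and $(R\overline{\rho}\overline{\theta}+\mathcal{R}\overline{\rho})\div\mathbf{u}$ so that $\int\mathbf{u}\cdot\nabla\zeta$ and $\int\zeta\div\mathbf{u}$ cancel to order $\delta$ after integration by parts (Remark \ref{remarkHyperbolicEffect}); the companion source $\mathcal{R}\div(\varphi'\mathbf{u}')$ is quadratic in the prime variables and cannot cancel the linear term $R\overline{\theta}\nabla\varphi'$. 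The actual resolution couples the elliptic variational identity with the \emph{transport equation tested against $\varphi$} (and its tangential derivatives), as in Lemmas \ref{basicEstimateLemma}--\ref{H2ConormalBasicEstimateLemma}: the discrete-time structure $\frac{\varphi-\varphi'}{h}$ produces the telescoping quantity $\frac{1}{h}(\|\varphi\|_{L^{2}}^{2}-\|\varphi'\|_{L^{2}}^{2}+\|\varphi-\varphi'\|_{L^{2}}^{2})$, and after integrating $-R\tilde{\theta}\int\mathbf{u}\cdot\nabla\varphi'$ by parts the dangerous contribution collapses to $R\overline{\theta}\int(\varphi-\varphi')\div\mathbf{u}$, which is absorbed by that telescoping term plus $Ch\|\nabla\mathbf{u}\|_{L^{2}}^{2}$. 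This comparison of input and output density norms at the \emph{same} order (rather than through a multiplicative constant $C>1$) is what closes the map into the ball, and it is absent from your argument.

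Even granting that step, your proposal omits the second structural difficulty: the boundary-induced quadratic cross terms such as $\|\zeta\|_{L^{2}}\|\partial_{1}\varphi'\|_{L^{2}}$ (the term $I_{21}$ in Lemma \ref{basicEstimateLemma}, i.e.\ \eqref{badTerm2}), which survive all cancellations and are genuinely of size $\varepsilon\cdot\varepsilon$ with a constant that must be dominated. The paper handles these by deriving an independent coercive estimate for the normal derivative of the density, Lemma \ref{densityBasicEstimateLemma}, obtained from $\int_{\Omega}\partial_{1}\varphi\,(\tfrac{4}{3}\tfrac{\mu(\overline{\theta})}{\overline{\rho}}\eqref{LSNS}_{1}+\mathbf{e}_{1}\cdot\eqref{LSNS}_{2})\,d\mathbf{x}$, which puts $\tfrac{R\overline{\theta}}{2}\|\partial_{1}\varphi'\|_{L^{2}}^{2}$ on the good side; the weighted combination $\eqref{densityBasicEstimate}+\lambda_{0}\eqref{basicEstimate}+\lambda_{1}\eqref{H1ConormalBasicEstimate}$ is then coercive precisely because the matrix \eqref{assumption2} in assumption (A) is positive definite. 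Finally, the normal regularity of $(\mathbf{u},\zeta)$ is not recovered from the difference-quotient estimates alone but requires the $\partial_{1}\div\mathbf{u}$ estimates of Lemmas \ref{divergenceBasicEstimateLemma}--\ref{divergenceH1EstimateLemma} together with the Stokes and Robin-elliptic estimates \eqref{StokesEstimate} and \eqref{ellipticEstimate}. Without these three ingredients — the telescoping transport coupling, the $\partial_{1}\varphi$ estimate feeding assumption (A), and the divergence/Stokes bootstrap — the inequality you aim for cannot be closed.
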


\begin{remark}
    Theorem \ref{boundednessTheorem} implies that operator $\mathds{T}$ is bounded on $H^{2}(\Omega)\cap H^{3}(\Omega)\cap H^{3}(\Omega)$.
\end{remark}

The above result will be proved after we derive a series of estimates in the following.

\begin{lemma}\label{basicEstimateLemma}
    Under the assumption of Corollary \ref{existenceEllipticSystem} and Corollary \ref{ExistenceTransportEquation}, for an arbitrary fixed constant $h>0$, there exist constants $\delta_{0}, \varepsilon_{0}>0$ such that for $\varepsilon\in[0,\delta_{0}), \varepsilon\in[0,\varepsilon_{0})$, if the wall temperature $\theta_{w}\in H^{\vartheta}(\partial\Omega)$ with $\|\theta_{w}-\overline{\theta}\|_{H^{\vartheta}(\partial\Omega)}\leq \delta$ where $\vartheta>\frac{7}{2}$, and $(\varphi',\mathbf{u}',\zeta')\in (H^{2}(\Omega)\cap L_{0}^{2}(\Omega))\times V^{3}(\Omega)\times H^{3}(\Omega)$ with $N_{2}(\varphi',\mathbf{u}',\zeta')\leq\varepsilon$, then there exist constants $\eta, C>0$ only depending on $a_{\mathbf{u}}^{I}, a_{\mathbf{u}}^{II}, a_{\theta}^{I}, a_{\theta}^{II}, c_{v}, R, \overline{\rho}, \overline{\theta}$ such that
    \begin{align}\label{basicEstimate}
        &\frac{R}{4h}\frac{\overline{\theta}}{\overline{\rho}}\sqrt{\frac{2}{R}}\frac{2}{3R}a_{\mathbf{u}}^{I}a_{\mathbf{u}}^{II}\frac{\mu(\overline{\theta})}{\sqrt{\overline{\theta}}}(\|\varphi\|_{L^{2}(\Omega)}^2-\|\varphi'\|_{L^{2}(\Omega)}^2+\|\varphi-\varphi'\|_{L^{2}(\Omega)}^2) \nonumber\\
        &+\left(\frac{1}{3R}\sqrt{\frac{2}{R}}a_{\mathbf{u}}^{I}a_{\mathbf{u}}^{II}\frac{\mu(\overline{\theta})^{2}}{\sqrt{\overline{\theta}}}-C\delta\right)\left\|\nabla\mathbf{u}+\nabla\mathbf{u}^{\mathsf{T}}-\frac{2}{3}\div\mathbf{u}\mathbb{I}_{3}\right\|_{L^{2}(\Omega)}^{2} \nonumber\\
        &+\left(\frac{32}{75R^2}\sqrt{\frac{2}{R}}a_{\theta}^{I}a_{\theta}^{II}\frac{\kappa(\overline{\theta})^{2}}{\overline{\theta}\sqrt{\overline{\theta}}}-C\delta\right)\|\nabla\zeta\|_{L^{2}(\Omega)}^2 \nonumber\\
        &+\left(\frac{2}{3R}a_{\mathbf{u}}^{II}\mu(\overline{\theta})\overline{\rho}-C\delta-C\varepsilon\right)\|\mathbf{u}\|_{L^{2}(\partial\Omega)}^2+\left(\frac{16}{15R}a_{\theta}^{I}\frac{\kappa(\overline{\theta})}{\overline{\theta}}\overline{\rho}-C\delta-C\varepsilon\right)\|\zeta\|_{L^{2}(\partial\Omega)}^2 \nonumber\\
        \leq&\left(\frac{2}{5R^2}a_{\mathbf{u}}^{II}a_{\theta}^I\frac{\kappa(\overline{\theta})}{\overline{\theta}}+C\delta\right)\left\|\nabla\mathbf{u}+\nabla\mathbf{u}^{\mathsf{T}}-\frac{2}{3}\div\mathbf{u}\mathbb{I}_{3}\right\|_{L^{2}(\Omega)}\|\nabla\zeta\|_{L^{2}(\Omega)} \nonumber\\
        &+\left(R\frac{4}{5R^2}a_{\mathbf{u}}^{II}a_\theta^I\kappa(\overline{\theta})+C\delta\right)\|\zeta\|_{L^{2}(\Omega)}\|\partial_{1}\varphi'\|_{L^{2}(\Omega)} \nonumber\\
        &+C\delta(\|\nabla\varphi'\|_{L^{2}(\Omega)}^{2}+\|\mathbf{u}\|_{L^{2}(\Omega)}^{2})+Ch\|\nabla\mathbf{u}\|_{L^{2}(\Omega)}^{2} \nonumber\\
        &+\eta(\|\mathbf{u}\|_{L^{2}(\Omega)}+\|\zeta\|_{L^{2}(\Omega)}^{2})+ \frac{C}{\eta}(1+N_{1}(\varphi',\mathbf{u}',\zeta'))^{2}(\sqrt{\delta}+N_{2}(\varphi',\mathbf{u}',\zeta'))^{4}.
    \end{align}
\end{lemma}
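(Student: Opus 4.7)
\medskip
\noindent\emph{Proof plan.} The strategy is to combine the variational identity from Lemma~\ref{coervivityEllipticSystem} with a weighted multiplication of the transport equation $\eqref{LSNS}_1$ by the density itself. First, I would take $(\mathbf{v},\xi)=(\mathbf{u},\zeta)$ in the weak form \eqref{weakFormEllipticSystem}. The bulk terms $I_1+I_2+I_3$ produce the same quadratic form as in the proof of Lemma~\ref{coervivityEllipticSystem}, yielding (via \eqref{assumption2} and Korn's inequality) the coercive $\|\nabla\mathbf{u}+\nabla\mathbf{u}^{\mathsf{T}}-\tfrac23\div\mathbf{u}\,\mathbb{I}_3\|_{L^2}^2+\|\nabla\zeta\|_{L^2}^2$ part of the left-hand side; the boundary integrals $I_{10}$--$I_{13}$ give the $\|\mathbf{u}\|_{L^2(\partial\Omega)}^2$ and $\|\zeta\|_{L^2(\partial\Omega)}^2$ contributions; the terms $I_7+I_8$ cancel to leading order by the specific choice of $\mathcal{R}$ in \eqref{def.crad}; and $I_4,I_5,I_6,I_9$ contribute only $O(\delta)$-errors. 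The cross term from $I_3+I_6$ is left on the right-hand side unestimated, reproducing the first line of the RHS of \eqref{basicEstimate}, and $I_{19}$ is simply H\"older-estimated to yield the $\|\zeta\|_{L^2}\|\partial_1\varphi'\|_{L^2}$ line (these two are deliberately kept as ``bad'' terms for subsequent steps of the paper).

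The novelty compared with Lemma~\ref{coervivityEllipticSystem} is the handling of the source term $I_{15}=\int_\Omega -R\tilde\theta K\,\mathbf{u}\cdot\nabla\varphi'\,d\mathbf{x}$, where $K=\sqrt{2/R}\,\tfrac{2}{3R}a_{\mathbf{u}}^{I}a_{\mathbf{u}}^{II}\mu(\tilde\theta)^2/(\mu(\overline\theta)\sqrt{\tilde\theta})$. Rather than bounding it crudely by $\|\nabla\varphi'\|_{L^2}$, I would integrate by parts, using $\mathbf{u}\cdot\mathbf{n}=0$, to get $I_{15}=R\int \tilde\theta K\,\varphi'\div\mathbf{u}+R\int K(\mathbf{u}\cdot\nabla\tilde\theta)\varphi'$ (the second term is $O(\delta)$ and absorbed). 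I then substitute $\varphi'=\varphi+h\div(\varphi\mathbf{u})+h\overline\rho\div\mathbf{u}$ from $\eqref{LSNS}_1$:
\begin{equation*}
\int R\overline\theta K\,\varphi'\div\mathbf{u} \;=\; \int R\overline\theta K\,\varphi\div\mathbf{u} \;+\; Rh\!\int\! \overline\theta K\div(\varphi\mathbf{u})\div\mathbf{u} \;+\; R\overline\rho h\!\int\! \overline\theta K(\div\mathbf{u})^2 \;+\; O(\delta).
\end{equation*}
The $O(h)$ pieces are controlled by Sobolev and $N_2(\varphi',\mathbf{u}',\zeta')\leq\varepsilon$, producing the $Ch\|\nabla\mathbf{u}\|_{L^2}^2$ term in \eqref{basicEstimate}. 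To eliminate the remaining $\int R\overline\theta K\,\varphi\div\mathbf{u}$, I multiply $\eqref{LSNS}_1$ by a constant multiple of $\varphi$ (of the form $c\,\tfrac{R\overline\theta K}{\overline\rho}\varphi$, with $c$ tuned to match the final coefficient $\tfrac{R}{4h}\tfrac{\overline\theta}{\overline\rho}K$) and integrate, using the standard identity $\int\varphi(\varphi-\varphi')=\tfrac12(\|\varphi\|^2-\|\varphi'\|^2+\|\varphi-\varphi'\|^2)$ and $\int\varphi\div(\varphi\mathbf{u})=\tfrac12\int\varphi^2\div\mathbf{u}$. The resulting identity provides both the $\frac{R}{4h}\frac{\overline\theta}{\overline\rho}K(\|\varphi\|^2-\|\varphi'\|^2+\|\varphi-\varphi'\|^2)$ contribution to the left-hand side and a term $\propto \int\varphi\div\mathbf{u}$ that combines with the one coming from $I_{15}$; any surplus $\int\varphi\div\mathbf{u}$ or $\int\varphi^2\div\mathbf{u}$ is absorbed using $\|\div\mathbf{u}\|_{L^\infty}\!\lesssim\!N_2\leq\varepsilon$ and Young's inequality (generating the $\eta$-term and the nonlinear $\frac{C}{\eta}(1+N_1)^2(\sqrt\delta+N_2)^4$ term via Lemma~\ref{EstimateFGLemma}).

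The main obstacle is a delicate bookkeeping issue: the weighted multiplier for the transport equation must be chosen so that the $\int\varphi\div\mathbf{u}$ produced by integrating $\eqref{LSNS}_1$ against $\varphi$ cancels exactly the $\int\varphi\div\mathbf{u}$ arising from Step 2, and simultaneously the $\|\varphi\|_{L^2}^2/h$ coefficient on the LHS matches the one stated. This calibration relies essentially on \eqref{def.crad}, which is precisely the compatibility condition ensuring that the hyperbolic--elliptic coupling between density and velocity can be symmetrized at leading order; without it, no choice of scalar multiplier would close the estimate. Everything else (the quadratic cross term $\|\nabla\mathbf{u}+\nabla\mathbf{u}^{\mathsf{T}}-\tfrac23\div\mathbf{u}\,\mathbb{I}_3\|\|\nabla\zeta\|$ and the coupling $\|\zeta\|\|\partial_1\varphi'\|$) is deliberately \emph{not} absorbed here: these are the ``bad'' terms flagged in Remarks \ref{remarkBoundaryEffect}--\ref{remarkHyperbolicEffect}, and their treatment is postponed to the combined argument using the additional $\partial_1\varphi'$ estimate sketched after Theorem~\ref{mainTheorem} via testing $\tfrac{4}{3}\mu(\overline\theta)/\overline\rho\cdot\eqref{LSNS}_1 + \mathbf{e}_1\cdot\eqref{LSNS}_2$ against $\partial_1\varphi$.
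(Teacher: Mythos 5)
Your proposal follows essentially the same route as the paper: test the variational form with $(\mathbf{u},\zeta)$, multiply $\eqref{LSNS}_{1}$ by the tuned constant times $\varphi$, and cancel the resulting $\int\varphi\,\div\mathbf{u}$ against the integrated-by-parts pressure source $\int\varphi'\div\mathbf{u}$, the residual $\int(\varphi-\varphi')\div\mathbf{u}$ being absorbed at cost $Ch\|\nabla\mathbf{u}\|_{L^{2}}^{2}$ (the paper does this via Young's inequality against the $h^{-1}\|\varphi-\varphi'\|_{L^2}^2$ term, which costs half of the $\tfrac{R}{2h}$ coefficient and explains the stated $\tfrac{R}{4h}$, rather than by re-substituting the transport equation, but the two are equivalent). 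One small correction: the calibration of the transport multiplier against $-R\overline{\theta}\nabla\varphi'$ is pure coefficient matching and does not rely on \eqref{def.crad}; that condition is needed only for the leading-order cancellation of the velocity--temperature coupling terms $I_{7}+I_{8}$, as you correctly state earlier in the proposal.
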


\begin{proof}
    Multiplying both sides of $\eqref{LSNS}_{1}$ by $R\frac{\overline{\theta}}{\overline{\rho}}\sqrt{\frac{2}{R}}\frac{2}{3R}a_{\mathbf{u}}^{I}a_{\mathbf{u}}^{II}\frac{\mu(\overline{\theta})}{\sqrt{\overline{\theta}}}\varphi$, we then have
    \begin{align*}
        &\underbrace{\int_{\Omega}R\frac{\overline{\theta}}{\overline{\rho}}\sqrt{\frac{2}{R}}\frac{2}{3R}a_{\mathbf{u}}^{I}a_{\mathbf{u}}^{II}\frac{\mu(\overline{\theta})}{\sqrt{\overline{\theta}}}\varphi\frac{\varphi-\varphi'}{h}d\mathbf{x}}_{I_1}+\underbrace{\int_{\Omega}R\frac{\overline{\theta}}{\overline{\rho}}\sqrt{\frac{2}{R}}\frac{2}{3R}a_{\mathbf{u}}^{I}a_{\mathbf{u}}^{II}\frac{\mu(\overline{\theta})}{\sqrt{\overline{\theta}}}\varphi\div(\varphi\mathbf{u})d\mathbf{x}}_{I_2} \\
        &+\underbrace{\int_{\Omega}R\overline{\theta}\sqrt{\frac{2}{R}}\frac{2}{3R}a_{\mathbf{u}}^{I}a_{\mathbf{u}}^{II}\frac{\mu(\overline{\theta})}{\sqrt{\overline{\theta}}}\varphi\div\mathbf{u}d\mathbf{x}}_{I_3}=0.
    \end{align*}
    And by the variational form,
    \begin{align*}
        &\underbrace{\int_{\Omega}\frac{2}{3R}\sqrt{\frac{2}{R}}a_{\mathbf{u}}^{I}a_{\mathbf{u}}^{II}\frac{\mu(\tilde{\theta})^{2}}{\sqrt{\tilde{\theta}}}\nabla\mathbf{u}:\left(\nabla\mathbf{u}+\nabla\mathbf{u}^{\mathsf{T}}-\frac{2}{3}\div\mathbf{u}\mathbb{I}_{3}\right)d\mathbf{x}}_{I_{4}} +\underbrace{\int_{\Omega}\frac{32}{75R^2}\sqrt{\frac{2}{R}}a_{\theta}^{I}a_{\theta}^{II}\frac{\kappa(\tilde{\theta})^{2}}{\tilde{\theta}\sqrt{\tilde{\theta}}}\nabla\zeta\cdot\nabla\zeta d\mathbf{x}}_{I_{5}} \\
        &+\underbrace{\int_{\Omega}-\frac{4}{5R^2}a_{\mathbf{u}}^{II}a_{\theta}^I\frac{\mu(\tilde{\theta})\kappa(\tilde{\theta})}{\mu(\overline{\theta})\tilde{\theta}}\nabla\zeta\cdot\mathfrak{n}\left(\nabla u_{1}+\partial_{1}\mathbf{u}-\frac{2}{3}(\div\mathbf{u})\mathbf{e}_{1}\right)d\mathbf{x}}_{I_{6}} \\
        &+\underbrace{\int_{\Omega}\frac{2}{3R}\sqrt{\frac{2}{R}}a_{\mathbf{u}}^{I}a_{\mathbf{u}}^{II}\nabla\left(\frac{\mu(\tilde{\theta})^{2}}{\sqrt{\tilde{\theta}}}\right)\otimes\mathbf{u}:\left(\nabla\mathbf{u}+\nabla\mathbf{u}^{\mathsf{T}}-\frac{2}{3}\div\mathbf{u}\mathbb{I}_{3}\right)d\mathbf{x}}_{I_{7}} \nonumber\\
        &+\underbrace{\int_{\Omega}\frac{32}{75R^2}\sqrt{\frac{2}{R}}a_{\theta}^{I}a_{\theta}^{II}\nabla\left(\frac{\kappa(\tilde{\theta})\kappa(\tilde{\theta})}{\tilde{\theta}\sqrt{\tilde{\theta}}}\right)\zeta\cdot\nabla\zeta d\mathbf{x}}_{I_{8}} \\
        &+\underbrace{\int_{\Omega}-\frac{4}{5R^2}a_{\mathbf{u}}^{II}a_{\theta}^I\nabla\left(\frac{\mu(\tilde{\theta})\kappa(\tilde{\theta})}{\mu(\overline{\theta})\tilde{\theta}}\right)\cdot\zeta\mathfrak{n}\left(\nabla u_{1}+\partial_{1}\mathbf{u}-\frac{2}{3}(\div\mathbf{u})\mathbf{e}_{1}\right)d\mathbf{x}}_{I_{9}} \\
        &+\underbrace{\int_{\Omega}R\overline{\rho}\sqrt{\frac{2}{R}}\frac{2}{3R}a_{\mathbf{u}}^{I}a_{\mathbf{u}}^{II}\frac{\mu(\tilde{\theta})^{2}}{\mu(\overline{\theta})\sqrt{\tilde{\theta}}}\mathbf{u}\cdot\nabla\zeta d\mathbf{x}}_{I_{10}} +\underbrace{\int_{\Omega}\frac{32}{75R^2}\sqrt{\frac{2}{R}}a_{\theta}^{I}a_{\theta}^{II}\frac{\kappa(\tilde{\theta})^{2}}{\kappa(\overline{\theta})\tilde{\theta}\sqrt{\tilde{\theta}}}(R\overline{\rho}\overline{\theta}+\mathcal{R}\overline{\rho})\zeta\div\mathbf{u}d\mathbf{x}}_{I_{11}} \\
        &+\underbrace{\int_{\Omega}-\frac{4}{5R}\overline{\rho}a_{\mathbf{u}}^{II}a_{\theta}^I\frac{\mu(\tilde{\theta})\kappa(\tilde{\theta})}{\mu(\overline{\theta})\tilde{\theta}}\zeta\mathfrak{n}\partial_{1}\zeta d\mathbf{x}}_{I_{12}} \\
        &+\underbrace{\sum_{p=0,1}\int_{\mathbb{T}_{p}^{2}}\frac{2}{3R}a_{\mathbf{u}}^{II}\mu(\theta_{w})(\varphi'+\overline{\rho})|\mathbf{u}|^{2}dx_{2}dx_{3}}_{I_{13}} +\underbrace{\sum_{p=0,1}\int_{\mathbb{T}_{p}^{2}}\frac{16}{15R}a_{\theta}^{I}\frac{\kappa(\theta_{w})}{\theta_{w}}(\varphi'+\overline{\rho})|\zeta|^{2}dx_{2}dx_{3}}_{I_{14}} \\
        &+\underbrace{\sum_{p=0,1}\int_{\mathbb{T}_{p}^{2}}\frac{8}{15R^2}a_{\mathbf{u}}^{II}a_{\theta}^{I}\mathbf{u}\cdot\nabla\tilde{\theta}dx_{2}dx_{3}}_{I_{15}} +\underbrace{\sum_{p=0,1}\int_{\mathbb{T}_{p}^{2}}\frac{32}{75R^2}a_{\theta}^{I}a_{\theta}^{II}\frac{\kappa(\theta_{w})^2}{\theta_{w}\sqrt{\theta_{w}}}\zeta\mathfrak{n}\partial_{1}\tilde{\theta}dx_{2}dx_{3}}_{I_{16}} \\
        = 
        &\underbrace{\int_{\Omega}\frac{2}{3R}\sqrt{\frac{2}{R}}a_{\mathbf{u}}^{I}a_{\mathbf{u}}^{II}\frac{\mu(\tilde{\theta})^{2}}{\mu(\overline{\theta})\sqrt{\tilde{\theta}}}\mathbf{u}\cdot\mathbf{F}(\varphi',\mathbf{u}',\zeta')d\mathbf{x}}_{I_{16}}+\underbrace{\int_{\Omega}-R\tilde{\theta}\sqrt{\frac{2}{R}}\frac{2}{3R}a_{\mathbf{u}}^{I}a_{\mathbf{u}}^{II}\frac{\mu(\tilde{\theta})^{2}}{\mu(\overline{\theta})\sqrt{\tilde{\theta}}}\mathbf{u}\cdot\nabla\varphi'd\mathbf{x}}_{I_{17}} \\
        &+\underbrace{\int_{\Omega}\frac{32}{75R^2}\sqrt{\frac{2}{R}}a_{\theta}^{I}a_{\theta}^{II}\frac{\kappa(\tilde{\theta})^{2}}{\kappa(\overline{\theta})\tilde{\theta}\sqrt{\tilde{\theta}}}\zeta G(\varphi',\mathbf{u}',\zeta')d\mathbf{x}}_{I_{18}}+\underbrace{\int_{\Omega}-\frac{32}{75R^2}\sqrt{\frac{2}{R}}a_{\theta}^{I}a_{\theta}^{II}\frac{\kappa(\tilde{\theta})^{2}}{\kappa(\overline{\theta})\tilde{\theta}\sqrt{\tilde{\theta}}}\mathcal{R}\zeta\div(\varphi'\mathbf{u}')d\mathbf{x}}_{I_{19}} \\
        &+\underbrace{\int_{\Omega}-\frac{4}{5R^2}a_{\mathbf{u}}^{II}a_{\theta}^I\frac{\mu(\tilde{\theta})\kappa(\tilde{\theta})}{\mu(\overline{\theta})\tilde{\theta}}\zeta\mathfrak{n}F_{1}(\varphi',\mathbf{u}',\zeta')d\mathbf{x}}_{I_{20}} +\underbrace{\int_{\Omega}\frac{4}{5R}\overline{\theta}a_{\mathbf{u}}^{II}a_{\theta}^I\frac{\mu(\tilde{\theta})\kappa(\tilde{\theta})}{\mu(\overline{\theta})\tilde{\theta}}\zeta\mathfrak{n}\partial_{1}\varphi' d\mathbf{x}}_{I_{21}}.
    \end{align*}
  
    For $I_{1}$, 
    \begin{equation*}
        I_{1}\geq \frac{R}{2h}\frac{\overline{\theta}}{\overline{\rho}}\sqrt{\frac{2}{R}}\frac{2}{3R}a_{\mathbf{u}}^{I}a_{\mathbf{u}}^{II}\frac{\mu(\overline{\theta})}{\sqrt{\overline{\theta}}}(\|\varphi\|_{L^{2}(\Omega)}^{2}-\|\varphi'\|_{L^{2}(\Omega)}^{2}+\|\varphi-\varphi'\|_{L^{2}(\Omega)}^{2}).
    \end{equation*}

    For $I_{2}$, 
    \begin{equation*}
        I_{2}
        \leq C_{1}\int_{\Omega}|\div\mathbf{u}\cdot\varphi^2|d\mathbf{x}\\
        \leq C_{1}\|\varphi\|_{L^3(\Omega)}\|\varphi\|_{L^6(\Omega)}\|\nabla\mathbf{u}\|_{L^2(\Omega)}\\
        \leq C_{1}\|\varphi\|_{H^{1}(\Omega)}^{2}\|\nabla\mathbf{u}\|_{L^{2}(\Omega)},
    \end{equation*}
    where $C_{1}>0$ is a constant only depending on $R, a_{\mathbf{u}}^{I}, a_{\mathbf{u}}^{II}, \overline{\rho}, \overline{\theta}$.

    For $I_{3}$ and $I_{17}$, 
    \begin{align*}
        &|I_{3}+I_{17}| \\
        \leq&\delta\int_{\Omega}|\mathbf{u}\cdot\nabla\varphi'|d\mathbf{x}+R\overline{\theta}\sqrt{\frac{2}{R}}\frac{2}{3R}a_{\mathbf{u}}^{I}a_{\mathbf{u}}^{II}\frac{\mu(\overline{\theta})}{\sqrt{\overline{\theta}}}\int_{\Omega}|(\varphi-\varphi')\div\mathbf{u}|d\mathbf{x} \\
        \leq&\delta(\|\nabla\varphi'\|_{L^{2}(\Omega)}^{2}+\|\mathbf{u}\|_{L^{2}(\Omega)}^{2})+\frac{h}{R}\overline{\rho}\overline{\theta}\sqrt{\frac{2}{R}}\frac{2}{3R}a_{\mathbf{u}}^{I}a_{\mathbf{u}}^{II}\frac{\mu(\overline{\theta})}{\sqrt{\overline{\theta}}}\|\nabla\mathbf{u}\|_{L^{2}(\Omega)}^{2} \\
        &+\frac{R}{4h}\frac{\overline{\theta}}{\overline{\rho}}\sqrt{\frac{2}{R}}\frac{2}{3R}a_{\mathbf{u}}^{I}a_{\mathbf{u}}^{II}\frac{\mu(\overline{\theta})}{\sqrt{\overline{\theta}}}\|\varphi-\varphi'\|_{L^{2}(\Omega)}^{2}.
    \end{align*}

    The rest of the terms can be estimated similarly as the proof of Lemma \ref{weakExistenceEllipticSystem}, and we omit them for brevity. After collecting all of the results, we can obtain the desired estimate \eqref{basicEstimate}.
\end{proof}

\begin{lemma}\label{H1ConormalBasicEstimateLemma}
    Under the assumption of Corollary \ref{existenceEllipticSystem} and Corollary \ref{ExistenceTransportEquation}, for an arbitrary fixed constant $h>0$, there exist constants $\delta_{0}, \varepsilon_{0}>0$ such that for $\varepsilon\in[0,\delta_{0}), \varepsilon\in[0,\varepsilon_{0})$, if the wall temperature $\theta_{w}\in H^{\vartheta}(\partial\Omega)$ with $\|\theta_{w}-\overline{\theta}\|_{H^{\vartheta}(\partial\Omega)}\leq \delta$ where $\vartheta>\frac{7}{2}$, and $(\varphi',\mathbf{u}',\zeta')\in (H^{2}(\Omega)\cap L_{0}^{2}(\Omega))\times V^{3}(\Omega)\times H^{3}(\Omega)$ with $N_{2}(\varphi',\mathbf{u}',\zeta')\leq\varepsilon$, then there exist constants $\eta, C>0$ only depending on $a_{\mathbf{u}}^{I}, a_{\mathbf{u}}^{II}, a_{\theta}^{I}, a_{\theta}^{II}, c_{v}, R, \overline{\rho}, \overline{\theta}$ such that
    \begin{align}\label{H1ConormalBasicEstimate}
        &\frac{R}{4h}\frac{\overline{\theta}}{\overline{\rho}}\sqrt{\frac{2}{R}}\frac{2}{3R}a_{\mathbf{u}}^{I}a_{\mathbf{u}}^{II}\frac{\mu(\overline{\theta})}{\sqrt{\overline{\theta}}}(\|\partial_{i}\varphi\|_{L^{2}(\Omega)}^2-\|\partial_{i}\varphi'\|_{L^{2}(\Omega)}^2+\|\partial_{i}\varphi-\partial_{i}\varphi'\|_{L^{2}(\Omega)}^2) \nonumber\\
        &+\left(\frac{1}{3R}\sqrt{\frac{2}{R}}a_{\mathbf{u}}^{I}a_{\mathbf{u}}^{II}\frac{\mu(\overline{\theta})^{2}}{\sqrt{\overline{\theta}}}-C\delta\right)\left\|\partial_{i}\nabla\mathbf{u}+\partial_{i}\nabla\mathbf{u}^{\mathsf{T}}-\frac{2}{3}\partial_{i}\div\mathbf{u}\mathbb{I}_{3}\right\|_{L^{2}(\Omega)}^{2} \nonumber\\
        &+\left(\frac{32}{75R^2}\sqrt{\frac{2}{R}}a_{\theta}^{I}a_{\theta}^{II}\frac{\kappa(\overline{\theta})^{2}}{\overline{\theta}\sqrt{\overline{\theta}}}-C\delta\right)\|\nabla\partial_{i}\zeta\|_{L^{2}(\Omega)}^2 \nonumber\\
        &+\left(\frac{2}{3R}a_{\mathbf{u}}^{II}\mu(\overline{\theta})\overline{\rho}-C\delta-C\varepsilon\right)\|\partial_{i}\mathbf{u}\|_{L^{2}(\partial\Omega)}^2+\left(\frac{16}{15R}a_{\theta}^{I}\frac{\kappa(\overline{\theta})}{\overline{\theta}}\overline{\rho}-C\delta-C\varepsilon\right)\|\partial_{i}\zeta\|_{L^{2}(\partial\Omega)}^2 \nonumber\\
        \leq&\left(\frac{2}{5R^2}a_{\mathbf{u}}^{II}a_{\theta}^I\frac{\kappa(\overline{\theta})}{\overline{\theta}}+C\delta\right)\left\|\partial_{i}\nabla\mathbf{u}+\partial_{i}\nabla\mathbf{u}^{\mathsf{T}}-\frac{2}{3}\partial_{i}\div\mathbf{u}\mathbb{I}_{3}\right\|_{L^{2}(\Omega)}\|\partial_{i}\nabla\zeta\|_{L^{2}(\Omega)} \nonumber\\
        &+\left(R\frac{4}{5R^2}a_{\mathbf{u}}^{II}a_\theta^I\kappa(\overline{\theta})+C\delta\right)\|\partial_{i}^{2}\zeta\|_{L^{2}(\Omega)}\|\partial_{1}\varphi'\|_{L^{2}(\Omega)} \nonumber\\
        &+C\delta(\|\nabla\partial_{i}\varphi'\|_{L^{2}(\Omega)}^{2}+\|\partial_{i}\mathbf{u}\|_{L^{2}(\Omega)}^{2})+Ch\|\nabla\partial_{i}\mathbf{u}\|_{L^{2}(\Omega)}^{2} \nonumber\\
        &+\eta\sum_{j=1,2}(\|\partial_{i}^{j}\mathbf{u}\|_{L^{2}(\Omega)}^{2}+\|\partial_{i}^{j}\zeta\|_{L^{2}(\Omega)}^{2})+\frac{C}{\eta}(1+N_{1}(\varphi,\mathbf{u},\zeta))^{2}(\sqrt{\delta}+N_{2}(\varphi,\mathbf{u},\zeta))^{4},
    \end{align}
    for $i=2,3$.
\end{lemma}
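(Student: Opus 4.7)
The plan is to mimic the proof of Lemma \ref{basicEstimateLemma} one tangential derivative higher. Since $\Omega=(0,1)\times\mathbb{T}^{2}$ and $\partial_{i}$ for $i=2,3$ is tangent to the boundary, the operator $\partial_{i}$ commutes with taking traces on $\mathbb{T}_{p}^{2}$ and preserves the constraint $\mathbf{u}\cdot\mathbf{n}=0$; in particular $-\partial_{i}^{2}\mathbf{u}\in V^{1}(\Omega)$ is an admissible test function in \eqref{weakFormEllipticSystem} whenever $\mathbf{u}\in V^{3}(\Omega)$, and likewise $-\partial_{i}^{2}\zeta\in H^{1}(\Omega)$. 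This makes the tangential-derivative estimate formally identical to the basic one, and so no new variational principle is needed.

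First I would apply $\partial_{i}$ to $\eqref{LSNS}_{1}$, multiply by $R\frac{\overline{\theta}}{\overline{\rho}}\sqrt{\frac{2}{R}}\frac{2}{3R}a_{\mathbf{u}}^{I}a_{\mathbf{u}}^{II}\frac{\mu(\overline{\theta})}{\sqrt{\overline{\theta}}}\partial_{i}\varphi$, and integrate. This produces the density block of \eqref{H1ConormalBasicEstimate} exactly as $I_{1},I_{2},I_{3}$ in Lemma \ref{basicEstimateLemma} did for $\varphi$, after routine commutator estimates in which one $\partial_{i}$ derivative falls on $\mathbf{u}$ (controlled via $\|\mathbf{u}\|_{H^{3}}$) or on $\varphi$ (controlled by $\|\nabla\partial_{i}\mathbf{u}\|_{L^{2}}$ through H\"{o}lder and Sobolev). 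Next, I would take $(\mathbf{v},\xi)=(-\partial_{i}^{2}\mathbf{u},-\partial_{i}^{2}\zeta)$ in \eqref{weakFormEllipticSystem} and transfer one $\partial_{i}$ onto each factor by integration by parts. The leading coercive block is a copy of $I_{1}+I_{2}+I_{3}$ in Lemma \ref{coervivityEllipticSystem} with $(\mathbf{u},\zeta)$ replaced by $(\partial_{i}\mathbf{u},\partial_{i}\zeta)$, so combining \eqref{assumption2} from assumption (A) with Korn's inequality \eqref{KornInequality} reproduces the coercive left-hand side of \eqref{H1ConormalBasicEstimate}, with the cross bad-term carried on the right precisely as in the basic case.

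The boundary contributions analogous to $I_{10},\ldots,I_{13}$ in the proof of Lemma \ref{coervivityEllipticSystem} give $\|\partial_{i}\mathbf{u}\|_{L^{2}(\partial\Omega)}^{2}$ and $\|\partial_{i}\zeta\|_{L^{2}(\partial\Omega)}^{2}$ with coefficient $\overline{\rho}-C\delta-C\varepsilon$; the commutators where $\partial_{i}$ falls on $\mu(\theta_{w})$, $\kappa(\theta_{w})/\theta_{w}$, or $\varphi'+\overline{\rho}$ are absorbed by the $O(\delta)+O(\varepsilon)$ remainders thanks to $\theta_{w}\in H^{\vartheta}(\partial\Omega)$ with $\vartheta>\frac{7}{2}$ (which yields $L^{\infty}$ control on one tangential derivative) and the Sobolev trace of $\varphi'\in H^{2}(\Omega)$. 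The hyperbolic pairing $I_{10}+I_{11}$ is still cancelled to leading order by the choice of $\mathcal{R}$ in \eqref{def.crad}, leaving an $O(\delta)$ remainder. The half-derivative Fourier boundary piece in \eqref{weakFormEllipticSystem} is handled by the same Plancherel manipulation as in Lemma \ref{coervivityEllipticSystem}, picking up one extra tangential frequency factor $m_{i}$; here $\mathbf{u}\cdot\mathbf{n}=0$ is again essential to make $\partial_{i}^{1/2}$ traces meaningful.

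The source terms $I_{17}$--$I_{21}$ of the basic estimate now involve $\partial_{i}\mathbf{F}$, $\partial_{i}G$, $\partial_{i}\nabla\varphi'$, and $\partial_{i}\div(\varphi'\mathbf{u}')$, all bounded by \eqref{H1EstimateFG} of Lemma \ref{EstimateFGLemma} and absorbed into the $\frac{C}{\eta}(1+N_{1})^{2}(\sqrt{\delta}+N_{2})^{4}$ term. The bad term $\int\frac{4}{5R}\overline{\theta}a_{\mathbf{u}}^{II}a_{\theta}^{I}\mathfrak{n}\partial_{i}\zeta\,\partial_{i}\partial_{1}\varphi'\,d\mathbf{x}$ is turned into $\int\frac{4}{5R}\overline{\theta}a_{\mathbf{u}}^{II}a_{\theta}^{I}\mathfrak{n}\partial_{i}^{2}\zeta\,\partial_{1}\varphi'\,d\mathbf{x}$ after integrating by parts in $x_{i}$ (a tangential direction with no boundary), matching the bound in \eqref{H1ConormalBasicEstimate}. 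The main obstacle is the bookkeeping of commutators generated when $\partial_{i}$ hits the $\tilde{\theta}$-dependent coefficients in \eqref{weakFormEllipticSystem} (notably the $I_{4}$--$I_{9}$ block): these commutators do not share the structural cancellation afforded by the choice of $\mathcal{R}$, and must be shown individually to be $O(\delta)$ multiples of $\|\partial_{i}\mathbf{u}\|_{H^{1}}^{2}+\|\partial_{i}\zeta\|_{H^{1}}^{2}$, absorbable into the coercive left-hand side. This is precisely what forces the smallness of $\delta$ and $\varepsilon$ in the hypothesis, and once verified, collecting all contributions yields \eqref{H1ConormalBasicEstimate}.
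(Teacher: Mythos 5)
Your proposal follows essentially the same route as the paper: tangentially differentiate the transport equation and pair with the weighted $\partial_i\varphi$, test the variational form with $(-\partial_i^2\mathbf{u},-\partial_i^2\zeta)$ (legitimate since $\partial_i$ is tangential and the solution is already in $V^3\times H^3$ by Corollary \ref{existenceEllipticSystem}), recover coercivity from assumption (A) plus Korn's inequality, cancel the hyperbolic pairing via $\mathcal{R}$, absorb the coefficient commutators as $O(\delta)$, and convert the $\mathfrak{n}\,\partial_i\zeta\,\partial_i\partial_1\varphi'$ term into $\partial_i^2\zeta\,\partial_1\varphi'$ by tangential integration by parts — exactly the treatment of $I_{25}$ in the paper. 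No gaps; this matches the paper's argument.
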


\begin{proof}
    Applying $\partial_{i}$ to both sides of $\eqref{LSNS}_{1}$ and multiplying the resultant by $R\frac{\overline{\theta}}{\overline{\rho}}\sqrt{\frac{2}{R}}\frac{2}{3R}a_{\mathbf{u}}^{I}a_{\mathbf{u}}^{II}\frac{\mu(\overline{\theta})}{\sqrt{\overline{\theta}}}\partial_{i}\varphi$,
    \begin{align*}
        &\underbrace{\int_{\Omega}R\frac{\overline{\theta}}{\overline{\rho}}\sqrt{\frac{2}{R}}\frac{2}{3R}a_{\mathbf{u}}^{I}a_{\mathbf{u}}^{II}\frac{\mu(\overline{\theta})}{\sqrt{\overline{\theta}}}\partial_{i}\varphi\frac{\partial_{i}\varphi-\partial_{i}\varphi'}{h}d\mathbf{x}}_{I_1}+\underbrace{\int_{\Omega}R\frac{\overline{\theta}}{\overline{\rho}}\sqrt{\frac{2}{R}}\frac{2}{3R}a_{\mathbf{u}}^{I}a_{\mathbf{u}}^{II}\frac{\mu(\overline{\theta})}{\sqrt{\overline{\theta}}}\partial_{i}\varphi\div\partial_{i}(\varphi\mathbf{u})d\mathbf{x}}_{I_2} \\
        &+\underbrace{\int_{\Omega}R\overline{\theta}\sqrt{\frac{2}{R}}\frac{2}{3R}a_{\mathbf{u}}^{I}a_{\mathbf{u}}^{II}\frac{\mu(\overline{\theta})}{\sqrt{\overline{\theta}}}\partial_{i}\varphi\div\partial_{i}\mathbf{u}d\mathbf{x}}_{I_3}=0.
    \end{align*}
    And by the variational form,
    \begin{align*}
        &\underbrace{\int_{\Omega}\frac{2}{3R}\sqrt{\frac{2}{R}}a_{\mathbf{u}}^{I}a_{\mathbf{u}}^{II}\frac{\mu(\tilde{\theta})^{2}}{\sqrt{\tilde{\theta}}}\nabla\partial_{i}\mathbf{u}:\left(\nabla\partial_{i}\mathbf{u}+\nabla\partial_{i}\mathbf{u}^{\mathsf{T}}-\frac{2}{3}\div\partial_{i}\mathbf{u}\mathbb{I}_{3}\right)d\mathbf{x}}_{I_{4}} \nonumber\\
        &+\underbrace{\int_{\Omega}\frac{32}{75R^2}\sqrt{\frac{2}{R}}a_{\theta}^{I}a_{\theta}^{II}\frac{\kappa(\tilde{\theta})^{2}}{\tilde{\theta}\sqrt{\tilde{\theta}}}\nabla\partial_{i}\zeta\cdot\nabla\partial_{i}\zeta d\mathbf{x}}_{I_{5}} \\
        &+\underbrace{\int_{\Omega}-\frac{4}{5R^2}a_{\mathbf{u}}^{II}a_{\theta}^I\frac{\mu(\tilde{\theta})\kappa(\tilde{\theta})}{\mu(\overline{\theta})\tilde{\theta}}\nabla\partial_{i}\zeta\cdot\mathfrak{n}\left(\nabla\partial_{i}u_{1}+\partial_{i}\partial_{1}\mathbf{u}-\frac{2}{3}(\partial_{i}\div\mathbf{u})\mathbf{e}_{1}\right)d\mathbf{x}}_{I_{6}} \\
        &+\underbrace{\int_{\Omega}\frac{2}{3R}\sqrt{\frac{2}{R}}a_{\mathbf{u}}^{I}a_{\mathbf{u}}^{II}\nabla\left(\frac{\mu(\tilde{\theta})^{2}}{\sqrt{\tilde{\theta}}}\right)\otimes\partial_{i}\mathbf{u}:\left(\nabla\partial_{i}\mathbf{u}+\nabla\partial_{i}\mathbf{u}^{\mathsf{T}}-\frac{2}{3}\div\partial_{i}\mathbf{u}\mathbb{I}_{3}\right)d\mathbf{x}}_{I_{7}} \nonumber\\ &+\underbrace{\int_{\Omega}\frac{32}{75R^2}\sqrt{\frac{2}{R}}a_{\theta}^{I}a_{\theta}^{II}\nabla\left(\frac{\kappa(\tilde{\theta})\kappa(\tilde{\theta})}{\tilde{\theta}\sqrt{\tilde{\theta}}}\right)\partial_{i}\zeta\cdot\nabla\partial_{i}\zeta d\mathbf{x}}_{I_{8}} \\
        &+\underbrace{\int_{\Omega}-\frac{4}{5R^2}a_{\mathbf{u}}^{II}a_{\theta}^I\nabla\left(\frac{\mu(\tilde{\theta})\kappa(\tilde{\theta})}{\mu(\overline{\theta})\tilde{\theta}}\right)\cdot\partial_{i}\zeta\cdot\mathfrak{n}\left(\partial_{i}\nabla u_{1}+\partial_{i}\nabla\mathbf{u}-\frac{2}{3}(\partial_{i}\div\mathbf{u})\mathbf{e}_{1}\right)d\mathbf{x}}_{I_{9}} \\
        &+\underbrace{\int_{\Omega}R\overline{\rho}\sqrt{\frac{2}{R}}\frac{2}{3R}a_{\mathbf{u}}^{I}a_{\mathbf{u}}^{II}\frac{\mu(\tilde{\theta})^{2}}{\mu(\overline{\theta})\sqrt{\tilde{\theta}}}\partial_{i}\mathbf{u}\cdot\nabla\partial_{i}\zeta d\mathbf{x}}_{I_{10}} +\underbrace{\int_{\Omega}\frac{32}{75R^2}\sqrt{\frac{2}{R}}a_{\theta}^{I}a_{\theta}^{II}\frac{\kappa(\tilde{\theta})^{2}}{\kappa(\overline{\theta})\tilde{\theta}\sqrt{\tilde{\theta}}}(R\overline{\rho}\overline{\theta}+\mathcal{R}\overline{\rho})\partial_{i}\zeta\div\partial_{i}\mathbf{u}d\mathbf{x}}_{I_{11}} \\
        &+\underbrace{\int_{\Omega}-\frac{4}{5R}\overline{\rho}a_{\mathbf{u}}^{II}a_{\theta}^I\frac{\mu(\tilde{\theta})\kappa(\tilde{\theta})}{\mu(\overline{\theta})\tilde{\theta}}\partial_{i}\zeta\cdot\mathfrak{n}\partial_{i}\partial_{1}\zeta d\mathbf{x}}_{I_{12}} \\
        &+\underbrace{\sum_{p=0,1}\int_{\mathbb{T}_{p}^{2}}\frac{2}{3R}a_{\mathbf{u}}^{II}\mu(\theta_{w})(\varphi'+\overline{\rho})|\partial_{i}\mathbf{u}|^{2}dx_{2}dx_{3}}_{I_{13}} +\underbrace{\sum_{p=0,1}\int_{\mathbb{T}_{p}^{2}}\frac{16}{15R}a_{\theta}^{I}\frac{\kappa(\theta_{w})}{\theta_{w}}(\varphi'+\overline{\rho})|\partial_{i}\zeta|^{2}dx_{2}dx_{3}}_{I_{14}} \\
        &+\underbrace{\sum_{p=0,1}\int_{\mathbb{T}_{p}^{2}}\frac{16}{15R}a_{\theta}^{I}\frac{\kappa(\theta_{w})}{\theta_{w}}(\varphi'+\overline{\rho})\partial_{i}\zeta(\partial_{i}\tilde{\theta}-\partial_{i}\theta_{w})dx_{2}dx_{3}}_{I_{15}} \\
        &+\underbrace{\sum_{p=0,1}\int_{\mathbb{T}_{p}^{2}}\frac{2}{3R}a_{\mathbf{u}}^{II}\mu(\theta_{w})\partial_{i}\varphi'\cdot\partial_{i}\mathbf{u}\cdot\mathbf{u}dx_{2}dx_{3}}_{I_{16}} +\underbrace{\sum_{p=0,1}\int_{\mathbb{T}_{p}^{2}}\frac{16}{15R}a_{\theta}^{I}\frac{\kappa(\theta_{w})}{\theta_{w}}\partial_{i}\varphi'\cdot\zeta\partial_{i}\zeta dx_{2}dx_{3}}_{I_{17}} \\
        &+\underbrace{\sum_{p=0,1}\int_{\mathbb{T}_{p}^{2}}\frac{8}{15R^2}a_{\mathbf{u}}^{II}a_{\theta}^{I}\partial_{i}\mathbf{u}\cdot\nabla\partial_{i}\tilde{\theta}dx_{2}dx_{3}}_{I_{18}} +\underbrace{\sum_{p=0,1}\int_{\mathbb{T}_{p}^{2}}\frac{32}{75R^2}a_{\theta}^{I}a_{\theta}^{II}\frac{\kappa(\theta_{w})^2}{\theta_{w}\sqrt{\theta_{w}}}\partial_{i}\zeta\cdot\mathfrak{n}\partial_{i}\partial_{1}\tilde{\theta}dx_{2}dx_{3}}_{I_{19}} \\
        = 
        &\underbrace{\int_{\Omega}\frac{2}{3R}\sqrt{\frac{2}{R}}a_{\mathbf{u}}^{I}a_{\mathbf{u}}^{II}\frac{\mu(\tilde{\theta})^{2}}{\mu(\overline{\theta})\sqrt{\tilde{\theta}}}\partial_{i}\mathbf{u}\cdot\partial_{i}\mathbf{F}(\varphi',\mathbf{u}',\zeta')d\mathbf{x}}_{I_{20}}+\underbrace{\int_{\Omega}-R\tilde{\theta}\sqrt{\frac{2}{R}}\frac{2}{3R}a_{\mathbf{u}}^{I}a_{\mathbf{u}}^{II}\frac{\mu(\tilde{\theta})^{2}}{\mu(\overline{\theta})\sqrt{\tilde{\theta}}}\partial_{i}\mathbf{u}\cdot\nabla\partial_{i}\varphi'd\mathbf{x}}_{I_{21}} \\
        &+\underbrace{\int_{\Omega}\frac{32}{75R^2}\sqrt{\frac{2}{R}}a_{\theta}^{I}a_{\theta}^{II}\frac{\kappa(\tilde{\theta})^{2}}{\kappa(\overline{\theta})\tilde{\theta}\sqrt{\tilde{\theta}}}\partial_{i}\zeta\cdot\partial_{i} G(\varphi',\mathbf{u}',\zeta')d\mathbf{x}}_{I_{22}}+\underbrace{\int_{\Omega}-\frac{32}{75R^2}\sqrt{\frac{2}{R}}a_{\theta}^{I}a_{\theta}^{II}\frac{\kappa(\tilde{\theta})^{2}}{\kappa(\overline{\theta})\tilde{\theta}\sqrt{\tilde{\theta}}}\mathcal{R}\partial_{i}\zeta\cdot\partial_{i}\div(\varphi'\mathbf{u}')d\mathbf{x}}_{I_{23}} \\
        &+\underbrace{\int_{\Omega}-\frac{4}{5R^2}a_{\mathbf{u}}^{II}a_{\theta}^I\frac{\mu(\tilde{\theta})\kappa(\tilde{\theta})}{\mu(\overline{\theta})\tilde{\theta}}\partial_{i}\zeta\cdot\mathfrak{n}\partial_{i}F_{1}(\varphi',\mathbf{u}',\zeta')d\mathbf{x}}_{I_{24}} +\underbrace{\int_{\Omega}\frac{4}{5R}\overline{\theta}a_{\mathbf{u}}^{II}a_{\theta}^I\frac{\mu(\tilde{\theta})\kappa(\tilde{\theta})}{\mu(\overline{\theta})\tilde{\theta}}\partial_{i}\zeta\cdot\mathfrak{n}\partial_{i}\partial_{1}\varphi' d\mathbf{x}}_{I_{25}}.
    \end{align*}
    
    For $I_{1}$, 
    \begin{equation*}
        I_{1}\geq \frac{R}{2h}\frac{\overline{\theta}}{\overline{\rho}}\sqrt{\frac{2}{R}}\frac{2}{3R}a_{\mathbf{u}}^{I}a_{\mathbf{u}}^{II}\frac{\mu(\overline{\theta})}{\sqrt{\overline{\theta}}}\left(\|\partial_{i}\varphi\|_{L^{2}(\Omega)}^{2}-\|\partial_{i}\varphi'\|_{L^{2}(\Omega)}^{2}+\|\partial_{i}\varphi-\partial_{i}\varphi'\|_{L^{2}(\Omega)}^{2}\right).
    \end{equation*}

    For $I_{2}$, 
    \begin{equation*}
        I_{2}
        \leq C_{1}\int_{\Omega}|\div\partial_{i}\mathbf{u}|\cdot|\partial_{i}\varphi|^2d\mathbf{x}\\
        \leq C_{1}\|\partial_{i}\varphi\|_{L^3(\Omega)}\|\partial_{i}\varphi\|_{L^6(\Omega)}\|\nabla\partial_{i}\mathbf{u}\|_{L^2(\Omega)}\\
        \leq C_{1}\|\partial_{i}\varphi\|_{H^{1}(\Omega)}^{2}\|\nabla\partial_{i}\mathbf{u}\|_{L^{2}(\Omega)},
    \end{equation*}
    where $C_{1}>0$ is a constant only depending on $R, a_{\mathbf{u}}^{I}, a_{\mathbf{u}}^{II}, \overline{\rho}, \overline{\theta}$.

    For $I_{3}$ and $I_{17}$, 
    \begin{align*}
        &|I_{3}+I_{17}| \\
        \leq&\varepsilon\int_{\Omega}|\partial_{i}\mathbf{u}\cdot\nabla\partial_{i}\varphi'|d\mathbf{x}+R\overline{\theta}\sqrt{\frac{2}{R}}\frac{2}{3R}a_{\mathbf{u}}^{I}a_{\mathbf{u}}^{II}\frac{\mu(\overline{\theta})}{\sqrt{\overline{\theta}}}\int_{\Omega}|(\partial_{i}\varphi-\partial_{i}\varphi')\div\partial_{i}\mathbf{u}|d\mathbf{x} \\
        \leq&\varepsilon(\|\nabla\partial_{i}\varphi'\|_{L^{2}(\Omega)}^{2}+\|\partial_{i}\mathbf{u}\|_{L^{2}(\Omega)}^{2})+\frac{h}{R}\overline{\rho}\overline{\theta}\sqrt{\frac{2}{R}}\frac{2}{3R}a_{\mathbf{u}}^{I}a_{\mathbf{u}}^{II}\frac{\mu(\overline{\theta})}{\sqrt{\overline{\theta}}}\|\nabla\partial_{i}\mathbf{u}\|_{L^{2}(\Omega)}^{2} \\
        &+\frac{R}{4h}\frac{\overline{\theta}}{\overline{\rho}}\sqrt{\frac{2}{R}}\frac{2}{3R}a_{\mathbf{u}}^{I}a_{\mathbf{u}}^{II}\frac{\mu(\overline{\theta})}{\sqrt{\overline{\theta}}}\|\partial_{i}\varphi-\partial_{i}\varphi'\|_{L^{2}(\Omega)}^{2}.
    \end{align*}
   
    For $I_{20}$,
    \begin{align*}
        |I_{20}|\leq&\left|\int_{\Omega}\frac{2}{3R}\sqrt{\frac{2}{R}}a_{\mathbf{u}}^{I}a_{\mathbf{u}}^{II}\frac{\mu(\tilde{\theta})^{2}}{\mu(\overline{\theta})\sqrt{\tilde{\theta}}}\partial_{i}^2\mathbf{u}\cdot\mathbf{F}(\varphi',\mathbf{u}',\zeta')d\mathbf{x}\right| \\
        &+\left|\int_{\Omega}\frac{2}{3R}\sqrt{\frac{2}{R}}a_{\mathbf{u}}^{I}a_{\mathbf{u}}^{II}\partial_{i}\left(\frac{\mu(\tilde{\theta})^{2}}{\mu(\overline{\theta})\sqrt{\tilde{\theta}}}\right)\cdot\partial_{i}\mathbf{u}\cdot\mathbf{F}(\varphi',\mathbf{u}',\zeta')d\mathbf{x}\right| \\
        \leq&\eta\sum_{j=1,2}\|\partial_{i}^{j}\mathbf{u}\|_{L^{2}(\Omega)}^{2}+\frac{C_{2}}{\eta}\|\mathbf{F}\|_{L^{2}(\Omega)}^{2},
    \end{align*}
    where $C_{2}>0$ is a constant only depending on $R, a_{\mathbf{u}}^{I}, a_{\mathbf{u}}^{II}, \mu,  \overline{\theta}$.
   
    For $I_{22}$,
    \begin{align*}
        |I_{22}|\leq&\left|\int_{\Omega}\frac{32}{75R^2}\sqrt{\frac{2}{R}}a_{\theta}^{I}a_{\theta}^{II}\frac{\kappa(\tilde{\theta})^{2}}{\kappa(\overline{\theta})\tilde{\theta}\sqrt{\tilde{\theta}}}\partial_{i}^{2}\zeta\cdot G(\varphi',\mathbf{u}',\zeta')d\mathbf{x}\right| \\
        &+\left|\int_{\Omega}\frac{32}{75R^2}\sqrt{\frac{2}{R}}a_{\theta}^{I}a_{\theta}^{II}\partial_{i}\left(\frac{\kappa(\tilde{\theta})^{2}}{\kappa(\overline{\theta})\tilde{\theta}\sqrt{\tilde{\theta}}}\right)\cdot\partial_{i}\zeta\cdot G(\varphi',\mathbf{u}',\zeta')d\mathbf{x}\right| \\
        \leq&\eta\sum_{j=1,2}\|\partial_{i}^{j}\zeta\|_{L^{2}(\Omega)}^{2}+\frac{C_{3}}{\eta}\|G\|_{L^{2}(\Omega)}^{2},
    \end{align*}
    where $C_{3}>0$ is a constant only depending on $R, a_{\theta}^{I}, a_{\theta}^{II}, \kappa, \overline{\theta}$.
   
    For $I_{23}$,
    \begin{align*}
        |I_{23}|\leq&\left|\int_{\Omega}-\frac{32}{75R^2}\sqrt{\frac{2}{R}}a_{\theta}^{I}a_{\theta}^{II}\frac{\kappa(\tilde{\theta})^{2}}{\kappa(\overline{\theta})\tilde{\theta}\sqrt{\tilde{\theta}}}\mathcal{R}\partial_{i}^{2}\zeta\cdot\div(\varphi'\mathbf{u}')d\mathbf{x}\right| \\
        &+\left|\int_{\Omega}-\frac{32}{75R^2}\sqrt{\frac{2}{R}}a_{\theta}^{I}a_{\theta}^{II}\partial_{i}\left(\frac{\kappa(\tilde{\theta})^{2}}{\kappa(\overline{\theta})\tilde{\theta}\sqrt{\tilde{\theta}}}\right)\cdot\mathcal{R}\partial_{i}\zeta\cdot\div(\varphi'\mathbf{u}')d\mathbf{x}\right| \\
        \leq&\eta\sum_{j=1,2}\|\partial_{i}^{j}\zeta\|_{L^{2}(\Omega)}^{2}+\frac{C_{4}}{\eta}\|\div(\varphi'\mathbf{u}')\|_{L^{2}(\Omega)}^{2},
    \end{align*}
    where $C_{4}>0$ is a constant only depending on $R, a_{\theta}^{I}, a_{\theta}^{II}, \overline{\rho}, \overline{\theta}$.
   
    For $I_{24}$,
    \begin{align*}
        |I_{24}|\leq&\left|\int_{\Omega}-\frac{4}{5R^2}a_{\mathbf{u}}^{II}a_{\theta}^I\frac{\mu(\tilde{\theta})\kappa(\tilde{\theta})}{\mu(\overline{\theta})\tilde{\theta}}\partial_{i}^2\zeta\cdot\mathfrak{n}F_{1}(\varphi',\mathbf{u}',\zeta')d\mathbf{x}\right| \\
        &+\left|\int_{\Omega}-\frac{4}{5R^2}a_{\mathbf{u}}^{II}a_{\theta}^I\partial_{i}\left(\frac{\mu(\tilde{\theta})\kappa(\tilde{\theta})}{\mu(\overline{\theta})\tilde{\theta}}\right)\cdot\partial_{i}\zeta\cdot\mathfrak{n}F_{1}(\varphi',\mathbf{u}',\zeta')d\mathbf{x}\right| \\
        \leq&\eta\sum_{j=1,2}\|\partial_{i}^{j}\zeta\|_{L^{2}(\Omega)}^{2}+\frac{C_{5}}{\eta}\|F_{1}\|_{L^{2}(\Omega)}^{2},
    \end{align*}
    where $C_{5}>0$ is a constant only depending on $R, a_{\mathbf{u}}^{II}, a_{\theta}^{I}, \mu, \kappa, \overline{\theta}$.
   
    For $I_{25}$,
    \begin{align*}
        |I_{25}| \leq&\left|\int_{\Omega}R\tilde{\theta}\frac{4}{5R^2}a_{\mathbf{u}}^{II}a_{\theta}^{I}\mathfrak{n}\frac{\mu(\tilde{\theta})\kappa(\tilde{\theta})\tilde{\theta}^{-1}}{\mu(\overline{\theta})}\partial_{i}^2\zeta\cdot\partial_1\varphi'd\mathbf{x}\right| 
        +\left|\int_{\Omega}R\tilde{\theta}\frac{4}{5R^2}a_{\mathbf{u}}^{II}a_{\theta}^{I}\partial_{i}\left(\mathfrak{n}\frac{\mu(\tilde{\theta})\kappa(\tilde{\theta})\tilde{\theta}^{-1}}{\mu(\overline{\theta})}\right)\cdot\partial_{i}\zeta\cdot\partial_1\varphi'd\mathbf{x}\right| \\
        \leq&(R\frac{4}{5R^2}a_{\mathbf{u}}^{II}a_{\theta}^{I}\kappa(\overline{\theta})+C\delta)\|\partial_{i}^2\zeta\|_{L^{2}(\Omega)}\|\partial_{1}\varphi'\|_{L^{2}(\Omega)}+C_{6}\delta(\|\partial_{i}\zeta\|_{L^{2}(\Omega)}^{2}+\|\partial_{1}\varphi'\|_{L^{2}(\Omega)}^{2}),
    \end{align*}
    where $C_{6}>0$ is a constant only depending on $R, a_{\mathbf{u}}^{II}, a_{\theta}^{I}, \mu, \kappa, \overline{\theta}$.

    The rest of the terms can be estimated similarly as the proof of Lemma \ref{regularityEllipticSystem}, and we omit them for brevity. After collecting all of the results, we can obtain the desired estimate \eqref{H1ConormalBasicEstimate}.
\end{proof}

\begin{lemma}\label{H2ConormalBasicEstimateLemma}
    Under the assumption of Corollary \ref{existenceEllipticSystem} and Corollary \ref{ExistenceTransportEquation}, for an arbitrary fixed constant $h>0$, there exist constants $\delta_{0}, \varepsilon_{0}>0$ such that for $\varepsilon\in[0,\delta_{0}), \varepsilon\in[0,\varepsilon_{0})$, if the wall temperature $\theta_{w}\in H^{\vartheta}(\partial\Omega)$ with $\|\theta_{w}-\overline{\theta}\|_{H^{\vartheta}(\partial\Omega)}\leq \delta$ where $\vartheta>\frac{7}{2}$, and $(\varphi',\mathbf{u}',\zeta')\in (H^{2}(\Omega)\cap L_{0}^{2}(\Omega))\times V^{3}(\Omega)\times H^{3}(\Omega)$ with $N_{2}(\varphi',\mathbf{u}',\zeta')\leq\varepsilon$, then there exist constants $\eta, C>0$ only depending on $a_{\mathbf{u}}^{I}, a_{\mathbf{u}}^{II}, a_{\theta}^{I}, a_{\theta}^{II}, c_{v}, R, \overline{\rho}, \overline{\theta}$ such that
    \begin{align}\label{H2ConormalBasicEstimate}
        &\frac{R}{4h}\frac{\overline{\theta}}{\overline{\rho}}\sqrt{\frac{2}{R}}\frac{2}{3R}a_{\mathbf{u}}^{I}a_{\mathbf{u}}^{II}\frac{\mu(\overline{\theta})}{\sqrt{\overline{\theta}}}(\|\partial_{i}^2\varphi\|_{L^{2}(\Omega)}^2-\|\partial_{i}^2\varphi'\|_{L^{2}(\Omega)}^2+\|\partial_{i}^2\varphi-\partial_{i}^2\varphi'\|_{L^{2}(\Omega)}^2) \nonumber\\
        &+\left(\frac{1}{3R}\sqrt{\frac{2}{R}}a_{\mathbf{u}}^{I}a_{\mathbf{u}}^{II}\frac{\mu(\overline{\theta})^{2}}{\sqrt{\overline{\theta}}}-C\delta\right)\left\|\partial_{i}^2\nabla\mathbf{u}+\partial_{i}^{2}\nabla\mathbf{u}^{\mathsf{T}}-\frac{2}{3}\partial_{i}^{2}\div\mathbf{u}\mathbb{I}_{3}\right\|_{L^{2}(\Omega)}^2 \nonumber\\
        &+\left(\frac{32}{75R^2}\sqrt{\frac{2}{R}}a_{\theta}^{I}a_{\theta}^{II}\frac{\kappa(\overline{\theta})^{2}}{\overline{\theta}\sqrt{\overline{\theta}}}-C\delta\right)\|\nabla\partial_{i}^2\zeta\|_{L^{2}(\Omega)}^2 \nonumber\\
        &+\left(\frac{2}{3R}a_{\mathbf{u}}^{II}\mu(\overline{\theta})\overline{\rho}-C\delta-C\varepsilon\right)\|\partial_{i}^2\mathbf{u}\|_{L^{2}(\partial\Omega)}^2+\left(\frac{16}{15R}a_{\theta}^{I}\frac{\kappa(\overline{\theta})}{\overline{\theta}}\overline{\rho}-C\delta-C\varepsilon\right)\|\partial_{i}^2\zeta\|_{L^{2}(\partial\Omega)}^2 \nonumber\\
        \leq&\left(\frac{2}{5R^2}a_{\mathbf{u}}^{II}a_{\theta}^I\frac{\kappa(\overline{\theta})}{\overline{\theta}}+C\delta\right)\left\|\partial_{i}^{2}\nabla\mathbf{u}+\partial_{i}^{2}\nabla\mathbf{u}^{\mathsf{T}}-\frac{2}{3}\partial_{i}^{2}\div\mathbf{u}\mathbb{I}_{3}\right\|_{L^{2}(\Omega)}\|\partial_{i}^{2}\nabla\zeta\|_{L^{2}(\Omega)} \nonumber\\
        &+\left(R\frac{4}{5R^2}a_{\mathbf{u}}^{II}a_\theta^I\kappa(\overline{\theta})+C\delta\right)\|\partial_{i}^{2}\zeta\|_{L^{2}(\Omega)}\|\partial_{i}^{2}\partial_{1}\varphi'\|_{L^{2}(\Omega)} \nonumber\\
        &+C\delta(\|\nabla\partial_{i}^{2}\varphi'\|_{L^{2}(\Omega)}^{2}+\|\partial_{i}\mathbf{u}\|_{L^{2}(\Omega)}^{2})+Ch\|\nabla\partial_{i}^{2}\mathbf{u}\|_{L^{2}(\Omega)}^{2} \nonumber\\
        &+\eta\sum_{j=2,3}(\|\partial_{i}^{j}\mathbf{u}\|_{L^{2}(\Omega)}^{2}+\|\partial_{i}^{j}\zeta\|_{L^{2}(\Omega)}^{2})+\frac{C}{\eta}(1+N_{2}(\varphi,\mathbf{u},\zeta))^{2}(\sqrt{\delta}+N_{2}(\varphi,\mathbf{u},\zeta))^{4},
    \end{align}
    for $i=2,3$.
\end{lemma}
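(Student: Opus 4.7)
The plan is to mirror the argument of Lemma \ref{H1ConormalBasicEstimateLemma} but now with two tangential derivatives. Since $i\in\{2,3\}$ is a direction along which $\Omega=(0,1)\times\mathbb{T}^2$ is translation-invariant, the operator $\partial_i^2$ commutes with the boundary (does not produce normal-boundary commutators), so we may rigorously justify the computation by working with difference quotients $D_{i,d}^{2}$ as in the proof of Lemma \ref{regularityEllipticSystem} and then sending $d\to0$. First I would apply $\partial_i^2$ to $\eqref{LSNS}_1$ and multiply by $R\frac{\overline{\theta}}{\overline{\rho}}\sqrt{\frac{2}{R}}\frac{2}{3R}a_{\mathbf{u}}^{I}a_{\mathbf{u}}^{II}\frac{\mu(\overline{\theta})}{\sqrt{\overline{\theta}}}\,\partial_i^2\varphi$ and integrate, producing the density-difference term $\frac{1}{2h}(\|\partial_i^2\varphi\|_{L^2}^2-\|\partial_i^2\varphi'\|_{L^2}^2+\|\partial_i^2\varphi-\partial_i^2\varphi'\|_{L^2}^2)$ together with a cross-coupling piece $\propto\int \partial_i^2\varphi\,\partial_i^2\div\mathbf{u}\,d\mathbf{x}$ (absorbed via the $h\|\nabla\partial_i^2\mathbf{u}\|_{L^2}^2$ term) and a quadratic term in $\partial_i^2\varphi$ controlled by $C\|\partial_i^2\varphi\|_{H^1}^2\|\nabla\partial_i^2\mathbf{u}\|_{L^2}$, which is of the good form.

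Next I would test the variational formulation \eqref{weakFormEllipticSystem} with $(\mathbf{v},\xi)=(\partial_i^2\mathbf{u},\partial_i^2\zeta)$ (justified via the second-order difference quotient). This reproduces all 19 terms $I_1,\dots,I_{19}$ of Lemma \ref{H1ConormalBasicEstimateLemma} with $\partial_i$ replaced by $\partial_i^2$; the three principal coercive contributions (terms $I_4$, $I_5$, the off-diagonal $I_6$) again combine using \eqref{assumption2} and Korn's inequality \eqref{KornInequality} to give the stated lower bound, the boundary coercivity (terms $I_{13}, I_{14}$) uses \eqref{assumption1}, and the hyperbolic-cancellation identity \eqref{def.crad} handles $I_{10}+I_{11}$. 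The new features are the extra commutator-type contributions generated because coefficients such as $\mu(\tilde\theta)^2/\sqrt{\tilde\theta}$, $\kappa(\theta_w)/\theta_w$, $\varphi'+\overline{\rho}$ must be differentiated, producing sums of the form $\sum_{0\le j\le2}\partial_i^{2-j}(\cdot)\,\partial_i^{j}(\cdot)$; each such piece is either $O(\delta)$ (when the extra derivatives fall on the slowly-varying wall temperature $\tilde\theta$) or $O(\varepsilon)$ (when they fall on $\varphi'$), so they can be absorbed into the coercive bulk and boundary norms exactly as in \eqref{regularityEstimateEllipticSystemI4I5I6}, \eqref{regularityEstimateEllipticSystemI10I11I12}, \eqref{regularityEstimateEllipticSystemI13I14I15I16I17I18}.

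For the forcing terms $I_{20}$--$I_{25}$ I would use the $H^1$-version of Lemma \ref{EstimateFGLemma}, namely \eqref{H1EstimateFG}, because taking $\partial_i^2$ (via difference quotients) and integrating by parts once shifts one derivative onto the test function, so only $\|\partial_i\mathbf{F}\|_{L^2}$, $\|\partial_i G\|_{L^2}$ appear. Each such term yields, after Young's inequality with an arbitrary $\eta>0$, a contribution bounded by $\eta(\|\partial_i^2\mathbf{u}\|_{L^2}^2+\|\partial_i^2\zeta\|_{L^2}^2)+\frac{C}{\eta}(1+N_2)^2(\sqrt\delta+N_2)^4$. The critical cross-term $I_{25}\propto\int \partial_i^2\zeta\,\mathfrak{n}\,\partial_i^2\partial_1\varphi'\,d\mathbf{x}$ is retained unabsorbed on the right-hand side as the $\|\partial_i^2\zeta\|_{L^2}\|\partial_i^2\partial_1\varphi'\|_{L^2}$ term of \eqref{H2ConormalBasicEstimate}, exactly paralleling the treatment in \eqref{H1ConormalBasicEstimate}; this is the term which will later be closed together with a bound on $\|\partial_1\varphi\|_{L^2}$ (cf.~the discussion around \eqref{badTerm2}) rather than absorbed here.

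The main obstacle is bookkeeping rather than novelty: one must verify that every commutator produced when $\partial_i^2$ crosses $\tilde\theta$-dependent coefficients is of order $\delta$ (using $\|\tilde\theta-\overline{\theta}\|_{H^{\vartheta+1/2}}\le C\delta$ with $\vartheta>7/2$, which gives $W^{1,\infty}$ control of $\tilde\theta$ and thus of $\mu(\tilde\theta)^{2}/\sqrt{\tilde\theta}$, etc., up to two derivatives), and that every commutator falling on $\varphi'+\overline{\rho}$ on the boundary is of order $\varepsilon$ via the Sobolev trace inequality $\|\varphi'\|_{W^{1,\infty}(\partial\Omega)}\lesssim\|\varphi'\|_{H^2(\Omega)}\le\varepsilon$. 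The fractional-derivative boundary term $\partial_j^{1/2}[\cdots]$ in \eqref{weakFormEllipticSystem} contributes in the same manner as in the $H^1$-conormal case: on the flat boundary $\mathbb{T}^2_p$ the tangential derivative $\partial_i$ commutes with $\partial_j^{1/2}$, so differentiating twice simply produces $\partial_i^2$ inside the half-derivative and an additional boundary quadratic form which, by Plancherel, is controlled by $\|\partial_i^2\mathbf{u}\|_{L^2(\partial\Omega)}^{1/2}\|\partial_i^2\zeta\|_{L^2(\partial\Omega)}^{1/2}$ times a Sobolev norm of the coefficients and hence absorbable for small $\delta,\varepsilon$. Collecting all contributions yields \eqref{H2ConormalBasicEstimate}.
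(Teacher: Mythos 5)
Your overall architecture matches the paper's: apply $\partial_i^2$ to $\eqref{LSNS}_1$ with the multiplier $R\frac{\overline{\theta}}{\overline{\rho}}\sqrt{\frac{2}{R}}\frac{2}{3R}a_{\mathbf{u}}^{I}a_{\mathbf{u}}^{II}\frac{\mu(\overline{\theta})}{\sqrt{\overline{\theta}}}\partial_i^2\varphi$, test the variational form with $(\partial_i^2\mathbf{u},\partial_i^2\zeta)$, use \eqref{assumption2}, Korn, \eqref{def.crad} for the coercive/cancellation structure, integrate the forcing terms by parts once so that only $\|\partial_i\mathbf{F}\|_{L^2}$, $\|\partial_i G\|_{L^2}$ appear, and retain the cross-term $\|\partial_i^2\zeta\|_{L^2}\|\partial_i^2\partial_1\varphi'\|_{L^2}$ unabsorbed. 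However, there is a genuine gap in your treatment of the boundary commutators where \emph{both} tangential derivatives fall on the density coefficient $\varphi'+\overline{\rho}$ in the boundary integrals $I_{13},I_{14}$. These produce terms of the form $\sum_{p}\int_{\mathbb{T}^2_p}\partial_i^2\varphi'\cdot(\partial_i^2\mathbf{u}\cdot\mathbf{u})\,dx_2dx_3$ and $\sum_{p}\int_{\mathbb{T}^2_p}\partial_i^2\varphi'\cdot(\partial_i^2\zeta\cdot\zeta)\,dx_2dx_3$. You propose to bound these via ``$\|\varphi'\|_{W^{1,\infty}(\partial\Omega)}\lesssim\|\varphi'\|_{H^2(\Omega)}$,'' but this embedding is false ($H^2(\Omega)$ in 3D controls $\|\varphi'\|_{L^\infty}$ but not $\|\nabla\varphi'\|_{L^\infty}$), and even a valid $W^{1,\infty}$ bound would be irrelevant here: what appears on the boundary is the \emph{second} tangential derivative $\partial_i^2\varphi'$, whose trace lies only in $H^{-1/2}(\partial\Omega)$ since the trace of $\varphi'\in H^2(\Omega)$ is merely $H^{3/2}(\partial\Omega)$. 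It is not a function that can be estimated pointwise.

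The paper resolves exactly this point (and it is the genuinely new difficulty relative to the $H^1$-conormal Lemma \ref{H1ConormalBasicEstimateLemma}, where only $\partial_i\varphi'\in H^{1/2}(\partial\Omega)$ appears) by a Plancherel duality argument on the flat periodic boundary: writing the pairing as $\sum_m 4\pi^2 m^2[\varphi']^{\wedge}(m)\cdot[\mu(\theta_w)\partial_i^2\mathbf{u}\cdot\mathbf{u}]^{\wedge}(m)$, splitting the weight as $|m|^{3/2}\cdot|m|^{1/2}$, and bounding by $\|\varphi'\|_{H^{3/2}(\partial\Omega)}\|\partial_i^2\mathbf{u}\cdot\mathbf{u}\|_{H^{1/2}(\partial\Omega)}\lesssim \|\varphi'\|_{H^2(\Omega)}\|\mathbf{u}\|_{H^3(\Omega)}^2$ via the trace theorem (and similarly for the $\zeta$ and $(\tilde\theta-\theta_w)$ variants). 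This is precisely the ``$H^{1/2}$-regularity via Fourier transform'' issue flagged in the introduction of the paper. Your proposal correctly invokes Plancherel for the explicit $\partial_j^{1/2}$ term already present in \eqref{weakFormEllipticSystem}, but misses that the same mechanism is required for the second-order commutators on $\varphi'$; without it, the estimate of $I_{18}$, $I_{19}$ (and the analogous $(\tilde\theta-\theta_w)$ term) does not close.
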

\begin{proof}
    Applying $\partial_{i}^{2}$ to both sides of $\eqref{LSNS}_{1}$ and multiplying the resultant by $R\frac{\overline{\theta}}{\overline{\rho}}\sqrt{\frac{2}{R}}\frac{2}{3R}a_{\mathbf{u}}^{I}a_{\mathbf{u}}^{II}\frac{\mu(\overline{\theta})}{\sqrt{\overline{\theta}}}\partial_{i}^{2}\varphi$,
    \begin{align*}
        &\underbrace{\int_{\Omega}R\frac{\overline{\theta}}{\overline{\rho}}\sqrt{\frac{2}{R}}\frac{2}{3R}a_{\mathbf{u}}^{I}a_{\mathbf{u}}^{II}\frac{\mu(\overline{\theta})}{\sqrt{\overline{\theta}}}\partial_{i}^{2}\varphi\frac{\partial_{i}^{2}\varphi-\partial_{i}^{2}\varphi'}{h}d\mathbf{x}}_{I_1}+\underbrace{\int_{\Omega}R\frac{\overline{\theta}}{\overline{\rho}}\sqrt{\frac{2}{R}}\frac{2}{3R}a_{\mathbf{u}}^{I}a_{\mathbf{u}}^{II}\frac{\mu(\overline{\theta})}{\sqrt{\overline{\theta}}}\partial_{i}^{2}\varphi\div\partial_{i}^{2}(\varphi\mathbf{u})d\mathbf{x}}_{I_2} \\
        &+\underbrace{\int_{\Omega}R\overline{\theta}\sqrt{\frac{2}{R}}\frac{2}{3R}a_{\mathbf{u}}^{I}a_{\mathbf{u}}^{II}\frac{\mu(\overline{\theta})}{\sqrt{\overline{\theta}}}\partial_{i}^{2}\varphi\div\partial_{i}^{2}\mathbf{u}d\mathbf{x}}_{I_3}=0.
    \end{align*}
    And by the variational form,
    \begin{align*}
        &\underbrace{\int_{\Omega}\frac{2}{3R}\sqrt{\frac{2}{R}}a_{\mathbf{u}}^{I}a_{\mathbf{u}}^{II}\frac{\mu(\tilde{\theta})^{2}}{\sqrt{\tilde{\theta}}}\nabla\partial_{i}^2\mathbf{u}:\left(\nabla\partial_{i}^2\mathbf{u}+\nabla\mathbf{u}^{\mathsf{T}}-\frac{2}{3}\div\mathbf{u}\mathbb{I}_{3}\right)d\mathbf{x}}_{I_{4}} \nonumber\\
        &+\underbrace{\int_{\Omega}\frac{32}{75R^2}\sqrt{\frac{2}{R}}a_{\theta}^{I}a_{\theta}^{II}\frac{\kappa(\tilde{\theta})^{2}}{\tilde{\theta}\sqrt{\tilde{\theta}}}\nabla\partial_{i}^2\zeta\cdot\nabla\partial_{i}^2\zeta d\mathbf{x}}_{I_{5}} \\
        &+\underbrace{\int_{\Omega}-\frac{4}{5R^2}a_{\mathbf{u}}^{II}a_{\theta}^I\frac{\mu(\tilde{\theta})\kappa(\tilde{\theta})}{\mu(\overline{\theta})\tilde{\theta}}\nabla\partial_{i}^2\zeta\cdot\mathfrak{n}\left(\partial_{i}^2\nabla u+\partial_{i}^2\nabla\mathbf{u}-\frac{2}{3}(\partial_{i}^2\div\mathbf{u})\mathbf{e}_{1}\right)d\mathbf{x}}_{I_{6}} \\
        &+\underbrace{\int_{\Omega}\frac{2}{3R}\sqrt{\frac{2}{R}}a_{\mathbf{u}}^{I}a_{\mathbf{u}}^{II}\nabla\left(\frac{\mu(\tilde{\theta})^{2}}{\sqrt{\tilde{\theta}}}\right)\otimes\partial_{i}^2\mathbf{u}:\left(\nabla\partial_{i}^2\mathbf{u}+\nabla\partial_{i}^2\mathbf{u}^{\mathsf{T}}-\frac{2}{3}\div\partial_{i}^2\mathbf{u}\mathbb{I}_{3}\right)d\mathbf{x}}_{I_{7}} \nonumber\\
        &+\underbrace{\int_{\Omega}\frac{32}{75R^2}\sqrt{\frac{2}{R}}a_{\theta}^{I}a_{\theta}^{II}\nabla\left(\frac{\kappa(\tilde{\theta})\kappa(\tilde{\theta})}{\tilde{\theta}\sqrt{\tilde{\theta}}}\right)\partial_{i}^2\zeta\cdot\nabla\partial_{i}^2\zeta d\mathbf{x}}_{I_{8}} \\
        &+\underbrace{\int_{\Omega}-\frac{4}{5R^2}a_{\mathbf{u}}^{II}a_{\theta}^I\nabla\left(\frac{\mu(\tilde{\theta})\kappa(\tilde{\theta})}{\mu(\overline{\theta})\tilde{\theta}}\right)\partial_{i}^2\zeta\cdot\mathfrak{n}\left(\partial_{i}^2\nabla u_{1}+\partial_{i}^2\nabla\mathbf{u}-\frac{2}{3}(\partial_{i}^2\div\mathbf{u})\mathbf{e}_{1}\right)d\mathbf{x}}_{I_{9}} \\
        &+\underbrace{\int_{\Omega}R\overline{\rho}\sqrt{\frac{2}{R}}\frac{2}{3R}a_{\mathbf{u}}^{I}a_{\mathbf{u}}^{II}\frac{\mu(\tilde{\theta})^{2}}{\mu(\overline{\theta})\sqrt{\tilde{\theta}}}\partial_{i}^2\mathbf{u}\cdot\nabla\partial_{i}^2\zeta d\mathbf{x}}_{I_{10}} +\underbrace{\int_{\Omega}\frac{32}{75R^2}\sqrt{\frac{2}{R}}a_{\theta}^{I}a_{\theta}^{II}\frac{\kappa(\tilde{\theta})^{2}}{\kappa(\overline{\theta})\tilde{\theta}\sqrt{\tilde{\theta}}}(R\overline{\rho}\overline{\theta}+\mathcal{R}\overline{\rho})\partial_{i}^2\zeta\cdot\partial_{i}^2\div\mathbf{u}d\mathbf{x}}_{I_{11}} \\
        &+\underbrace{\int_{\Omega}-\frac{4}{5R}\overline{\rho}a_{\mathbf{u}}^{II}a_{\theta}^I\frac{\mu(\tilde{\theta})\kappa(\tilde{\theta})}{\mu(\overline{\theta})\tilde{\theta}}\partial_{i}\zeta\cdot\mathfrak{n}\partial_{i}\partial_{1}\zeta d\mathbf{x}}_{I_{12}} \\
        &+\underbrace{\sum_{p=0,1}\int_{\mathbb{T}_{p}^{2}}\frac{2}{3R}a_{\mathbf{u}}^{II}\mu(\theta_{w})(\varphi'+\overline{\rho})|\partial_{i}^2\mathbf{u}|^{2}dx_{2}dx_{3}}_{I_{13}} +\underbrace{\sum_{p=0,1}\int_{\mathbb{T}_{p}^{2}}\frac{16}{15R}a_{\theta}^{I}\frac{\kappa(\theta_{w})}{\theta_{w}}(\varphi'+\overline{\rho})|\partial_{i}^2\zeta|^{2}dx_{2}dx_{3}}_{I_{14}} \\
        &+\underbrace{\sum_{p=0,1}\int_{\mathbb{T}_{p}^{2}}\frac{16}{15R}a_{\theta}^{I}\frac{\kappa(\theta_{w})}{\theta_{w}}(\varphi'+\overline{\rho})\partial_{i}^2\zeta\cdot(\partial_{i}^2\tilde{\theta}-\partial_{i}^2\theta_{w})dx_{2}dx_{3}}_{I_{15}} \\
        &+\underbrace{\sum_{p=0,1}\int_{\mathbb{T}_{p}^{2}}\frac{2}{3R}a_{\mathbf{u}}^{II}\mu(\theta_{w})\partial_{i}\varphi'\cdot\partial_{i}^2\mathbf{u}\cdot\partial_{i}\mathbf{u}dx_{2}dx_{3}}_{I_{16}} \\ &+\underbrace{\sum_{p=0,1}\int_{\mathbb{T}_{p}^{2}}\frac{16}{15R}a_{\theta}^{I}\frac{\kappa(\theta_{w})}{\theta_{w}}\partial_{i}^2\zeta\cdot\partial_{i}\varphi'\cdot(\partial_{i}\zeta+\partial_{i}\tilde{\theta}-\partial_{i}\theta_{w})dx_{2}dx_{3}}_{I_{17}} \\
        &+\underbrace{\sum_{p=0,1}\int_{\mathbb{T}_{p}^{2}}\frac{2}{3R}a_{\mathbf{u}}^{II}\mu(\theta_{w})\partial_{i}^2\varphi'\cdot\partial_{i}^2\mathbf{u}\cdot\mathbf{u}dx_{2}dx_{3}}_{I_{18}} +\underbrace{\sum_{p=0,1}\int_{\mathbb{T}_{p}^{2}}\frac{16}{15R}a_{\theta}^{I}\frac{\kappa(\theta_{w})}{\theta_{w}}\partial_{i}^2\zeta\cdot\partial_{i}^2\varphi'\zeta dx_{2}dx_{3}}_{I_{19}} \\
        &+\underbrace{\sum_{p=0,1}\int_{\mathbb{T}_{p}^{2}}\frac{8}{15R^2}a_{\mathbf{u}}^{II}a_{\theta}^{I}\partial_{i}^2\mathbf{u}\cdot\nabla\partial_{i}^2\tilde{\theta}dx_{2}dx_{3}}_{I_{20}} +\underbrace{\sum_{p=0,1}\int_{\mathbb{T}_{p}^{2}}\frac{32}{75R^2}a_{\theta}^{I}a_{\theta}^{II}\frac{\kappa(\theta_{w})^2}{\theta_{w}\sqrt{\theta_{w}}}\partial_{i}^2\zeta\cdot\mathfrak{n}\partial_{i}^2\partial_{1}\tilde{\theta}dx_{2}dx_{3}}_{I_{21}} \\
        = 
        &\underbrace{\int_{\Omega}\frac{2}{3R}\sqrt{\frac{2}{R}}a_{\mathbf{u}}^{I}a_{\mathbf{u}}^{II}\frac{\mu(\tilde{\theta})^{2}}{\mu(\overline{\theta})\sqrt{\tilde{\theta}}}\partial_{i}^2\mathbf{u}\cdot\partial_{i}^2\mathbf{F}(\varphi',\mathbf{u}',\zeta')d\mathbf{x}}_{I_{22}}+\underbrace{\int_{\Omega}-R\tilde{\theta}\sqrt{\frac{2}{R}}\frac{2}{3R}a_{\mathbf{u}}^{I}a_{\mathbf{u}}^{II}\frac{\mu(\tilde{\theta})^{2}}{\mu(\overline{\theta})\sqrt{\tilde{\theta}}}\partial_{i}^2\mathbf{u}\cdot\nabla\partial_{i}^2\varphi'd\mathbf{x}}_{I_{23}} \\
        &+\underbrace{\int_{\Omega}\frac{32}{75R^2}\sqrt{\frac{2}{R}}a_{\theta}^{I}a_{\theta}^{II}\frac{\kappa(\tilde{\theta})^{2}}{\kappa(\overline{\theta})\tilde{\theta}\sqrt{\tilde{\theta}}}\partial_{i}^2\zeta\cdot\partial_{i}^2 G(\varphi',\mathbf{u}',\zeta')d\mathbf{x}}_{I_{24}} \\
        &+\underbrace{\int_{\Omega}-\frac{32}{75R^2}\sqrt{\frac{2}{R}}a_{\theta}^{I}a_{\theta}^{II}\frac{\kappa(\tilde{\theta})^{2}}{\kappa(\overline{\theta})\tilde{\theta}\sqrt{\tilde{\theta}}}\mathcal{R}\partial_{i}^2\zeta\cdot\div\partial_{i}^2(\varphi'\mathbf{u}')d\mathbf{x}}_{I_{25}} \\
        &+\underbrace{\int_{\Omega}-\frac{4}{5R^2}a_{\mathbf{u}}^{II}a_{\theta}^I\frac{\mu(\tilde{\theta})\kappa(\tilde{\theta})}{\mu(\overline{\theta})\tilde{\theta}}\partial_{i}^2\zeta\cdot\mathfrak{n}\partial_{i}^2F_{1}(\varphi',\mathbf{u}',\zeta')d\mathbf{x}}_{I_{26}} +\underbrace{\int_{\Omega}\frac{4}{5R}\overline{\theta}a_{\mathbf{u}}^{II}a_{\theta}^I\frac{\mu(\tilde{\theta})\kappa(\tilde{\theta})}{\mu(\overline{\theta})\tilde{\theta}}\partial_{i}^2\zeta\cdot\mathfrak{n}\partial_{i}^2\partial_{1}\varphi' d\mathbf{x}}_{I_{27}}.
    \end{align*}
  
    For $I_{1}$, 
    \begin{equation*}
        I_{1}\geq \frac{R}{2h}\frac{\overline{\theta}}{\overline{\rho}}\sqrt{\frac{2}{R}}\frac{2}{3R}a_{\mathbf{u}}^{I}a_{\mathbf{u}}^{II}\frac{\mu(\overline{\theta})}{\sqrt{\overline{\theta}}}\left(\|\partial_{i}^{2}\varphi\|_{L^{2}(\Omega)}^{2}-\|\partial_{i}^{2}\varphi'\|_{L^{2}(\Omega)}^{2}+\|\partial_{i}^{2}\varphi-\partial_{i}^{2}\varphi'\|_{L^{2}(\Omega)}^{2}\right).
    \end{equation*}

    For $I_{2}$, 
    \begin{equation*}
        I_{2}
        \leq C_{1}\int_{\Omega}|\div\partial_{i}^{2}\mathbf{u}\cdot|\partial_{i}^{2}\varphi|^2|d\mathbf{x}\\
        \leq C_{1}\|\partial_{i}^{2}\varphi\|_{L^3(\Omega)}\|\partial_{i}\varphi\|_{L^6(\Omega)}\|\nabla\partial_{i}^{2}\mathbf{u}\|_{L^2(\Omega)}\\
        \leq C_{1}\|\partial_{i}^{2}\varphi\|_{H^{1}(\Omega)}^{2}\|\nabla\partial_{i}^{2}\mathbf{u}\|_{L^{2}(\Omega)},
    \end{equation*}
    where $C_{1}>0$ is a constant only depending on $R, a_{\mathbf{u}}^{I}, a_{\mathbf{u}}^{II}, \overline{\rho}, \overline{\theta}$.

    For $I_{3}$ and $I_{17}$, 
    \begin{align*}
        &|I_{3}+I_{17}| \\
        \leq&\varepsilon\int_{\Omega}|\partial_{i}^{2}\mathbf{u}\cdot\nabla\partial_{i}^{2}\varphi'|d\mathbf{x}+R\overline{\theta}\sqrt{\frac{2}{R}}\frac{2}{3R}a_{\mathbf{u}}^{I}a_{\mathbf{u}}^{II}\frac{\mu(\overline{\theta})}{\sqrt{\overline{\theta}}}\int_{\Omega}|(\partial_{i}^{2}\varphi-\partial_{i}^{2}\varphi')\div\partial_{i}\mathbf{u}|d\mathbf{x} \\
        \leq&\varepsilon(\|\nabla\partial_{i}^{2}\varphi'\|_{L^{2}(\Omega)}^{2}+\|\partial_{i}^{2}\mathbf{u}\|_{L^{2}(\Omega)}^{2})+\frac{h}{R}\overline{\rho}\overline{\theta}\sqrt{\frac{2}{R}}\frac{2}{3R}a_{\mathbf{u}}^{I}a_{\mathbf{u}}^{II}\frac{\mu(\overline{\theta})}{\sqrt{\overline{\theta}}}\|\nabla\partial_{i}^{2}\mathbf{u}\|_{L^{2}(\Omega)}^{2} \\
        &+\frac{R}{4h}\frac{\overline{\theta}}{\overline{\rho}}\sqrt{\frac{2}{R}}\frac{2}{3R}a_{\mathbf{u}}^{I}a_{\mathbf{u}}^{II}\frac{\mu(\overline{\theta})}{\sqrt{\overline{\theta}}}\|\partial_{i}^{2}\varphi-\partial_{i}^{2}\varphi'\|_{L^{2}(\Omega)}^{2}.
    \end{align*}
  
    For $I_{18}$, by Plancherel theorem, the trace theorem and H\"{o}lder's inequality,
    \begin{align*}
        |I_{18}|=&\frac{2}{3R}a_{\mathbf{u}}^{II}\left|\sum_{m=-\infty}^{+\infty}4\pi^2m^2[\varphi']^\wedge(m)\cdot\left[\mu(\theta_w)\partial_{i}^2\mathbf{u}\cdot\mathbf{u}\right]^\wedge(m)\right|\\
        \leq&C_{2}\left(\sum_{m=-\infty}^{+\infty}\left|m^\frac{3}{2}[\varphi']^\wedge(m)\right|^2\right)^\frac{1}{2}\left(\sum_{m=-\infty}^{+\infty}\left|m^\frac{1}{2}\left[\mu(\theta_w)\partial_{i}^2\mathbf{u}\cdot\mathbf{u}\right]^\wedge(m)\right|^2\right)^\frac{1}{2}\\
        \leq&C_{2}\|\varphi'\|_{H^\frac{3}{2}(\partial\Omega)}\|\mu(\theta_w)\partial_{i}^2\mathbf{u}\cdot\mathbf{u}\|_{H^{\frac{1}{2}}(\partial\Omega)}\\
        \leq&C_{2}\|\varphi'\|_{H^{2}(\Omega)}\|\partial_{i}^2\mathbf{u}\cdot\mathbf{u}\|_{H^{1}(\Omega)}\\
        \leq&C_{2}\|\varphi'\|_{H^{2}(\Omega)}\|\mathbf{u}\|_{H^{3}(\Omega)}^{2},
    \end{align*}
    where $C_{2}>0$ is a constant only depending on $R, a_{\mathbf{u}}^{II}, \mu, \overline{\theta}$.

    For $I_{19}$, by Plancherel theorem, the trace theorem and H\"{o}lder's inequality,
    \begin{align*}
        |I_{19}|=&\frac{16}{15R}a_{\theta}^{I}\left|\sum_{m=-\infty}^{+\infty}4\pi^2m^2[\varphi']^\wedge(m)\cdot\left[\kappa(\theta_w)\theta_w^{-1}\partial_{i}^2\zeta\cdot\zeta\right]^\wedge(m)\right|\\
        \leq&C_{3}\left(\sum_{m=-\infty}^{+\infty}\left|m^\frac{3}{2}[\varphi']^\wedge(m)\right|^2\right)^\frac{1}{2}\left(\sum_{m=-\infty}^{+\infty}\left|m^\frac{1}{2}\left[\kappa(\theta_w)\theta_w^{-1}\partial_{i}^2\zeta\cdot\zeta\right]^\wedge(m)\right|^2\right)^\frac{1}{2}\\
        \leq&C_{3}\|\varphi'\|_{H^{\frac{3}{2}}(\partial\Omega)}\|\kappa(\theta_w)\theta_w^{-1}\partial_{i}^2\zeta\cdot\zeta\|_{H^{\frac{1}{2}}(\partial\Omega)}\\
        \leq&C_{3}\|\varphi'\|_{H^{2}(\Omega)}\|\partial_{i}^2\zeta\cdot\zeta\|_{H^{1}(\Omega)}\\
        \leq&C_{3}\|\varphi'\|_{H^{2}(\Omega)}\|\zeta\|_{H^{3}(\Omega)}^{2},
    \end{align*}
    where $C_{3}>0$ is a constant only depending on $R, a_{\theta}^{I}, \kappa, \overline{\theta}$.

    For $I_{20}$, by Plancherel theorem, the trace theorem and H\"{o}lder's inequality,
    \begin{align*}
        |I_{20}|=&\frac{16}{15R}a_{\theta}^{I}\left|\sum_{m=-\infty}^{+\infty}4\pi^2m^2[\varphi']^\wedge(m)\cdot\left[\kappa(\theta_w)\theta_w^{-1}(\tilde{\theta}-\theta_w)\partial_{i}^2\zeta\right]^\wedge(m)\right|\\
        \leq&C_{4}\left(\sum_{m=-\infty}^{+\infty}\left|m^\frac{3}{2}[\varphi']^\wedge(m)\right|^2\right)^\frac{1}{2}\left(\sum_{m=-\infty}^{+\infty}\left|m^\frac{1}{2}\left[\kappa(\theta_w)\theta_w^{-1}(\tilde{\theta}-\theta_w)\partial_{i}^2\zeta\right]^\wedge(m)\right|^2\right)^\frac{1}{2}\\
        \leq&C_{4}\|\varphi'\|_{H^{\frac{3}{2}}(\partial\Omega)}\|\kappa(\theta_w)\theta_w^{-1}(\tilde{\theta}-\theta_w)\partial_{i}^2\zeta\|_{H^{\frac{1}{2}}(\partial\Omega)}\\
        \leq&C_{4}\varepsilon\|\varphi'\|_{H^{2}(\Omega)}\|\partial_{i}^2\zeta\|_{H^{1}(\Omega)}\\
        \leq&C_{4}\varepsilon\|\varphi'\|_{H^{2}(\Omega)}\|\zeta\|_{H^{3}(\Omega)},
    \end{align*}
    where $C_{4}>0$ is a constant only depending on $R, a_{\theta}^{I}, \kappa, \overline{\theta}$.

    For $I_{22}$,
    \begin{align*}
        |I_{22}|\leq&\left|\int_{\Omega}\frac{2}{3R}\sqrt{\frac{2}{R}}a_{\mathbf{u}}^{I}a_{\mathbf{u}}^{II}\frac{\mu(\tilde{\theta})^{2}}{\mu(\overline{\theta})\sqrt{\tilde{\theta}}}\partial_{i}^3\mathbf{u}\cdot\partial_{i}\mathbf{F}(\varphi',\mathbf{u}',\zeta')d\mathbf{x}\right| \\
        &+\left|\int_{\Omega}\frac{2}{3R}\sqrt{\frac{2}{R}}a_{\mathbf{u}}^{I}a_{\mathbf{u}}^{II}\partial_{i}\left(\frac{\mu(\tilde{\theta})^{2}}{\mu(\overline{\theta})\sqrt{\tilde{\theta}}}\right)\cdot\partial_{i}^2\mathbf{u}\cdot\partial_{i}\mathbf{F}(\varphi',\mathbf{u}',\zeta')d\mathbf{x}\right| \\
        \leq&\eta\sum_{j=2,3}\|\partial_{i}^{j}\mathbf{u}\|_{L^{2}(\Omega)}^{2}+\frac{C_{5}}{\eta}\|\partial_{i}\mathbf{F}\|_{L^{2}(\Omega)}^{2},
    \end{align*}
    where $C_{5}>0$ is a constant only depending on $R, a_{\mathbf{u}}^{I}, a_{\mathbf{u}}^{II}, \mu, \overline{\theta}$.

    For $I_{24}$,
    \begin{align*}
        |I_{24}|\leq&\left|\int_{\Omega}\frac{32}{75R^2}\sqrt{\frac{2}{R}}a_{\theta}^{I}a_{\theta}^{II}\frac{\kappa(\tilde{\theta})^{2}}{\kappa(\overline{\theta})\tilde{\theta}\sqrt{\tilde{\theta}}}\partial_{i}^{3}\zeta\cdot\partial_{i}G(\varphi',\mathbf{u}',\zeta')d\mathbf{x}\right| \\
        &+\left|\int_{\Omega}\frac{32}{75R^2}\sqrt{\frac{2}{R}}a_{\theta}^{I}a_{\theta}^{II}\partial_{i}\left(\frac{\kappa(\tilde{\theta})^{2}}{\kappa(\overline{\theta})\tilde{\theta}\sqrt{\tilde{\theta}}}\right)\cdot\partial_{i}^2\zeta\cdot\partial_{i} G(\varphi',\mathbf{u}',\zeta')d\mathbf{x}\right| \\
        \leq&\eta\sum_{j=2,3}\|\partial_{i}^{j}\zeta\|_{L^{2}(\Omega)}^{2}+\frac{C_{6}}{\eta}\|\partial_{i}G\|_{L^{2}(\Omega)}^{2},
    \end{align*}
    where $C_{6}>0$ is a constant only depending on $R, a_{\theta}^{I}, a_{\theta}^{II}, \kappa, \overline{\theta}$.

    For $I_{25}$,
    \begin{align*}
        |I_{25}|\leq&\left|\int_{\Omega}-\frac{32}{75R^2}\sqrt{\frac{2}{R}}a_{\theta}^{I}a_{\theta}^{II}\frac{\kappa(\tilde{\theta})^{2}}{\kappa(\overline{\theta})\tilde{\theta}\sqrt{\tilde{\theta}}}\mathcal{R}\partial_{i}^{3}\zeta\cdot\div\partial_{i}(\varphi'\mathbf{u}')d\mathbf{x}\right| \\
        &+\left|\int_{\Omega}-\frac{32}{75R^2}\sqrt{\frac{2}{R}}a_{\theta}^{I}a_{\theta}^{II}\partial_{i}\left(\frac{\kappa(\tilde{\theta})^{2}}{\kappa(\overline{\theta})\tilde{\theta}\sqrt{\tilde{\theta}}}\right)\cdot\mathcal{R}\partial_{i}^2\zeta\cdot\div\partial_{i}(\varphi'\mathbf{u}')d\mathbf{x}\right| \\
        \leq&\eta\sum_{j=2,3}\|\partial_{i}^{j}\zeta\|_{L^{2}(\Omega)}^{2}+\frac{C_{7}}{\eta}\|\partial_{i}\div(\varphi'\mathbf{u}')\|_{L^{2}(\Omega)}^{2},
    \end{align*}
    where $C_{7}>0$ is a constant only depending on $R, a_{\theta}^{I}, a_{\theta}^{II}, \kappa, \overline{\theta}$.
 
    For $I_{26}$,
    \begin{align*}
        |I_{26}|\leq&\left|\int_{\Omega}-\frac{4}{5R^2}a_{\mathbf{u}}^{II}a_{\theta}^I\frac{\mu(\tilde{\theta})\kappa(\tilde{\theta})}{\mu(\overline{\theta})\tilde{\theta}}\partial_{i}^2\zeta\cdot\mathfrak{n}\partial_{i}^2F_{1}(\varphi',\mathbf{u}',\zeta')d\mathbf{x}\right| \\
        &+\left|\int_{\Omega}-\frac{4}{5R^2}a_{\mathbf{u}}^{II}a_{\theta}^I\frac{\mu(\tilde{\theta})\kappa(\tilde{\theta})}{\mu(\overline{\theta})\tilde{\theta}}\partial_{i}^2\zeta\cdot\mathfrak{n}\partial_{i}^2F_{1}(\varphi',\mathbf{u}',\zeta')d\mathbf{x}\right| \\
        \leq&\eta\sum_{j=2,3}\|\partial_{i}^{j}\zeta\|_{L^{2}(\Omega)}^{2}+\frac{C_{8}}{\eta}\|\partial_{i}F_{1}\|_{L^{2}(\Omega)}^{2},
    \end{align*}
    where $C_{7}>0$ is a constant only depending on $R, a_{\mathbf{u}}^{II}, a_{\theta}^{I}, \mu, \kappa, \overline{\theta}$.
 
    For $I_{27}$, 
    \begin{align*}
        |I_{27}|\leq&\left|\int_{\Omega}-R\tilde{\theta}\frac{4}{5R^2}a_{\mathbf{u}}^{II}a_{\theta}^{I}\mathfrak{n}\frac{\mu(\tilde{\theta})\kappa(\tilde{\theta})\tilde{\theta}^{-1}}{\mu(\overline{\theta})}\partial_{i}^3\zeta\cdot\partial_1\partial_{i}\varphi'd\mathbf{x}\right| \\
        &+\left|\int_{\Omega}-R\tilde{\theta}\frac{4}{5R^2}a_{\mathbf{u}}^{II}a_{\theta}^{I}\partial_{i}\left(\mathfrak{n}\frac{\mu(\tilde{\theta})\kappa(\tilde{\theta})\tilde{\theta}^{-1}}{\mu(\overline{\theta})}\right)\cdot\partial_{i}^2\zeta\cdot\partial_1\partial_{i}\varphi'd\mathbf{x}\right| \\
        \leq&\left(R\frac{4}{5R^2}a_{\mathbf{u}}^{II}a_\theta^I\kappa(\overline{\theta})+C\delta\right)\int_{\Omega}|\partial_{i}^3\zeta\cdot\partial_1\partial_{i}\varphi'|d\mathbf{x} +C_{8}\delta(\|\partial_{i}^2\zeta\|_{L^{2}(\Omega)}^{2}+\|\partial_{i}\partial_{1}\varphi'\|_{L^{2}(\Omega)}^{2}),
    \end{align*}
    where $C_{8}>0$ is a constant only depending on $R, a_{\mathbf{u}}^{II}, a_{\theta}^{I}, \mu, \kappa, \overline{\theta}$.

    The rest of the terms can be estimated similarly as the proof of Lemma \ref{regularityEllipticSystem}, and we omit them for brevity. After collecting all of the results, we can obtain the desired estimate \eqref{H2ConormalBasicEstimate}.
\end{proof}

\begin{lemma}\label{densityBasicEstimateLemma}
    Under the assumption of Corollary \ref{existenceEllipticSystem} and Corollary \ref{ExistenceTransportEquation}, there exist two constants $\eta, C>0$ such that
    \begin{align}\label{densityBasicEstimate}
        &\frac{4}{3}\frac{\mu(\overline{\theta})}{\overline{\rho}}\frac{1}{2h}(\|\partial_{1}\varphi\|_{L^{2}(\Omega)}^{2}-\|\partial_{1}\varphi'\|_{L^{2}(\Omega)}^{2}) \nonumber\\
        &+\left(\frac{4}{3}\frac{\mu(\overline{\theta})}{\overline{\rho}}\frac{1}{2h}-\frac{R\overline{\theta}}{2}\right)\|\partial_{1}\varphi-\partial_{1}\varphi'\|_{L^{2}(\Omega)}^{2} \nonumber\\
        &+\left(\frac{R\overline{\theta}}{2}-\eta\right)\|\partial_{1}\varphi\|_{L^{2}(\Omega)}^{2}+\frac{R\overline{\theta}}{2}\|\partial_{1}\varphi'\|_{L^{2}(\Omega)}^{2} \nonumber\\
        \leq&\mu(\overline{\theta})\|\partial_{1}\varphi\|_{L^{2}(\Omega)}\sum_{i=2,3}\|\nabla\partial_{i}\mathbf{u}\|_{L^{2}(\Omega)} +R\overline{\rho}\|\partial_{1}\varphi\|_{L^{2}(\Omega)}\|\partial_{1}\zeta\|_{L^{2}(\Omega)} \nonumber\\ 
        &+\frac{C}{\eta}(1+N_{1}(\varphi',\mathbf{u}',\zeta'))^{2}(\sqrt{\delta}+N_{2}(\varphi',\mathbf{u}',\zeta'))^{4}.
    \end{align}
\end{lemma}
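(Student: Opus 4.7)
\textbf{Proof proposal for Lemma \ref{densityBasicEstimateLemma}.}
The plan is to follow the hint in the introduction and test a specific linear combination of $\eqref{LSNS}_{1}$ and $\eqref{LSNS}_{2}$ against $\partial_{1}\varphi$, engineered so that the worst second-order normal-derivative contribution $\partial_{1}^{2}u_{1}$ cancels identically. Concretely, I first apply $\partial_{1}$ to the continuity equation $\eqref{LSNS}_{1}$, multiply by $\tfrac{4}{3}\tfrac{\mu(\overline{\theta})}{\overline{\rho}}\partial_{1}\varphi$ and integrate over $\Omega$. The time-difference quotient produces
$$\tfrac{4}{3}\tfrac{\mu(\overline{\theta})}{2\overline{\rho}h}\bigl(\|\partial_{1}\varphi\|_{L^{2}(\Omega)}^{2}-\|\partial_{1}\varphi'\|_{L^{2}(\Omega)}^{2}+\|\partial_{1}\varphi-\partial_{1}\varphi'\|_{L^{2}(\Omega)}^{2}\bigr),$$
the divergence term becomes $\tfrac{4}{3}\mu(\overline{\theta})\int_{\Omega}\partial_{1}\varphi\,\partial_{1}\div\mathbf{u}\,d\mathbf{x}$, and the transport piece $\tfrac{4}{3}\tfrac{\mu(\overline{\theta})}{\overline{\rho}}\int_{\Omega}\partial_{1}\varphi\,\partial_{1}\div(\varphi\mathbf{u})\,d\mathbf{x}$ is carried along as a nonlinear remainder; in its only potentially delicate subterm $\tfrac{1}{2}\int_{\Omega}\mathbf{u}\cdot\nabla|\partial_{1}\varphi|^{2}\,d\mathbf{x}$, the boundary contribution vanishes after integration by parts thanks to $\mathbf{u}\cdot\mathbf{n}=0$.

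Next I test $\mathbf{e}_{1}\cdot\eqref{LSNS}_{2}$ against $\partial_{1}\varphi$. Writing the viscous term as $(\div\mathbb{S}(\mathbf{u},\overline{\theta}))_{1}=\mu(\overline{\theta})(\Delta u_{1}+\tfrac{1}{3}\partial_{1}\div\mathbf{u})$ and adding the two integrated identities, the coefficient $\tfrac{4}{3}$ chosen above makes the $\partial_{1}^{2}u_{1}$ contributions cancel in the combination
$$\tfrac{4}{3}\mu(\overline{\theta})\int_{\Omega}\partial_{1}\varphi\,\partial_{1}\div\mathbf{u}\,d\mathbf{x}-\mu(\overline{\theta})\int_{\Omega}\partial_{1}\varphi\bigl(\Delta u_{1}+\tfrac{1}{3}\partial_{1}\div\mathbf{u}\bigr)d\mathbf{x}=\mu(\overline{\theta})\int_{\Omega}\partial_{1}\varphi\bigl[\partial_{1}\div\mathbf{u}-\Delta u_{1}\bigr]d\mathbf{x},$$
and the bracket collapses to $\partial_{2}(\partial_{1}u_{2}-\partial_{2}u_{1})+\partial_{3}(\partial_{1}u_{3}-\partial_{3}u_{1})$, which by Cauchy-Schwarz is controlled by $\mu(\overline{\theta})\|\partial_{1}\varphi\|_{L^{2}(\Omega)}\sum_{i=2,3}\|\nabla\partial_{i}\mathbf{u}\|_{L^{2}(\Omega)}$, exactly the first term in the stated bound.

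The remaining contributions are handled directly. The coupling $R\overline{\rho}\int_{\Omega}\partial_{1}\varphi\,\partial_{1}\zeta\,d\mathbf{x}$ is estimated by Cauchy-Schwarz and kept on the right-hand side. For the forcing from the momentum equation I use the polarization identity
$$-R\overline{\theta}\int_{\Omega}\partial_{1}\varphi\,\partial_{1}\varphi'\,d\mathbf{x}=-\tfrac{R\overline{\theta}}{2}\|\partial_{1}\varphi\|_{L^{2}(\Omega)}^{2}-\tfrac{R\overline{\theta}}{2}\|\partial_{1}\varphi'\|_{L^{2}(\Omega)}^{2}+\tfrac{R\overline{\theta}}{2}\|\partial_{1}\varphi-\partial_{1}\varphi'\|_{L^{2}(\Omega)}^{2},$$
which, after moving all three pieces to the left, is precisely the origin of the $R\overline{\theta}/2$-coefficients in \eqref{densityBasicEstimate}. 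Finally, $\int_{\Omega}\partial_{1}\varphi\,F_{1}\,d\mathbf{x}$ is split by H\"{o}lder's and Young's inequalities with parameter $\eta$, giving $\eta\|\partial_{1}\varphi\|_{L^{2}(\Omega)}^{2}+\tfrac{C}{\eta}\|F_{1}\|_{L^{2}(\Omega)}^{2}$; the first slot is absorbed into the $R\overline{\theta}/2$ coefficient on the left, and the second is bounded via Lemma \ref{EstimateFGLemma} by $\tfrac{C}{\eta}(1+N_{1}(\varphi',\mathbf{u}',\zeta'))^{2}(\sqrt{\delta}+N_{2}(\varphi',\mathbf{u}',\zeta'))^{4}$.

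The main technical obstacle I expect is absorbing the nonlinear remainder $\tfrac{4}{3}\tfrac{\mu(\overline{\theta})}{\overline{\rho}}\int_{\Omega}\partial_{1}\varphi\,\partial_{1}\div(\varphi\mathbf{u})\,d\mathbf{x}$. Expanding this into the four products coming from $\partial_{1}(\mathbf{u}\cdot\nabla\varphi+\varphi\div\mathbf{u})$, handling the $\mathbf{u}\cdot\nabla\partial_{1}\varphi$ contribution by the integration-by-parts trick above, and applying H\"{o}lder's inequality together with the three-dimensional Sobolev embedding $H^{2}(\Omega)\hookrightarrow L^{\infty}(\Omega)$, each piece is bounded schematically by $\|\varphi\|_{H^{2}(\Omega)}\|\mathbf{u}\|_{H^{2}(\Omega)}\|\partial_{1}\varphi\|_{L^{2}(\Omega)}$. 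Using the regularity estimates from Lemma \ref{regularityEllipticSystem} and Lemma \ref{regularityTransportEquation} to bound $\|\mathbf{u}\|_{H^{2}(\Omega)}$ and $\|\varphi\|_{H^{2}(\Omega)}$ in terms of the primed data, and then invoking smallness of $N_{2}(\varphi',\mathbf{u}',\zeta')\leq\varepsilon$, these cubic pieces are absorbed partly into $\eta\|\partial_{1}\varphi\|_{L^{2}(\Omega)}^{2}$ on the left and partly into the source term on the right. Collecting everything yields \eqref{densityBasicEstimate}.
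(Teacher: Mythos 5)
Your proposal is correct and follows essentially the same route as the paper: the same weighted combination $\tfrac{4}{3}\tfrac{\mu(\overline{\theta})}{\overline{\rho}}\partial_{1}\eqref{LSNS}_{1}+\mathbf{e}_{1}\cdot\eqref{LSNS}_{2}$ tested against $\partial_{1}\varphi$, the same cancellation of $\partial_{1}^{2}u_{1}$ leaving only tangential second derivatives, the same polarization identity producing the $R\overline{\theta}/2$ coefficients, and the same treatment of the transport remainder and of $F_{1}$ via Young's inequality with parameter $\eta$.
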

\begin{proof}
    Applying $\partial_{1}$ to $\eqref{LSNS}_{1}$ and multiplying the resultant by $\frac{4}{3}\frac{\mu(\overline{\theta})}{\overline{\rho}}$,
    \begin{equation*}
        \frac{4}{3}\frac{\mu(\overline{\theta})}{\overline{\rho}}\frac{\partial_{1}\varphi-\partial_{1}\varphi'}{h}+\frac{4}{3}\frac{\mu(\overline{\theta})}{\overline{\rho}}\div\partial_{1}(\varphi\mathbf{u})+\frac{4}{3}\frac{\mu(\overline{\theta})}{\overline{\rho}}\partial_{1}(\overline{\rho}\div\mathbf{u})=0,
    \end{equation*}
    and taking the inner product between $\eqref{LSNS}_{2}$ and $\mathbf{e}_{1}$,
    \begin{equation*}
        -\div\left[\mu(\overline{\theta})\left(\nabla u_{1}+\partial_{1}\mathbf{u}-\frac{2}{3}(\div\mathbf{u})\mathbf{e}_{1}\right)\right]+R\overline{\rho}\partial_{1}\zeta=F_{1}(\varphi',\mathbf{u}',\zeta')-R\overline{\theta}\partial_{1}\varphi'.
    \end{equation*}
    It then follows that
    \begin{align*}
        &\frac{4}{3}\frac{\mu(\overline{\theta})}{\overline{\rho}}\frac{\partial_{1}\varphi-\partial_{1}\varphi'}{h}+R\overline{\theta}\partial_{1}\varphi'\\
        =&-\frac{4}{3}\frac{\mu(\overline{\theta})}{\overline{\rho}}\div\partial_{1}(\varphi\mathbf{u})-\frac{4}{3}\frac{\mu(\overline{\theta})}{\overline{\rho}}\partial_{1}(\overline{\rho}\div\mathbf{u})\\
        &+\mu(\overline{\theta})\left(\Delta u_{1}+\frac{1}{3}\partial_{1}\div\mathbf{u}\right)-R\overline{\rho}\partial_{1}\zeta+F_{1}(\varphi',\mathbf{u}',\zeta').
    \end{align*}
    Multiplying the above equation by $\partial_{1}\varphi$ and integrating the resultant over $\Omega$, we have
    \begin{align*}
        &\underbrace{\int_{\Omega}\partial_{1}\varphi\left(\frac{4}{3}\frac{\mu(\overline{\theta})}{\overline{\rho}}\frac{\partial_{1}\varphi-\partial_{1}\varphi'}{h}+R\overline{\theta}\partial_{1}\varphi'\right)d\mathbf{x}}_{I_{1}} \\
        =&\underbrace{\int_{\Omega}-\frac{4}{3}\frac{\mu(\overline{\theta})}{\overline{\rho}}\partial_{1}\varphi\div\partial_{1}(\varphi\mathbf{u})d\mathbf{x}}_{I_{2}}+\underbrace{\int_{\Omega}-\frac{4}{3}\frac{\mu(\overline{\theta})}{\overline{\rho}}\partial_{1}\varphi\partial_{1}(\overline{\rho}\div\mathbf{u})d\mathbf{x}}_{I_{3}} \\
        &+\underbrace{\int_{\Omega}\mu(\overline{\theta})\partial_{1}\varphi\cdot\left(\Delta u_{1}+\frac{1}{3}\partial_{1}\div\mathbf{u}\right)d\mathbf{x}}_{I_{4}} +\underbrace{\int_{\Omega}-R\overline{\rho}\partial_{1}\varphi\cdot\partial_{1}\zeta d\mathbf{x}}_{I_{5}}+\underbrace{\int_{\Omega}\partial_{1}\varphi\cdot F_1(\varphi',\mathbf{u}',\zeta')d\mathbf{x}}_{I_{6}}.
    \end{align*}
  
    For $I_{1}$,
    \begin{align}\label{densityBasicEstimateI1}
        I_{1}=&\int_{\Omega}\frac{4}{3}\frac{\mu(\overline{\theta})}{\overline{\rho}}\frac{1}{h}|\partial_{1}\varphi|^2+\left(-\frac{4}{3}\frac{\mu(\overline{\theta})}{\overline{\rho}}\frac{1}{h}+R\overline{\theta}\right)\partial_{1}\varphi\cdot\partial_{1}\varphi'd\mathbf{x} \nonumber\\
        \geq&\frac{4}{3}\frac{\mu(\overline{\theta})}{\overline{\rho}}\frac{1}{2h}\left(\int_{\Omega}|\partial_{1}\varphi|^2d\mathbf{x}-\int_{\Omega}|\partial_{1}\varphi'|^2\right)d\mathbf{x} \nonumber\\
        &+\left(\frac{4}{3}\frac{\mu(\overline{\theta})}{\overline{\rho}}\frac{1}{2h}-\frac{R\overline{\theta}}{2}\right)\int_{\Omega}\left|\partial_{1}\varphi-\partial_{1}\varphi'\right|^2d\mathbf{x} \nonumber\\
        &+\frac{R\overline{\theta}}{2}\left(\int_{\Omega}|\partial_{1}\varphi|^2d\mathbf{x}+\int_{\Omega}|\partial_{1}\varphi'|^2d\mathbf{x}\right).
    \end{align}
  
    For $I_{2}$,
    \begin{align*}
        |I_{2}|
        \leq&\left|\int_{\Omega}\frac{4}{3}\frac{\mu(\overline{\theta})}{\overline{\rho}}\cdot\frac{1}{2}(\partial_{1}\varphi)^2\div\mathbf{u}d\mathbf{x}\right| +\left|\int_{\Omega}\frac{4}{3}\frac{\mu(\overline{\theta})}{\overline{\rho}}\nabla\partial_{1}\varphi\cdot\varphi\partial_{1}\mathbf{u}d\mathbf{x}\right| \nonumber\\
        \leq&C(\|\div\mathbf{u}\|_{L^{\infty}(\Omega)}\|\partial_{1}\varphi\|_{L^{2}(\Omega)}^{2}+\|\varphi\|_{L^{4}(\Omega)}\|\nabla\partial_{1}\varphi\|_{L^{2}(\Omega)}\|\partial_{1}\mathbf{u}\|_{L^{4}(\Omega)}) \nonumber\\
        \leq&C\|\mathbf{u}\|_{H^{2}(\Omega)}\|\varphi\|_{H^{2}(\Omega)}^{2}.
    \end{align*}
    Then by Corollary \ref{existenceEllipticSystem} and Corollary \ref{ExistenceTransportEquation}, 
    \begin{equation}\label{densityBasicEsitmateI2}
        |I_{2}|\leq C(1+N_{1}(\varphi',\mathbf{u}',\zeta'))^{3}(\sqrt{\delta}+N_{2}(\varphi',\mathbf{u}',\zeta'))^{6}.
    \end{equation}
  
    For $I_{3}$ and $I_{4}$,
    \begin{align}\label{densityBasicEstiamteI3I4}
        |I_{3}+I_{4}|=&\left|\int_{\Omega}\partial_{1}\varphi\left[-\frac{4}{3}\frac{\mu(\overline{\theta})}{\overline{\rho}}\partial_{1}(\overline{\rho}\div\mathbf{u})+\mu(\overline{\theta})\left(\Delta u_{1}+\frac{1}{3}\partial_{1}\div\mathbf{u}\right)\right]d\mathbf{x}\right| \nonumber\\
        =&\left|\int_{\Omega}\mu(\overline{\theta})\partial_{1}\varphi\cdot(\Delta u_{1}-\partial_{1}\div\mathbf{u})d\mathbf{x}\right| \nonumber\\
        \leq&\mu(\overline{\theta})\|\partial_{1}\varphi\|_{L^{2}(\Omega)}\sum_{i=2,3}\|\nabla\partial_{i}\mathbf{u}\|_{L^{2}(\Omega)}.
    \end{align}
  
    For $I_{5}$,
    \begin{equation}\label{densityBasicEsitmateI5}
        |I_{5}|\leq\left(R\overline{\rho}+\varepsilon\right)\|\partial_{1}\varphi\|_{L^{2}(\Omega)}\|\partial_{1}\zeta\|_{L^{2}(\Omega)}.
    \end{equation}
  
    For $I_{6}$,
    \begin{equation}\label{densityBasicEstimateI6}
        |I_{6}|\leq \eta\|\partial_{1}\varphi\|_{L^{2}(\Omega)}^{2}+\frac{C}{\eta}\|\mathbf{F}'\|_{L^{2}(\Omega)}^{2}.
    \end{equation}

    Combining  
    \eqref{densityBasicEstimateI1}, \eqref{densityBasicEsitmateI2}, \eqref{densityBasicEstiamteI3I4}, \eqref{densityBasicEsitmateI5}, \eqref{densityBasicEstimateI6}, 
    we obtain the desired estimate \eqref{densityBasicEstimate}.
\end{proof}

\begin{lemma}\label{densityH1EstimateLemma}
    Under the assumption of Corollary \ref{existenceEllipticSystem} and Corollary \ref{ExistenceTransportEquation}, there exist constants $\eta, C>0$ such that
    \begin{align}\label{densityH1Estimate}
        &\frac{4}{3}\frac{\mu(\overline{\theta})}{\overline{\rho}}\frac{1}{2h}(\|\partial_{1}\partial_{i}\varphi\|_{L^{2}(\Omega)}^{2}-\|\partial_{1}\partial_{i}\varphi'\|_{L^{2}(\Omega)}^{2}) \nonumber\\
        &+\left(\frac{4}{3}\frac{\mu(\overline{\theta})}{\overline{\rho}}\frac{1}{2h}-\frac{R\overline{\theta}}{2}\right)\left\|\partial_{1}\partial_{i}\varphi-\partial_{1}\partial_{i}\varphi'\right\|_{L^{2}(\Omega)}^{2} \nonumber\\
        &+\left(\frac{R\overline{\theta}}{2}-\eta\right)\|\partial_{1}\partial_{i}\varphi\|_{L^{2}(\Omega)}^{2}+\frac{R\overline{\theta}}{2}\|\partial_{1}\partial_{i}\varphi'\|_{L^{2}(\Omega)}^{2} \nonumber\\
        \leq&\mu(\overline{\theta})\|\partial_{i}\partial_{1}\varphi\|_{L^{2}(\Omega)}\sum_{j=2,3}\|\partial_{j}\nabla^{2}\mathbf{u}\|_{L^{2}(\Omega)} +R\overline{\rho}\|\partial_{i}\partial_{1}\varphi\|_{L^{2}(\Omega)}\|\partial_{i}\partial_{1}\zeta\|_{L^{2}(\Omega)} \nonumber\\
        &+\frac{C}{\eta}(1+N_{2}(\varphi,\mathbf{u},\zeta))^{2}(\sqrt{\delta}+N_{2}(\varphi,\mathbf{u},\zeta))^{4},
    \end{align}
    for $i=1,2,3$.
\end{lemma}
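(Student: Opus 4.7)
The plan is to mimic the proof of Lemma \ref{densityBasicEstimateLemma}, but one order higher: apply $\partial_i$ to the combined equation used there and test against $\partial_i\partial_1\varphi$ instead of $\partial_1\varphi$. Concretely, I will start from
\begin{equation*}
  \frac{4}{3}\frac{\mu(\overline{\theta})}{\overline{\rho}}\cdot\frac{1}{h}\eqref{LSNS}_1 + \mathbf{e}_1\cdot\eqref{LSNS}_2,
\end{equation*}
which, after the same bookkeeping as in Lemma \ref{densityBasicEstimateLemma}, yields
\begin{equation*}
\frac{4}{3}\frac{\mu(\overline{\theta})}{\overline{\rho}}\frac{\partial_1\varphi-\partial_1\varphi'}{h}+R\overline{\theta}\partial_1\varphi'
=-\tfrac{4}{3}\tfrac{\mu(\overline{\theta})}{\overline{\rho}}\div\partial_1(\varphi\mathbf u)
+\mu(\overline{\theta})\bigl(\Delta u_1-\partial_1\div\mathbf u\bigr)
-R\overline{\rho}\,\partial_1\zeta+F_1(\varphi',\mathbf u',\zeta').
\end{equation*}
I will apply $\partial_i$ to this identity, multiply by $\partial_i\partial_1\varphi$, and integrate over $\Omega$. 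The identities $\partial_i\partial_1\varphi\cdot\partial_i\partial_1\varphi'=\tfrac12|\partial_i\partial_1\varphi|^2-\tfrac12|\partial_i\partial_1\varphi-\partial_i\partial_1\varphi'|^2+\tfrac12|\partial_i\partial_1\varphi'|^2$ together with the inequality $2ab\le a^2+b^2$ give exactly the three-line structure on the left-hand side of \eqref{densityH1Estimate}, the same way $I_1$ was handled in \eqref{densityBasicEstimateI1}.

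For the right-hand side, the linear diffusion contributions $\partial_i(\mu(\overline\theta)(\Delta u_1-\partial_1\div\mathbf u))$ and $-R\overline{\rho}\,\partial_i\partial_1\zeta$ directly produce the two ``clean'' quadratic products appearing on the RHS of \eqref{densityH1Estimate}; note that $\Delta u_1-\partial_1\div\mathbf u$ contains no $\partial_1^2$ derivative, so after one more $\partial_i$ one still only needs $\partial_j\nabla^2\mathbf u$ with $j\in\{2,3\}$ (for $i\in\{2,3\}$), and the case $i=1$ is handled by noting that $\partial_1(\Delta u_1-\partial_1\div\mathbf u)$ also involves only second derivatives with at least one tangential direction among $\{2,3\}$. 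The source term $\partial_i F_1(\varphi',\mathbf u',\zeta')$ is absorbed into $\eta\|\partial_i\partial_1\varphi\|_{L^2}^2+\tfrac{C}{\eta}\|\nabla F_1\|_{L^2}^2$ and bounded via \eqref{H1EstimateFG} in Lemma \ref{EstimateFGLemma}.

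The main obstacle, as in the base case, is controlling the transport nonlinearity $\partial_i\div\partial_1(\varphi\mathbf u)$ tested against $\partial_i\partial_1\varphi$. I will expand $\partial_i\div\partial_1(\varphi\mathbf u)=\partial_i(\partial_1\mathbf u\cdot\nabla\varphi+\mathbf u\cdot\nabla\partial_1\varphi+\partial_1\varphi\div\mathbf u+\varphi\,\partial_1\div\mathbf u)$ and, after one integration by parts on the top-order piece $\mathbf u\cdot\nabla(\partial_i\partial_1\varphi)$ using $\mathbf u\cdot\mathbf n=0$, obtain $\tfrac12\int(\partial_i\partial_1\varphi)^2\div\mathbf u\,d\mathbf x$ plus commutator terms. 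Every remaining factor can then be estimated in $L^\infty$, $L^4$ or $L^6$ by Sobolev embeddings and distributed as $\|\varphi\|_{H^2}\|\mathbf u\|_{H^3}\|\partial_i\partial_1\varphi\|_{L^2}$ type products; combined with Corollary \ref{existenceEllipticSystem} and Corollary \ref{ExistenceTransportEquation}, each such product is absorbed into $\tfrac{C}{\eta}(1+N_2(\varphi,\mathbf u,\zeta))^{2}(\sqrt\delta+N_2(\varphi,\mathbf u,\zeta))^{4}$.

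Collecting the above yields \eqref{densityH1Estimate}. The only delicate bookkeeping is ensuring that no $\partial_1^2$ of $\mathbf u$ ever appears on the RHS (so that the bound stays linear in $\sum_{j=2,3}\|\partial_j\nabla^2\mathbf u\|_{L^2}$); this is guaranteed by the algebraic cancellation $\partial_1(\Delta u_1-\partial_1\div\mathbf u)=\partial_1(\partial_2^2u_1+\partial_3^2u_1-\partial_2\partial_1u_2-\partial_3\partial_1u_3)$, which only carries tangential derivatives at top order.
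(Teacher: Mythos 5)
Your proposal follows the same route as the paper's proof: apply $\partial_i$ to the combination $\tfrac{4}{3}\tfrac{\mu(\overline{\theta})}{\overline{\rho}}\eqref{LSNS}_{1}+\mathbf{e}_{1}\cdot\eqref{LSNS}_{2}$, test against $\partial_{1}\partial_{i}\varphi$, exploit the cancellation that leaves only $\mu(\overline{\theta})\partial_{i}(\Delta u_{1}-\partial_{1}\div\mathbf{u})$ (which carries a tangential derivative at top order), and handle the transport nonlinearity by integration by parts plus H\"older and Sobolev estimates. The argument and the bookkeeping, including the observation that no pure $\partial_{1}$ third derivative of $\mathbf{u}$ appears, match the paper's proof, so the proposal is correct.
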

\begin{proof}
    Multiplying $\partial_{1}\partial_{i}\eqref{LSNS}_{1}$ by $\frac{4}{3}\frac{\mu(\overline{\theta})}{\overline{\rho}}$,
    \begin{equation*}
        \frac{4}{3}\frac{\mu(\overline{\theta})}{\overline{\rho}}\frac{\partial_{1}\partial_{i}\varphi-\partial_{1}\partial_{i}\varphi'}{h}+\frac{4}{3}\frac{\mu(\overline{\theta})}{\overline{\rho}}\div\partial_{i}\partial_{1}(\varphi\mathbf{u})+\frac{4}{3}\frac{\mu(\overline{\theta})}{\overline{\rho}}\partial_{1}\partial_{i}(\overline{\rho}\div\mathbf{u})=0,
    \end{equation*}
    and taking the inner product between $\partial_{i}\eqref{LSNS}_{2}$ and $\mathbf{e}_{1}$,
    \begin{equation*}
        -\div\left[\mu(\overline{\theta})\left(\partial_{i}\nabla u_{1}+\partial_{i}\partial_{1}\mathbf{u}-\frac{2}{3}(\partial_{i}\div\mathbf{u})\mathbf{e}_{1}\right)\right]+R\overline{\rho}\partial_{1}\partial_{i}\zeta=\partial_{i} F_{1}(\varphi',\mathbf{u}',\zeta')-R\overline{\theta}\partial_{i}\partial_{1}\varphi'.
    \end{equation*}
    It then follows that
    \begin{align*}
        &\frac{4}{3}\frac{\mu(\overline{\theta})}{\overline{\rho}}\frac{\partial_{1}\partial_{i}\varphi-\partial_{1}\partial_{i}\varphi'}{h}+R\overline{\theta}\partial_{1}\partial_{i}\varphi' \\
        =&-\frac{4}{3}\frac{\mu(\overline{\theta})}{\overline{\rho}}\div\partial_{i}\partial_{1}(\varphi\mathbf{u})-\frac{4}{3}\frac{\mu(\overline{\theta})}{\overline{\rho}}\partial_{1}\partial_{i}(\overline{\rho}\div\mathbf{u}) \\
        &+\mu(\overline{\theta})\mathbf{n}\cdot\left(\partial_{i}\Delta u_{1}+\frac{1}{3}\partial_{i}\partial_{1}\div\mathbf{u}\right) -R\overline{\rho}\partial_{1}\partial_{i}\zeta+\partial_{i} F_{1}(\varphi',\mathbf{u}',\zeta').
    \end{align*}
    Multiplying the above equation by $\partial_{1}\partial\varphi$ and integrating the resultant over $\Omega$, 
    \begin{align*}
        &\underbrace{\int_{\Omega}\partial_{1}\partial_{i}\varphi\cdot\left(\frac{4}{3}\frac{\mu(\overline{\theta})}{\overline{\rho}}\frac{\partial_{1}\partial_{i}\varphi-\partial_{1}\partial_{i}\varphi'}{h}+R\overline{\theta}\partial_{1}\partial_{i}\varphi'\right)d\mathbf{x}}_{I_{1}} \\
        =&\underbrace{\int_{\Omega}-\frac{4}{3}\frac{\mu(\overline{\theta})}{\overline{\rho}}\partial_{1}\partial_{i}\varphi\cdot\div\partial_{i}\partial_{1}(\varphi\mathbf{u})d\mathbf{x}}_{I_{2}}+\underbrace{\int_{\Omega}-\frac{4}{3}\frac{\mu(\overline{\theta})}{\overline{\rho}}\partial_{1}\partial_{i}\varphi\cdot\partial_{1}\partial_{i}(\overline{\rho}\div\mathbf{u})d\mathbf{x}}_{I_{3}} \\
        &+\underbrace{\int_{\Omega}\mu(\overline{\theta})\partial_{1}\partial_{i}\varphi\cdot\left(\partial_{i}\Delta u_{1}+\frac{1}{3}\partial_{i}\partial_{1}\div\mathbf{u}\right)d\mathbf{x}}_{I_{4}} +\underbrace{\int_{\Omega}-R\overline{\rho}\partial_{1}\partial_{i}\varphi\cdot\partial_{1}\partial_{i}\zeta d\mathbf{x}}_{I_{5}} \\
        &+\underbrace{\int_{\Omega}\partial_{1}\partial_{i}\varphi\cdot\partial_{i} F_{1}(\varphi',\mathbf{u}',\zeta')d\mathbf{x}}_{I_{6}}.
    \end{align*}
  
    For $I_{1}$,
    \begin{align}\label{densityH1EstimateI1}
        I_{1}=&\int_{\Omega}\frac{4}{3}\frac{\mu(\overline{\theta})}{\overline{\rho}}\frac{1}{h}|\partial_{1}\partial_{i}\varphi|^2+\left(-\frac{4}{3}\frac{\mu(\overline{\theta})}{\overline{\rho}}\frac{1}{h}+R\overline{\theta}\right)\partial_{1}\partial_{i}\varphi\cdot\partial_{1}\varphi'd\mathbf{x} \nonumber\\
        \geq&\left(\frac{4}{3}\frac{\mu(\overline{\theta})}{\overline{\rho}}-\varepsilon\right)\frac{1}{2h}\left(\int_{\Omega}|\partial_{1}\partial_{i}\varphi|^2d\mathbf{x}-\int_{\Omega}|\partial_{1}\partial_{i}\varphi'|^2d\mathbf{x}\right) \nonumber\\
        &+\left(\frac{4}{3}\frac{\mu(\overline{\theta})}{\overline{\rho}}\frac{1}{2h}-\frac{R\overline{\theta}}{2}-\varepsilon\right)\int_{\Omega}\left|\partial_{1}\partial_{i}\varphi-\partial_{1}\partial_{i}\varphi'\right|^2d\mathbf{x} \nonumber\\
        &+\left(\frac{R\overline{\theta}}{2}-\varepsilon\right)\left(\int_{\Omega}|\partial_{1}\partial_{i}\varphi|^2d\mathbf{x}+\int_{\Omega}|\partial_{1}\partial_{i}\varphi'|^2d\mathbf{x}\right).
    \end{align}
  
    For $I_{2}$,
    \begin{align*}
        &|I_{2}| \nonumber\\
        \leq&\left|\int_{\Omega}\frac{4}{3}\frac{\mu(\overline{\theta})}{\overline{\rho}}\cdot\frac{1}{2}(\partial_{1}\partial_{i}\varphi)^2\div\mathbf{u}d\mathbf{x}\right| +\left|\int_{\Omega}\frac{4}{3}\frac{\mu(\overline{\theta})}{\overline{\rho}}\partial_{1}\partial_{i}\varphi\cdot\partial_{1}\partial_{i}\varphi\cdot\div\mathbf{u}d\mathbf{x}\right| +\left|\int_{\Omega}\frac{4}{3}\frac{\mu(\overline{\theta})}{\overline{\rho}}\partial_{1}\partial_{i}\varphi\cdot\nabla\partial_{i}\varphi\cdot\partial_{1}\mathbf{u}d\mathbf{x}\right| \nonumber\\
        &+\left|\int_{\Omega}\frac{4}{3}\frac{\mu(\overline{\theta})}{\overline{\rho}}\partial_{1}\partial_{i}\varphi\cdot\nabla\partial_{1}\varphi\cdot\partial_{i}\mathbf{u}d\mathbf{x}\right| +\left|\int_{\Omega}\frac{4}{3}\frac{\mu(\overline{\theta})}{\overline{\rho}}\partial_{1}\partial_{i}\varphi\cdot\partial_{i}\varphi\cdot\div\partial_{1}\mathbf{u}d\mathbf{x}\right| \nonumber\\
        &+\left|\int_{\Omega}\frac{4}{3}\frac{\mu(\overline{\theta})}{\overline{\rho}}\partial_{1}\partial_{i}\varphi\cdot\partial_{1}\varphi\cdot\div\partial_{i}\mathbf{u}d\mathbf{x}\right| +\left|\int_{\Omega}\frac{4}{3}\frac{\mu(\overline{\theta})}{\overline{\rho}}\partial_{1}\partial_{i}\varphi\cdot\nabla\varphi\cdot\partial_{1}\partial_{i}\mathbf{u}d\mathbf{x}\right| \nonumber\\
        &+\left|\int_{\Omega}\frac{4}{3}\frac{\mu(\overline{\theta})}{\overline{\rho}}\partial_{1}\partial_{i}\varphi\cdot\varphi\div\partial_{1}\partial_{i}\mathbf{u}d\mathbf{x}\right| \nonumber\\
        \leq&C(\|\mathbf{u}\|_{L^{\infty}(\Omega)}\|\partial_{1}\partial_{i}\varphi\|_{L^{2}(\Omega)}^{2} +\|\div\mathbf{u}\|_{L^{\infty}(\Omega)}\|\partial_{1}\partial_{i}\varphi\|_{L^{2}(\Omega)}^{2} +\|\partial_{1}\mathbf{u}\|_{L^{\infty}(\Omega)}\|\partial_{1}\partial_{i}\varphi\|_{L^{2}(\Omega)}^{2} \nonumber\\
        &+\|\partial_{i}\mathbf{u}\|_{L^{\infty}(\Omega)}\|\partial_{1}\partial_{i}\varphi\|_{L^{2}(\Omega)}\|\nabla\partial_{1}\varphi\|_{L^{2}(\Omega)} +\|\partial_{1}\partial_{i}\varphi\|_{L^2(\Omega)}\|\partial_{i}\varphi\|_{L^4(\Omega)}\|\div\partial_{1}\mathbf{u}\|_{L^4(\Omega)} \nonumber\\
        &+\|\partial_{1}\partial_{i}\varphi\|_{L^2(\Omega)}\|\partial_{1}\varphi\|_{L^4(\Omega)}\|\div\partial_{i}\mathbf{u}\|_{L^4(\Omega)} +\|\partial_{1}\partial_{i}\varphi\|_{L^2(\Omega)}\|\nabla\varphi\|_{L^4(\Omega)}\|\partial_{1}\partial_{i}\mathbf{u}\|_{L^4(\Omega)} \nonumber\\
        &+\|\varphi\|_{L^{\infty}(\Omega)}\|\partial_{1}\partial_{i}\varphi\|_{L^{2}(\Omega)}\|\div\partial_{1}\partial_{i}\mathbf{u}\|_{L^{2}(\Omega)}) \nonumber\\
        \leq&C\|\varphi\|_{H^{2}(\Omega)}^{2}\|\mathbf{u}\|_{H^{3}(\Omega)}.
    \end{align*}
    Then by Corollary \ref{existenceEllipticSystem} and Corollary \ref{ExistenceTransportEquation},
    \begin{equation}\label{densityH1EstiamteI2}
        |I_{2}|\leq C(1+N_{2}(\varphi,\mathbf{u},\zeta))^{3}(\sqrt{\delta}+N_{2}(\varphi,\mathbf{u},\zeta))^{6}.
    \end{equation}
  
    For $I_{3}$ and $I_{4}$,
    \begin{align}\label{densityH1EstiameI3I4}
        |I_{3}+I_{4}|=&\left|\int_{\Omega}\partial_{1}\partial_{i}\varphi\cdot\left[-\frac{4}{3}\frac{\mu(\overline{\theta})}{\overline{\rho}}\partial_{1}\partial_{i}(\overline{\rho}\div\mathbf{u})+\mu(\overline{\theta})\left(\partial_{i}\Delta u_{1}+\frac{1}{3}\partial_{i}\partial_{1}\div\mathbf{u}\right)\right]\right| \nonumber\\
        =&\left|\int_{\Omega}\mu(\overline{\theta})\partial_{1}\partial_{i}\varphi\cdot(\partial_{i}\Delta u_{1}-\partial_{i}\partial_{1}\div\mathbf{u})d\mathbf{x}\right| \nonumber\\
        \leq&\mu(\overline{\theta})\|\partial_{1}\partial_{i}\varphi\|_{L^{2}(\Omega)}\sum_{j=2,3}\|\partial_{j}\nabla^{2}\mathbf{u}\|_{L^{2}(\Omega)}.
    \end{align}
  
    For $I_{5}$,
    \begin{equation}\label{densityH1EstimateI5}
        |I_{5}|\leq R\overline{\rho}\|\partial_{1}\partial_{i}\varphi\|_{L^{2}(\Omega)}\|\partial_{1}\partial_{i}\zeta\|_{L^{2}(\Omega)}.
    \end{equation}
  
    For $I_{6}$,
    \begin{equation}\label{densityH1EstimateI6}
        |I_{6}|\leq \eta\|\partial_{1}\partial_{i}\varphi\|_{L^{2}(\Omega)}^{2}+\frac{C}{\eta}\|\partial_{i}\mathbf{F}\|_{L^{2}(\Omega)}^{2}.
    \end{equation}

    Combining \eqref{densityH1EstimateI1}, \eqref{densityH1EstiamteI2}, \eqref{densityH1EstiameI3I4}, \eqref{densityH1EstimateI5}, \eqref{densityH1EstimateI6}, we obtain the desired estimate  \eqref{densityH1Estimate}.
\end{proof}

\begin{lemma}\label{divergenceBasicEstimateLemma}
    Under the assumption of Corollary \ref{existenceEllipticSystem} and Corollary \ref{ExistenceTransportEquation}, there exist two constants $\eta, C>0$ such that
    \begin{align}\label{divergenceBasicEstimate}
        &\left(\frac{4}{3}\mu(\overline{\theta})-\eta\right)\|\partial_{1}\div\mathbf{u}\|_{L^{2}(\Omega)}^{2} \nonumber\\
        \leq&\mu(\overline{\theta})\|\partial_{1}\div\mathbf{u}\|_{L^{2}(\Omega)}\|\partial_{i}\nabla\mathbf{u}\|_{L^{2}(\Omega)} +R\overline{\rho}\|\partial_{1}\div\mathbf{u}\|_{L^{2}(\Omega)}\|\partial_{1}\zeta\|_{L^{2}(\Omega)} \nonumber\\
        &+R\overline{\theta}\|\partial_{1}\div\mathbf{u}\|_{L^{2}(\Omega)}\|\partial_{1}\varphi'\|_{L^{2}(\Omega)} +\frac{C}{\eta}(1+N_{2}(\varphi,\mathbf{u},\zeta))^{2}(\sqrt{\delta}+N_{2}(\varphi,\mathbf{u},\zeta))^{4}.
    \end{align}
\end{lemma}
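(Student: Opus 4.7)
The plan is to test the first component of the linearized momentum equation against $\partial_{1}\div\mathbf{u}$. First I would take $\mathbf{e}_{1}\cdot\eqref{LSNS}_{2}$; expanding the divergence of the Newtonian stress gives
$$
-\mu(\overline{\theta})\left(\Delta u_{1}+\tfrac{1}{3}\partial_{1}\div\mathbf{u}\right)+R\overline{\rho}\,\partial_{1}\zeta=F_{1}(\varphi',\mathbf{u}',\zeta')-R\overline{\theta}\,\partial_{1}\varphi'.
$$
The key algebraic step is the decomposition $\mu(\overline{\theta})(\Delta u_{1}+\tfrac{1}{3}\partial_{1}\div\mathbf{u})=\mu(\overline{\theta})(\Delta u_{1}-\partial_{1}\div\mathbf{u})+\tfrac{4}{3}\mu(\overline{\theta})\partial_{1}\div\mathbf{u}$, in which the first piece contains only tangential second derivatives of $\mathbf{u}$, because $\Delta u_{1}-\partial_{1}\div\mathbf{u}=\partial_{2}(\partial_{2}u_{1}-\partial_{1}u_{2})+\partial_{3}(\partial_{3}u_{1}-\partial_{1}u_{3})$. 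Rearranging yields the pointwise identity
$$
\tfrac{4}{3}\mu(\overline{\theta})\,\partial_{1}\div\mathbf{u}=-\mu(\overline{\theta})(\Delta u_{1}-\partial_{1}\div\mathbf{u})+R\overline{\rho}\,\partial_{1}\zeta-F_{1}(\varphi',\mathbf{u}',\zeta')+R\overline{\theta}\,\partial_{1}\varphi'.
$$

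Next I would multiply this identity by $\partial_{1}\div\mathbf{u}$ and integrate over $\Omega$; no integration by parts in $\mathbf{x}$ is needed, so no boundary terms are generated. The left-hand side is $\tfrac{4}{3}\mu(\overline{\theta})\|\partial_{1}\div\mathbf{u}\|_{L^{2}(\Omega)}^{2}$. On the right, Cauchy--Schwarz applied to the tangential-derivative piece gives the desired $\mu(\overline{\theta})\|\partial_{1}\div\mathbf{u}\|_{L^{2}(\Omega)}\|\partial_{i}\nabla\mathbf{u}\|_{L^{2}(\Omega)}$ for $i=2,3$; Cauchy--Schwarz on the next two linear terms produces $R\overline{\rho}\|\partial_{1}\div\mathbf{u}\|_{L^{2}(\Omega)}\|\partial_{1}\zeta\|_{L^{2}(\Omega)}$ and $R\overline{\theta}\|\partial_{1}\div\mathbf{u}\|_{L^{2}(\Omega)}\|\partial_{1}\varphi'\|_{L^{2}(\Omega)}$. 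For the remaining source term, Young's inequality with parameter $\eta>0$ absorbs $\eta\|\partial_{1}\div\mathbf{u}\|_{L^{2}(\Omega)}^{2}$ into the left-hand side and leaves $\tfrac{C}{\eta}\|F_{1}(\varphi',\mathbf{u}',\zeta')\|_{L^{2}(\Omega)}^{2}$, which Lemma~\ref{EstimateFGLemma} then bounds by $\tfrac{C}{\eta}(1+N_{2}(\varphi,\mathbf{u},\zeta))^{2}(\sqrt{\delta}+N_{2}(\varphi,\mathbf{u},\zeta))^{4}$. Assembling these estimates yields exactly \eqref{divergenceBasicEstimate}.

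I do not expect a serious obstacle here. In contrast to the density estimates of Lemmas~\ref{densityBasicEstimateLemma}--\ref{densityH1EstimateLemma}, no combination with the continuity equation is required; in contrast to the basic (co-normal) energy estimates of Lemmas~\ref{basicEstimateLemma}--\ref{H2ConormalBasicEstimateLemma}, there are no boundary integrals to handle, so the delicate slip-boundary algebra and the assumption (A) play no role. The only minor subtlety is recognising the $\tfrac{1}{3}\to\tfrac{4}{3}$ rearrangement that exposes the effective compressible-flow viscosity coefficient $\tfrac{4}{3}\mu(\overline{\theta})$ on the left-hand side and simultaneously leaves a remainder involving only tangential second derivatives of $\mathbf{u}$; once this is in hand the whole estimate is a two-line Cauchy--Schwarz plus Young argument, which is why this lemma is the natural last piece for the forthcoming $H^{3}$ regularity closure on $\mathbf{u}$ and $\theta$ via the Stokes and Poisson estimates.
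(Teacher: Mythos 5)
Your proposal is correct and follows essentially the same route as the paper: take the first component of $\eqref{LSNS}_{2}$, split $\Delta u_{1}+\tfrac{1}{3}\partial_{1}\div\mathbf{u}$ into $(\Delta u_{1}-\partial_{1}\div\mathbf{u})+\tfrac{4}{3}\partial_{1}\div\mathbf{u}$, test against $\partial_{1}\div\mathbf{u}$, and finish with Cauchy--Schwarz and Young plus Lemma \ref{EstimateFGLemma} for $F_{1}$. The observation that $\Delta u_{1}-\partial_{1}\div\mathbf{u}$ involves only tangential second derivatives is exactly the point of the paper's term $I_{1}$, so nothing is missing.
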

\begin{proof}
    Taking the inner product between $\eqref{LSNS}_{2}$ and $\mathbf{e}_{1}$,
    \begin{equation*}
        -\frac{4}{3}\mu(\overline{\theta})\partial_{1}\div\mathbf{u}=\mu(\overline{\theta})\mathbf{n}\cdot(\Delta u_{1}-\partial_{1}\div\mathbf{u}) -R\overline{\rho}\partial_{1}\zeta+F_{1}(\varphi',\mathbf{u}',\zeta')-R\overline{\theta}\partial_{1}\varphi'.
    \end{equation*}
    Multiplying the above equation by $-\partial_{1}\div\mathbf{u}$ and integrating the resultant over $\Omega$,
    \begin{align*}
        &\int_{\Omega}\frac{4}{3}\mu(\overline{\theta})\left|\partial_{1}\div\mathbf{u}\right|^2d\mathbf{x} \\
        =&\underbrace{\int_{\Omega}-\mu(\overline{\theta})\partial_{1}\div\mathbf{u}\cdot(\Delta u_{1}-\partial_{1}\div\mathbf{u})d\mathbf{x}}_{I_{1}}
        +\underbrace{\int_{\Omega}R\overline{\rho}\partial_{1}\div\mathbf{u}\cdot\partial_{1}\zeta d\mathbf{x}}_{I_{2}} \\
        &+\underbrace{\int_{\Omega}-\partial_{1}\div\mathbf{u}\cdot F_{1}(\varphi',\mathbf{u}',\zeta')d\mathbf{x}}_{I_{3}}+\underbrace{\int_{\Omega}R\overline{\theta}\partial_{1}\div\mathbf{u}\cdot\partial_{1}\varphi'd\mathbf{x}}_{I_{4}}.
    \end{align*}
  
    For $I_{1}$,
    \begin{equation}\label{divergenceBasicEstimateI1}
        |I_{1}|\leq \mu(\overline{\theta})\|\partial_{1}\div\mathbf{u}\|_{L^{2}(\Omega)}\|\partial_{i}\nabla\mathbf{u}\|_{L^{2}(\Omega)}.
    \end{equation} 
  
    For $I_{2}$,
    \begin{equation}\label{divergenceBasicEstimateI2}
        |I_{2}|\leq R\overline{\rho}\|\partial_{1}\div\mathbf{u}\|_{L^{2}(\Omega)}\|\partial_{1}\zeta\|_{L^{2}(\Omega)}.
    \end{equation}
  
    For $I_{3}$,
    \begin{equation}\label{divergenceBasicEstimateI3}
        |I_{3}|\leq \eta\|\partial_{1}\div\mathbf{u}\|_{L^{2}(\Omega)}^{2}+\frac{C}{\eta}\|\mathbf{F}\|_{L^{2}(\Omega)}^{2}.
    \end{equation}
  
    For $I_{4}$,
    \begin{equation}\label{divergenceBasicEstimateI4}
        |I_{4}|\leq R\overline{\theta}\|\partial_{1}\div\mathbf{u}\|_{L^{2}(\Omega)}\|\partial_{1}\varphi'\|_{L^{2}(\Omega)}.
    \end{equation}

    Combining \eqref{divergenceBasicEstimateI1}, \eqref{divergenceBasicEstimateI2}, \eqref{divergenceBasicEstimateI3}, \eqref{divergenceBasicEstimateI4}, we obtain the desired estimate \eqref{divergenceBasicEstimate}.
\end{proof}

\begin{lemma}\label{divergenceH1EstimateLemma}
    Under the assumption of Corollary \ref{existenceEllipticSystem} and Corollary \ref{ExistenceTransportEquation}, there exist constants $\eta, C>0$ such that
    \begin{align}\label{divergenceH1Estimate}
        &\left(\frac{4}{3}\mu(\overline{\theta})-\eta\right)\|\partial_{1}\partial_{i}\div\mathbf{u}\|_{L^{2}(\Omega)}^{2} \nonumber\\
        \leq&\mu(\overline{\theta})\|\partial_{1}\partial_{i}\div\mathbf{u}\|_{L^{2}(\Omega)}\|\partial_{i}^{2}\nabla\mathbf{u}\|_{L^{2}(\Omega)} +R\overline{\rho}\|\partial_{1}\partial_{i}\div\mathbf{u}\|_{L^{2}(\Omega)}\|\partial_{1}\partial_{i}\zeta\|_{L^{2}(\Omega)} \nonumber\\
        &+R\overline{\theta}\|\partial_{1}\partial_{i}\div\mathbf{u}\|_{L^{2}(\Omega)}\|\partial_{1}\partial_{i}\varphi'\|_{L^{2}(\Omega)} +\frac{C}{\eta}(1+N_{2}(\varphi,\mathbf{u},\zeta))^{2}(\sqrt{\delta}+N_{2}(\varphi,\mathbf{u},\zeta))^{4},
    \end{align}
    for $i=1,2,3$.
\end{lemma}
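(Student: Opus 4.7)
The plan is to mirror the argument of Lemma \ref{divergenceBasicEstimateLemma}, applying one additional tangential-type derivative $\partial_i$ (for $i=1,2,3$) to the key scalar equation obtained by projecting $\eqref{LSNS}_2$ onto $\mathbf{e}_1$. Recall that equation reads
\begin{equation*}
-\frac{4}{3}\mu(\overline{\theta})\partial_{1}\div\mathbf{u}=\mu(\overline{\theta})(\Delta u_{1}-\partial_{1}\div\mathbf{u}) -R\overline{\rho}\partial_{1}\zeta+F_{1}(\varphi',\mathbf{u}',\zeta')-R\overline{\theta}\partial_{1}\varphi'.
\end{equation*}
Applying $\partial_i$ to both sides, I obtain a pointwise identity for $\partial_1\partial_i\div\mathbf{u}$ in which the ``bad'' term $\partial_i(\Delta u_1-\partial_1\div\mathbf{u})$ on the right expands into $\partial_i\partial_2^2u_1+\partial_i\partial_3^2u_1-\partial_i\partial_1\partial_2u_2-\partial_i\partial_1\partial_3u_3$, i.e.\ a combination of third derivatives each carrying at least one factor of $\partial_2$ or $\partial_3$; this is precisely what makes it controllable by a norm of $\partial_i^2\nabla\mathbf{u}$ (or of $\partial_j\nabla^2\mathbf{u}$ for $j=2,3$) rather than by the fully normal $\partial_1^3\mathbf{u}$, which is the whole point of the argument.

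Next I multiply the resulting identity by $-\partial_1\partial_i\div\mathbf{u}$ and integrate over $\Omega$. The left-hand side produces the coercive term $\tfrac{4}{3}\mu(\overline{\theta})\|\partial_1\partial_i\div\mathbf{u}\|_{L^2(\Omega)}^2$. Splitting the right-hand side into four pieces $J_1,\ldots,J_4$ corresponding to the four terms of the identity, I estimate:
\begin{itemize}
\item[(i)] $|J_1|\le\mu(\overline{\theta})\|\partial_1\partial_i\div\mathbf{u}\|_{L^2(\Omega)}\|\partial_i^2\nabla\mathbf{u}\|_{L^2(\Omega)}$ by Cauchy--Schwarz, using the observation above;
\item[(ii)] $|J_2|\le R\overline{\rho}\|\partial_1\partial_i\div\mathbf{u}\|_{L^2(\Omega)}\|\partial_1\partial_i\zeta\|_{L^2(\Omega)}$ by Cauchy--Schwarz;
\item[(iii)] $|J_3|\le\eta\|\partial_1\partial_i\div\mathbf{u}\|_{L^2(\Omega)}^2+C\eta^{-1}\|\partial_i\mathbf{F}\|_{L^2(\Omega)}^2$ by Cauchy's inequality with $\eta$, and then Lemma \ref{EstimateFGLemma} (the $H^1$ bound \eqref{H1EstimateFG}) absorbs $\|\partial_i\mathbf{F}\|_{L^2}^2$ into the nonlinear quantity $(1+N_2)^2(\sqrt{\delta}+N_2)^4$;
\item[(iv)] $|J_4|\le R\overline{\theta}\|\partial_1\partial_i\div\mathbf{u}\|_{L^2(\Omega)}\|\partial_1\partial_i\varphi'\|_{L^2(\Omega)}$ by Cauchy--Schwarz.
\end{itemize}
Absorbing the $\eta\|\partial_1\partial_i\div\mathbf{u}\|_{L^2}^2$ from (iii) into the coercive term on the left gives exactly the claimed inequality \eqref{divergenceH1Estimate}.

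I do not expect a genuine obstacle here: the estimate is a direct tangential (or, for $i=1$, normal) commutator of the argument already carried out for $\|\partial_1\div\mathbf{u}\|_{L^2}$. The only point requiring a bit of care is the treatment of $J_1$, where one must be sure not to introduce the fully normal third derivative $\partial_1^3\mathbf{u}$ on the right-hand side; this is why I keep the combination $\Delta u_1-\partial_1\div\mathbf{u}$ intact before differentiating and expand it only afterwards, so that every term manifestly contains a tangential derivative. With this care, the proof is purely a repetition of the Hölder/Young estimates of Lemma \ref{divergenceBasicEstimateLemma} at one higher derivative order, combined with the already-established $H^1$ control of $\mathbf{F}$ from Lemma \ref{EstimateFGLemma}.
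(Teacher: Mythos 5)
Your proposal is correct and follows essentially the same route as the paper: apply $\partial_i$ to the first component of $\eqref{LSNS}_{2}$, test against $-\partial_1\partial_i\div\mathbf{u}$, and estimate the four resulting terms by Cauchy--Schwarz and Young's inequality, with Lemma \ref{EstimateFGLemma} absorbing $\|\partial_i\mathbf{F}\|_{L^2(\Omega)}^2$. Your explicit remark that $\partial_i(\Delta u_1-\partial_1\div\mathbf{u})$ contains only third derivatives carrying at least one tangential factor is the (unstated) reason the paper's $I_1$ bound holds, so no gap remains.
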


\begin{proof}
    Taking the inner product between $\partial\eqref{LSNS}_{2}$ and $\mathbf{e}_{1}$,
    \begin{equation*}
        -\frac{4}{3}\mu(\overline{\theta})\partial_{i}\partial_{1}\div\mathbf{u} =\mu(\overline{\theta})(\partial_{i}\Delta u_{1}-\partial_{i}\partial_{1}\div\mathbf{u}) -R\overline{\rho}\partial_{1}\partial_{i}\zeta+\partial_{i} F_{1}(\varphi',\mathbf{u}',\zeta')-R\overline{\theta}\partial_{i}\partial_{1}\varphi'.
    \end{equation*}
    Multiplying the above equation by $-\partial_{1}\partial_{i}\div\mathbf{u}$ and integrating the resultant over $\Omega$,
    \begin{align*}
        &\int_{\Omega}\frac{4}{3}\mu(\overline{\theta})\left|\partial_{i}\partial_{1}\div\mathbf{u}\right|^2d\mathbf{x} \\
        =&\underbrace{\int_{\Omega}-\mu(\overline{\theta})\partial_{1}\partial_{i}\div\mathbf{u}\cdot(\partial_{i}\Delta u_{1}-\partial_{i}\partial_{1}\div\mathbf{u})d\mathbf{x}}_{I_{1}} \\
        &+\underbrace{\int_{\Omega}R\overline{\rho}\partial_{1}\partial_{i}\div\mathbf{u}\cdot\partial_{1}\partial_{i}\zeta d\mathbf{x}}_{I_{2}}+\underbrace{\int_{\Omega}-\partial_{1}\partial_{i}\div\mathbf{u}\cdot\partial F_{1}(\varphi',\mathbf{u}',\zeta')d\mathbf{x}}_{I_{3}} \\
        &\underbrace{\int_{\Omega}R\overline{\theta}\partial_{1}\partial_{i}\div\mathbf{u}\cdot\partial_{1}\partial_{i}\varphi'd\mathbf{x}}_{I_{4}}.
    \end{align*}
  
    For $I_{1}$,
    \begin{equation}\label{divergenceH1EstimateI1}
        |I_{1}|\leq\mu(\overline{\theta})\|\partial_{1}\partial_{i}\div\mathbf{u}\|_{L^{2}(\Omega)}\|\partial_{i}^{2}\nabla\mathbf{u}\|_{L^{2}(\Omega)}.
    \end{equation}
  
    For $I_{2}$,
    \begin{equation}\label{divergenceH1EstimateI2}
        |I_{2}|\leq R\overline{\rho}\|\partial_{1}\partial_{i}\div\mathbf{u}\|_{L^{2}(\Omega)}\|\partial_{1}\partial_{i}\zeta\|_{L^{2}(\Omega)}.
    \end{equation}
  
    For $I_{3}$,
    \begin{equation}\label{divergenceH1EstimateI3}
        |I_{3}|\leq \eta\|\partial_{1}\partial_{i}\div\mathbf{u}\|_{L^{2}(\Omega)}^{2}+\frac{C}{\eta}\|\partial_{i}\mathbf{F}\|_{L^{2}(\Omega)}^{2}.
    \end{equation}
  
    For $I_{4}$,
    \begin{equation}\label{divergenceH1EstimateI4}
        |I_{4}|\leq R\overline{\theta}\|\partial_{1}\partial_{i}\div\mathbf{u}\|_{L^{2}(\Omega)}\|\partial_{1}\partial_{i}\varphi'\|_{L^{2}(\Omega)}.
    \end{equation}

    Combining \eqref{divergenceH1EstimateI1}, \eqref{divergenceH1EstimateI2}, \eqref{divergenceH1EstimateI3}, \eqref{divergenceH1EstimateI4}, we obtain the desired estimate \eqref{divergenceH1Estimate}.
\end{proof}

Now, we give the proof of Theorem \ref{boundednessTheorem}.

\begin{proof}[Proof of Theorem \ref{boundednessTheorem}]
    Let $\lambda_{0},\lambda_{1}$ be chosen such that the assumption (A) is valid, then combining the results of Lemma \ref{densityBasicEstimateLemma}, Lemma \ref{basicEstimateLemma} and Lemma \ref{H1ConormalBasicEstimateLemma} as $\eqref{densityBasicEstimate}+\lambda_{0}\eqref{basicEstimate}+\lambda_{1}\eqref{H1ConormalBasicEstimate}$, we have
    \begin{align*}
        &\frac{4}{3}\frac{\mu(\overline{\theta})}{\overline{\rho}}\frac{1}{2h}(\|\partial_{1}\varphi\|_{L^{2}(\Omega)}^2-\|\partial_{1}\varphi'\|_{L^{2}(\Omega)}^2)+\left(\frac{4}{3}\frac{\mu(\overline{\theta})}{\overline{\rho}}\frac{1}{2h}-\frac{R\overline{\theta}}{2}\right)\|\partial_{1}\varphi-\partial_{1}\varphi'\|_{L^{2}(\Omega)}^2 \\
        &+(\frac{R\overline{\theta}}{2}-\eta)\|\partial_{1}\varphi\|_{L^{2}(\Omega)}^2+\frac{R\overline{\theta}}{2}\|\partial_{1}\varphi'\|_{L^{2}(\Omega)}^2 \\
        &+\lambda_{0}\frac{R}{4h}\frac{\overline{\theta}}{\overline{\rho}}\sqrt{\frac{2}{R}}\frac{2}{3R}a_{\mathbf{u}}^{I}a_{\mathbf{u}}^{II}\frac{\mu(\overline{\theta})}{\sqrt{\overline{\theta}}}(\|\varphi\|_{L^{2}(\Omega)}^2-\|\varphi'\|_{L^{2}(\Omega)}^2+\|\varphi-\varphi'\|_{L^{2}(\Omega)}^2) \\
        &+\lambda_{0}\left(\frac{1}{3R}\sqrt{\frac{2}{R}}a_{\mathbf{u}}^{I}a_{\mathbf{u}}^{II}\frac{\mu(\overline{\theta})^{2}}{\sqrt{\overline{\theta}}}-C\delta\right)\left\|\nabla\mathbf{u}+\nabla\mathbf{u}^{\mathsf{T}}-\frac{2}{3}\div\mathbf{u}\mathbb{I}_{3}\right\|_{L^{2}(\Omega)}^{2} \\
        &+\lambda_{0}\left(\frac{32}{75R^2}\sqrt{\frac{2}{R}}a_{\theta}^{I}a_{\theta}^{II}\frac{\kappa(\overline{\theta})^{2}}{\overline{\theta}\sqrt{\overline{\theta}}}-C\delta\right)\|\nabla\zeta\|_{L^{2}(\Omega)}^2 \\
        &+\lambda_{0}\left(\frac{2}{3R}a_{\mathbf{u}}^{II}\mu(\overline{\theta})\overline{\rho}-C\delta-C\varepsilon\right)\|\mathbf{u}\|_{L^{2}(\partial\Omega)}^2+\lambda_{0}\left(\frac{16}{15R}a_{\theta}^{I}\frac{\kappa(\overline{\theta})}{\overline{\theta}}\overline{\rho}-C\delta-C\varepsilon\right)\|\zeta\|_{L^{2}(\partial\Omega)}^2 \\
        &+\lambda_{1}\frac{R}{4h}\frac{\overline{\theta}}{\overline{\rho}}\sqrt{\frac{2}{R}}\frac{2}{3R}a_{\mathbf{u}}^{I}a_{\mathbf{u}}^{II}\frac{\mu(\overline{\theta})}{\sqrt{\overline{\theta}}}(\|\partial_{i}\varphi\|_{L^{2}(\Omega)}^2-\|\partial_{i}\varphi'\|_{L^{2}(\Omega)}^2+\|\partial_{i}\varphi-\partial_{i}\varphi'\|_{L^{2}(\Omega)}^2) \\
        &+\lambda_{1}\left(\frac{1}{3R}\sqrt{\frac{2}{R}}a_{\mathbf{u}}^{I}a_{\mathbf{u}}^{II}\frac{\mu(\overline{\theta})^{2}}{\sqrt{\overline{\theta}}}-C\delta\right)\left\|\partial_{i}\nabla\mathbf{u}+\partial_{i}\nabla\mathbf{u}^{\mathsf{T}}-\frac{2}{3}\partial_{i}\div\mathbf{u}\mathbb{I}_{3}\right\|_{L^{2}(\Omega)}^{2} \\
        &+\lambda_{1}\left(\frac{32}{75R^2}\sqrt{\frac{2}{R}}a_{\theta}^{I}a_{\theta}^{II}\frac{\kappa(\overline{\theta})^{2}}{\overline{\theta}\sqrt{\overline{\theta}}}-C\delta\right)\|\nabla\partial_{i}\zeta\|_{L^{2}(\Omega)}^2 \\
        &+\lambda_{1}\left(\frac{2}{3R}a_{\mathbf{u}}^{II}\mu(\overline{\theta})\overline{\rho}-C\delta-C\varepsilon\right)\|\partial_{i}\mathbf{u}\|_{L^{2}(\partial\Omega)}^2+\left(\frac{16}{15R}a_{\theta}^{I}\frac{\kappa(\overline{\theta})}{\overline{\theta}}\overline{\rho}-C\delta-C\varepsilon\right)\|\partial_{i}\zeta\|_{L^{2}(\partial\Omega)}^2 \\
        \leq&\mu(\overline{\theta})\|\partial_{1}\varphi\|_{L^{2}(\Omega)}\sum_{j=2,3}\|\nabla\partial_{j}\mathbf{u}\|_{L^{2}(\Omega)} +R\overline{\rho}\|\partial_{1}\varphi\|_{L^{2}(\Omega)}\|\partial_{1}\zeta\|_{L^{2}(\Omega)} \\
        &+\lambda_{0}\left(\frac{2}{5R^2}a_{\mathbf{u}}^{II}a_{\theta}^I\frac{\kappa(\overline{\theta})}{\overline{\theta}}+C\delta\right)\left\|\nabla\mathbf{u}+\nabla\mathbf{u}^{\mathsf{T}}-\frac{2}{3}\div\mathbf{u}\mathbb{I}_{3}\right\|_{L^{2}(\Omega)}\|\nabla\zeta\|_{L^{2}(\Omega)} \nonumber\\
        &+\lambda_{0}\left(R\frac{4}{5R^2}a_{\mathbf{u}}^{II}a_\theta^I\kappa(\overline{\theta})+C\delta\right)\|\zeta\|_{L^{2}(\Omega)}\|\partial_{1}\varphi'\|_{L^{2}(\Omega)} \nonumber\\
        &+\lambda_{0}C\delta(\|\nabla\varphi'\|_{L^{2}(\Omega)}^{2}+\|\mathbf{u}\|_{L^{2}(\Omega)}^{2})+\lambda_{0}\frac{h}{CR}\overline{\rho}\overline{\theta}\sqrt{\frac{2}{R}}\frac{2}{3R}a_{\mathbf{u}}^{I}a_{\mathbf{u}}^{II}\frac{\mu(\overline{\theta})}{\sqrt{\overline{\theta}}}\|\nabla\mathbf{u}\|_{L^{2}(\Omega)}^{2} \nonumber\\
        &+\lambda_{1}\left(\frac{2}{5R^2}a_{\mathbf{u}}^{II}a_{\theta}^I\frac{\kappa(\overline{\theta})}{\overline{\theta}}+C\delta\right)\left\|\partial_{i}\nabla\mathbf{u}+\partial_{i}\nabla\mathbf{u}^{\mathsf{T}}-\frac{2}{3}\partial_{i}\div\mathbf{u}\mathbb{I}_{3}\right\|_{L^{2}(\Omega)}\|\partial_{i}\nabla\zeta\|_{L^{2}(\Omega)} \nonumber\\
        &+\lambda_{1}\left(R\frac{4}{5R^2}a_{\mathbf{u}}^{II}a_\theta^I\kappa(\overline{\theta})+C\delta\right)\|\partial_{i}^2\zeta\|_{L^{2}(\Omega)}\|\partial_{1}\varphi'\|_{L^{2}(\Omega)} \nonumber\\
        &+\max\{\lambda_{0},\lambda_{1}\}C\delta\sum_{0\leq j\leq1}(\|\nabla\partial_{i}^{j}\varphi'\|_{L^{2}(\Omega)}^{2}+\|\partial_{i}^{j}\mathbf{u}\|_{L^{2}(\Omega)}^{2})+\max\{\lambda_{0},\lambda_{1}\}Ch\sum_{0\leq j\leq1}\|\nabla\partial_{i}^{j}\mathbf{u}\|_{L^{2}(\Omega)}^{2} \\
        &+\max\{\lambda_{0},\lambda_{1}\}\eta\sum_{0\leq j\leq2}(\|\partial_{i}^{j}\mathbf{u}\|_{L^{2}(\Omega)}^{2}+\|\partial_{i}^{j}\zeta\|_{L^{2}(\Omega)}^{2})+\max\{\lambda_{0},\lambda_{1}\}\frac{C}{\eta}(1+N_{1}(\varphi,\mathbf{u},\zeta))^{2}(\sqrt{\delta}+N_{2}(\varphi,\mathbf{u},\zeta))^{4}.
    \end{align*} 
    Then choosing $\delta, h, \eta>0$ sufficiently small, and by assumption (A), Korn's inequality \eqref{KornInequality} and Poincar\'{e}'s inequality \eqref{PoincareTraceInequality}, there exists a constant $C>0$ such that
    \begin{align}\label{partBoundednessH2Estimate}
        &\|\partial_{1}\varphi\|_{L^{2}(\Omega)}^{2}+\frac{1}{h}(\|\varphi\|_{H^{1}(\Omega)}^{2}-\|\varphi'\|_{H^{1}(\Omega)}^{2}+\|\varphi-\varphi'\|_{H^{1}(\Omega)}^{2}) \nonumber\\
        &+\sum_{0\leq j\leq1}\|\partial_{i}^{j}\mathbf{u}\|_{H^{1}(\Omega)}^{2} +\sum_{0\leq j\leq1}\|\partial_{i}\zeta\|_{H^{1}(\Omega)}^{2} \nonumber\\
        \leq&C(1+N_{1}(\varphi,\mathbf{u},\zeta))^{2}(\sqrt{\delta}+N_{2}(\varphi,\mathbf{u},\zeta))^{4}.
    \end{align}
  
    Applying \eqref{partBoundednessH2Estimate} to Lemma \ref{divergenceBasicEstimateLemma}, we also have
    \begin{equation*}
        \left(\frac{4}{3}\mu(\overline{\theta})-\eta\right)\|\partial_{1}\div\mathbf{u}\|_{L^{2}(\Omega)}^{2} \nonumber\\
        \leq C(1+N_{1}(\varphi,\mathbf{u},\zeta))^{2}(\sqrt{\delta}+N_{2}(\varphi,\mathbf{u},\zeta))^{4}.
    \end{equation*}
    Together with the Stokes' estimate \eqref{StokesEstimate} in Lemma \ref{StokesEstimateTheorem} and the elliptic estimate \eqref{ellipticEstimate} in Lemma \ref{ellipticEstimateTheorem}, we have
    \begin{equation}\label{H2StokesEllipticEstimate}
        \|\nabla\varphi'\|_{L^{2}(\Omega)}^{2}+\|\mathbf{u}\|_2^2+\|\zeta\|_2^2 
        \leq C(1+N_{1}(\varphi,\mathbf{u},\zeta))^{2}(\sqrt{\delta}+N_{2}(\varphi,\mathbf{u},\zeta))^{4}.
    \end{equation}

    Combining \eqref{partBoundednessH2Estimate} and \eqref{H2StokesEllipticEstimate}, by Poincar\'{e}'s inequality \eqref{PoincareInequality} and choosing $h>0$ sufficiently small, we obtain
    \begin{equation}\label{BoundednessH2Estimate}
        \|\varphi\|_{H^{1}(\Omega)}^{2}+\frac{1}{h}\sum_{0\leq j\leq1}(\|\partial_{i}^{j}\varphi\|_{L^{2}(\Omega)}^{2}-\|\partial_{i}^{j}\varphi'\|_{L^{2}(\Omega)}^{2})+\|\mathbf{u}\|_{H^{2}(\Omega)}^{2}+\|\zeta\|_{H^{2}(\Omega)}^{2}\leq C(1+N_{1}(\varphi,\mathbf{u},\zeta))^{2}(\sqrt{\delta}+N_{2}(\varphi,\mathbf{u},\zeta))^{4}.
    \end{equation}

    Let $i=2,3$ and $\lambda_{0}$ be a constant such that the assumption (A) is valid, then by Lemma \ref{H2ConormalBasicEstimateLemma} and Lemma \ref{densityH1EstimateLemma} with $i=2,3$, it follows that
    \begin{align*}
        &\frac{4}{3}\frac{\mu(\overline{\theta})}{\overline{\rho}}\frac{1}{2h}(\|\partial_{1}\partial_{i}\varphi\|_{L^{2}(\Omega)}^{2}-\|\partial_{1}\partial_{i}\varphi'\|_{L^{2}(\Omega)}^{2})+\left(\frac{4}{3}\frac{\mu(\overline{\theta})}{\overline{\rho}}\frac{1}{2h}-\frac{R\overline{\theta}}{2}\right)\left\|\partial_{1}\partial_{i}\varphi-\partial_{1}\partial_{i}\varphi'\right\|_{L^{2}(\Omega)}^{2} \\
        &+(\frac{R\overline{\theta}}{2}-\eta)\|\partial_{1}\partial_{i}\varphi\|_{L^{2}(\Omega)}^{2}+\frac{R\overline{\theta}}{2}\|\partial_{1}\partial_{i}\varphi'\|_{L^{2}(\Omega)}^{2} \\
        &+\lambda_{0}\frac{R}{4h}\frac{\overline{\theta}}{\overline{\rho}}\sqrt{\frac{2}{R}}\frac{2}{3R}a_{\mathbf{u}}^{I}a_{\mathbf{u}}^{II}\frac{\mu(\overline{\theta})}{\sqrt{\overline{\theta}}}\left(\|\partial_{i}^2\varphi\|_{L^{2}(\Omega)}^2-\|\partial_{i}^2\varphi'\|_{L^{2}(\Omega)}^2+\|\partial_{i}^2\varphi-\partial_{i}^2\varphi'\|_{L^{2}(\Omega)}^2\right) \\
        &+\lambda_{0}\left(\frac{1}{3R}\sqrt{\frac{2}{R}}a_{\mathbf{u}}^{I}a_{\mathbf{u}}^{II}\frac{\mu(\overline{\theta})^{2}}{\sqrt{\overline{\theta}}}-C\delta\right)\left\|\partial_{i}^2\nabla\mathbf{u}+\partial_{i}^{2}\nabla\mathbf{u}^{\mathsf{T}}-\frac{2}{3}\partial_{i}^{2}\div\mathbf{u}\mathbb{I}_{3}\right\|_{L^{2}(\Omega)}^2 \\
        &+\lambda_{0}\left(\frac{32}{75R^2}\sqrt{\frac{2}{R}}a_{\theta}^{I}a_{\theta}^{II}\frac{\kappa(\overline{\theta})^{2}}{\overline{\theta}\sqrt{\overline{\theta}}}-C\delta\right)\|\nabla\partial_{i}^2\zeta\|_{L^{2}(\Omega)}^2 \\
        &+\lambda_{0}\left(\frac{2}{3R}a_{\mathbf{u}}^{II}\mu(\overline{\theta})\overline{\rho}-C\delta-C\varepsilon\right)\|\partial_{i}^2\mathbf{u}\|_{L^{2}(\partial\Omega)}^2+\left(\frac{16}{15R}a_{\theta}^{I}\frac{\kappa(\overline{\theta})}{\overline{\theta}}\overline{\rho}-C\delta-C\varepsilon\right)\|\partial_{i}^2\zeta\|_{L^{2}(\partial\Omega)}^2 \\
        \leq&\mu(\overline{\theta})\|\partial_{1}\varphi\|_{L^{2}(\Omega)}\sum_{j=2,3}\|\partial_{j}\nabla^{2}\mathbf{u}\|_{L^{2}(\Omega)} +R\overline{\rho}\|\partial_{1}\varphi\|_{L^{2}(\Omega)}\|\partial_{1}\zeta\|_{L^{2}(\Omega)} \\
        &+\lambda_{0}\left(\frac{2}{5R^2}a_{\mathbf{u}}^{II}a_{\theta}^I\frac{\kappa(\overline{\theta})}{\overline{\theta}}+C\delta\right)\left\|\partial_{i}^{2}\nabla\mathbf{u}+\partial_{i}^{2}\nabla\mathbf{u}^{\mathsf{T}}-\frac{2}{3}\partial_{i}^{2}\div\mathbf{u}\mathbb{I}_{3}\right\|_{L^{2}(\Omega)}\|\partial_{i}^{2}\nabla\zeta\|_{L^{2}(\Omega)} \nonumber\\
        &+\lambda_{0}\left(R\frac{4}{5R^2}a_{\mathbf{u}}^{II}a_\theta^I\kappa(\overline{\theta})+C\delta\right)\|\partial_{i}^3\zeta\|_{L^{2}(\Omega)}\|\partial_{1}\partial_{i}\varphi'\|_{L^{2}(\Omega)} \nonumber\\
        &+\lambda_{0}C\delta(\|\nabla\partial_{i}^{2}\varphi'\|_{L^{2}(\Omega)}^{2}+\|\partial_{i}\mathbf{u}\|_{L^{2}(\Omega)}^{2})+Ch\|\nabla\partial_{i}^{2}\mathbf{u}\|_{L^{2}(\Omega)}^{2} \nonumber\\
        &+\lambda_{0}\eta\sum_{j=2,3}(\|\partial_{i}^{j}\mathbf{u}\|_{L^{2}(\Omega)}^{2}+\|\partial_{i}^{j}\zeta\|_{L^{2}(\Omega)}^{2})+\lambda_{0}\frac{C}{\eta}(1+N_{2}(\varphi,\mathbf{u},\zeta))^{2}(\sqrt{\delta}+N_{2}(\varphi,\mathbf{u},\zeta))^{4}.
    \end{align*}
    Then choosing $\delta, h, \eta>0$ sufficiently small, and by assumption (A), Korn's inequality \eqref{KornInequality}, Poincar\'{e}'s inequality \eqref{PoincareTraceInequality} and \eqref{BoundednessH2Estimate}, there exists a constant $C>0$ such that
    \begin{align}\label{partBoundednessConormalH3Estimate}
        &\|\partial_{1}\partial_{i}\varphi\|_{L^{2}(\Omega)}^{2}+\frac{1}{h}\left(\|\partial_{i}\varphi\|_{H^{1}(\Omega)}^{2}-\|\partial_{i}\varphi'\|_{H^{1}(\Omega)}^{2}+\|\partial_{i}\varphi-\partial_{i}\varphi'\|_{H^{1}(\Omega)}^{2}\right) \nonumber\\
        &+\|\partial_{i}^{2}\mathbf{u}\|_{H^{1}(\Omega)}^{2}+\|\partial_{i}^{2}\zeta\|_{H^{1}(\Omega)}^{2} \nonumber\\
        \leq&C(1+N_{2}(\varphi,\mathbf{u},\zeta))^{2}(\sqrt{\delta}+N_{2}(\varphi,\mathbf{u},\zeta))^{4}.
    \end{align}
  
    Applying \eqref{partBoundednessConormalH3Estimate} to Lemma \ref{divergenceH1Estimate}, we have
    \begin{equation*}
        \left(\frac{4}{3}\mu(\overline{\theta})-\eta\right)\|\partial_{1}\partial_{i}\div\mathbf{u}\|_{L^{2}(\Omega)}^{2}
        \leq C(1+N_{2}(\varphi,\mathbf{u},\zeta))^{2}(\sqrt{\delta}+N_{2}(\varphi,\mathbf{u},\zeta))^{4}.
    \end{equation*}
     Together with \eqref{BoundednessH2Estimate}, the Stokes' estimate \eqref{StokesEstimate} in Lemma \ref{StokesEstimateTheorem} and the elliptic estimate \eqref{ellipticEstimate} in Lemma \ref{ellipticEstimateTheorem}, we have
    \begin{equation}\label{conormalStokesEllipticEstimate}
        \|\partial_{i}\nabla\varphi'\|_{L^{2}(\Omega)}^{2}+\|\partial_{i}\mathbf{u}\|_2^2+\|\partial_{i}\zeta\|_2^2
        \leq C(1+N_{2}(\varphi,\mathbf{u},\zeta))^{2}(\sqrt{\delta}+N_{2}(\varphi,\mathbf{u},\zeta))^{4}.
    \end{equation}

    Combining \eqref{partBoundednessConormalH3Estimate} and \eqref{conormalStokesEllipticEstimate}, by Poincar\'{e}'s inequality \eqref{PoincareInequality} and choosing $h>0$ sufficiently small, we obtain
    \begin{align}\label{BoundednessConormalH3Estimate}
        &\|\partial_{i}\varphi\|_{H^{1}(\Omega)}^{2}+\frac{1}{h}\left(\|\partial_{i}\varphi\|_{H^{1}(\Omega)}^{2}-\|\partial_{i}\varphi'\|_{H^{1}(\Omega)}^{2}\right) \nonumber\\
        &+\|\partial_{i}\mathbf{u}\|_{H^{2}(\Omega)}^{2}+\|\partial_{i}\zeta\|_{H^{2}(\Omega)}^{2} \nonumber\\
        \leq&C(1+N_{2}(\varphi,\mathbf{u},\zeta))^{2}(\sqrt{\delta}+N_{2}(\varphi,\mathbf{u},\zeta))^{4}.
    \end{align}

    Let $i=1$, applying \eqref{BoundednessConormalH3Estimate} to Lemma \ref{divergenceH1EstimateLemma}, we have
    \begin{equation*}
        \|\partial_{1}^2\div\mathbf{u}\|_{L^{2}(\Omega)}^{2}
        \leq C(1+N_{2}(\varphi,\mathbf{u},\zeta))^{2}(\sqrt{\delta}+N_{2}(\varphi,\mathbf{u},\zeta))^{4}.
    \end{equation*}
    Together with \eqref{BoundednessConormalH3Estimate}, Stokes' estimate \eqref{StokesEstimate} in Lemma \ref{StokesEstimateTheorem} and the elliptic estimate \eqref{ellipticEstimate} in Lemma \ref{ellipticEstimateTheorem}, we have
    \begin{equation}\label{H3StokesEllipticEstimate}
        \|\nabla\varphi'\|_{H^{1}(\Omega)}^{2}+\|\mathbf{u}\|_{H^{3}(\Omega)}^2+\|\zeta\|_{H^{3}(\Omega)}^{2} 
        \leq C(1+N_{2}(\varphi,\mathbf{u},\zeta))^{2}(\sqrt{\delta}+N_{2}(\varphi,\mathbf{u},\zeta))^{4}.
    \end{equation}
    Then applying \eqref{BoundednessConormalH3Estimate} to Lemma \ref{densityH1EstimateLemma}, we have
    \begin{align*}
        &\|\partial_{1}^2\varphi\|_{L^{2}(\Omega)}^{2}+\|\partial_{1}^2\varphi'\|_{L^{2}(\Omega)}^{2}+\frac{1}{h}\left(\|\partial_{1}^2\varphi\|_{L^{2}(\Omega)}^{2}-\|\partial_{1}^2\varphi'\|_{L^{2}(\Omega)}^{2}+\left\|\partial_{1}^2\varphi-\partial_{1}^2\varphi'\right\|_{L^{2}(\Omega)}^{2}\right) \\
        \leq&C(1+N_{2}(\varphi,\mathbf{u},\zeta))^{2}(\sqrt{\delta}+N_{2}(\varphi,\mathbf{u},\zeta))^{4}.
    \end{align*}
    Together with \eqref{BoundednessConormalH3Estimate} and \eqref{H3StokesEllipticEstimate}, by Poincar\'{e}'s inequality and choosing $h>0$ sufficiently small, we obtain
    \begin{align}\label{BoundednessH3Estimate}
        &\|\varphi\|_{H^{2}(\Omega)}^{2}+\frac{1}{h}\left(\|\varphi\|_{H^{2}(\Omega)}^{2}-\|\varphi'\|_{H^{2}(\Omega)}^{2}\right) \nonumber\\
        &+\|\mathbf{u}\|_{H^{3}(\Omega)}^{2}+\|\zeta\|_{H^{3}(\Omega)}^{2} \nonumber\\
        \leq&C(1+N_{2}(\varphi,\mathbf{u},\zeta))^{2}(\sqrt{\delta}+N_{2}(\varphi,\mathbf{u},\zeta))^{4}.
    \end{align}
    Then by choosing $\delta>0$ sufficiently small in \eqref{BoundednessH3Estimate}, we obtain that $N_{2}(\varphi,\mathbf{u},\zeta)\leq\varepsilon$ and therefore complete the proof of Theorem \ref{boundednessTheorem}.
\end{proof}

\section{Proof of the main theorem}
\begin{proof}[Proof of Theorem \ref{mainTheorem}]
    Consider the Hilbert space
    \begin{equation*}
      H:=\{(\varphi,\mathbf{u},\zeta)\in L_{0}^{2}(\Omega)\times V^{1}(\Omega)\times H^{1}(\Omega)\},
    \end{equation*}
    and let $K$ be the following compact convex subset of $H$:
    \begin{equation*}
      K:=\{(\varphi,\mathbf{u},\zeta)\in (L_{0}^{2}\cap H^{2}(\Omega))\times V^{3}(\Omega)\times H^{3}(\Omega): N_{2}(\varphi,\mathbf{u},\zeta)\leq\varepsilon\}.
    \end{equation*}
    
    By Lemma \ref{continuityTheorem}, for arbitrary $(\varphi_{i}',\mathbf{u}_{i}',\zeta_{i}')\in K$,
   \begin{equation*}
     \|\mathds{T}(\varphi_{1}',\mathbf{u}_{1}',\zeta_{1}')-\mathds{T}(\varphi_{2}',\mathbf{u}_{2}',\zeta_{2}')\|_{H}\leq C\|(\varphi_{1}'-\varphi_{2}',\mathbf{u}_{1}'-\mathbf{u}_{2}',\zeta_{1}'-\zeta_{2}')\|_{H},
   \end{equation*}
   which implies that $\mathds{T}$ is a continuous operator with respect to the topology of $H$. Moreover, by Lemma \ref{boundednessTheorem}, for arbitrary $(\varphi',\mathbf{u}',\zeta')\in K$, we have $\mathds{T}(\varphi',\mathbf{u}',\zeta')\in K$. Therefore, by Schauder's fixed point theorem, there exists a unique point $(\varphi,\mathbf{u},\zeta)\in K$ such that 
   \begin{equation*}
     \mathds{T}(\varphi,\mathbf{u},\zeta)=(\varphi,\mathbf{u},\zeta),
   \end{equation*}
   which implies that $(\varphi,\mathbf{u},\zeta)\in K$ is a solution to the problem \eqref{PSNS} and \eqref{PBC}. Then by our construction, $(\rho,\mathbf{u},\theta):=(\overline{\rho}+\varphi,\mathbf{u},\tilde{\theta}+\zeta)$ is a solution to the problem \eqref{SNS} and \eqref{BC} and
   \begin{equation*}
     \|(\rho-\overline{\rho},\mathbf{u},\theta-\overline{\theta})\|_{K}^{2}\leq C\|(\rho-\overline{\rho},\mathbf{u},\theta-\tilde{\theta})\|_{K}^{2},
   \end{equation*}
   which implies \eqref{mainEstimate}. The proof of Theorem \ref{mainTheorem} is complete.
\end{proof}

\section{Appendices}
Korn's inequality is commonly used in fluid dynamics, particularly in the presence of Navier's slip boundary conditions. Roughly speaking, Korn's inequality asserts that the $L^{2}(\Omega)$ norm of a gradient field can be controlled by the $L^{2}(\Omega)$ norm of its symmetric part under certain conditions. For our purpose, we give the following Korn's type inequality for $\Omega=(0,1)\times\mathbb{T}^2$.

\begin{lemma}
Let $\Omega=(0,1)\times\mathbb{T}^2$, it holds that
      \begin{equation}\label{KornInequality}
        \|\nabla\mathbf{u}\|_{L^{2}(\Omega)}^2\leq\frac{1}{2}\left\|\nabla\mathbf{u}+\nabla\mathbf{u}^{\mathsf{T}}-\frac{2}{3}\div\mathbf{u}\right\|_{L^{2}(\Omega)}^{2},
      \end{equation}
      for all $\mathbf{u}\in V^{1}(\Omega)$.
\end{lemma}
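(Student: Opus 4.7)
\smallskip
\noindent\textbf{Proof proposal.} The plan is to turn the inequality into an identity (with a nonnegative slack) by expanding the deviatoric strain tensor pointwise and then integrating by parts to convert the mixed term $\nabla\mathbf{u}:\nabla\mathbf{u}^{\mathsf{T}}$ into $(\div\mathbf{u})^2$ modulo boundary contributions that vanish on $(0,1)\times\mathbb{T}^2$ under the tangency condition $\mathbf{u}\cdot\mathbf{n}=0$.

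First, I would record the algebraic identity
\begin{equation*}
\left|\nabla\mathbf{u}+\nabla\mathbf{u}^{\mathsf{T}}-\tfrac{2}{3}(\div\mathbf{u})\mathbb{I}_{3}\right|^{2}
=2|\nabla\mathbf{u}|^{2}+2\,\nabla\mathbf{u}:\nabla\mathbf{u}^{\mathsf{T}}-\tfrac{4}{3}(\div\mathbf{u})^{2},
\end{equation*}
obtained by expanding the square and using $|\mathbb{I}_{3}|^{2}=3$ together with $\nabla\mathbf{u}:\mathbb{I}_{3}=\div\mathbf{u}$. Second, for smooth $\mathbf{u}$, I would integrate by parts twice in the mixed term,
\begin{equation*}
\int_{\Omega}\partial_{j}u_{i}\,\partial_{i}u_{j}\,d\mathbf{x}
=\int_{\Omega}(\div\mathbf{u})^{2}\,d\mathbf{x}
+\int_{\partial\Omega}\!\bigl(u_{i}\,\partial_{i}u_{j}\,n_{j}-u_{i}\,n_{i}\,\div\mathbf{u}\bigr)\,d\sigma,
\end{equation*}
so that the identity above yields, after integration,
\begin{equation*}
\left\|\nabla\mathbf{u}+\nabla\mathbf{u}^{\mathsf{T}}-\tfrac{2}{3}\div\mathbf{u}\,\mathbb{I}_{3}\right\|_{L^{2}(\Omega)}^{2}
=2\|\nabla\mathbf{u}\|_{L^{2}(\Omega)}^{2}+\tfrac{2}{3}\|\div\mathbf{u}\|_{L^{2}(\Omega)}^{2}+2B(\mathbf{u}),
\end{equation*}
where $B(\mathbf{u})$ collects the two boundary integrals above.

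Third, I would verify that $B(\mathbf{u})=0$ on the specific geometry $\Omega=(0,1)\times\mathbb{T}^{2}$. The boundary consists of the two flat pieces $\mathbb{T}_{p}^{2}=\{p\}\times\mathbb{T}^{2}$ for $p=0,1$, each with outward normal $\mathbf{n}=\pm\mathbf{e}_{1}$; the periodicity in $x_{2},x_{3}$ already kills all contributions that would have come from lateral pieces. The condition $\mathbf{u}\in V^{1}(\Omega)$ forces $u_{1}\equiv 0$ on $\mathbb{T}_{p}^{2}$, so $u_{i}n_{i}=\pm u_{1}=0$, handling the second term in $B$. For the first term, $u_{i}\partial_{i}u_{j}n_{j}=\pm u_{i}\partial_{i}u_{1}$; the tangential derivatives $\partial_{2}u_{1},\partial_{3}u_{1}$ vanish on each boundary component since $u_{1}$ vanishes identically there, and the remaining piece $u_{1}\partial_{1}u_{1}$ also vanishes as $u_{1}|_{\partial\Omega}=0$. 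Hence $B(\mathbf{u})=0$, and the identity reduces to
\begin{equation*}
\left\|\nabla\mathbf{u}+\nabla\mathbf{u}^{\mathsf{T}}-\tfrac{2}{3}\div\mathbf{u}\,\mathbb{I}_{3}\right\|_{L^{2}(\Omega)}^{2}
=2\|\nabla\mathbf{u}\|_{L^{2}(\Omega)}^{2}+\tfrac{2}{3}\|\div\mathbf{u}\|_{L^{2}(\Omega)}^{2}\ge 2\|\nabla\mathbf{u}\|_{L^{2}(\Omega)}^{2},
\end{equation*}
which is exactly \eqref{KornInequality}. The general case $\mathbf{u}\in V^{1}(\Omega)$ follows by density of $V^{1}\cap C^{\infty}(\overline{\Omega})$ in $V^{1}(\Omega)$ (straightforward on this product domain via mollification in $x_{1}$ preserving $u_{1}|_{\partial\Omega}=0$, and Fourier truncation in $x_{2},x_{3}$) and passage to the limit, since both sides are continuous in the $H^{1}$-topology.

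The only step that requires care is the vanishing of the surface integral $B(\mathbf{u})$: it is here that the product structure of $\Omega$ and the flatness of the normal (so that the normal is constant and the tangential derivatives of $u_1$ vanish identically on each component) play an essential role. On a curved domain this step would produce genuine curvature terms and the clean constant $\tfrac12$ in \eqref{KornInequality} would no longer be available.
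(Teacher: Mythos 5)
Your proposal is correct and follows essentially the same route as the paper: expand the square of the deviatoric strain tensor, convert the mixed term $\int\nabla\mathbf{u}:\nabla\mathbf{u}^{\mathsf{T}}\,d\mathbf{x}$ into $\int(\div\mathbf{u})^{2}\,d\mathbf{x}$ by integration by parts, and use the tangency condition together with the flat, periodic geometry of $(0,1)\times\mathbb{T}^{2}$ to kill the boundary contributions, leaving the slack term $\tfrac{1}{3}\|\div\mathbf{u}\|_{L^{2}(\Omega)}^{2}\ge 0$. The only difference is that you make explicit the boundary integrals and the density step that the paper's two-line computation leaves implicit.
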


\begin{proof}
    Since
    \begin{equation*}
      \frac{1}{2}\left\|\nabla\mathbf{u}+\nabla\mathbf{u}^{\mathsf{T}}-\frac{2}{3}\div\mathbf{u}\right\|_{L^{2}(\Omega)}^{2}=\frac{1}{2}\|\nabla\mathbf{u}+\nabla\mathbf{u}^{\mathsf{T}}\|_{L^{2}(\Omega)}^2-\frac{2}{3}\|\div\mathbf{u}\|_{L^{2}(\Omega)}^2,
    \end{equation*}
    and
    \begin{equation*}
    \frac{1}{2}\int_{(0,1)\times\mathbb{T}^2}|\nabla\mathbf{u}+\nabla\mathbf{u}^{\mathsf{T}}|^2d\mathbf{x}=\int_{(0,1)\times\mathbb{T}^2}\nabla\mathbf{u}:(\nabla\mathbf{u}+\nabla\mathbf{u}^{\mathsf{T}})d\mathbf{x}=\int_{(0,1)\times\mathbb{T}^2}|\nabla\mathbf{u}|^2+|\div\mathbf{u}|^2d\mathbf{x}.
    \end{equation*}
    Combining the above two facts, the result is proved.
\end{proof}

\begin{remark}
  It is important to compute explicitly Korn's constant to verify the assumption (A).  For Korn's inequality with the slip (tangency) boundary condition in general domains, DESVILLETTES and VILLANI \cite{Desvillettes-Villani-COCV-2002} showed that Korn's inequality holds for non-axisymmetric $C^{1}$ domains under the slip (tangency) boundary condition. They also characterized the Korn's constant but without explicit value. Later, BAUER and PAULY \cite{Bauer-Pauly-MMAS-2016, Bauer-Pauly-AUF-2016} proved the Korn's inequality in non-axisymmetric Lipschitz domains under the slip (tangency) boundary condition. See also LEWICKA and MULLER's result \cite{Lewicka-Muller-IUMJ-2016} for domains not being non-axisymmetric. 
\end{remark}

Poincar\'{e} inequality with traces is another important tool, asserting that the integrability of a function can be estimated by the integrability of its gradient and trace. For our purpose, we show the validity of the following Poincar\'{e} type inequality for $\Omega=(0,1)\times\mathbb{T}^2$.

\begin{lemma}
Let $\Omega=(0,1)\times\mathbb{T}^2$, it holds that
\begin{equation}\label{PoincareTraceInequality}
  \|\zeta\|_{L^{2}(\Omega)}^{2}\leq\|\nabla\zeta\|_{L^{2}(\Omega)}^{2}+\|\zeta\|_{L^{2}(\partial\Omega)}^{2},
\end{equation}
for all $\zeta\in H^{1}(\Omega)$.
\end{lemma}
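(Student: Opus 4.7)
The plan is to reduce the statement to a one-dimensional estimate on each slice and then integrate over the periodic variables. For fixed $(x_{2},x_{3})\in\mathbb{T}^{2}$, set $f(t):=\zeta(t,x_{2},x_{3})$ for $t\in(0,1)$, so that $f\in H^{1}(0,1)$ for a.e.\ $(x_{2},x_{3})$. By a standard density argument it is enough to assume $f$ is smooth and prove the one-dimensional inequality
\begin{equation*}
  \int_{0}^{1} f(t)^{2}\,dt \leq f(0)^{2}+f(1)^{2}+\int_{0}^{1}f'(t)^{2}\,dt.
\end{equation*}

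The key step is to symmetrize the fundamental theorem of calculus. Writing $f(t)^{2}=f(0)^{2}+\int_{0}^{t}2f(s)f'(s)\,ds$ and $f(t)^{2}=f(1)^{2}-\int_{t}^{1}2f(s)f'(s)\,ds$, integrating each identity in $t\in(0,1)$ and swapping the order of integration (via Fubini) gives
\begin{equation*}
  \int_{0}^{1} f(t)^{2}\,dt = f(0)^{2}+\int_{0}^{1}2f(s)f'(s)(1-s)\,ds, \qquad \int_{0}^{1} f(t)^{2}\,dt = f(1)^{2}-\int_{0}^{1}2f(s)f'(s)s\,ds.
\end{equation*}
Summing these two identities eliminates the boundary-weighting asymmetry and yields
\begin{equation*}
  2\int_{0}^{1}f(t)^{2}\,dt = f(0)^{2}+f(1)^{2}+\int_{0}^{1}2f(s)f'(s)(1-2s)\,ds.
\end{equation*}

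Since $|1-2s|\le 1$ on $[0,1]$, Young's inequality gives $\bigl|\int_{0}^{1}2f(s)f'(s)(1-2s)\,ds\bigr|\le \|f\|_{L^{2}(0,1)}^{2}+\|f'\|_{L^{2}(0,1)}^{2}$, and absorbing the $\|f\|^{2}$ term into the left-hand side produces the desired one-dimensional estimate. Finally, integrating over $(x_{2},x_{3})\in\mathbb{T}^{2}$ and using $\|\partial_{1}\zeta\|_{L^{2}(\Omega)}\le\|\nabla\zeta\|_{L^{2}(\Omega)}$ together with $\partial\Omega=\mathbb{T}_{0}^{2}\cup\mathbb{T}_{1}^{2}$ yields \eqref{PoincareTraceInequality}.

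There is no real obstacle in this argument; the only point requiring care is the choice of the \emph{symmetric} pair of one-sided identities (rather than a single one-sided version, which would produce an unwanted factor of $2$ in front of the boundary norm). Using just $f(t)^{2}=f(0)^{2}+\int_{0}^{t}2ff'$ and a direct Cauchy--Schwarz yields only $\|f\|^{2}\le 2f(0)^{2}+\|f'\|^{2}$, so the symmetrization step is essential to obtain the sharp constant $1$ on both terms on the right-hand side.
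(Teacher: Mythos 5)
Your proposal is correct and is essentially the paper's own argument: the paper splits the interval at $x_{1}=\tfrac12$ and applies the fundamental theorem of calculus from the nearer endpoint on each half, which after the same Fubini/Young step produces exactly your weight $\tfrac12-s$ and the same absorption of $\tfrac12\|\zeta\|_{L^{2}}^{2}$ into the left-hand side. Your symmetrization via averaging the two full-interval identities is just a slightly different bookkeeping of the identical estimate, and your closing remark about why a one-sided version loses a factor of $2$ correctly identifies the point of the construction.
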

\begin{proof}
Using the fundamental theorem of Calculus,
\begin{align*}
  &\left|\int_{\mathbb{T}^2}\int_{0}^{1}|\zeta(x_1,x')|^2d\mathbf{x}_1d\mathbf{x}'-\frac{1}{2}\left(\int_{\mathbb{T}^2}|\zeta(0,x')|^2d\mathbf{x}'+\int_{\mathbb{T}^2}|\zeta(1,x')|^2d\mathbf{x}'\right)\right|\\
  \leq&\left|\int_{\mathbb{T}^2}\int_{0}^{\frac{1}{2}}\int_{0}^{x_1}2\zeta(y_1,x')\partial_1\zeta(y_1,x')dy_1d\mathbf{x}_1d\mathbf{x}'\right|+\left|\int_{\mathbb{T}^2}\int_{\frac{1}{2}}^{1}\int_{x_1}^{1}2\zeta(y_1,x')\partial_1\zeta(y_1,x')dy_1d\mathbf{x}_1d\mathbf{x}'\right|\\
  \leq&\int_{\mathbb{T}^2}\int_{0}^{1}|\zeta(y_1,x')|\cdot|\partial_1\zeta(y_1,x')|dy_1d\mathbf{x}'\\
  \leq&\frac{1}{2}\int_{\mathbb{T}^2}\int_{0}^{1}|\zeta(x_1,x')|^2d\mathbf{x}'+\frac{1}{2}\int_{\mathbb{T}^2}\int_{0}^{1}|\partial_1\zeta(x_1,x')|^2d\mathbf{x}_1d\mathbf{x}',
\end{align*}
which implies the result.
\end{proof}

\begin{remark}
   It is also important to compute explicitly Poincar\'{e}'s constant to verify the assumption (A). For Poincar\'{e} inequality with traces in general domains, BREZIS and LIEB \cite{Brezis-Lieb-JFA-1985} proved that for bounded domain $\Omega\in\mathbb{R}^n$, the $L^{\frac{2n}{n-2}}(\Omega)$ norm of a function can be estimated by the $L^2(\Omega)$ norm of its gradient and $L^{\frac{2(n-1)}{n-2}}(\Omega)$ norm of its trace. Later, MAGGI and VILLANI \cite{Maggi-Villani-JGA-2005} extended BREZIS and LIEB's result to $L^{p}(\Omega)$ case for $p\in[1,n)$ and they gave explicit bounds for the constants involved in the inequality by using the transportation techniques. More recently, BUCUR, GIACOMINI and TREBESCHI \cite{Bucur-Giacomini-Trebeschi-AIHP-2019} proved a control of $L^{p}(\Omega)$ norm a function by the $L^{p}(\Omega)$ norm of its gradient and $L^{p}(\Omega)$ norm of its traces, they also give a best constant description in their result. 
\end{remark}

In the rest of this section, we give some well-known results and theorems used in our proof for the reader's convenience.

First, we recall 
the following type of Poincar\'{e}'s inequality.
\begin{lemma}[Poincar\'{e}'s inequality]
  Let $\Omega\in\mathbb{R}^{3}$ be a smooth bounded domain. Then there exists a constant $C>0$ such that
  \begin{equation}\label{PoincareInequality}
    \|\varphi\|_{L^{2}(\Omega)}\leq C\|\nabla\varphi\|_{L^{2}(\Omega)},
  \end{equation}
  for all $\varphi\in H^{1}(\Omega)\cap L_{0}^{2}(\Omega)$.
\end{lemma}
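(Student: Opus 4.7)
The plan is to establish the mean-zero Poincar\'e inequality by the standard compactness-contradiction argument based on the Rellich--Kondrachov embedding. Since $\Omega\subset\mathbb{R}^{3}$ is a smooth bounded domain, the embedding $H^{1}(\Omega)\hookrightarrow L^{2}(\Omega)$ is compact, and this will be the essential analytic input.

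First I would assume, for the sake of contradiction, that no such constant $C>0$ exists. Then for every $n\in\mathbb{N}$ one can find $\varphi_{n}\in H^{1}(\Omega)\cap L_{0}^{2}(\Omega)$ with $\|\varphi_{n}\|_{L^{2}(\Omega)}>n\,\|\nabla\varphi_{n}\|_{L^{2}(\Omega)}$. After normalizing, I may assume $\|\varphi_{n}\|_{L^{2}(\Omega)}=1$ and hence $\|\nabla\varphi_{n}\|_{L^{2}(\Omega)}<1/n\to 0$. Consequently $\{\varphi_{n}\}$ is bounded in $H^{1}(\Omega)$.

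Next I would extract a subsequence, still denoted $\{\varphi_{n}\}$, such that $\varphi_{n}\rightharpoonup\varphi$ weakly in $H^{1}(\Omega)$ and $\varphi_{n}\to\varphi$ strongly in $L^{2}(\Omega)$ by the Rellich--Kondrachov theorem. The weak lower semicontinuity of the $L^{2}$ norm of the gradient, together with $\|\nabla\varphi_{n}\|_{L^{2}(\Omega)}\to 0$, yields $\nabla\varphi=0$ in the distributional sense, so $\varphi$ is constant on the connected domain $\Omega$. The linear functional $\psi\mapsto\int_{\Omega}\psi\,d\mathbf{x}$ is continuous on $L^{2}(\Omega)$, so $\int_{\Omega}\varphi\,d\mathbf{x}=\lim_{n}\int_{\Omega}\varphi_{n}\,d\mathbf{x}=0$, forcing the constant value to be $0$; hence $\varphi\equiv 0$. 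This contradicts the strong convergence $\varphi_{n}\to\varphi$ in $L^{2}(\Omega)$ together with $\|\varphi_{n}\|_{L^{2}(\Omega)}=1$.

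There is no real obstacle here: the only nontrivial ingredient is the compact embedding, which is classical for smooth bounded domains in $\mathbb{R}^{3}$. The only minor point worth remarking is that connectedness of $\Omega$ is implicitly used to conclude that a function with vanishing distributional gradient is constant; for the channel $\Omega=(0,1)\times\mathbb{T}^{2}$ used in the paper this is obvious, but the lemma is stated for general smooth bounded domains so I would include a brief line addressing this.
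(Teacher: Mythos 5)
Your compactness--contradiction argument is the standard proof of this inequality and is correct; note that the paper itself offers no proof, simply recalling the lemma as a classical fact in the appendix. Your closing remark about connectedness is well taken and is in fact necessary: as stated for a general smooth bounded domain the inequality fails if $\Omega$ is disconnected (take $\varphi$ locally constant with zero total mean), so the implicit hypothesis that $\Omega$ is connected --- satisfied by the channel $(0,1)\times\mathbb{T}^{2}$ actually used in the paper --- is the only point where the statement needs a touch-up, and you identified it.
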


Second, we refer to \cite{Grisvard-1985, Tapia-Amrouche-Conca-Ghosh-JDE-2021} for the results concerning the inhomogeneous boundary value problems for Poisson equation with Robin boundary condition and Stokes system with slip boundary conditions.

Consider the boundary value problem for $\zeta$:
\begin{equation}\label{Poisson}
  -\Delta\zeta=\mathfrak{G},
\end{equation}
for $x\in\Omega$, with
\begin{equation}\label{RobinBC}
\zeta+\mathfrak{a}\mathbf{n}\cdot\nabla\zeta+\mathfrak{b}=0,
\end{equation}
for $x\in\partial\Omega$. Then we have the following theorem. 
\begin{theorem}\label{ellipticEstimateTheorem}
  Let $\Omega\subset\mathbb{R}^3$ be a smooth bounded domain, $k\in\mathbb{N}$ and $\mathfrak{a}\in C^{2}(\partial\Omega)$ is a positive function. If $\mathfrak{G}\in H^{1}(\Omega)$ and $\mathfrak{b}\in H^{k+\frac{1}{2}}(\partial\Omega)$, then the boundary value problem \eqref{Poisson} with \eqref{RobinBC} has a unique strong solution $\zeta\in H^{k+2}(\Omega)$. Moreover,
  \begin{equation}\label{ellipticEstimate}
    \|\zeta\|_{H^{k+2}(\Omega)}^2\leq C(\|\mathfrak{G}\|_{H^{k}(\Omega)}^2+\|\mathfrak{b}\|_{H^{k+\frac{1}{2}}(\partial\Omega)}^2).
  \end{equation}
\end{theorem}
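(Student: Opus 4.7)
\textbf{Proof proposal for Theorem \ref{ellipticEstimateTheorem}.}

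The plan is to separate the argument into three stages: (i) existence and uniqueness of a weak $H^{1}$ solution via Lax--Milgram, (ii) basic $H^{2}$ regularity by flattening the boundary and using difference quotients, and (iii) higher regularity by induction on $k$. First, I rewrite the Robin condition as $\mathbf{n}\cdot\nabla\zeta=-\mathfrak{a}^{-1}(\zeta+\mathfrak{b})$, which is admissible because $\mathfrak{a}\in C^{2}(\partial\Omega)$ is strictly positive, so $\mathfrak{a}^{-1}\in C^{2}(\partial\Omega)$. The natural weak formulation is: find $\zeta\in H^{1}(\Omega)$ such that for all $v\in H^{1}(\Omega)$,
\begin{equation*}
\int_{\Omega}\nabla\zeta\cdot\nabla v\,d\mathbf{x}+\int_{\partial\Omega}\frac{1}{\mathfrak{a}}\zeta v\,dS=\int_{\Omega}\mathfrak{G}\,v\,d\mathbf{x}-\int_{\partial\Omega}\frac{\mathfrak{b}}{\mathfrak{a}}v\,dS.
\end{equation*}
The associated bilinear form is continuous on $H^{1}(\Omega)$ (using the trace theorem) and is coercive by the Poincar\'e-type inequality \eqref{PoincareTraceInequality}, since $\min_{\partial\Omega}\mathfrak{a}^{-1}>0$ controls the boundary term. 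Lax--Milgram yields a unique $\zeta\in H^{1}(\Omega)$ together with the basic estimate $\|\zeta\|_{H^{1}(\Omega)}\leq C(\|\mathfrak{G}\|_{L^{2}(\Omega)}+\|\mathfrak{b}\|_{H^{1/2}(\partial\Omega)})$.

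Second, I upgrade to $H^{2}$ regularity by the standard localize-and-flatten argument. Using a finite $C^{2}$ partition of unity $\{\chi_{j}\}$ subordinate to boundary charts straightening $\partial\Omega$ to the hyperplane $\{y_{3}=0\}$, each localized function $\chi_{j}\zeta$ satisfies a transformed Poisson equation in a half-ball with a zeroth/first-order perturbation and a Robin condition of the form $\partial_{y_{3}}\tilde\zeta=\tilde\alpha\tilde\zeta+\tilde\beta$. Tangential difference quotients $D_{\ell,d}$ for $\ell=1,2$ preserve the Robin condition (up to an admissible error absorbed into $\mathfrak{G}$ and $\mathfrak{b}$), and testing with $D_{\ell,-d}D_{\ell,d}(\chi_{j}\tilde\zeta)$ yields uniform bounds on $\|\nabla\partial_{\ell}\tilde\zeta\|_{L^{2}}$ in terms of $\|\mathfrak{G}\|_{L^{2}}$ and $\|\mathfrak{b}\|_{H^{3/2}}$. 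The normal second derivative $\partial_{y_{3}}^{2}\tilde\zeta$ is then recovered algebraically from the equation itself. This gives $\zeta\in H^{2}(\Omega)$ together with the estimate \eqref{ellipticEstimate} for $k=0$.

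Third, I establish the higher-order estimate by induction on $k\geq 1$. Assuming the statement holds at level $k-1$, I differentiate the equation tangentially: for each tangential multi-index $\alpha$ with $|\alpha|=k$, the function $\partial^{\alpha}\zeta$ (in local boundary-flattened coordinates) solves a Poisson equation with right-hand side $\partial^{\alpha}\mathfrak{G}+[\text{commutator terms involving }\partial^{\leq k}\zeta]$ and a Robin condition with boundary datum of regularity $H^{k+1/2}(\partial\Omega)$. Applying the $k=0$ case to this auxiliary problem and using the inductive hypothesis to absorb commutator contributions yields control of all tangential $H^{k+2}$ derivatives. Normal derivatives are then recovered from the equation and commutation with tangential derivatives. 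Summing over the partition of unity and returning to $\Omega$ produces \eqref{ellipticEstimate}.

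The main technical obstacle will be the bookkeeping in the induction step: the commutators generated by differentiating the flattened equation and the variable-coefficient Robin condition contain terms with $\nabla\mathfrak{a}$ and derivatives of the chart maps, and these must be shown to lie in the correct Sobolev class so that the inductive hypothesis can be invoked cleanly. The $C^{2}$ assumption on $\mathfrak{a}$ is sufficient at low order, but for large $k$ one needs additional smoothness of $\partial\Omega$ and $\mathfrak{a}$ hidden in ``smooth bounded domain''; the proposal here would be to either assume $\mathfrak{a}\in C^{k+1}(\partial\Omega)$ implicitly or to remark that only the regularity needed in the applications ($k$ small) is used. Uniqueness at the strong-solution level follows from uniqueness of the weak solution, and the continuity estimate \eqref{ellipticEstimate} is the direct consequence of the induction closing.
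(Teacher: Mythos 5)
Your proposal is essentially correct, but you should be aware that the paper does not prove Theorem \ref{ellipticEstimateTheorem} at all: it is recorded in the appendix as a known result, with the reader referred to Grisvard's monograph and to Tapia--Amrouche--Conca--Ghosh for the elliptic (and Stokes) theory with Robin/slip conditions. What you have written is the standard self-contained argument that those references encapsulate --- Lax--Milgram on the natural Robin bilinear form, tangential difference quotients after localizing and flattening for the $H^{2}$ step, and induction on tangential derivatives for higher regularity --- so your route is legitimate and, if anything, more informative than the paper's citation. Three remarks. First, in your $H^{2}$ step you claim a bound in terms of $\|\mathfrak{b}\|_{H^{3/2}(\partial\Omega)}$, whereas the $k=0$ case of \eqref{ellipticEstimate} permits only $\|\mathfrak{b}\|_{H^{1/2}(\partial\Omega)}$; the difference-quotient argument does deliver the sharper bound, because after summation by parts the boundary contribution is the pairing of $D_{\ell,d}(\mathfrak{a}^{-1}\mathfrak{b})$, which is bounded in $H^{-1/2}(\partial\Omega)$ uniformly in $d$ by $C\|\mathfrak{b}\|_{H^{1/2}(\partial\Omega)}$, against the trace of $D_{\ell,d}\zeta$ in $H^{1/2}(\partial\Omega)$ --- you should make this duality explicit rather than estimating $D_{\ell,d}\mathfrak{b}$ in $L^{2}(\partial\Omega)$. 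Second, your observation that $\mathfrak{a}\in C^{2}(\partial\Omega)$ cannot suffice for arbitrary $k$ is well taken and is really an imprecision of the theorem as stated; since the result is only invoked for $k\le 1$ in the bootstrap of Section 4, your proposed fix (assume $\mathfrak{a}\in C^{k+1}$, or note that only small $k$ is used) is exactly the right repair. Third, coercivity should be obtained from the general Friedrichs/trace form of the Poincar\'e inequality for a smooth bounded domain rather than from \eqref{PoincareTraceInequality}, which the paper establishes only for the channel $(0,1)\times\mathbb{T}^{2}$; this is what your argument implicitly does and costs nothing.
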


Consider the boundary value problem for $(\mathbf{u},\nabla\varphi)$:
\begin{equation}\label{Stokes}
\left\{
\begin{aligned}
   -\Delta\mathbf{u}+\nabla\varphi=&\boldsymbol{\mathfrak{F}}, \\
   \div\mathbf{u}=&\mathfrak{G}, 
\end{aligned}\right.
\end{equation}
for $x\in\Omega$, with \begin{equation}\label{slipBC}
    \left\{
    \begin{aligned}
    &\mathbf{u}\cdot\mathbf{n}=0, \\
    &\mathbf{u}\cdot\mathbf{t}+\mathfrak{a}(\nabla\mathbf{u}+\nabla\mathbf{u}^{\mathsf{T}}):\mathbf{n}\otimes\mathbf{t}+\mathfrak{b}=0,
    \end{aligned}
    \right.
\end{equation}
for $x\in\partial\Omega$. Then we have the following results.
\begin{theorem}\label{StokesEstimateTheorem}
    Let $\Omega\subset\mathbb{R}^3$ be a smooth bounded domain, $k\in\mathbb{N}$ and $\mathfrak{a}\in C^{2}(\partial\Omega)$ is a positive function. If$\mathfrak{F}\in H^{k}(\Omega)$, $\mathfrak{G}\in H^{k}(\Omega)\cap L_0^2(\Omega)$ and $\mathfrak{b}\in H^{k+\frac{1}{2}}(\partial\Omega)$, then the boundary value problem \eqref{Stokes} with \eqref{slipBC} has a unique strong solution $(\mathbf{u},\varphi)\in V^{k+2}(\Omega)\times (H^{k+1}(\Omega)\cap L_{0}^{2}(\Omega))$. Moreover,
    \begin{equation}\label{StokesEstimate}
        \|\mathbf{u}\|_{H^{k+2}(\Omega)}^2+\|\varphi\|_{H^{k+1}(\Omega)}^2\leq C(\left\|\boldsymbol{\mathfrak{F}}\right\|_{H^{k}(\Omega)}^2+\|\mathfrak{G}\|_{H^{k+1}(\Omega)}^2+\|\boldsymbol{\mathfrak{b}}\|_{H^{k+\frac{1}{2}}(\partial\Omega)}^2).
    \end{equation}
\end{theorem}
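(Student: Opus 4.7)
The plan is to derive both existence and the regularity estimate from the general linear theory for the Stokes system with Navier-type slip boundary conditions, as developed in \cite{Grisvard-1985, Tapia-Amrouche-Conca-Ghosh-JDE-2021}. The argument decomposes into (a) constructing a weak solution by Lax--Milgram, (b) recovering the pressure via de Rham's theorem, and (c) bootstrapping regularity through elliptic theory of Agmon--Douglis--Nirenberg (ADN) type.

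First, I reduce to the homogeneous-divergence case by a Bogovskii-type lifting: since $\mathfrak{G}\in L_0^2(\Omega)$, one constructs $\mathbf{w}\in H^{k+2}(\Omega)$ satisfying $\div\mathbf{w}=\mathfrak{G}$, $\mathbf{w}\cdot\mathbf{n}|_{\partial\Omega}=0$, and $\|\mathbf{w}\|_{H^{k+2}(\Omega)}\lesssim\|\mathfrak{G}\|_{H^{k+1}(\Omega)}$. Setting $\tilde{\mathbf{u}}=\mathbf{u}-\mathbf{w}$ transfers the divergence constraint into the source term $\boldsymbol{\mathfrak{F}}+\Delta\mathbf{w}$ and modifies the slip data accordingly. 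On the Hilbert space $\mathcal{V}=\{\mathbf{v}\in V^1(\Omega):\div\mathbf{v}=0\}$ I then consider
\begin{equation*}
a(\mathbf{u},\mathbf{v}) = \frac{1}{2}\int_\Omega(\nabla\mathbf{u}+\nabla\mathbf{u}^{\mathsf{T}}):(\nabla\mathbf{v}+\nabla\mathbf{v}^{\mathsf{T}})\,d\mathbf{x} + \int_{\partial\Omega}\frac{1}{\mathfrak{a}}(\mathbf{u}\cdot\mathbf{t})(\mathbf{v}\cdot\mathbf{t})\,dS.
\end{equation*}
Continuity is immediate from the trace theorem, and coercivity follows from Korn's inequality combined with positivity of $1/\mathfrak{a}$ (the boundary term ensures coercivity even when the tangential trace of $\mathbf{u}$ is nontrivial). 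Lax--Milgram then produces a unique weak solution $\tilde{\mathbf{u}}\in\mathcal{V}$ with the corresponding $H^1$ bound.

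To recover the pressure, I test the weak identity against interior divergence-free fields: this shows that $\boldsymbol{\mathfrak{F}}+\Delta\tilde{\mathbf{u}}$ annihilates the solenoidal test class, so by de Rham's theorem there exists a unique $\varphi\in L_0^2(\Omega)$ with $-\Delta\tilde{\mathbf{u}}+\nabla\varphi=\boldsymbol{\mathfrak{F}}$ distributionally. The slip condition is recovered as the natural boundary condition associated with $a$ by integrating by parts. Returning to $\mathbf{u}=\tilde{\mathbf{u}}+\mathbf{w}$ yields a weak solution of \eqref{Stokes}--\eqref{slipBC}; uniqueness is immediate since coercivity rules out nontrivial homogeneous solutions.

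For regularity, the pair $(\mathbf{u},\varphi)$ satisfies a linear elliptic boundary value problem of ADN type with boundary operators $\mathbf{u}\cdot\mathbf{n}$ and $(\nabla\mathbf{u}+\nabla\mathbf{u}^{\mathsf{T}}):\mathbf{n}\otimes\mathbf{t}+\mathfrak{a}^{-1}(\mathbf{u}\cdot\mathbf{t})$. Flattening the boundary and passing to frozen coefficients, the Lopatinskii--Shapiro complementing condition reduces to the solvability of an explicit ODE system on the half-line, admitting only the trivial decaying solution because $\mathfrak{a}>0$. Applying the ADN regularity theorem together with a standard bootstrap---tangential difference quotients supply tangential derivatives, after which the equation algebraically yields normal derivatives and the pressure gradient---produces $\mathbf{u}\in V^{k+2}(\Omega)$, $\varphi\in H^{k+1}(\Omega)\cap L_0^2(\Omega)$ and the estimate \eqref{StokesEstimate}. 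The main obstacle is the verification of the Lopatinskii condition in the slip setting, where the tangential condition couples a zeroth- and a first-order term through the positive coefficient $\mathfrak{a}$; this is precisely the framework of \cite{Tapia-Amrouche-Conca-Ghosh-JDE-2021}, whose regularity theorem I would invoke to conclude rather than re-derive from scratch.
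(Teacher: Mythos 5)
The paper does not actually prove Theorem \ref{StokesEstimateTheorem}: it is recorded in the appendix as a known result, with the reader referred to \cite{Grisvard-1985, Tapia-Amrouche-Conca-Ghosh-JDE-2021}. Your proposal reconstructs the standard proof behind that citation --- Bogovskii/Neumann lifting of the divergence, Lax--Milgram on the solenoidal subspace of $V^1(\Omega)$ with the symmetric-gradient form plus the boundary term $\int_{\partial\Omega}\mathfrak{a}^{-1}(\mathbf{u}\cdot\mathbf{t})(\mathbf{v}\cdot\mathbf{t})\,dS$, de Rham for the pressure, and ADN regularity --- and then, for the hard regularity step, defers to the same reference the paper cites. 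The outline is sound and the weak formulation is the correct one (integrating $-\div(\nabla\mathbf{u}+\nabla\mathbf{u}^{\mathsf{T}})$ against a tangential test field does produce exactly your boundary term after substituting \eqref{slipBC}). Two small caveats: (i) coercivity of $a$ on $\mathcal{V}$ is not a one-line consequence of Korn plus $\mathfrak{a}>0$; one must dispose of rigid motions tangent to $\partial\Omega$ via the usual compactness (Peetre--Tartar) argument, the point being that any such rigid motion has nontrivial tangential trace and is therefore seen by the boundary term; (ii) the Lopatinskii--Shapiro condition is a statement about principal symbols, so the zeroth-order term $\mathfrak{a}^{-1}(\mathbf{u}\cdot\mathbf{t})$ plays no role in its verification --- the complementing condition holds for the pair $\{\mathbf{u}\cdot\mathbf{n},\ (\nabla\mathbf{u}+\nabla\mathbf{u}^{\mathsf{T}}):\mathbf{n}\otimes\mathbf{t}\}$ alone, and positivity of $\mathfrak{a}$ matters for coercivity and uniqueness rather than for ellipticity of the boundary system. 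Note also that the theorem's hypothesis $\mathfrak{G}\in H^{k}(\Omega)$ is inconsistent with the norm $\|\mathfrak{G}\|_{H^{k+1}(\Omega)}$ appearing in \eqref{StokesEstimate}; your implicit assumption $\mathfrak{G}\in H^{k+1}(\Omega)\cap L^2_0(\Omega)$, needed for the $H^{k+2}$ lifting, is the intended reading.
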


In addition, we recall the following two elementary theorems.

\begin{lemma}[Lax-Milgram theorem]
   Let $H$ be a Hilbert space, if $B:H\times H\rightarrow\mathbb{R}$ is a bilinear mapping, for which there exist constants $\alpha,\beta>0$ such that for all $u,v\in H$,
  \begin{equation*}
    |B(u,v)|\leq\alpha\|u\|_{H}\cdot\|v\|_{H},
  \end{equation*}
  and
  \begin{equation*}
    \beta\|u\|_{H}^{2}\leq B(u,u).
  \end{equation*}
  Then for a bounded linear functional $f:H\rightarrow\mathbb{R}$, there exists a unique element $u\in H$ such that
  \begin{equation*}
    B(u,v)=f(v),
  \end{equation*}
  for all $v\in H$.
\end{lemma}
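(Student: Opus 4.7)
The plan is to reduce the statement to the Riesz representation theorem for Hilbert spaces, which realizes the bilinear form and the linear functional as inner products, and then to invert the resulting bounded linear operator using the coercivity estimate.

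First, for each fixed $u\in H$, the map $v\mapsto B(u,v)$ is linear and, by the continuity hypothesis $|B(u,v)|\le\alpha\|u\|_H\|v\|_H$, it is a bounded linear functional on $H$ with norm at most $\alpha\|u\|_H$. The Riesz representation theorem therefore yields a unique element $Au\in H$ with $B(u,v)=\langle Au,v\rangle_H$ for every $v\in H$ and with $\|Au\|_H\le\alpha\|u\|_H$. A direct check using bilinearity of $B$ and uniqueness in the Riesz theorem shows that $A:H\to H$ is linear; combined with the norm bound it is a bounded linear operator. Applying Riesz once more to the given bounded linear functional $f$ produces a unique $z\in H$ with $f(v)=\langle z,v\rangle_H$ for all $v\in H$. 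The lemma is then equivalent to proving that the equation $Au=z$ admits a unique solution $u\in H$.

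Next, I would extract two properties of $A$ from the coercivity hypothesis. The inequality $\beta\|u\|_H^2\le B(u,u)=\langle Au,u\rangle_H\le\|Au\|_H\|u\|_H$ immediately gives $\|Au\|_H\ge\beta\|u\|_H$ for all $u\in H$. This lower bound implies that $A$ is injective and has closed range: if $Au_n\to w$, then $(u_n)$ is Cauchy in $H$ by the lower bound, so $u_n\to u$ and by continuity $Au=w$. To show the range $R(A)$ is all of $H$, suppose $w\in R(A)^{\perp}$. Then $\langle Aw,w\rangle_H=0$, but coercivity forces $\beta\|w\|_H^2\le\langle Aw,w\rangle_H=0$, so $w=0$. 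Hence $R(A)^{\perp}=\{0\}$, and since $R(A)$ is closed we conclude $R(A)=H$. Therefore $A$ is a bijection from $H$ onto $H$, and the unique $u:=A^{-1}z$ satisfies $\langle Au,v\rangle_H=\langle z,v\rangle_H$ for all $v$, i.e.\ $B(u,v)=f(v)$ for all $v\in H$.

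A clean alternative, which I would mention only as a remark, is to fix $r\in(0,2\beta/\alpha^2)$ and show that the affine map $T_r(u):=u-r(Au-z)$ is a strict contraction on $H$: a direct computation using $\|Au\|_H\le\alpha\|u\|_H$ and $\langle Au,u\rangle_H\ge\beta\|u\|_H^2$ gives $\|T_r(u_1)-T_r(u_2)\|_H^2\le(1-2r\beta+r^2\alpha^2)\|u_1-u_2\|_H^2$, and the Banach fixed point theorem delivers the unique fixed point, equivalent to $Au=z$. The main (only) subtlety in either approach is the surjectivity step, since everything else is a formal consequence of the Riesz theorem and the two hypotheses on $B$; I would present the orthogonal-complement argument above as the primary route.
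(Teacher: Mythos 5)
Your proof is correct. Note, however, that the paper does not actually prove this lemma: it is stated in the appendix as a recalled classical result (alongside Schauder's fixed point theorem) with no argument given, so there is nothing to compare against. Your route --- Riesz representation to convert $B$ and $f$ into an operator equation $Au=z$, the coercivity bound $\|Au\|_{H}\geq\beta\|u\|_{H}$ for injectivity and closed range, and the orthogonal-complement argument for surjectivity --- is the standard textbook proof and handles the non-symmetric case correctly; the contraction-mapping alternative you sketch is also valid.
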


\begin{lemma}[Schauder's fixed point theorem]
  Let $X$ be a Banach space and $M\subset X$ be a nonempty convex compact subset. If $T: M\rightarrow M$ is a continuous mapping, then $T$ has a fixed point in $M$.
\end{lemma}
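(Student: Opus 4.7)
The plan is to reduce the infinite-dimensional fixed point problem to a finite-dimensional one and invoke Brouwer's fixed point theorem. The key device is the Schauder projection onto finite $\varepsilon$-nets, combined with compactness of $M$.

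First I would use compactness of $M$: for each $\varepsilon>0$, there exist finitely many points $x_1^\varepsilon,\ldots,x_{N_\varepsilon}^\varepsilon \in M$ whose $\varepsilon$-balls cover $M$. Define the Schauder partition of unity by
\begin{equation*}
\psi_i^\varepsilon(x) := \frac{\max\bigl(0,\varepsilon-\|x-x_i^\varepsilon\|\bigr)}{\sum_{j=1}^{N_\varepsilon}\max\bigl(0,\varepsilon-\|x-x_j^\varepsilon\|\bigr)}, \qquad x \in M,
\end{equation*}
which is well-defined and continuous on $M$ because at least one denominator term is positive for every $x\in M$. Set the Schauder projection
\begin{equation*}
P_\varepsilon(x) := \sum_{i=1}^{N_\varepsilon}\psi_i^\varepsilon(x)\,x_i^\varepsilon.
\end{equation*}
Since $M$ is convex and each $x_i^\varepsilon\in M$, we have $P_\varepsilon(x)\in M$, and moreover $P_\varepsilon(x)$ is a convex combination supported only on those $x_i^\varepsilon$ with $\|x-x_i^\varepsilon\|<\varepsilon$, so $\|P_\varepsilon(x)-x\|<\varepsilon$ uniformly on $M$.

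Second, set $M_\varepsilon := \operatorname{conv}\{x_1^\varepsilon,\ldots,x_{N_\varepsilon}^\varepsilon\}\subset M$, a compact convex subset of the finite-dimensional affine subspace spanned by these points. Consider the continuous map $T_\varepsilon := P_\varepsilon\circ T : M_\varepsilon \to M_\varepsilon$. By Brouwer's fixed point theorem (after identifying $M_\varepsilon$ with a compact convex subset of some $\mathbb{R}^{N_\varepsilon-1}$ via an affine homeomorphism, or just invoking Brouwer in the version valid on any compact convex subset of a finite-dimensional normed space), there exists $y_\varepsilon\in M_\varepsilon$ with $T_\varepsilon(y_\varepsilon)=y_\varepsilon$, and by construction
\begin{equation*}
\|T(y_\varepsilon)-y_\varepsilon\|=\|T(y_\varepsilon)-P_\varepsilon(T(y_\varepsilon))\|<\varepsilon.
\end{equation*}

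Finally, I would pass to the limit using compactness of $M$. Taking $\varepsilon=1/k$ produces a sequence $\{y_k\}\subset M$ with $\|T(y_k)-y_k\|<1/k$. By compactness there is a subsequence $y_{k_j}\to y^\ast\in M$, and continuity of $T$ gives $T(y_{k_j})\to T(y^\ast)$; hence $T(y^\ast)=y^\ast$.

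The main obstacle is invoking Brouwer's fixed point theorem as a black box for the finite-dimensional reduction, as well as verifying the two structural properties of $P_\varepsilon$ that make the scheme work: (i) $P_\varepsilon$ is continuous and maps $M$ into the finite-dimensional compact convex set $M_\varepsilon\subset M$, and (ii) $P_\varepsilon$ approximates the identity uniformly at scale $\varepsilon$. Property (i) requires the denominator in $\psi_i^\varepsilon$ to be bounded away from zero, which follows from the covering property of the $\varepsilon$-net, while (ii) follows because $\psi_i^\varepsilon(x)\neq 0$ forces $\|x-x_i^\varepsilon\|<\varepsilon$. Everything else is either a routine compactness/continuity argument or an appeal to Brouwer.
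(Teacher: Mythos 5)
Your argument is the classical and correct proof of Schauder's theorem: the Schauder projection onto a finite $\varepsilon$-net reduces the problem to Brouwer's fixed point theorem on the finite-dimensional convex hull, and the approximate fixed points converge to a genuine one by compactness of $M$ and continuity of $T$. The paper states this lemma in its appendix as a known result and gives no proof, so there is nothing to compare against; all the steps you outline (well-definedness and continuity of the partition of unity, $\|P_\varepsilon(x)-x\|<\varepsilon$ from the support condition, and the final limiting argument) are sound.
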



\medskip
\noindent {\bf Acknowledgment:}\,
The research of Renjun Duan was partially supported by the General Research Fund (Project No.~14303523) from RGC of Hong Kong and a Direct Grant from CUHK.

%
%
%
%

\end{document}